\documentclass[letterpaper,reqno,10pt,twoside]{amsart}
\usepackage{amsmath,amsthm,amsfonts,amssymb,euscript,mathrsfs,graphics,latexsym,marginnote}
\usepackage[dvips]{graphicx}
\usepackage[left=0.8 in, right=0.8 in,top=0.8 in, bottom=0.8 in]{geometry}
\usepackage[toc,page]{appendix}
\usepackage{float}

\usepackage{xargs}
\usepackage[dvipsnames]{xcolor}

\usepackage{hyperref}
\hypersetup{colorlinks=true, pdfstartview=FitV,linkcolor=blue!70!black,citecolor=red!70!black, urlcolor=green!60!black}
\definecolor{labelkey}{rgb}{0.6,0,0}

\usepackage[colorinlistoftodos,prependcaption,textsize=small]{todonotes}
% \newcommandx{\change}[2][1=]{\todo[#1]{#2}}
% \newcommandx{\unsure}[2][1=]{\todo[linecolor=red,backgroundcolor=red!25,bordercolor=red,#1]{#2}}
% \newcommandx{\rmk}[2][1=]{\todo[linecolor=blue,backgroundcolor=blue!25,bordercolor=blue,#1]{#2}}
% \newcommandx{\info}[2][1=]{\todo[linecolor=OliveGreen,backgroundcolor=OliveGreen!25,bordercolor=OliveGreen,#1]{#2}}
% \newcommandx{\improvement}[2][1=]{\todo[linecolor=Plum,backgroundcolor=Plum!25,bordercolor=Plum,#1]{#2}}
% \newcommandx{\thiswillnotshow}[2][1=]{\todo[disable,#1]{#2}}

\setlength{\parskip}{0.2em}

\allowdisplaybreaks

\makeatletter
\renewcommand \theequation {%
	\ifnum \c@section>\z@ \@arabic\c@section.%
	%\fi \ifnum\c@subsection>\z@\@arabic\c@subsection.%
	\fi\@arabic\c@equation} \@addtoreset{equation}{section}
%\@addtoreset{equation}{subsection}
\@namedef{subjclassname@2020}{2020 Mathematics Subject Classification}
\makeatother

% \rule{0ex}{2.0em}

\newtheorem{theorem}{Theorem}[section]

\newtheorem{lemma}[theorem]{Lemma}
\newtheorem{proposition}[theorem]{Proposition}

\theoremstyle{definition}
\newtheorem{definition}{Definition}[section]
\theoremstyle{remark}
\newtheorem{remark}{Remark}[section]

\def\dt{\partial_t}
\def\p{\partial}

\def\rt{\rightarrow}

\def\no{\nonumber}

\def\Om{\Omega}
\def\om{\omega}
\def\dive{\mathop{\rm div}\nolimits}

\def\d{\partial}
\def\f{\frac}

\begin{document}
	
	\title{Global Well-Posedness of Contact Lines: 2D Navier-Stokes Flow}
	
	\author[Y. Guo]{Yan Guo}
	\address[Y. Guo]{
		\newline\indent Division of Applied Mathematics, Brown University, Providence, RI 02912, USA}
	\email{yan\_guo@brown.edu}
	\thanks{Y. Guo was supported by NSF Grant DMS-2106650.}
	
	\author[I. Tice]{Ian Tice}
	\address[I. Tice]{
		\newline\indent Department of Mathematical Sciences, Carnegie Mellon University, Pittsburgh, PA 15213, USA}
	\email{iantice@andrew.cmu.edu}
	\thanks{I. Tice was supported by NSF Grant DMS-2508400.}
	
	\author[L. Wu]{Lei Wu}
	\address[L. Wu]{
		\newline\indent Department of Mathematics, Lehigh University, Bethlehem, PA 18015, USA}
	\email{lew218@lehigh.edu}
	\thanks{L. Wu was supported by NSF Grant DMS-2104775 and DMS-2405161.}
	
	\author[X. Yang]{Xiaoding Yang}
	\address[X. Yang]{
		\newline\indent Division of Applied Mathematics, Brown University, Providence, RI 02912, USA}
	\email{xiaoding\_yang@brown.edu}
	\thanks{X. Yang was supported by NSF Grant DMS-2405051.}
	
	\author[Y. Zheng]{Yunrui Zheng}
	\address[Y. Zheng]{
		\newline\indent School of Mathematics, Shandong University, Shandong 250100, Jinan, P. R. China}
	\email{yunrui\_zheng@sdu.edu.cn*}
	\thanks{Y. Zheng was supported by NSFC Grant 11901350 and 12371172.}
	
	\date{}
	
	\subjclass[2020]{Primary 35Q30, 35R35, 76D45; Secondary 35B40, 76E17, 76E99}
	
	\keywords{contact point, Navier-Stokes equations, surface tension}
	
	\begin{abstract}
		Based on the global a priori estimates in [Guo-Tice, J. Eur. Math. Soc. (2024)], we establish the well-posedness of a viscous fluid model satisfying the dynamic law for the contact line
		\begin{equation*}
			\mathscr{W}(\p_t\zeta(\pm\ell,t))=[\![\gamma]\!]\mp\sigma\frac{\p_1\zeta}{(1+|\p_1\zeta|^2)^{1/2}}(\pm\ell,t)
		\end{equation*}
		in a 2D domain, where $\zeta(x_1,t)$ is a free surface with two contact points $\zeta(\pm\ell,t)$, $[\![\gamma]\!]$ and $\sigma$ are constants characterizing the solid-fluid-gas free energy, and  $\mathscr{W}$ is the contact point velocity response function.  Our approach, which is motivated by the natural energy-dissipation structure of the problem, relies on constructing a pressureless weak solution to the linearized problem for the coupled velocity and free interface, via a Galerkin approximation with a time-dependent basis and an artificial regularization for the capillary operator.
	\end{abstract}
	
	\maketitle

	\pagestyle{myheadings} \thispagestyle{plain} \markboth{GUO, TICE, WU, YANG, ZHENG}{WELL-POSEDNESS OF CONTACT POINTS PROBLEM}

	\setcounter{tocdepth}{2}
	\makeatletter
	\def\l@subsection{\@tocline{2}{0pt}{2.5pc}{5pc}{}}
	\makeatother
	\tableofcontents

	%%%%%%%%%%%%%%%%%%%%%%%%%%%%%%%%%%%%%%%%%%%%%%
	\section{Introduction}
	%%%%%%%%%%%%%%%%%%%%%%%%%%%%%%%%%%%%%%%%%%%%%%

	%%%%%%%%%%%%%%%%%%%%%%%%%%%%%%%%%%%%%%%%%%%%%%
	\subsection{Problem Formulation}
	%%%%%%%%%%%%%%%%%%%%%%%%%%%%%%%%%%%%%%%%%%%%%%

	We consider a $2$D open top vessel as a bounded, connected open set $\mathcal{V}\subseteq\mathbb{R}^2$ which consists of two disjoint sections as well as their intersection, i.e., $\mathcal{V}=\mathcal{V}_{top}\cup\mathcal{V}_{bot}\cup I$.
	We refer to Figure 1 for an example.

	%%%%%%%%%%%%%%%%%%%%%%%%%%%%%%%%%%%%%%%%%%

	\begin{figure}[H]
		\begin{minipage}[t]{0.4\linewidth}
			\centering
			\scalebox{0.9}{\begin{tikzpicture}
					\draw  (-1, 1)--(-1, 5);
					\draw  (1, 1)--(1, 5);
					\filldraw [green!15, draw=black] (-1, 5)--(-1, 1)--(1, 1)--(1, 5);
					\node at(0, 3){$\mathcal{V}_{top}$};
					\filldraw [blue!10, draw=black] (-1, 1) arc(180: 360: 1);
					\node at(0,0.5) {$\mathcal{V}_{bot}$};
					\draw[dashed]  (-1,1)--(1, 1);
					\node at(0, 1){ $I$};
			\end{tikzpicture}}
			\caption{A vessel $\mathcal{V}$.}
		\end{minipage}%
		\begin{minipage}[t]{0.4\linewidth}
			\centering
			\scalebox{0.9}{\begin{tikzpicture}
					\draw  (6, 1)--(6, 5);
					\draw  (8, 1)--(8, 5);
					\filldraw [green!10, draw=black] (6, 4)--(6, 1)--(6, 1) arc(180: 360: 1)--(8, 1)--(8, 4)--(8,4)..controls(7.4, 3.8)and (6.7, 3.9)..(6,4.2);
					\node at(7, 2){$\Om(t)$};
					\node at(7,4){$\Sigma(t)$};
					\node at(8.5, 2){$\Sigma_s(t)$};
			\end{tikzpicture}}
			\caption{The domain $\Om(t)$.}
		\end{minipage}
	\end{figure}
	%%%%%%%%%%%%%%%%%%%%%%%%%%%%%%%%%%%%%%%%%%%%

	Assume that the ``top" part $\mathcal{V}_{top}$ is a rectangular channel
	$
	\mathcal{V}_{top}:=\mathcal{V}\cap\mathbb{R}^2_{+}=\{y\in\mathbb{R}^2: -\ell<y_1<\ell, 0\le y_2<L\}
	$
	for some $\ell$, $L>0$, where $\mathbb{R}^2_{+}$ is the upper half plane $\mathbb{R}^2_{+}=\{y\in\mathbb{R}^2: y_2\ge0\}$. Similarly, we write the ``bottom" part as
	$
	\mathcal{V}_{bot}:=\mathcal{V}\cap\mathbb{R}^2_{-}=\mathcal{V}\cap\{y\in\mathbb{R}^2: y_2\le0\}.
	$
	Clearly, $I=(-\ell, \ell)\times\{0\}$. In addition, we also assume that the boundary $\partial\mathcal{V}$ of $\mathcal{V}$ is $C^2$ away from the points $(\pm\ell, L)$.
	
	We consider a viscous incompressible fluid filling $\mathcal{V}_{bot}$ entirely and $\mathcal{V}_{top}$ partially. More precisely, we assume that the fluid occupies the domain $\Om(t)$ with a top free surface,
	$
	\Om(t):=\mathcal{V}_{bot}\cup\big\{y\in\mathbb{R}^2: -\ell<y_1<\ell,\ 0<y_2<\zeta(y_1,t)\big\},
	$
	where the free surface $\zeta(y_1,t)$ is assumed to be a graph of the function $\zeta: [-\ell, \ell]\times \mathbb{R}_{+}\rightarrow\mathbb{R}$ satisfying $0<\zeta(\pm\ell,t)\le L$ for all $t\in\mathbb{R}_+$, which means the fluid does not spill out of the top domain. For simplicity, we write the free surface as $\Sigma(t)=\{y_2=\zeta(y_1,t)\}$ and the interface between fluid and solid as $\Sigma_s(t)=\partial\Om(t)\backslash\Sigma(t)$.
	We refer to Figure 2 for an illustration of domain.
	
	For each $t>0$, the fluid is described by its velocity and pressure $(u,P):\Om(t)\rightarrow\mathbb{R}^2\times\mathbb{R}$ which is governed by the incompressible Navier-Stokes equations:
	\begin{equation}\label{eq:navier-stokes}
		\left\{
		\begin{aligned}
			&\d_t u+u\cdot\nabla u+\nabla P-\mu\Delta u=0 \quad & \text{in} &\ \Om(t),\\
			&\dive u=0 \quad & \text{in} &\ \Om(t),\\
			&S(P,u)\nu=g\zeta\nu-\sigma \mathcal{H}(\zeta)\nu \quad & \text{on} &\ \Sigma(t),\\
			&\big(S(P,u)\nu-\beta u\big)\cdot\tau=0 \quad & \text{on} &\ \Sigma_s(t),\\
			&u\cdot\nu=0 \quad & \text{on} &\ \Sigma_s(t),\\
			&\d_t\zeta=u\cdot\nu=u_2-u_1\partial_1\zeta \quad & \text{on} &\ \Sigma(t),\\\rule{0ex}{-1.5em}
			&\d_t\zeta(\pm\ell,t)=\mathscr{V}\left([\![\gamma]\!]\mp\sigma\tfrac{\partial_1\zeta}{(1+|\partial_1\zeta|^2)^{1/2}}(\pm\ell,t)\right).
		\end{aligned}
		\right.
	\end{equation}
	
	In the above system \eqref{eq:navier-stokes}, $S(P,u)$ is the viscous stress tensor
	$S(P,u):=PI-\mu\mathbb{D}u$,
	where $I$ is the $2\times2$ identity matrix, $\mu>0$ is the coefficient of viscosity, $\mathbb{D}u=\nabla u+\nabla^T u$ is the symmetric gradient of $u$ for $\nabla^T u$ the transpose of the matrix $\nabla u$.  The pressure unknown $P=\bar{P}+gy_2-P_{atm}$ is obtained by adjusting the actual pressure $\bar{P}$ by the constant atmospheric pressure $P_{atm}$ and the background hydrostatic pressure $gy_2$.  The vector $\nu$ is the outward unit normal, $\tau$ is the unit tangent, $\sigma>0$ is the coefficient of surface tension,
	$
	\mathcal{H}(\zeta)=\p_1\left(\tfrac{\p_1\zeta}{(1+|\p_1\zeta|^2)^{1/2}}\right)
	$
	is twice the mean curvature of the free surface, and $\beta>0$ is the Navier-slip friction coefficient on the vessel walls. The boundary conditions on $\Sigma_s(t)$ are called Navier-slip conditions. $\mathscr{V}:\mathbb{R}\rightarrow\mathbb{R}$ is the contact point velocity response function which is a $C^2$ increasing diffeomorphism satisfying $\mathscr{V}(0)=0$. $[\![\gamma]\!]:=\gamma_{sv}-\gamma_{sf}$ for $\gamma_{sv}$, $\gamma_{sf}\in\mathbb{R}$, where $\gamma_{sv}$, $\gamma_{sf}$ are measures of the free-energy per unit length with respect to the solid-vapor and solid-fluid intersection. We also assume that the Young relation
	$|[\![\gamma]\!]|<\sigma$ (\cite{Yo}),
	which is necessary for the equilibrium state we will consider in the following. For convenience, we introduce the inverse function $\mathscr{W}$ of $\mathscr{V}$ and rewrite the final equation in \eqref{eq:navier-stokes} as
	\begin{equation}\label{eq:contact law}
		\mathscr{W}\big(\p_t\zeta(\pm\ell,t)\big)=[\![\gamma]\!]\mp\sigma\frac{\p_1\zeta}{(1+|\p_1\zeta|^2)^{1/2}}(\pm\ell,t).
	\end{equation}
	
	Contact points in 2D and contact lines in 3D domain have been studied for a long time, and we may refer to \cite[de Gennes]{dG} for an overview. There is an extensive literature dealing with the free boundary problems of contact points or contact lines, and so we will only mention some works closely related to our background settings.
	
	The analysis for Navier-slip conditions with contact angle can be found in \cite[Ren-E]{RE}, \cite[Cox]{C1986} and \cite[Guo-Tice]{GT2020}, thee latter of which we refer to for a lengthier  overview.  Note that the model \eqref{eq:navier-stokes} allows both the contact points and contact angles to move over time.
	The global a priori estimates for \eqref{eq:navier-stokes} are given by \cite[Guo-Tice]{GT2020}. When the contact angle is fixed at $\frac\pi2$, the well-posedness of 2D contact points problem in the Navier-Stokes flow is studied by \cite[Schweizer]{S01} and the 3D contact line problem with an additional periodic direction is studied by \cite[Bodea]{Bo06}.
	
	There are also many results for the stationary problem of contact points or lines. The solvability of stationary Navier-Stokes problem for dynamic contact angles and moving contact lines are studied by \cite[Kr\"oner]{Kr87} and \cite[Socolosky]{Soco93}, respectively. The local well-posedness and stability for the 2D moving contact points in the Stokes flow are studied by \cite[Guo-Tice]{GT18} and \cite[Tice-Zheng]{ZhT17}. The existence results for the Navier-Stokes equations for both static and dynamic contact points and lines in weighted H\"older spaces are proved by \cite[Solonnikov]{Sol95, Sol98}. The 3D contact line problem for the stationary Navier-Stokes equations with fixed angle $\frac{\pi}{2}$ is studied by \cite[Jin]{Jin05}.
	
	The contact angles are also closely related to the droplet problem. The stability problem for droplets governed by the Stokes equations is studied by \cite[Tice-Wu]{TW21}. The well-posedness for droplets governed by Darcy's law is studied by \cite[Kn\"upfer-Masmoudi]{KM15}.
	
	Despite  \cite[Guo-Tice]{GT2020}, there is no construction of solutions with dynamical contact line law \eqref{eq:contact law}, which \textit{must be} compatible with all the other boundary conditions. The purpose of this paper is to establish the first result for the global well-posedness of the 2D Navier-Stokes contact line problem. It is known that \eqref{eq:navier-stokes} admits an equilibrium state with $u=0$, $P=P_0= \text{constant}$ and $\zeta(y_1,t)=\zeta_0(y_1)$ satisfying
	\begin{equation}\label{eq:equibrium}
		P_0=g\zeta_0-\sigma\mathcal{H}(\zeta_0)\quad  \text{on} \ (-\ell,\ell),\quad
		\sigma\frac{\p_1\zeta_0}{\sqrt{1+|\p_1\zeta_0|^2}}(\pm\ell)=\pm[\![\gamma]\!].
	\end{equation}
	For the well-posedness of \eqref{eq:equibrium} we refer to \cite[Guo-Tice]{GT18} or \cite[Finn]{Finn}.
	
	This equilibrium allows us to transform the free surface problem into a fixed domain. Let $\zeta_0\in C^\infty[-\ell,\ell]$ be the equilibrium surface given by \eqref{eq:equibrium}. We then define the equilibrium domain
	$
	\Om:=\mathcal{V}_{b}\cup\big\{x\in\mathbb{R}^2|-\ell<x_1<\ell, 0<x_2<\zeta_0(x_1)\big\} \subset\mathbb{R}^2
	$.
	The boundary $\p\Om$ of the equilibrium $\Om$ is defined by
	$
	\p\Om:=\Sigma\cup\Sigma_s$,
	where
	$
	\Sigma:=\{x\in\mathbb{R}^2|-\ell<x_1<\ell, x_2=\zeta_0(x_1)\}$ and $ \Sigma_s=\p\Om\backslash \Sigma$.
	Here $\Sigma$ is the equilibrium free surface. Denote the corner angle $\om\in(0,\pi)$ of $\Om$ formed by the fluid and solid.
	We assume that the function $\zeta(y_1,t)$ of free surface is the perturbation of $\zeta_0(y_1)$ as
	$
	\zeta(y_1,t)=\zeta_0(y_1)+\eta(y_1,t)$.
	
	Let $\phi\in C^\infty(\mathbb{R})$ be such that $\phi(z)=0$ for $z\le\frac14\min\zeta_0$ and $\phi(z)=z$ for $z\ge\frac12\min\zeta_0$.
	Now we define the mapping $\Phi: \Om\mapsto\Om(t)$ by
	\begin{equation}\label{def:map}
		\Phi(x_1,x_2,t):=\left(x_1, x_2+\frac{\phi(x_2)}{\zeta_0(x_1)}\bar{\eta}(x_1,x_2,t)\right)=(y_1,y_2)\in\Om(t),
	\end{equation}
	with $
	\bar{\eta}(x_1,x_2,t):=\mathcal{P}E\eta(x_1,x_2-\zeta_0(x_1),t)$,
	where $E: H^s(-\ell,\ell)\mapsto H^{s+\frac{1}{2}}(\mathbb{R})$ is a bounded extension operator for all $0\le s\le 3$ and $\mathcal{P}$ is the lower Poisson extension given by
	\[
	\mathcal{P}f(x_1,x_2):=\int_{\mathbb{R}}\hat{f}(\xi)e^{2\pi|\xi| x_2}e^{2\pi ix_1\xi}\,\mathrm{d}\xi.
	\]
	If $\eta$ is sufficiently small (in proper Sobolev spaces), the mapping $\Phi$ is a $C^1$ diffeomorphism of $\Om$ onto $\Om(t)$ that maps the components of $\p\Om$ to the corresponding components of $\p\Om(t)$. The Jacobian matrix $\nabla\Phi$ and the transform matrix $\mathcal{A}$ of $\Phi$ are
	\begin{equation}\label{eq:Jaccobian}
		\nabla\Phi=\left(
		\begin{array}{cc}
			1&0\\
			A&J
		\end{array}
		\right), \quad
		\mathcal{A}=(\nabla\Phi)^{-T}=\left(
		\begin{array}{cc}
			1&-AK\\
			0&K
		\end{array}
		\right),
	\end{equation}
	with entries
	\begin{equation}\label{eq:components}
		A:=W\p_1\bar{\eta}-\frac{\phi}{\zeta_0^2}\p_1\zeta_0\bar{\eta},\quad J:=1+\frac{\phi^\prime}{\zeta_0}\bar{\eta}+W\p_2\bar{\eta},\quad K:=\frac{1}{J},\quad W:=\frac{\phi}{\zeta_0}.
	\end{equation}
	We define the transform operators as follows.
	$
	(\nabla_{\mathcal{A}}f)_i:=\mathcal{A}_{ij}\p_jf,\quad\dive_{\mathcal{A}}X:=\mathcal{A}_{ij}\p_j X_i,\quad \Delta_{\mathcal{A}}f:=\dive_{\mathcal{A}}\nabla_{\mathcal{A}}f
	$
	for appropriate $f$ and $X$. We write the stress tensor
	$
	S_{\mathcal{A}}(P,u):=PI-\mu\mathbb{D}_{\mathcal{A}}u
	$
	where $I$ is the $2\times2$ identity matrix and $(\mathbb{D}_{\mathcal{A}}u)_{ij}:=\mathcal{A}_{ik}\p_ku_j+\mathcal{A}_{jk}\p_ku_i$ is the symmetric $\mathcal{A}$--gradient. Note that if we extend $\dive_{\mathcal{A}}$ to act on symmetric tensors in the natural way, then $\dive_{\mathcal{A}}S_{\mathcal{A}}(P,u)=-\mu\Delta_{\mathcal{A}}u+\nabla_{\mathcal{A}}P$ for vectors fields satisfying $\dive_{\mathcal{A}}u=0$. $\Phi$ is a diffeomorphism justified by \cite{ZhT17}.
	We rewrite the solutions to \eqref{eq:navier-stokes} as a perturbation around the equilibrium state $(0, P_0, \zeta_0)$. By  defining new perturbed unknowns $(u, p, \eta)$ via $u=0+u$, $P=P_0+p$, and $\zeta=\zeta_0+\eta$, we can  use the analysis of \cite{GT2020} to transform the problem \eqref{eq:navier-stokes} to the equilibrium domain $\Om$ for $t\ge0$. For our purpose, we introduce the following  system
	\begin{equation}\label{eq:geometric}
		\left\{
		\begin{aligned}
			&\p_tu-\p_t\bar{\eta}WK\p_2u+u\cdot\nabla_{\mathcal{A}}u+\dive_{\mathcal{A}}S_{\mathcal{A}}(p,u)=0 \quad &\text{in}&\quad \Om(t),\\
			&\dive_{\mathcal{A}}u=0 \quad &\text{in}&\quad \Om(t),\\
			&S_{\mathcal{A}}(p,u)\mathcal{N}=g\eta\mathcal{N}-\sigma\p_1\left(\tfrac{\p_1\eta }{(1+|\p_1\zeta_0|^2)^{3/2}}\right)\mathcal{N}-\sigma\p_1\Big(\mathcal{R}(\p_1\zeta_0,\p_1\eta)\Big)\mathcal{N} \quad &\text{on}&\quad\Sigma(t),\\
			&(S_{\mathcal{A}}(p,u)\nu-\beta u)\cdot\tau=0 \quad &\text{on}&\quad\Sigma_s(t),\\
			&u\cdot\nu=0 \quad &\text{on}&\quad\Sigma_s(t),\\
			&\p_t\eta=u\cdot\mathcal{N} \quad &\text{on}&\quad\Sigma(t),\\
			&\kappa\p_t\eta(\pm\ell,t)+\kappa\hat{\mathscr{W}}(\p_t\eta(\pm\ell,t))=\mp\sigma\left(\tfrac{\p_1\eta}{(1+|\p_1\zeta_0|^2)^{3/2}}+\mathcal{R}(\p_1\zeta_0,\p_1\eta)\right)(\pm\ell,t),
		\end{aligned}
		\right.
	\end{equation}
	where $\kappa:=\mathscr{W}'(0)>0$, $\mathcal{A}$ and $\mathcal{N}:=-\p_1\zeta e_1+e_2$ are still determined in terms of $\zeta=\zeta_0+\eta$. $\mathcal{R}(y,z)$ is a multi-variable function defined as follows:
	\[
	\mathcal{R}(y,z)=\int_{0}^{z}3\frac{(s-z)(s+y)}{(1+|y+s|^{2})^{\frac{5}{2}}}ds.
	\]
	In the following, let $\mathcal{N}_0$ be the non-unit normal vector for the equilibrium surface $\Sigma$, and $\mathcal{N}=\mathcal{N}_0-\p_1\eta e_1$.
	Since all the differential operators in \eqref{eq:geometric} are in terms of $\eta$, \eqref{eq:geometric} is connected to the geometry of the free surface. This geometric structure is essential to control higher-order derivatives.

	%%%%%%%%%%%%%%%%%%%%%%%%%%%%%%%%%%%%%%%%%%%%%%
	\subsection{Main Theorem}
	%%%%%%%%%%%%%%%%%%%%%%%%%%%%%%%%%%%%%%%%%%%%%%

	We use the same notation for the equilibrium contact angle $\om_{eq}$ formed between the surface $\zeta_0$ and the wall $\Sigma_s$, and the parameter $\varepsilon_{max} := \min\{1, -1+ \pi/\om_{eq}\}$ as in \cite{GT2020},  allowing us to choose three parameters $\alpha$, $\varepsilon_-$ and $\varepsilon_+$ that satisfy
	\begin{equation}\label{parameters}
		0<\alpha<\varepsilon_-<\varepsilon_+<\varepsilon_{max},\quad \alpha<\min\left\{\frac{\varepsilon_-}2, \frac{\varepsilon_+-\varepsilon_-}2\right\}, \quad \varepsilon_+\leq \frac{\varepsilon_-+1}2,
	\end{equation}
	\begin{equation}\label{eq:parameter2}
		1< q_-:=\frac{2}{2-\varepsilon_-} < q_+:=\frac{2}{2-\varepsilon_+} <q_{max}:=\frac{2}{2-\varepsilon_{max}} < 2.
	\end{equation}
	
	Define the energy
	\begin{equation}\label{energy}
		\begin{aligned}
			\mathcal{E}(t)&: =\|u\|_{W^{2,q_+}}^2+\sum_{i=0}^{1}\|\p_t^{i}u\|_{1+\varepsilon_-/2}^2+\sum_{i=0}^2 \Big( \|\p_t^iu\|_0^2+\|\p_{t}^{i}p\|_{W^{1,q_+}}^2\Big) + \sum_{i=0}^{1}\|\p_t^{i}p\|_0^2\\&\quad+\|\eta\|_{W^{3-1/q_+, q_+}}^2+\sum_{i=0}^{1}\|\p_t^{i}\eta\|_{H^{3/2+(\varepsilon_--\alpha)/2}}^2+\sum_{j=0}^2\|\p_t^j\eta\|_1^2,
		\end{aligned}
	\end{equation}
	and the dissipation
	\begin{equation}\label{dissipation}
		\begin{aligned}
			\mathcal{D}(t)&:=\Big(\|u\|_{W^{2,q_+}}^2+\|p\|_{W^{1,q_{+}}}^{2}+\|\eta\|_{W^{3-1/q_+, q_+}}^2\Big)+\|\partial_{t}u\|^{2}_{W^{2,q_{-}}}+\sum_{j=0}^2\Big(\|\p_t^ju\|_1^2+\|\p_t^ju\|_{L^2(\Sigma_s)}^2\Big)\\&\quad
			+\|\p_tp\|_{W^{1,q_-}}^2
			+\|\p_t\eta\|_{W^{3-1/q_-, q_-}}^2+\sum_{j=0}^2\Big(\|\p_t^j\eta\|_{H^{3/2-\alpha}}^2+[\p_t^{j+1}\eta]_\ell^2\Big)+\|\p_t^3\eta\|_{1/2-\alpha}^2.
		\end{aligned}
	\end{equation}
	
	\begin{theorem}\label{thm:main}
		Assume that the contact angle $\om_{eq} \in (0, \pi)$ and $\varepsilon_{max} \in (0, 1)$.
		Suppose that the initial data $\Big(u_0, p_0, \eta_0, \p_tu(0), \p_tp(0), \p_t\eta(0), \p_t^2u(0),\p_t^2\eta(0)\Big)$ satisfy the compatibility conditions for $j=0,1,2,$
		\begin{equation}\label{compat_C2}
			\left\{
			\begin{aligned}
				&\dive_{\mathcal{A}_0}D_t^ju(0)=0\quad &\text{in}&\ \Om,\\
				&D_t^ju(0)\cdot\nu=0\quad &\text{on}&\ \Sigma_s,\\
				&D_t^ju(0)\cdot\mathcal{N}(0)=\p_t^{j+1}\eta(0)\quad &\text{on}&\ \Sigma,
			\end{aligned}
			\right.
		\end{equation}
		and the zero-average condition for $k=0, 1, 2$
		\begin{equation}\label{cond:zero}
			\int_{-\ell}^\ell\p_t^k\eta(0)=0.
		\end{equation}
		Then there exists a sufficiently small $\delta_0>0$, such that if $\sqrt{\mathcal{E}(0)} + \|\p_t^2 \eta(0)\|_{W^{2-1/q_+, q_+}} \le \delta_0$, then there exists a unique solution $(u,p,\eta)$ to \eqref{eq:geometric} for $t\in[0,\infty)$ that achieves the initial data and satisfies
		\begin{equation}
			\begin{aligned}
				\sup_{t\ge 0}e^{\lambda t}\mathcal{E}(t)+\int_0^\infty\mathcal{D}(t)\,\mathrm{d}t\le C\mathcal{E}(0)
			\end{aligned}
		\end{equation}
		for some universal constants $C>0$ and $\lambda>0$.
	\end{theorem}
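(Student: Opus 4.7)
The strategy is to decouple the existence argument from the a priori analysis of \cite[Guo-Tice]{GT2020}: at each fixed $\varepsilon > 0$ I construct solutions to the regularized system \eqref{eq:geometric} by linearization and iteration, then use those a priori estimates to continue the solutions globally in time with exponential decay, uniformly in $\varepsilon$, and finally pass $\varepsilon \to 0$. The artificial regularization $\varepsilon\partial_1\partial_t\eta$ in the capillary and contact-point operators is essential here: it turns the surface-tension law for $\eta$ from hyperbolic into parabolic type, supplying the dissipation $\varepsilon\|\partial_1\partial_t\eta\|_{L^2(\Sigma)}^2$ needed to close the Galerkin scheme at $\varepsilon>0$. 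The term $\kappa\partial_t\eta(\pm\ell)$ in the contact-point law plays an analogous coercive role at the endpoints.

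\medskip

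The first step is to solve the linearization of \eqref{eq:geometric} obtained by freezing the geometric coefficients $(\mathcal{A},J,\mathcal{N})$ at a prescribed small $\tilde\eta$. Because of the kinematic coupling $\partial_t\eta = u\cdot\mathcal{N}$ on $\Sigma$ and of the contact-point law coupling $u$ to $\partial_t\eta(\pm\ell)$, the natural functional setting treats $(u,\eta)$ as a single joint unknown. Following the description in the abstract, I would eliminate the pressure by testing against $\tilde{\mathcal{A}}$-divergence-free fields $v$ with $v\cdot\nu=0$ on $\Sigma_s$ and a matching trace on $\Sigma$; this produces a pressureless weak formulation for $(u,\eta)$ whose energy-dissipation identity, obtained formally by testing with $u$, controls $\|u\|_0^2+\|\eta\|_1^2+[\eta]_\ell^2$ with dissipation including $\|u\|_1^2$, $[\partial_t\eta]_\ell^2$ and the $\varepsilon$-contribution from the regularized capillary term.

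\medskip

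To construct the weak solution I use a Galerkin method with a \emph{time-dependent} basis: since the pointwise constraints $\dive_{\tilde{\mathcal{A}}(t)}v=0$ and the trace pairing with $\mathcal{N}(t)$ evolve in $t$ through $\tilde\eta(t)$, no fixed spatial basis can respect them. One selects pairs $(v_k(\cdot,t),\xi_k(\cdot,t))$ that meet these time-dependent constraints—for instance by pulling back a fixed basis through the flow generated by $\tilde\eta$—and projects the weak formulation onto the span of the first $N$ pairs, obtaining a linear ODE system whose global solvability is immediate from the $\varepsilon$-energy estimate. Weak compactness as $N\to\infty$ furnishes a weak solution of the linear problem. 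Higher time regularity then follows by differentiating the equations in $t$, which is where the compatibility hypotheses \eqref{compat_C2} are used to initialize the time-differentiated systems, while higher spatial regularity is extracted from an $\mathcal{A}$-Stokes subproblem in $\Omega$ and from the $\varepsilon$-regularized capillary ODE on $\Sigma$ with the contact-point law as a boundary condition. The pressure is recovered \emph{a posteriori} from the momentum equation.

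\medskip

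With the linear theory in place, the nonlinear $\varepsilon$-problem is solved by a contraction mapping in a small ball of a local-in-time version of the norm controlled by $\mathcal{E}$ and $\mathcal{D}$; the smallness assumption $\sqrt{\mathcal{E}(0)}+\|\partial_t^2\eta(0)\|_{W^{2-1/q_+,q_+}}\le\delta_0$ ensures that the map $\tilde\eta\mapsto\eta$ is a strict contraction and produces a unique local-in-time $\varepsilon$-solution. Globalization and the removal of $\varepsilon$ both rely on the a priori estimate of \cite[Guo-Tice]{GT2020} adapted to \eqref{eq:geometric}, which provides the bound $\sup_t e^{\lambda t}\mathcal{E}(t)+\int_0^\infty\mathcal{D}(s)\,\mathrm{d}s\le C\mathcal{E}(0)$ uniformly in $\varepsilon$; this simultaneously forbids blow-up of the local $\varepsilon$-solution, extends it to $t=\infty$, and yields the compactness needed to send $\varepsilon\to 0$. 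The main obstacle I expect is the corner singularity at $(\pm\ell,\zeta_0(\pm\ell))$: both the $\mathcal{A}$-Stokes and $\varepsilon$-capillary elliptic regularity theories must be developed in the weighted and fractional spaces parameterized by $(\alpha,\varepsilon_\pm,q_\pm)$ from \eqref{parameters}--\eqref{eq:parameter2}, and the Galerkin basis together with the pressureless formulation must be compatible with these corner-adapted spaces. A secondary delicate point is that the interior $\Sigma$-dissipation on $\partial_t\eta$ provided by $\varepsilon>0$ degenerates at $\varepsilon=0$, and is recovered only because \cite[Guo-Tice]{GT2020} controls \eqref{energy}--\eqref{dissipation} without any residual $\varepsilon$-dependence.
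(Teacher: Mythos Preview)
Your plan is correct and matches the paper's architecture step for step: $\varepsilon$-regularize, build a pressureless weak solution to the linearized problem by Galerkin with a time-dependent basis, recover the pressure and upgrade to a strong solution, run a contraction for the nonlinear $\varepsilon$-problem, invoke the uniform a~priori bounds (Theorem~\ref{thm:uniform}, adapted from \cite{GT2020}) to globalize, and pass $\varepsilon\to 0$.

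Three implementation points the paper works out that your sketch does not anticipate: (i) only the velocity is Galerkin-discretized---the approximate surface is \emph{defined} by integrating the kinematic relation, $\xi^m=\xi_0+\int_0^t u^m\cdot\mathcal N$, rather than via a joint $(v_k,\xi_k)$ basis (the paper explicitly flags joint discretization as problematic for the initial-data convergence); (ii) because the admissible test fields satisfy $\int_{-\ell}^\ell w\cdot\mathcal N=0$, the pressure recovered from the pressureless formulation is determined only up to a time-dependent constant $\bar q(t)$, and a new separate estimate (Theorem~\ref{thm:pressure}) is needed to fix and bound $\bar q$ so that the full stress boundary condition holds; (iii) the bootstrap to $(\partial_t^2 u,\partial_t^2\xi)$ cannot proceed by naive time-differentiation, since the forcing for $D_t u$ already contains $p$ and differentiating would introduce the as-yet-unknown $\partial_t p$---the paper resolves this with a secondary fixed-point iteration \eqref{eq:iteration}--\eqref{eq:iteration2} coupling $(v^{n+1},q^{n+1},\theta^{n+1})$ to time-integrated $(u^{n+1},p^{n+1},\xi^{n+1})$.
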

	
	\begin{remark}
		Note that the term $\|\p_t^2\eta(0)\|_{W^{2-1/q_+, q_+}}^2$ is stronger than the initial energy functional. This term is sufficient to guarantee that our initial data set is non-empty (see the discussion in Appendix \ref{sec:initial}). Since $\Phi$ is a $C^1$ diffeomorphism (which follows from $J>0$ as well as the fact that the derivatives of $\eta$ are sufficiently small), we may change coordinates from $\Om$ to $\Om(t)$ to obtain solutions to \eqref{eq:navier-stokes}.
	\end{remark}

	%%%%%%%%%%%%%%%%%%%%%%%%%%%%%%%%%%%%%%%%%%%%%%
	\subsection{Methodology}\label{section:discussion}
	%%%%%%%%%%%%%%%%%%%%%%%%%%%%%%%%%%%%%%%%%%%%%%

	It is well known that the dynamical law \eqref{eq:contact law} for the contact line has been established experimentally, and that this law is a key ingredient in deriving the basic energy--dissipation structure. However, unlike in most free-boundary problems, it is far from clear, from a PDE standpoint, whether such a law can be imposed as an independent boundary condition in a manner consistent with the other standard boundary conditions. In particular, one must construct a solution satisfying all boundary conditions without overdetermining the system.
	
	To construct such a solution, we first introduce a linear system involving prescribed functions and unknowns. We then prove that the nonlinear solution map induced by this system is a contraction on a suitably chosen complete metric space endowed with an appropriate metric. This establishes the local well-posedness of \eqref{eq:navier-stokes}. Combining this local result with the \textit{a priori} estimates established in \cite{GT2020}, we obtain Theorem \ref{thm:main}.
	
	The main part of this paper is devoted to constructing solutions to the linear system. This construction is carried out in the following steps.

	\noindent $\bullet$ \underline{\textsl{Pseudo-linear construction}}
	
	A natural way to construct the solution to \eqref{eq:geometric} is to first consider a linear system and then take temporal derivatives to obtain a higher-order system. In particular, in order to close the energy estimate at the linear level (see Section \ref{sec:strong}), we will not attack \eqref{eq:geometric} directly, and instead begin with the first-order system and equip it with the quasi-linear structure, which reads
	\begin{equation}\label{eq:first}
		\begin{cases}
			\partial_{t}D_{t}u+\operatorname{div}_{\mathcal{A}}S_{\mathcal{A}}(D_{t}u,\p_{t}p)+\p_{t}(Ru)+\dive_{\mathcal{A}}\nabla_{\mathcal{A}}(Ru)=F^{1}(u,p,\eta)~~~&\operatorname{in}~~\Omega,\\
			\operatorname{div}_{\mathcal{A}}D_{t}u=0~~~&\operatorname{in}~~\Omega,\\
			S_{\mathcal{A}}(\p_{t}p,D_{t}u)\mathcal{N}+\nabla_{\mathcal{A}}(Ru)\mathcal{N}=g\p_{t}\eta\mathcal{N}-\sigma\partial_{1}(\frac{\p_1\p_t\eta}{(1+\vert \partial_{1}\zeta_{0}\vert^{2})^{3/2}})\mathcal{N}+\p_{1}I_{1}\mathcal{N}\\
			\quad\quad\quad\quad\quad\quad\quad\quad+\sigma\partial_{1}(\int_{0}^{t}\mathcal{R}_{z}(\partial_{1}\zeta_{0},\partial_{1}\eta)\partial_{1}\partial_{t}^{2}\eta)\mathcal{N}+\sigma\partial_{1}(\int_{0}^{t}\mathcal{R}_{zz}(\partial_{1}\zeta_{0},\partial_{1}\eta)(\partial_{1}\partial_{t}\eta)^{2})\mathcal{N}\\
			\quad\quad\quad\quad\quad\quad\quad\quad+F^{4}(u,p,\eta)~~&\operatorname{on}~~\Sigma,\\
			(S_{\mathcal{A}}(\p_{t}p,D_{t}u)\nu+\nabla_{\mathcal{A}}(\p_{t}(Ru))\nu-\beta D_{t}u)\cdot \tau=F^{5}(u,\eta,p)~~~&\operatorname{on}~~\Sigma_{s},\\
			D_{t}u\cdot \nu=0~~~&\operatorname{on}~~\Sigma_{s},\\
			\partial_{t}^{2}\eta=D_{t}u\cdot \mathcal{N}+(Ru)\cdot \mathcal{N} -\int_{0}^{t}(\partial_{t}u_{1}\cdot\partial_{t}\p_1\eta+u_{1}\partial_{1}\partial_{t}^{2}\eta)+I_{2}~~~&\operatorname{on}~~\Sigma,\\
			\sigma\Big(\mp \frac{\partial_{1}\p_{t}\eta}{(1+\vert \partial_{1}\zeta_{0}\vert^{2})^{\frac{3}{2}}}\pm \int_{0}^{t}\mathcal{R}_{z}(\partial_{1}\zeta_{0},\partial_{1}\eta)\partial_{t}^{2}\partial_{1}\eta\pm\int_{0}^{t}\mathcal{R}_{zz}(\partial_{1}\zeta_{0},\partial_{1}\eta)(\partial_{t}\partial_{1}\eta)^{2})\pm I_{1} \Big)(\pm \ell)\\
			\quad\quad\quad\quad\quad\quad\quad\quad =\kappa (\partial_{t}^{2}\eta)(\pm \ell)-{F}^{7},
		\end{cases}
	\end{equation}
	\noindent where $R$ is defined in \eqref{def:Dt_u}, and
	$ I_{1}=\big(\mathcal{R}_{z}(\p_{1}\zeta_{0},\p_{1}\eta_{0})\big)(\p_{1}\p_{t}\eta_{0}), \
	I_{2}= -(u_{1}\cdot \partial_{t}\mathcal{N})(0)$.
	
	\noindent $\bullet$ \underline{\textsl{Implications of the quasi-linearity}}
	
	We choose not to work directly with the original system \eqref{eq:geometric}, but instead \eqref{eq:first} and further \eqref{eq:quasi_linear_0} below, due to a technical consideration.
	In the energy estimates in second order for $(\partial_t^{2}v,\partial_t^{2}\xi)$,
	the following two integrals arise and cannot be controlled:
	\[
	\int_{-\ell
	}^{\ell} \partial_{1}\partial_{t}^{2}\eta \, \partial_{1}\partial_{t}^{3}\xi \, \mathrm{d}x,
	\qquad
	\int_{-\ell}^{\ell} \partial_{1}^{2}\partial_{t}^{2}\eta \, \partial_{1}\partial_{t}^{2}\xi \, \mathrm{d}x,
	\]
	where $\eta$ is a given background function and $\xi$ is the unknown.
	These problematic terms arise from the nonlinear capillary contribution in \eqref{eq:geometric}, thereby revealing a loss of regularity for the boundary unknown $\xi$.
	
	To overcome this difficulty, we instead consider the first-order time derivative version of \eqref{eq:geometric}, which allows us to modify the nonlinear term in \eqref{eq:first} so that,
	when establishing the energy estimates for $(\partial_{t}^{2}v,\partial_{t}^{2}\xi)$,
	the above integrals are replaced by
	\[
	\int_{-\ell}^{\ell} \partial_{1}\partial_{t}^{2}\xi \, \partial_{1}\partial_{t}^{3}\xi \, \mathrm{d}x,
	\qquad
	\int_{-\ell}^{\ell} \partial_{1}^{2}\partial_{t}^{2}\xi \,
	\partial_{1}\partial_{t}^{2}\xi \, \mathrm{d}x.
	\]
	
	The first integral can now be controlled by integration by parts in time,
	while the second can be bounded via integration by parts in space.
	These bounds allow us to close the energy estimates.
	
	\noindent $\bullet$ \underline{\textsl{Introducing the linear problem}} 
	
	In order to establish the local well-posedness of \eqref{eq:first}, we now introduce
	the following key linear problem (formal) via replacing $(u,p,\eta)$ in \eqref{eq:first} by given functions $(u_g,p_g,\eta_g)$ and unknown functions $(v,q,\xi)$. For simplicity, we continue to write $(u,p,\eta)$ in place of the given functions $(u_g,p_g,\eta_g)$. We have
	\begin{equation}\label{eq:quasi_linear_0}
		\begin{cases}
			\partial_{t}D_{t}v+\operatorname{div}_{\mathcal{A}}S_{\mathcal{A}}(D_{t}v,\p_{t}q)+\p_{t}(Rv)+\dive_{\mathcal{A}}\nabla_{\mathcal{A}}(Rv)=F^{1}(u,p,\eta)~~~&\operatorname{in}~~\Omega,\\
			\operatorname{div}_{\mathcal{A}}D_{t}v=0~~~&\operatorname{in}~~\Omega,\\
			S_{\mathcal{A}}(\p_{t}q,D_{t}v)\mathcal{N}+\nabla_{\mathcal{A}}(Rv)\mathcal{N}=g\p_{t}\xi\mathcal{N}-\sigma\partial_{1}(\frac{\p_1\p_t\xi}{(1+\vert \partial_{1}\zeta_{0}\vert^{2})^{3/2}})\mathcal{N}+\p_{1}I_{1}\mathcal{N}\\
			\quad\quad\quad\quad\quad\quad\quad\quad+\sigma\int_{0}^{t}\mathcal{R}_{zz}(\p_{1}\zeta_{0},\p_{1}\eta)(\p_{1}\p_{t}\xi)\p_{1}\p_{t}\eta ds\mathcal{N}+\sigma\partial_{1}(\int_{0}^{t}\mathcal{R}_{z}(\partial_{1}\zeta_{0},\partial_{1}\eta)\partial_{1}\partial_{t}^{2}\xi)\mathcal{N}\\
			\quad\quad\quad\quad\quad\quad\quad\quad+F^{4}(u,p,\eta)~~~&\operatorname{on}~~\Sigma,\\
			(S_{\mathcal{A}}(\p_{t}q,D_{t}v)\nu+\nabla_{\mathcal{A}}(Rv)\nu-\beta D_{t}v)\cdot \tau=F^{5}(u,\eta,p)~~~&\operatorname{on}~~\Sigma_{s},\\
			D_{t}v\cdot \nu=0~~~&\operatorname{on}~~\Sigma_{s},\\
			\partial_{t}^{2}\xi=D_{t}v\cdot \mathcal{N}+(Rv)\cdot \mathcal{N} - \int_{0}^{t}(\partial_{t}u_{1}\cdot\partial_{t}\p_{1}\xi+u_{1}\partial_{1}\partial_{t}^{2}\xi)+I_{2}~~~&\operatorname{on}~~\Sigma,\\
			\sigma\Big(\mp \frac{\partial_{1}\p_{t}\xi}{(1+\vert \partial_{1}\zeta_{0}\vert^{2})^{\frac{3}{2}}}\pm \int_{0}^{t}\mathcal{R}_{z}(\partial_{1}\zeta_{0},\partial_{1}\eta)\partial_{t}^{2}\partial_{1}\xi \pm\int_{0}^{t}\mathcal{R}_{zz}(\p_{1}\zeta_{0},\p_{1}\eta)(\p_{t}\p_{1}\xi)\p_{t}\p_{1}\eta \pm I_{1} \Big)(\pm \ell)\\
			\quad\quad\quad\quad\quad\quad\quad\quad=\kappa (D_{t}v\cdot \mathcal{N})(\pm \ell)-{F}^{7}.
		\end{cases}
	\end{equation}
	\noindent  Equation \eqref{eq:quasi_linear_0} introduces a linear map from any given functions $(u,p,\eta)$ to unknown functions $(v,q,\xi)$. Here, $\mathcal{K}$ is the gravity-capillary operator defined via $\mathcal{K}(\xi):=g(\xi)-\sigma\p_1\left(\frac{\p_1\xi}{(1+|\p_1\zeta_0|^2)^{3/2}}\right)$, and $\mathcal{N}(\eta)=(-\p_1\zeta_0-\partial_{1}\eta, 1)$.

	For the new capillary equation in \eqref{eq:quasi_linear_0}, we observe that
	\[
	\mathcal{K}(\partial_{t}\xi)\mathcal{N}
	+\partial_{1}\Big(\int_{0}^{t}\mathcal{R}_{z}\Big(\partial_{1}\zeta_{0},\partial_{1}\eta\Big)\partial_{t}^{2}\partial_{1}\xi\Big)\mathcal{N}
	+\partial_{1}\Big(\int_{0}^{t}\mathcal{R}_{z}\Big(\partial_{1}\zeta_{0},\partial_{1}\eta\Big)(\partial_{t}\partial_{1}\xi)^{2}\Big)
	\]
	is not a perfect time derivative. Moreover, the system is equipped with a second-order kinematic boundary condition:
	\[
	\partial_{t}^{2}\xi
	= D_{t}v\cdot \mathcal{N}+(Rv)\cdot \mathcal{N}-\int_{0}^{t}\Big(\partial_{t}u_{1}\cdot\partial_{t}\partial_{1}\xi+u_{1}\partial_{1}\partial_{t}^{2}\xi\Big)+I_{2},
	\]
	which is likewise not a perfect time derivative.
	
	However, when $\xi=\eta$, both expressions become total time derivatives and can therefore be used to recover the corresponding zeroth-order equation. These two observations highlight a structural subtlety: introducing a closed system directly for $D_t v$ is conceptually unnatural without prior knowledge of $v$. While the original variables $(v, q, \xi)$ can still be recovered via (1.23) below, this formulation turns the reconstruction into an indirect process. Specifically, it requires first solving for the first-order derivatives $(\p_t v, \p_t q, \p_t \xi)$, rather than determining the variables directly from a zeroth-order PDE.
	% \red{(I am still thinking that we may be able to find a better way to state this sentence, but do not know how to best do it. It seems a bit confusing for first-time reader. For example, we claim that it is difficult to reconstruct $v$, but clearly (1.23) below can do the job.)}
	% \blue{Xiaoding: I have modified it. Feel free to let me know if this revision is satisfactory. I think the key point is emphasizing that we are not going to recover the 0-order equation system for the linear problem.}
	
	For notational clarity and simplicity, we use $(v_{d},q_{d},\xi_{d})$ as a substitute for $(D_{t}v,\partial_{t}q,\partial_{t}\xi)$ in \eqref{eq:quasi_linear_0} and rewrite the system accordingly:
	\begin{equation}\label{eq:quasi_linear}
		\begin{cases}
			\partial_{t}v_{d}+\operatorname{div}_{\mathcal{A}}S_{\mathcal{A}}(v_{d},q_{d})+\dive_{\mathcal{A}}\nabla_{\mathcal{A}}(Rv)+\p_{t}(Rv)=F^{1}(u,p,\eta)~~~&\operatorname{in}~~\Omega,\\
			\operatorname{div}_{\mathcal{A}}v_{d}=0~~~&\operatorname{in}~~\Omega,\\
			S_{\mathcal{A}}(q_{d},v_{d})\mathcal{N}+\nabla_{\mathcal{A}}(Rv)\mathcal{N}=g\xi_{d}\mathcal{N}-\sigma\partial_{1}(\frac{\p_1\p_t\xi_d}{(1+\vert \partial_{1}\zeta_{0}\vert^{2})^{3/2}})\mathcal{N}+\p_{1}I_{1}\mathcal{N}\\
			\quad\quad\quad\quad\quad\quad\quad+\sigma\partial_{1}(\int_{0}^{t}\mathcal{R}_{zz}(\partial_{1}\zeta_{0},\partial_{1}\eta)(\partial_{1}\p_{t}\xi)\p_{1}\p_{t}\eta)\mathcal{N}+\sigma\partial_{1}(\int_{0}^{t}\mathcal{R}_{z}(\partial_{1}\zeta_{0},\partial_{1}\eta)\partial_{1}\partial_{t}\xi_{d})\mathcal{N}\\
			\quad\quad\quad\quad\quad\quad\quad\quad+F^{4}(u,p,\eta)~~~&\operatorname{on}~~\Sigma,\\
			(S_{\mathcal{A}}(q_{d},v_{d})\nu+\nabla_{\mathcal{A}}(Rv)\nu-\beta v_{d})\cdot \tau=F^{5}(u,\eta,p)~~~&\operatorname{on}~~\Sigma_{s},\\
			v_{d}\cdot \nu=0~~~&\operatorname{on}~~\Sigma_{s},\\
			\partial_{t}\xi_{d}=v_{d}\cdot \mathcal{N}+(Rv)\cdot \mathcal{N}+\int_{0}^{t}(\partial_{t}u_{1}\cdot\partial_{t}\p_{1}\xi+u_{1}\partial_{1}\partial_{t}\xi_{d})+I_{2}~~~&\operatorname{on}~~\Sigma,\\
			\sigma \Big(\mp \frac{\partial_{1}\xi_{d}}{(1+\vert \partial_{1}\zeta_{0}\vert^{2})^{\frac{3}{2}}}\pm \int_{0}^{t}\mathcal{R}_{z}(\partial_{1}\zeta_{0},\partial_{1}\eta)\partial_{t}\partial_{1}\xi_{d}\pm\int_{0}^{t}\mathcal{R}_{zz}(\p_{1}\zeta_{0},\p_{1}\eta)(\p_{t}\p_{1}\xi\p_{t}\p_{1}\eta)\pm I_{1} \Big)(\pm \ell)\\
			\quad\quad\quad\quad\quad\quad\quad=\kappa (v_{d}\cdot \mathcal{N})(\pm \ell)-{F}^{7},
		\end{cases}
	\end{equation}
	\noindent where $v$ and $\xi$ can be recovered via solving the ODEs
	\begin{align}\label{eq:quasi_linear2}
		\begin{aligned}
			\p_{t}v=v_{d}+Rv,\quad \p_{t}\xi=\xi_{d},
		\end{aligned}
	\end{align}
	\noindent with initial data given in Appendix \ref{sec:initial} and \ref{sec:initial_l}. 
	
	We now write down the weak formulation of the system \eqref{eq:quasi_linear}
	\begin{definition} \label{def:weak}  Suppose that $\eta$ is given as well as $\mathcal{A}$, $J$, $\mathcal{N}$, etc. that are determined in terms of $\eta$ as in \eqref{eq:Jaccobian}, \eqref{eq:components} and  $\mathcal{N} = (-\p_1(\zeta_0 + \eta), 1)$. We say that a pair $(v_{d}, \xi_{d}) \in \left(L^\infty([0,T]; H^0)\cap L^2([0,T]; H^1) \right) \times L^\infty([0,T];H^1(-\ell,\ell))$ is a pressureless weak solution to \eqref{eq:quasi_linear}, if $(\p_tv_{d},v_{d}) \in (L^2([0,T];\mathcal{W}^{*}))\times (L^2([0,T];\mathcal{W}))$, and that $(v_{d}, \xi_{d})$ satisfies the weak formulation
		\begin{equation}\label{eq:weak_limit_1}
			\begin{aligned}
				& (\p_tv_{d}, w)_{\mathcal{H}^0}+((v_{d},w))+ (\xi_{d}, w\cdot\mathcal{N})_{1,\Sigma_{0}}+(\mathcal{R}_{z}(\partial_{1}\zeta_{0},\partial_{1}\eta)\partial_{1}\xi_{d},\partial_{1}(w\cdot\mathcal{N}))_{L^{2}}+ [v_{d}\cdot\mathcal{N},w\cdot\mathcal{N}]_\ell
				\\ &=\int_\Om F^1\cdot wJ-\int_{-\ell}^{\ell} F^4\cdot w-\int_{\Sigma_s}F^5(w\cdot\tau)J -[F^7,w\cdot\mathcal{N}]_\ell+(\p_{t}(Rv),w)_{\mathcal{H}^{0}}+((Rv,w))
			\end{aligned}
		\end{equation}
		for each $w\in \mathcal{W}_{\sigma}(t)$ and for a.e. $t\in [0, T]$, provided that $F^1-F^4-F^5\in L^\infty (\mathcal{H}^1)^\ast$, $[F^7]_\ell\in L^2_t$, $F^1\in L^2L^{q_-}$, $F^4\in L^2W^{1-1/q_-,q_-}$, and $F^5\in L^2W^{1-1/q_-,q_-}$ for some $q_-\in (1, 2)$.  All notations here can be found in Section \ref{appendix_spaces}. And $(\cdot, \cdot)_{1,\Sigma_{0}}$ inner product is defined to be:
		\begin{align}
			(f_1,f_{2})_{1,\Sigma_{0}}=&g\int_{-\ell}^{\ell}f_{1}f_{2}\mathrm{d}x+\sigma\int_{-\ell}^{\ell}\frac{\partial_{1}f_{1}\partial_{1}f_{2}}{(1+\vert \partial_{1}\zeta_{0}\vert^{2})^{\frac{3}{2}}} \mathrm{d}x.\label{eq:product}
		\end{align}
		Moreover, equation\eqref{eq:weak_limit_1} should be coupled with the kinematic relation
		\begin{align} \label{eq:kinematic}
			\partial_{t}\xi_{d}=v_{d}\cdot \mathcal{N}+(Rv)\cdot \mathcal{N}+\int_{0}^{t}(\partial_{t}u_{1}\cdot\partial_{t}\p_{1}\xi+u_{1}\partial_{1}\partial_{t}\xi_{d})+I_{2}.
		\end{align}
	\end{definition}
	We note  Definition \ref{def:weak} is motivated naturally from the energy identity, which is symmetric  in $(v_d,\xi_d)$. This key symmetric nature naturally produces a self-adjoint functional structure, which  enables us to invoke a Galerkin formulation in order to construct pressureless weak solutions as well as a pressure, with average zero preserving expected a priori estimates. Thanks to the coupling of the \eqref{eq:kinematic}, with enough regularity, such a pressureless weak solution will satisfy all the boundary conditions (including the dynamical law for the contact line \eqref{eq:contact law}), \textit{except} for the stress-free boundary condition $S_{\mathcal{A}}(q_d, v_d)\mathcal{N}$. In fact, the stress-free boundary condition is satisfied up to a constant, which leads to a complete solution to the original problem with a new modified pressure in Theorem \ref{thm:pressure}.

	\noindent $\bullet$ \underline{\textsl{Galerkin method and the smooth given data}}
	
	We employ the Galerkin method to construct a solution to \eqref{eq:quasi_linear}. However, in doing so, it becomes necessary to estimate the following inner product:
	\[
	\bigl(\partial_{t}\partial_{1}\xi_{d}^{m},
	\partial_{1}u_{1}\,\partial_{t}\partial_{1}\xi_{d}^{m}\bigr)_{L^{2}(\Sigma)},
	\]
	where $\xi_{d}^{m}$ denotes the discrete surface function.
	
	At the Galerkin level, without enhanced $H^{\frac{3}{2}-}$ regularity for $\partial_{t}\xi_{d}^{m}$, this term can only be bounded as follows:
	\[
	\bigl(\partial_{t}\partial_{1}\xi_{d}^{m},
	\partial_{1}u_{1}\,\partial_{t}\partial_{1}\xi_{d}^{m}\bigr)_{L^{2}(\Sigma)}
	\lesssim
	\|\partial_{t}\xi_{d}^{m}\|_{H^{1}}^{2}\,
	\|u\|_{H^{2+}}.
	\]
	However, the $H^{2+}$-norm of $u$ cannot be controlled by the available energy or dissipation functionals. Consequently, this estimate is insufficient to close the energy inequality at the Galerkin approximation level.
	
	To overcome this difficulty, we introduce the following modified system
	with smooth given functions $(u^{k},\eta^{k},\eta^{n})$:
	\begin{equation}{\label{eq:quasi_linear_{s1}}}
		\begin{cases}
			\partial_{t}v_{d}+\operatorname{div}_{\mathcal{A}^{n}}S_{\mathcal{A}^{n}}(v_{d},q_{d})+\dive_{\mathcal{A}^{n}}\nabla_{\mathcal{A}}(R^{n}v)+\p_{t}(R^{n}v)=F^{1}(u,p,\eta)~~~&\operatorname{in}~~\Omega,\\
			\operatorname{div}_{\mathcal{A}^{n}}v_{d}=0~~~&\operatorname{in}~~\Omega,\\
			S_{\mathcal{A}^{n}}(q_{d},v_{d})\mathcal{N}^{n}+\nabla_{\mathcal{A}^{n}}(R^{n}v)\mathcal{N}^{n}=g\xi_{d}\mathcal{N}^{n}-\sigma\partial_{1}(\frac{\p_1\xi_d}{(1+\vert \partial_{1}\zeta_{0}\vert^{2})^{3/2}})\mathcal{N}^{n}+\p_{1}I_{1}^{n,k}\mathcal{N}^{n}\\
			\quad\quad\quad\quad\quad\quad\quad+\sigma\partial_{1}(\int_{0}^{t}\mathcal{R}_{zz}(\partial_{1}\zeta_{0},\partial_{1}\eta^{k})(\partial_{1}\p_{t}\xi\p_{1}\p_{t}\eta))\mathcal{N}^{n}+\sigma\partial_{1}(\int_{0}^{t}\mathcal{R}_{z}(\partial_{1}\zeta_{0},\partial_{1}\eta^{k})\partial_{1}\partial_{t}\xi_{d})\mathcal{N}^{n}\\
			\quad\quad\quad\quad\quad\quad\quad\quad+F^{4}(u,p,\eta)~~~&\operatorname{on}~~\Sigma,\\
			(S_{\mathcal{A}^{n}}(q_{d},v_{d})\nu+\nabla_{\mathcal{A}^{n}}(R^{n}v)\nu-\beta v_{d})\cdot \tau=F^{5}(u,\eta,p)~~~&\operatorname{on}~~\Sigma_{s},\\
			v_{d}\cdot \nu=0~~~&\operatorname{on}~~\Sigma_{s},\\
			\partial_{t}\xi_{d}=v_{d}\cdot \mathcal{N}^{n}+(R^{n}v)\cdot \mathcal{N}^{n}+\int_{0}^{t}(\partial_{t}u_{1}^{k}\cdot\partial_{t}\p_{1}\xi+u_{1}^{k}\partial_{1}\partial_{t}\xi_{d})+I_{2}^{n,k}~~~&\operatorname{on}~~\Sigma,\\
			\sigma\Big(\mp \frac{\partial_{1}\xi_{d}}{(1+\vert \partial_{1}\zeta_{0}\vert^{2})^{\frac{3}{2}}}\pm \int_{0}^{t}\mathcal{R}_{z}(\partial_{1}\zeta_{0},\partial_{1}\eta^{k})\partial_{t}\partial_{1}\xi_{d}\pm\int_{0}^{t}\mathcal{R}_{zz}(\p_{1}\zeta_{0},\p_{1}\eta^{k})(\p_{t}\p_{1}\xi\p_{1}\p_{t}\eta^{k})\pm I_{1}^{n,k} \Big)(\pm \ell)\\
			\quad\quad\quad\quad\quad\quad\quad\quad=\kappa (v_{d}\cdot \mathcal{N}^{n})(\pm \ell)-{F}^{7},
		\end{cases}
	\end{equation}
	\noindent where the $(\eta^{k},u^{k},\eta^{n})$ is defined in Appendix \ref{sec:smooth} by replacing $\varepsilon$ with $\frac{1}{k}$ and $\frac{1}{n}$, respectively. We have the following convergence properties by \eqref{eq:convergence_e}
	\begin{align}\label{temp 2}
		\eta^{k}\rightarrow \eta ~~\operatorname{in}~~L_{t}^{\infty}W^{3-\frac{1}{q_{-}},q_{-}}, \quad
		\partial_{t}\eta^{k}\rightarrow \partial_{t}\eta ~~\operatorname{in}~~L_{t}^{2}W^{3-\frac{1}{q_{-}},q_{-}}~~\operatorname{as}~~k\rightarrow+\infty,
	\end{align}
	\begin{align}\label{temp 3}
		u^{k}\rightarrow u~~\operatorname{in}~~L_{t}^{\infty}W^{2-\frac{1}{q_{-}},q_{-}}(\Sigma),\quad 
		\partial_{t}u^{k}\rightarrow \partial_{t}u~~\operatorname{in}~~L_{t}^{2}W^{2-\frac{1}{q_{-}},q_{-}}(\Sigma)~~\operatorname{as}~~k\rightarrow+\infty,
	\end{align}
	\noindent and
	\begin{align}{\label{convergence_n}}
		\eta^{n}\rightarrow \eta ~~\operatorname{in}~~L_{t}^{\infty}W^{3-\frac{1}{q_{+}},q_{+}}, \quad
		\partial_{t}\eta^{n}\rightarrow \partial_{t}\eta ~~\operatorname{in}~~L_{t}^{2}W^{3-\frac{1}{q_{-}},q_{-}}~~\operatorname{as}~~n\rightarrow+\infty.
	\end{align}
	\noindent Moreover, $\mathcal{N}^{n}=(-\p_{1}\zeta_{0}-\p_{1}\eta^{n},1)$. $\mathcal{A}^{n}$ and $R^{n}$ are defined by substituting $\eta$ with $\eta^{n}$. $I_{1}^{n,k}$ and $I_{2}^{n,k}$ are defined by
	\begin{align}{\label{eq:I}}
		\begin{aligned}
			I_{1}^{n,k}=(\mathcal{R}_{z}(\p_{1}\zeta_{0},\p_{1}\eta^{k}(0)))\p_{1}\xi_{d0}^{n,k},\quad
			I_{2}^{n,k}=u^{k}(0)\p_{1}\xi_{d0}^{n,k}.
		\end{aligned}
	\end{align}
	The construction of these smooth sequences is detailed in Appendix \ref{sec:smooth}, and the full definition of the initial data of $\eta^{k}(0)$, $\xi_{d}^{n,k}(0)$, $u^{k}(0)$ in \eqref{eq:I} for any $n, k$ will be given in Appendix \ref{sec:initial_l}.
	
	These two sequences allow us to first show the well-posedness of the linear problem via assumed higher regularity. Then with the solution in hand, we may introduce the enhanced estimates of $\dt\xi_d$ to further relax the regularity requirements  and close the proof. Finally, passing to the limit $k\rt\infty$ and $n\rt\infty$ leads to the desired bounds.
	
	In detail, here we introduce two different smooth approximating sequences. The first smooth sequence (with index $k$) is used to close the energy estimate at the Galerkin level. The second sequence (with index $n$) is used to enhance the regularity for geometric parameters $R$ and $J$ such that the divergence-free function $D_{t}^{2}v\in H^{1}(\Omega)$.
	
	\noindent $\bullet$ \underline{\textsl{Contraction map and Galerkin method}}
	
	To complete the Galerkin estimates, in addition to introducing smooth prescribed functions, we must also modify the unknowns $\xi$ and $v$ in \eqref{eq:quasi_linear_{s1}}.
	
	In \eqref{eq:quasi_linear_{s1}}, the kinematic boundary condition is 
	\[
	\partial_{t}\xi_{d}
	=v_{d}\cdot \mathcal{N}^{n}
	+(R^{n}v)\cdot \mathcal{N}^{n}
	-\int_{0}^{t}\Bigl(
	\partial_{t}u_{1}^{k}\cdot\partial_{t}\partial_{1}\xi
	+u_{1}^{k}\partial_{1}\partial_{t}\xi_{d}
	\Bigr)
	+I_{2}.
	\]
	The second term on the right hand side of the equation above provides us the following inner product while conducting energy estimate
	\[
	\bigl(\partial_{1}\partial_{t}\xi_{d},
	\partial_{1}(R^{n}v\cdot\mathcal{N}^{n})\bigr)_{L^{2}(\Sigma)}.
	\]
	Without elliptic regularity for $v$, this inner product cannot be controlled at the Galerkin level, since only $v\in H^{1}(\Omega)$ is available. To overcome this difficulty, we replace $(v,\xi)$ in \eqref{eq:quasi_linear_{s1}} by prescribed functions $(v_{l},\xi_{l})$. We then combine the Galerkin scheme with a contraction mapping argument to prove that the prescribed functions coincide with the solution, namely $(v_{l},\xi_{l})=(v,\xi)$.
	
	Accordingly, we reformulate the equation \eqref{eq:quasi_linear_{s1}} to the following system:
	\begin{equation}{\label{eq:quasi_linear_{sk}}}
		\begin{cases}
			\partial_{t}v_{d}+\operatorname{div}_{\mathcal{A}^{n}}S_{\mathcal{A}^{n}}(v_{d},q_{d})+\dive_{\mathcal{A}^{n}}\nabla_{\mathcal{A}}(R^{n}v_{l})+\p_{t}(R^{n}v_{l})=F^{1}(u,p,\eta)~~~&\operatorname{in}~~\Omega,\\
			\operatorname{div}_{\mathcal{A}^{n}}v_{d}=0~~~&\operatorname{in}~~\Omega,\\
			S_{\mathcal{A}^{n}}(q_{d},v_{d})\mathcal{N}^{n}+\nabla_{\mathcal{A}^{n}}(R^{n}v_{l})\mathcal{N}^{n}=g\xi_{d}\mathcal{N}^{n}-\sigma\partial_{1}(\frac{\p_1\xi_d}{(1+\vert \partial_{1}\zeta_{0}\vert^{2})^{3/2}})\mathcal{N}^{n}+\p_{1}I^{n,k}_{1}\mathcal{N}^{n}\\
			\quad\quad\quad\quad\quad\quad\quad+\sigma\partial_{1}(\int_{0}^{t}\mathcal{R}_{zz}(\partial_{1}\zeta_{0},\partial_{1}\eta^{k})(\partial_{1}\p_{t}\xi_{l})\p_{1}\p_{t}\eta^{k})\mathcal{N}^{n}\\
			\quad\quad\quad\quad\quad\quad\quad +\sigma\partial_{1}(\int_{0}^{t}\mathcal{R}_{z}(\partial_{1}\zeta_{0},\partial_{1}\eta^{k})\partial_{1}\partial_{t}\xi_{d})\mathcal{N}^{n}+F^{4}(u,p,\eta)~~~&\operatorname{on}~~\Sigma,\\
			(S_{\mathcal{A}^{n}}(q_{d},v_{d})\nu+\nabla_{\mathcal{A}^{n}}(R^{n}v_{l})\nu-\beta v_{d})\cdot \tau=F^{5}(u,\eta,p)~~~&\operatorname{on}~~\Sigma_{s},\\
			v_{d}\cdot \nu=0~~~&\operatorname{on}~~\Sigma_{s},\\
			\partial_{t}\xi_{d}=v_{d}\cdot \mathcal{N}^{n}+(R^{n}v_{l})\cdot \mathcal{N}^{n}+\int_{0}^{t}(\partial_{t}u_{1}^{k}\cdot\partial_{t}\p_{1}\xi_{l}+u_{1}^{k}\partial_{1}\partial_{t}\xi_{d})+I^{n,k}_{2}~~~&\operatorname{on}~~\Sigma,\\
			\sigma \Big(\mp \frac{\partial_{1}\xi_{d}}{(1+\vert \partial_{1}\zeta_{0}\vert^{2})^{\frac{3}{2}}}\pm \int_{0}^{t}\mathcal{R}_{z}(\partial_{1}\zeta_{0},\partial_{1}\eta^{k})\partial_{t}\partial_{1}\xi_{d}\\
			\quad\quad\quad\quad\quad\quad\quad\quad \pm\int_{0}^{t}\mathcal{R}_{z}(\p_{1}\zeta_{0},\p_{1}\eta^{k})(\p_{t}\p_{1}\xi_{l}\p_{1}\p_{t}\eta^{k})\pm I^{n,k}_{1} \Big) (\pm \ell)=\kappa (v_{d}\cdot \mathcal{N}^{n})(\pm \ell)-{F}^{7},
		\end{cases}
	\end{equation}
	where $v_{l},\xi_{l}$ are given functions such that
	\[
	v_{l}\in L_{t}^{\infty}W^{2,q_{-}}~~~~\operatorname{and}~~~\p_{t}v_{l}\in L_{t}^{2}W^{2,q_{-}},\quad
	\xi_{l}\in L_{t}^{\infty}W^{3-\frac{1}{q_{-}},q_{-}}~~~~\operatorname{and}~~~\p_{t}\xi_{l}\in L_{t}^{2}W^{3-\frac{1}{q_{-}},q_{-}},
	\]
	and
	\[
	v_{l}(0)=v_{0}^{n}~~~~\operatorname{and}~~~~~D_{t}v_{l}(0)=D_{t}v_{0},\quad
	\xi_{l}(0)=\xi_{0}^{n}~~~~\operatorname{and}~~~~~\p_{t}\xi_{l}(0)=\p_{t}\xi_{0}^{n}
	\]
	\noindent for any prescribed $n$. The definition of initial data can be found in Appendix \ref{sec:initial_l}.
	
	Again, to close the estimate at the Galerkin step, in a similar flavor to \eqref{temp 2} and \eqref{temp 3}, we introduce smooth approximations for $(v_{l},\xi_{l})$ denoted as
	$\{(v_{l}^{k},\xi_{l}^{k})\}_{k\ge1}$ such that
	\[
	\begin{aligned}
		v_{l}^{k} &\to v_{l}
		&&\text{in } L_{t}^{\infty}W^{2-\frac{1}{q_{-}},q_{-}}(\Sigma),
		&\quad
		\partial_{t}v_{l}^{k} &\to \partial_{t}v_{l}
		&&\text{in } L_{t}^{2}W^{2-\frac{1}{q_{-}},q_{-}}(\Sigma),
		\\
		\xi_{l}^{k} &\to \xi_{l}
		&&\text{in } L_{t}^{\infty}W^{3-\frac{1}{q_{-}},q_{-}},
		&\quad
		\partial_{t}\xi_{l}^{k} &\to \partial_{t}\xi_{l}
		&&\text{in } L_{t}^{2}W^{3-\frac{1}{q_{-}},q_{-}},
	\end{aligned}
	\]
	as $k\to\infty$. The definitions for $v_{l}^{k},\xi_{l}^{k}$ are given in Appendix \ref{sec:smooth} by replacing $\varepsilon$ by $\frac{1}{k}$. 
	
	Substituting $v_{l},\xi_{l}$ in the kinematic boundary condition and capillary equation by $v_{l}^{k},\xi_{l}^{k}$, the quasi-linear system
	\eqref{eq:quasi_linear_{sk}} is turned into
	\begin{equation}{\label{eq:quasi_linear_{s}}}
		\begin{cases}
			\partial_{t}v_{d}+\operatorname{div}_{\mathcal{A}^{n}}S_{\mathcal{A}^{n}}(v_{d},q_{d})+\dive_{\mathcal{A}^{n}}\nabla_{\mathcal{A}}(R^{n}v_{l})+\p_{t}(R^{n}v_{l})=F^{1}(u,p,\eta)~~~&\operatorname{in}~~\Omega,\\
			\operatorname{div}_{\mathcal{A}^{n}}v_{d}=0~~~&\operatorname{in}~~\Omega,\\
			S_{\mathcal{A}^{n}}(q_{d},v_{d})\mathcal{N}^{n}+\nabla_{\mathcal{A}^{n}}(R^{n}v_{l})\mathcal{N}^{n}=g\xi_{d}\mathcal{N}^{n}-\sigma\partial_{1}(\frac{\p_1\xi_d}{(1+\vert \partial_{1}\zeta_{0}\vert^{2})^{3/2}})\mathcal{N}^{n}+\p_{1}I^{n,k}_{1}\mathcal{N}^{n}\\
			\quad\quad\quad\quad\quad\quad\quad+\sigma\partial_{1}(\int_{0}^{t}\mathcal{R}_{zz}(\partial_{1}\zeta_{0},\partial_{1}\eta^{k})(\partial_{1}\p_{t}\xi_{l}^{k})\p_{1}\p_{t}\eta^{k})\mathcal{N}^{n}\\
			\quad\quad\quad\quad\quad\quad\quad +\sigma\partial_{1}(\int_{0}^{t}\mathcal{R}_{z}(\partial_{1}\zeta_{0},\partial_{1}\eta^{k})\partial_{1}\partial_{t}\xi_{d})\mathcal{N}^{n}+F^{4}(u,p,\eta)~~~&\operatorname{on}~~\Sigma,\\
			(S_{\mathcal{A}^{n}}(q_{d},v_{d})\nu+\nabla_{\mathcal{A}^{n}}(R^{n}v_{l})\nu-\beta v_{d})\cdot \tau=F^{5}(u,\eta,p)~~~&\operatorname{on}~~\Sigma_{s},\\
			v_{d}\cdot \nu=0~~~&\operatorname{on}~~\Sigma_{s},\\
			\partial_{t}\xi_{d}=v_{d}\cdot \mathcal{N}^{n}+(R^{n}v_{l}^{k})\cdot \mathcal{N}^{n}+\int_{0}^{t}(\partial_{t}u_{1}^{k}\cdot\partial_{t}\p_{1}\xi_{l}^{k}+u_{1}^{k}\partial_{1}\partial_{t}\xi_{d})+I^{n,k}_{2}~~~&\operatorname{on}~~\Sigma,\\
			\sigma \Big(\mp \frac{\partial_{1}\xi_{d}}{(1+\vert \partial_{1}\zeta_{0}\vert^{2})^{\frac{3}{2}}}\pm \int_{0}^{t}\mathcal{R}_{z}(\partial_{1}\zeta_{0},\partial_{1}\eta^{k})\partial_{t}\partial_{1}\xi_{d}\\
			\quad\quad\quad\quad\quad\quad\quad\quad\pm\int_{0}^{t}\mathcal{R}_{zz}(\p_{1}\zeta_{0},\p_{1}\eta^{k})(\p_{t}\p_{1}\xi^{k}_{l}\p_{1}\p_{t}\eta^{k})\pm I^{n,k}_{1} \Big)(\pm \ell)=\kappa (v_{d}\cdot \mathcal{N}^{n})(\pm \ell)-{F}^{7},
		\end{cases}
	\end{equation}
	where for notational simplicity, we still use 
	$(v_{d},q_{d},\xi_{d})$ to denote the solution of \eqref{eq:quasi_linear_{s}} for any $k,n$. This will be our starting point to construct the solution.
	
	Notice that system \eqref{eq:quasi_linear_{s}} has the following weak form
	\begin{definition} \label{def:weak1}  Suppose that $\eta^{n}$ is given as well as $\mathcal{A}^{n}$, $J^{n}$, $\mathcal{N}^{n}$, etc. that are determined in terms of $\eta^{n}$ as in \eqref{eq:Jaccobian}, \eqref{eq:components} and  $\mathcal{N}^{n} = (-\p_1(\zeta_0 + \eta^{n}), 1)$. We say that a pair $(v_{d}, \xi_{d}) \in \left(L^\infty([0,T]; H^0)\cap L^2([0,T]; H^1) \right) \times L^\infty([0,T];H^1(-\ell,\ell))$ is a pressureless weak solution to \eqref{eq:quasi_linear_{s}}, if $(\p_tv_{d},v_{d}) \in (L^2([0,T];\mathcal{W}^{*}))\times (L^2([0,T];\mathcal{W}))$, and that $(v_{d}, \xi_{d})$ satisfies the weak formulation
		\begin{equation}\label{eq:weak_limit_0}
			\begin{aligned}
				& (\p_tv_{d}, w)_{\mathcal{H}^0}+((v_{d},w))+ (\xi_{d}, w\cdot\mathcal{N}^{n})_{1,\Sigma_{0}}+(\mathcal{R}_{z}(\partial_{1}\zeta_{0},\partial_{1}\eta^{k})\partial_{1}\xi_{d},\partial_{1}(w\cdot\mathcal{N}^{n}))_{L^{2}}+ [v_{d}\cdot\mathcal{N}^{n},w\cdot\mathcal{N}^{n}]_\ell\\ &
				=\int_\Om F^1\cdot wJ^{n}-\int_{-\ell}^{\ell} F^4\cdot w-\int_{\Sigma_s}F^5(w\cdot\tau)J^{n} -[F^7,w\cdot\mathcal{N}^{n}]_\ell+(\p_{t}(R^{n}v_{l}^{k}),w)_{\mathcal{H}^{0}}+((R^{n}v_{l}^{k},w))\\
				&\quad-(I^{n,k}_{1},\p_{1}(w\cdot \mathcal{N}^{n}))_{L^{2}(-\ell.\ell)}
			\end{aligned}
		\end{equation}
		for each $w\in \mathcal{W}_{\sigma}(t)$ and a.e. $t\in [0, T]$, provided that $F^1-F^4-F^5\in L^\infty (\mathcal{H}^1)^\ast$, $[F^7]_\ell\in L^2_t$, $F^1\in L^2L^{q_-}$, $F^4\in L^2W^{1-1/q_-,q_-}$, and $F^5\in L^2W^{1-1/q_-,q_-}$ for some $q_-\in (1, 2)$.  All notations here can be found in Section \ref{appendix_spaces}. And $(1,\Sigma_{0})$ norm is defined in \eqref{eq:product}. Also, to derive a solution, equation\eqref{eq:weak_limit_0} should be coupled with kinematic relation
		\begin{align} \label{eq:kinematic_0}
			\partial_{t}\xi_{d}=v_{d}\cdot \mathcal{N}^{n}+(R^{n}v_{l}^{k})\cdot \mathcal{N}^{n}+\int_{0}^{t}(\partial_{t}u_{1}^{k}\cdot\partial_{t}\p_{1}\xi_{l}^{k}+u_{1}^{k}\partial_{1}\partial_{t}\xi_{d})+I^{n,k}_{2}.
		\end{align}
	\end{definition}
	
	\noindent $\bullet$ \underline{\textsl{Strong solution and the construction of the pressure}}
	
	Once the weak solution $(v_{d},\xi_{d})$ to \eqref{eq:weak_limit_0} has been derived, we construct the pressure $\overset{\circ}q_{d}:=q_{d}-\bar{q}_{d}$ from the pressureless $(v_{d},\xi_{d})$. Furthermore, $(v_{d}, \overset{\circ}q_{d}
	, \xi_{d})$ is a strong solution to \eqref{eq:quasi_linear_{s}} and the difference between the real
	pressure $q$ and $\overset{\circ}q$
	is a constant depending only on time. We next determine the constant needed for our pressuresless weak solutions to satisfy the capillary equation.  Due to the structure of the problem, we may choose such a constant as the average of the final pressure, solving the original full set $(v_{d},q_{d},\xi_{d})$ of equations.
	
	\noindent $\bullet$ \underline{\textsl{Reconstruction of $(v,q,\xi)$}}
	
	After deriving the regularity for $(v_{d},q_{d},\xi_{d},\p_{t}v_{d},\p_{t}\xi_{d})$, it remains to establish the regularity of the zero-order terms $(v,q,\xi)$.  We recover $v,q,\xi$ (also depending on $k,n$) by solving the following ODE systems
	\begin{align}{\label{eq:ODE}}
		\begin{aligned}
			&v(t)=v(0)+\int_0^t\dt v(s)\mathrm{d} s=v(0)+\int_0^t\big(v_d(s)+R^{n}v(s)\big)\mathrm{d}s,\\   &\xi(t)=\xi(0)+\int_{0}^{t}\xi_d(s)\mathrm{d}s,\quad
			q(t)=q(0)+\int_{0}^{t}q_d(s)\mathrm{d}s,
		\end{aligned}
	\end{align}
	\noindent with the initial conditions
	\[
	D_{t}v(0)=v^{n,k}_{d0},~~~~v(0)=v_{0}^{n,k},~~~~\xi_{d}(0)=\xi_{d0}^{n,k},~~~~\xi(0)=\xi_{0}^{n,k},~~~~q_{d}(0)=q_{d0}^{n,k},~~~~q(0)=q_{0}^{n,k},
	\]
	\noindent where the definition of $v_{d0}^{n,k},v_{0}^{n,k},\xi^{n,k}_{d0},\xi_{0}^{n,k},q_{d0}^{n,k},q_{0}^{n,k}$ can all be found in Appendix \ref{sec:initial_l}.
	
	After integrating in time and using the standard estimate
	\[
	\|f\|_{L_{t}^{\infty}\mathcal{B}}\lesssim \|f(0)\|_{\mathcal{B}}+\int_{0}^{t}\|\p_{t}f\|_{\mathcal{B}}ds\lesssim \|f(0)\|_\mathcal{B}+t^{\frac{1}{2}}\|\p_{t}f\|_{L_{t}^{2}\mathcal{B}},
	\]
	where $\mathcal{B}$ can represent the norm of an arbitrary Sobolev space, we obtain the desired regularity for $(v,q,\xi)$ and $(\partial_{t}v,\partial_{t}q,\partial_{t}\xi)$ directly from the regularity of $(v_{d},q_{d},\xi_{d})$ after integration. Moreover, this procedure preserves the boundedness of the corresponding norms.
	
	\noindent $\bullet$ \underline{\textsl{Passing to the limit  $k,n \to \infty$} and identifying a contractive mapping} 
	
	After establishing the existence of a strong solution to \eqref{eq:quasi_linear_{s}}, our next step is to derive bounds uniform in $k$, for fixed $n$ and prescribed functions $(u,\eta,p,\eta^{n},v_{l},\xi_{l})$.
	
	With the uniform bounds in hand, letting $k\to +\infty$, the solution to \eqref{eq:quasi_linear_{s}} converges to the solution to the following limiting system:
	\begin{equation}{\label{eq:quasi_linear_{k}}}
		\begin{cases}
			\partial_{t}v_{d}+\operatorname{div}_{\mathcal{A}^{n}}S_{\mathcal{A}^{n}}(v_{d},q_{d})+\dive_{\mathcal{A}^{n}}\nabla_{\mathcal{A}}(R^{n}v_{l})+\p_{t}(R^{n}v_{l})=F^{1}(u,p,\eta)~~~&\operatorname{in}~~\Omega,\\
			\operatorname{div}_{\mathcal{A}^{n}}v_{d}=0~~~&\operatorname{in}~~\Omega,\\
			S_{\mathcal{A}^{n}}(q_{d},v_{d})\mathcal{N}^{n}+\nabla_{\mathcal{A}^{n}}(R^{n}v_{l})\mathcal{N}^{n}=g\xi_{d}\mathcal{N}^{n}-\sigma\partial_{1}(\frac{\p_1\xi_d}{(1+\vert \partial_{1}\zeta_{0}\vert^{2})^{3/2}})\mathcal{N}^{n}+\p_{1}I^{n}_{1}\mathcal{N}^{n}\\
			\quad\quad\quad\quad\quad\quad\quad+\sigma\partial_{1}(\int_{0}^{t}\mathcal{R}_{zz}(\partial_{1}\zeta_{0},\partial_{1}\eta)(\partial_{1}\p_{t}\xi_{l})\p_{1}\p_{t}\eta)\mathcal{N}^{n}+\sigma\partial_{1}(\int_{0}^{t}\mathcal{R}_{z}(\partial_{1}\zeta_{0},\partial_{1}\eta)\partial_{1}\partial_{t}\xi_{d})\mathcal{N}^{n}\\
			\quad\quad\quad\quad\quad\quad\quad+F^{4}(u,p,\eta)~~~&\operatorname{on}~~\Sigma,\\
			(S_{\mathcal{A}^{n}}(q_{d},v_{d})\nu+\nabla_{\mathcal{A}^{n}}(R^{n}v_{l})\nu-\beta v_{d})\cdot \tau=F^{5}(u,\eta,p)~~~&\operatorname{on}~~\Sigma_{s},\\
			v_{d}\cdot \nu=0~~~&\operatorname{on}~~\Sigma_{s},\\
			\partial_{t}\xi_{d}=v_{d}\cdot \mathcal{N}^{n}+(R^{n}v_{l})\cdot \mathcal{N}^{n}+\int_{0}^{t}(\partial_{t}u_{1}\cdot\partial_{t}\p_{1}\xi_{l}+u_{1}\partial_{1}\partial_{t}\xi_{d})+I^{n}_{2}~~~&\operatorname{on}~~\Sigma,\\
			\sigma \Big(\mp \frac{\partial_{1}\xi_{d}}{(1+\vert \partial_{1}\zeta_{0}\vert^{2})^{\frac{3}{2}}}\pm \int_{0}^{t}\mathcal{R}_{z}(\partial_{1}\zeta_{0},\partial_{1}\eta)\partial_{t}\partial_{1}\xi_{d}\pm\int_{0}^{t}\mathcal{R}_{zz}(\p_{1}\zeta_{0},\p_{1}\eta^{k})(\p_{t}\p_{1}\xi_{l}\p_{1}\p_{t}\eta^{k})\pm I^{n}_{1} \Big)(\pm \ell)\\
			\quad\quad\quad\quad\quad\quad\quad\quad=\kappa (v_{d}\cdot \mathcal{N}^{n})(\pm \ell)-{F}^{7}.
		\end{cases}
	\end{equation}
	
	Notice that for fixed $n$ and $(u,p,\eta,\eta^{n},v_{l},\xi_{l})$, system \eqref{eq:quasi_linear_{k}} defines a linear mapping from $(v_{l},\xi_{l})$ to $(v_{d},\xi_{d})$. After reconstructing $(v,\xi)$ from $(v_{d},\xi_{d})$, we prove in Section \ref{sec:strong} that this mapping is a contraction from $(v_{l},\xi_{l})$ to $(v,\xi)$. Consequently, there exists a solution to the following system:
	\begin{equation}{\label{eq:quasi_linear_n}}
		\begin{cases}
			\partial_{t}^{2}v
			+\operatorname{div}_{\mathcal{A}^{n}}
			S_{\mathcal{A}^{n}}(\partial_{t}v,\partial_{t}q)
			=F^{1}(u,p,\eta)
			& \text{in } \Omega, \\[0.4em]
			\operatorname{div}_{\mathcal{A}^{n}}D_{t}v=0
			& \text{in } \Omega, \\[0.4em]
			S_{\mathcal{A}^{n}}(\partial_{t}q,\partial_{t}v)\mathcal{N}^{n}
			= g\partial_{t}\xi\,\mathcal{N}^{n}
			-\sigma\partial_{1}\!\left(
			\frac{\p_1\p_t\xi}{(1+\vert \partial_{1}\zeta_{0}\vert^{2})^{3/2}}
			\right)\mathcal{N}^{n} \\[0.2em]
			\qquad  \quad\quad\quad\quad\quad\quad\quad
			+\sigma\partial_{1}\!\bigl(
			\mathcal{R}_{z}(\partial_{1}\zeta_{0},\partial_{1}\eta)
			\,\partial_{1}\partial_{t}\xi
			\bigr)\mathcal{N}^{n}
			+F^{4}(u,p,\eta)
			& \text{on } \Sigma, \\[0.4em]
			\bigl(
			S_{\mathcal{A}^{n}}(\partial_{t}q,\partial_{t}v)\nu
			-\beta\,\partial_{t}v
			\bigr)\cdot\tau
			=F^{5}(u,\eta,p)
			& \text{on } \Sigma_{s}, \\[0.4em]
			\partial_{t}v\cdot\nu=0
			& \text{on } \Sigma_{s}, \\[0.4em]
			\partial_{t}^{2}\xi
			=\partial_{t}v\cdot\mathcal{N}^{n}
			+\partial_{t}u_{1}\partial_{1}\xi
			& \text{on } \Sigma, \\[0.4em]
			\sigma\!\left(
			\mp\frac{\partial_{1}\partial_{t}\xi}
			{(1+|\partial_{1}\zeta_{0}|^{2})^{3/2}}
			\pm
			\mathcal{R}_{z}(\partial_{1}\zeta_{0},\partial_{1}\eta)
			\partial_{1}\partial_{t}\xi
			\right)(\pm \ell)
			=\kappa(\partial_{t}v\cdot\mathcal{N}^{n})(\pm \ell)-F^{7}.
		\end{cases}
	\end{equation}
	
	Finally, with the help of uniform bounds in $n$, we show that as $n\to +\infty$, the solution to \eqref{eq:quasi_linear_n} converges to the solution to \eqref{eq:quasi_linear}. This completes the construction and yields a solution to the original linear problem \eqref{eq:quasi_linear}.
	
	% \noindent $\bullet$ \underline{\textsl{Regularity for solution of \eqref{eq:quasi_linear}}}
	
	% As $n,k\rightarrow +\infty$ and considering the contraction mapping argument, we know that the solutions to \eqref{eq:quasi_linear_{s}} converge toward a solution of \eqref{eq:quasi_linear}. 
	
	Also, the regularity of the limiting solution can be deduced from the uniform estimates for \eqref{eq:quasi_linear_{s}} and the convergence argument to further \eqref{eq:quasi_linear_{k}}, \eqref{eq:quasi_linear_n} and \eqref{eq:quasi_linear}.
	
	\noindent  $\bullet$ \underline{\textsl{Construction of initial data} }
	All the pseudo-linear and full nonlinear systems are initial--boundary value problems, and thus they rely on suitably prescribed initial data. However, from the \textit{a priori estimates} in \cite{GT2020}, the energy does not propagate in time, preventing us from constructing the initial data in the usual way. 
	
	Usually, given $(u_0, \eta_0)$, by the Navier-Stokes system, we construct $(\p_tu(0), p_0, \p_t\eta(0))$, and subsequently obtain $(\p_t^2u(0), \p_tp(0)$, $\p_t^2\eta(0))$ by differentiating the Navier-Stokes system in time. 
	
	However, here we need a different yet more sophisticated approach. In Appendix \ref{sec:initial} -- \ref{sec:initial_l}, we apply the backward in time derivative method (for instance, given $(\p_t^2u(0), \p_t^2\xi(0))$, then construct $(\p_tu(0), \p_tp(0), \p_t\xi(0), u_0, p_0, \xi_0)$). In particular, $\p_t\eta(0) \in W^{2-1/q_{+}, q_{+}}((-\ell, \ell))$ appears naturally as part of boundary conditions. Hence, although this quantity is not included in the energy itself, it is necessary to prove that the admissible set of initial data with finite initial energy is nonempty. This constitutes the final step needed to formulate the nonlinear problem within the functional framework used in this paper.
	
	%%%%%%%%%%%%%%%%%%%%%%%%%%%%%%%%%%%%%%%%%%%%%%%%%%%%%%%%%%%%%%%%%%%%%%%%%
	%\subsection{Appendix and Notation}
	%%%%%%%%%%%%%%%%%%%%%%%%%%%%%%%%%%%%%%%%%%%%%%%%%%%%%%%%%%%%%%%%%%%%%%%%%
	%
	%
	%Due to the complexity of the problem, we put all the notation, including definitions of norms, inner products etc, in Appendix for the convenience of
	%the reader.

	% %%%%%%%%%%%%%%%%%%%%%%%%%%%%%%%%%%%%%%%%%%%%%%
	% \subsection{Organization of the paper}
	% %%%%%%%%%%%%%%%%%%%%%%%%%%%%%%%%%%%%%%%%%%%%%%

	% In Section 2, we developed the machinery of time-dependent function spaces with the $\dive_{\mathcal{A}}$--free vector fields and many basic estimates used in the nonlinear energy estimates. In Section 3, we proved that initial data for approximated solutions in Galerkin method really converge to the initial data for original nonlinear system. In section 4, we constructed the solutions to the linear system by time-dependent Galerkin method based on the convergence of initial data in section 3. In section 5, we studied the local well-posedness of $\varepsilon$--linear problem for a new unknown $(u, p, \xi)$ with the fixed point theory based on the linear iteration system in section 4. In section 6, we gave the uniform $\varepsilon$ estimates and then gave the main theorem in section 7.

	%%%%%%%%%%%%%%%%%%%%%%%%%%%%%%%%%%%%%%%%%%%%%%
	\section{Functional Settings and Basic Estimates} \label{appendix_spaces}
	%%%%%%%%%%%%%%%%%%%%%%%%%%%%%%%%%%%%%%%%%%%%%%
	
	In this section, we introduce the functional framework needed for the implementation of the Galerkin scheme. For simplicity, all definitions are formulated with respect to the surface function 
	$\eta$. The corresponding functional spaces associated with the smooth surface function 
	$\eta^{n}$
	are defined analogously by replacing $\eta$ with $\eta^{n}$.

	%%%%%%%%%%%%%%%%%%%%%%%%%%%%%%%%%%%%%%%%%%%%%%
	\subsection{Constants and Sobolev Norms}\label{sec:notation}
	%%%%%%%%%%%%%%%%%%%%%%%%%%%%%%%%%%%%%%%%%%%%%%

	Let $C>0$ denote a universal constant that only depends on the parameters of the problem, $N$ and $\Om$, but is independent of the data. This constant is allowed to change from line to line. We will write $C=C(z)$ to indicate that the constant $C$ depends on $z$. We write $a\lesssim b$ if $a\le C b$ for a universal constant $C>0$.
	
	We will write $H^k$ for $H^k(\Om)$ for $k\ge0$, and $H^s(\Sigma)$ with $s\in\mathbb{R}$ for the usual Sobolev spaces. Typically, we will write $H^0=L^2$, with the exception that we will use $L^2([0,T];H^k)$ (or $L^2([0,T];H^s(\Sigma))$) to denote the space of temporal square--integrable functions with values in $H^k$ (or $H^s(\Sigma)$).
	
	Sometimes we will write $\|\cdot\|_k$ instead of $\|\cdot\|_{H^k(\Om)}$ or $\|\cdot\|_{H^k(\Sigma)}$. We assume that functions belong to their respective natural spaces. For example, the functions $u$, $p$ and $\bar{\eta}$ live on $\Om$, while $\eta$ lives on $\Sigma$. So we may write $\|\cdot\|_{H^k}$ for the norms of $u$, $p$ and $\bar{\eta}$ in $\Om$, and $\|\cdot\|_{H^s}$ for norms of $\eta$ on $\Sigma$.

	%%%%%%%%%%%%%%%%%%%%%%%%%%%%%%%%%%%%%%%%%%%%%%
	\subsection{Time-Independent Inner Products and Function Spaces}
	%%%%%%%%%%%%%%%%%%%%%%%%%%%%%%%%%%%%%%%%%%%%%%

	We define the time-independent spaces
	\[
	{}_0H^1(\Om):=\Big\{u\in L^2(\Om)\Big|\|u\|_{{}_0H^1}<\infty, u\cdot\nu=0\ \text{on}\ \Sigma_s\Big\},
	\]
	endowed with the norm 
	\[\|u\|_{{}_0H^1}^{2}:=\|\nabla u\|_{L^{2}(\Omega)}^{2}+\beta\|u\cdot \tau\|^{2}_{L^{2}(\Sigma_{s})}+[u\cdot \mathcal{N}_{0}]_{\ell}^2\]
	and 
	\[
	\mathring{H}^k(U):=\left\{f\in H^k(U)\Big| \int_Uf=0\right\},
	\]
	where $U=\Om$ or $(-\ell, \ell)$ and $k\in\mathbb{N}$,
	\[
	W:=\Big\{u\in{}_0H^1(\Om)\Big|u\cdot\mathcal{N}_0\in \mathring{H}^1(-\ell, \ell)\Big\},
	\]
	and
	\[
	W_\sigma:=\Big\{u\in W\Big| \dive u=0\Big\}.
	\]
	
	Throughout the paper we will often utilize the following Korn-type inequality. 
	
	\begin{lemma}\label{lem:korn}
		For any $u\in{}_0H^1(\Om)$ and $u \in H^0(\Om)$, it holds that
		\begin{equation}\label{eq:korn_1}
			\|u\|_1^2\lesssim \|\mathbb{D}u\|_0^2+\|u\|_0^2 \text{ for all } u \in H^1(\Om),
		\end{equation}
	\end{lemma}
	\begin{proof}
		The inequality \eqref{eq:korn_1} may be proved in various ways.  See \cite{N} for a direct proof.  It can  also be derived from the Ne\^{c}as inequality  \cite[Lemma IV.7.6 ]{boyer_fabrie}.
	\end{proof}
	Using this lemma, we establish the following Poincar\'e-type inequality
	\begin{theorem}{\label{thm:poin}}
		For any $u\in{}_0H^1(\Om)$, it holds that
		\[
		\|u\|_{1}^{2}\lesssim \|\mathbb{D}u\|_{0}^{2}
		\]
		
	\end{theorem}
	\begin{proof}
		From Lemma \ref{lem:korn}, it suffices to show that
		\begin{align}{\label{equ:poin_1}}
			\|u\|_{L^{2}}^{2}\lesssim \|\mathbb{D}u\|_{0}^{2}
		\end{align}
		for any $u\in {}_0H^{1}$. We derive this inequality via a contradiction argument.
		
		Suppose, for the sake of contradiction, that \eqref{equ:poin_1} is false. Then there exists a sequence $u_{n}$ such that
		\begin{align}{\label{equ:poin_2}}
			\|u_{n}\|_{L^{2}}=1,
		\end{align}
		\noindent and
		\begin{align}{\label{equ:poin_3}}
			\|\mathbb{D}u_{n}\|_{0}^{2}\leq \frac{1}{n}.
		\end{align}
		By Lemma \ref{lem:korn} again, $u_{n}$ is uniformly bounded in $H^{1}$. Therefore, $u_{n}\rightarrow u_{0}$ in $H^{1}$ as $n\rightarrow \infty$. By equation \eqref{equ:poin_3}, we have
		\begin{align}{\label{equ:poin_4}}
			\|\mathbb{D}u_{0}\|_{0}^{2}=0
		\end{align}
		\noindent which implies that $u_{0}=Ax+b$ for some skew-symmetric matrix $A$ and constant vector $b$.
		
		Since $u_{n}\in {}_0H^{1}$, we have $u_{0}\cdot \nu =0$ on $\Sigma_{s}$ by trace theorem. Therefore, we obtain
		\begin{align}{\label{equ:point_5}}
			(A(x,0)^{T}+b)\cdot (0,1)=0
		\end{align}
		\noindent for all $x\in (-\ell,\ell)$. Suppose that 
		\[
		A=
		\begin{pmatrix}
			0 &a_{2}\\
			-a_{2} &0
		\end{pmatrix},
		\]
		and that $b=(b_{1},b_{2})$. We derive $a_{2}=b_{2}=0$ from \eqref{equ:point_5}. Hence, $A=0$ and $u_{0}=b=(b_{1},0)$.
		
		Again, using the fact that $u_{0}\cdot {N}=0$ on $\Sigma_{s}$, we have
		\[
		u_{0}\cdot (1,0)=0
		\]
		\noindent for all $y\in (0,\zeta(\ell))$. Therefore
		\[
		b_{1}=0.
		\]
		This implies that $u_{0}=0$, which contradicts the fact that $\|u_{0}\|_{L^{2}}=0$.
	\end{proof}

	%%%%%%%%%%%%%%%%%%%%%%%%%%%%%%%%%%%%%%%%%%%%%%
	\subsection{Time-Dependent Inner Products and Function Spaces}
	%%%%%%%%%%%%%%%%%%%%%%%%%%%%%%%%%%%%%%%%%%%%%%

	Suppose that $\eta$ is given, and that $\mathcal{A}$, $J$ and $\mathcal{N}$, etc., are determined in terms of $\eta$. Let us define
	\[
	((u,v)):=\int_{\Om}\frac{\mu}{2}\mathbb{D}_{\mathcal{A}}u:\mathbb{D}_{\mathcal{A}}vJ+\int_{\Sigma_s}\beta(u\cdot\tau)(v\cdot\tau)J.
	\]
	We also define
	\begin{equation}{\label{eq:inner_product}}
		(\phi,\psi)_{1,\Sigma}:=\int_{-\ell}^{\ell}g\phi\psi+\sigma\frac{\p_1\phi\p_1\psi}{(1+|\p_1\zeta_0|^2)^{3/2}}+\int_{-\ell}^{\ell}\mathcal{R}_{z}(\partial_{1}\zeta_{0},\partial_{1}\eta)\partial_{1}\phi\partial_{1}\psi,
	\end{equation}
	\begin{equation}{\label{eq:inner_product——1}}
		(\phi,\psi)_{1,\Sigma_{k}}:=\int_{-\ell}^{\ell}g\phi\psi+\sigma\frac{\p_1\phi\p_1\psi}{(1+|\p_1\zeta_0|^2)^{3/2}}+\int_{-\ell}^{\ell}\mathcal{R}_{z}(\partial_{1}\zeta_{0},\partial_{1}\eta^{k})\partial_{1}\phi\partial_{1}\psi,
	\end{equation}
	\[
	(\phi,\psi)_{1,\Sigma_{0}}:=\int_{-\ell}^{\ell}g\phi\psi+\sigma\frac{\p_1\phi\p_1\psi}{(1+|\p_1\zeta_0|^2)^{3/2}},
	\]
	\begin{equation}\label{sum_point}
		[a,b]_\ell:=\kappa\Big(a(\ell)b(\ell)+a(-\ell)b(-\ell)\Big).
	\end{equation}
	We denote $\|\xi\|_{1,\Sigma}:=\sqrt{(\xi,\xi)_{1,\Sigma}}$ and $[\phi]_\ell :=\sqrt{[\phi,\phi]_\ell}$.
	
	For convenience, let us define some time-dependent spaces
	\[
	\mathcal{H}^0(t):=\Big\{u(t):\Om\rightarrow\mathbb{R}^2\Big|\sqrt{J}u\in H^0(\Om)\Big\},
	\]
	\[
	\mathcal{H}^1(t):=\Big\{u(t):\Om\rightarrow\mathbb{R}^2\Big|((u,u))+[u\cdot \mathcal{N}]_{\ell}^2<\infty \Big\},
	\]
	\[
	{}_0\mathcal{H}^1(t):=\Big\{u(t)\in \mathcal{H}^1(t)\Big|u\cdot\nu=0\ \text{on}\ \Sigma_s\Big\},
	\]
	endowed with the norm $\|u\|_{{}_0\mathcal{H}^1}:= \sqrt{ ((u,u))+[u\cdot \mathcal{N}]_{\ell}^2 }$, and 
	\[
	\mathring{\mathcal{H}}^k(t):=\left\{f\in \mathcal{H}^k(t)\Big| \int_\Om f=0\right\},
	\]
	where $k\in\mathbb{N}$.
	
	Finally, we define the inner products on $L^2([0, T]; H^k(\Om))$ for $k=0,1$ as
	$
	(u,v)_{\mathcal{H}^1_T}=\int_0^T\big(u(t),v(t)\big)_{\mathcal{H}^1}\,\mathrm{d}t$,
	and write $\mathcal{H}^1_T = L^2([0, T]; \mathcal{H}^1(t))$ as the corresponding spaces with the corresponding norms $\|u\|_{\mathcal{H}^1_T}$.
	
	For some forces $\mathcal{F}^j$ and $\psi \in \mathcal{H}^1$, denote the duality
	\begin{align}\label{duality}
		\left<\mathcal{F}^j,\psi\right>_{(\mathcal{H}^1_T)^\ast} :=\int_0^T\left<\mathcal{F}^j,\psi\right>_{(\mathcal{H}^1)^\ast},\quad \left<\mathcal{F}^j,\psi\right>_{(\mathcal{H}^1)^\ast}: =\int_{\Om}F^{1,j}\cdot \psi J-\int_{-\ell}^{\ell}F^{4,j}\cdot \psi-\int_{\Sigma_s}F^{5,j}(\psi\cdot\tau)J.
	\end{align}
	
	The following lemma implies that $\mathcal{H}^1(t)$ is equivalent to ${}_0H^1(\Om)$.
	\begin{lemma}\label{lem:equivalence_norm}
		There exists a small universal $\delta_0>0$ such that if $\sup_{0\le t\le T}\|\eta(t)\|_{W^{3-1/q_-, q_-}}<\delta_0$,
		then
		$
		\frac{1}{\sqrt{2}}\|u\|_k\le\|u\|_{\mathcal{H}^k}\le\sqrt{2}\|u\|_k$
		for $k=0, 1$ and for all $t\in[0, T]$. As a consequence, for $k=0, 1$,
		$
		\|u\|_{L^2H^k(\Om)}\le \|u\|_{\mathcal{H}^k_T}\le \|u\|_{L^2H^k(\Om)}$.
	\end{lemma}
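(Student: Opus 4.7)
The plan is to derive the equivalence from three ingredients: pointwise smallness of the coefficients $A$, $J$, $K$ (hence of $\mathcal{A}$), the Korn-type inequality Lemma C.1, and the trace inequality on $\Sigma_s$.

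First, I would control the extension $\bar\eta = \mathcal{P}E\eta$ pointwise. Since $3-1/q_+ > 3/2$ (using $q_+ < 2$), we have the embedding $W^{3-1/q_+,q_+}(\Sigma) \hookrightarrow C^{1}(\Sigma)$, and by the mapping properties of $E$ and the Poisson extension $\mathcal{P}$ one obtains
\begin{equation*}
\|\bar\eta\|_{W^{1,\infty}(\Om)} \lesssim \|\eta\|_{W^{3-1/q_+,q_+}(\Sigma)} \le \delta_0.
\end{equation*}
Plugging this into the explicit formulas \eqref{eq:components} for $A$, $J=1+\phi'\bar\eta/\zeta_0 + W\p_2\bar\eta$, and $K=1/J$, together with $\phi, \zeta_0, W \in C^\infty$, yields $\|J-1\|_{L^\infty} + \|A\|_{L^\infty} \lesssim \delta_0$, hence $\|\mathcal{A} - I\|_{L^\infty(\Om)} \lesssim \delta_0$ from \eqref{eq:Jaccobian}. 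Choosing $\delta_0$ small gives $\tfrac{1}{2} \le J(x,t) \le \tfrac{3}{2}$ uniformly.

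The $k=0$ case is now immediate: $\|u\|_{\mathcal{H}^0}^2 = \int_\Om |u|^2 J$ is pinched between $\tfrac{1}{2}\|u\|_0^2$ and $\tfrac{3}{2}\|u\|_0^2$, giving the stated constant $\sqrt{2}$ after a further reduction of $\delta_0$.

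For $k=1$, the upper bound is straightforward: write $\mathbb{D}_\mathcal{A} u = \mathbb{D} u + (\mathbb{D}_\mathcal{A} u - \mathbb{D} u)$ and use $\|\mathcal{A}\|_{L^\infty} \le 1+C\delta_0$, $\|J\|_{L^\infty} \le 1+C\delta_0$, and the standard trace estimate $\|u\|_{L^2(\Sigma_s)} \lesssim \|u\|_1$, so $((u,u)) \lesssim (1+\delta_0)\|u\|_1^2 \le 2\|u\|_1^2$ for $\delta_0$ small. For the lower bound, expand
\begin{equation*}
|\mathbb{D}_\mathcal{A} u|^2 \ge \tfrac{1}{2}|\mathbb{D} u|^2 - C\|\mathcal{A} - I\|_{L^\infty}^2 |\nabla u|^2 \ge \tfrac{1}{2}|\mathbb{D} u|^2 - C\delta_0^2 |\nabla u|^2
\end{equation*}
and use $J \ge 1/2$ to obtain $((u,u)) \ge \tfrac{\mu}{8}\|\mathbb{D} u\|_0^2 - C\delta_0^2 \|\nabla u\|_0^2 + \tfrac{\beta}{2}\|u\cdot\tau\|_{L^2(\Sigma_s)}^2$. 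Since $u \in \mathcal{H}^1 \subset {}_0H^1(\Om)$, Lemma C.1 yields $\|u\|_1^2 \lesssim \|\mathbb{D} u\|_0^2$; absorbing the $C\delta_0^2 \|\nabla u\|_0^2$ term for small $\delta_0$ gives $((u,u)) \gtrsim \|u\|_1^2$. A final tightening of $\delta_0$ produces the constant $\sqrt{2}$.

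The main subtlety will be tracking the universal Korn constant from Lemma C.1 and choosing $\delta_0$ small enough so that $C\delta_0^2$ is strictly smaller than the Korn coefficient, so that no circular dependence on $\|u\|_1$ appears on the right. The $L^2([0,T];H^k)$ version follows from the pointwise-in-time bound by squaring and integrating, since the constants $\sqrt{2}$ do not depend on $t$ under the assumed uniform smallness of $\eta(t)$.
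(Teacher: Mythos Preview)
Your proposal is correct and follows essentially the same approach as the paper. The paper's own proof is a brief sketch that cites \cite{GT1} for $k=0$ and, for $k=1$, invokes the same calculation combined with the Korn-type inequality (Lemma~\ref{lem:korn}) and trace theory; you have simply filled in those details explicitly.
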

	\begin{proof}
		For the case of $k=0$, it is directly the result in \cite{GT1}. When $k=1$, we might use the similar calculation as the proof of Lemma 2.1 in \cite{GT1} together with the Korn-type inequality in Lemma \ref{lem:korn} and trace theory to derive the results.
	\end{proof}

	Now, we want to show that the time-independent spaces are related to the time-dependent spaces.
	We consider the matrix
	\begin{equation}\label{def:M}
		M:=M(t)=K\nabla\Phi=(J\mathcal{A}^T)^{-1},
	\end{equation}
	which induces a linear operator $\mathcal{M}_t : u\mapsto M(t)u$.
	
	\begin{proposition}\label{prop:isomorphism}
		Assume that $\eta\in W^{3-1/q_+, q_+}(-\ell, \ell)$. We have:
		$(i)$. For each $t\in [0,T]$, $\mathcal{M}_t $ is a bounded isomorphism from $W^{k,q_+}(\Om)$ to $W^{k,q_+}(\Om(t))$ for $k=0, 1,2$; from $W^{k,q_-}(\Om(0))$ to $W^{k,q_-}(\Om)$ for $k=0, 1,2$.
		$(ii)$. For each $t\in [0,T]$, $\mathcal{M}_t $ is a bounded isomorphism from ${}_0H^1(\Om)$ to ${}_0\mathcal{H}^1(\Om)$. Moreover,
		$
		\|Mu\|_{\mathcal{H}^1}\lesssim (1+\|\eta\|_{W^{3-1/q_-, q_-}})\|u\|_1$.
		$(iii)$. For each $t
		\in [0,T]$, $\mathcal{M}_t $ is a bounded isomorphism from $H^{1+\varepsilon_-/2}(\Om)$ to $H^{1+\varepsilon_-/2}(\Om)$. $(iv)$. Let $u\in H^1(\Om)$. Then $\dive u=p$ if and only if $\dive_{\mathcal{A}}(Mu)=Kp$.
		$(v)$ $ M^\top\mathcal{N}=\mathcal{N}_0\ \text{on}\ \Sigma$.
	\end{proposition}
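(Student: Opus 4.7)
The plan is to exploit that $M = K\nabla\Phi$ is a small perturbation of the identity when $\eta$ is small in $W^{3-1/q_+, q_+}$, together with the Piola-type identity $\partial_j(J\mathcal{A}_{ij}) = 0$, which is the algebraic fact behind (iv) and indirectly behind (v). The key observation is that from \eqref{eq:Jaccobian}--\eqref{eq:components}, every entry of $M$ and $M^{-1} = J\mathcal{A}^\top$ is a polynomial in $K$, $J$, $A$, $W$, and their products, each of which is controlled in $W^{2-1/q_+, q_+} \hookrightarrow L^\infty$ by $\|\eta\|_{W^{3-1/q_+, q_+}}$ with $\delta_0$ small, so that $\|M - I\|_{L^\infty} + \|M^{-1} - I\|_{L^\infty} \lesssim \delta_0$.

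For (i), I would establish the $L^{q_\pm}$ bounds directly from $\|M\|_{L^\infty}, \|M^{-1}\|_{L^\infty} \lesssim 1$, then extend to $W^{1,q_\pm}$ via the product estimates in Banach algebras, using that the entries of $M$ lie in $W^{2-1/q_+, q_+}(\Sigma) \hookrightarrow W^{1,q_+}(\Om)$ through the extension defining $\bar\eta$, hence multiplication by $M$ is bounded on $W^{1,q_\pm}$; the inverse is handled identically with $M^{-1}$. For (ii), I would first verify the boundary preservation: on $\Sigma_s$, by the construction of $\Phi$ in \eqref{def:map} combined with the fact that $\bar\eta$ vanishes near the bottom of $\Om$, a direct computation of the normal component (to be recorded as the aforementioned Proposition on solid boundaries) shows $Mu \cdot \nu = u \cdot \nu = 0$. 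Then apply Lemma \ref{lem:equivalence_norm} to obtain $\|Mu\|_{\mathcal{H}^1} \asymp \|Mu\|_{H^1}$, and the $H^1$ product estimate finishes the bound $\|Mu\|_{\mathcal{H}^1} \lesssim (1+\|\eta\|_{W^{3-1/q_+, q_+}})\|u\|_1$. The reverse bound follows by applying the same argument to $M^{-1}$. For (iii), I would use the interpolation $H^{1+\varepsilon_-/2} = [L^2, H^2]_{(1+\varepsilon_-/2)/2}$ combined with the bounds from (i), noting that multiplication by $M$ is bounded on $H^2$ since the entries of $M$ lie in $W^{2-1/q_+, q_+} \hookrightarrow H^{3/2 + (\varepsilon_- - \alpha)/2}$, a multiplier on $H^2$ in a 2D bounded domain with convex corners, then interpolating.

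Assertion (iv) is the cleanest: writing $(\dive_{\mathcal{A}}(Mu))_i = \mathcal{A}_{jk}\partial_k(M_{ji}u_i)$ and using $M = (J\mathcal{A}^\top)^{-1}$, i.e. $\mathcal{A}_{jk}M_{ji} = J^{-1}\delta_{ki}$, a short computation reduces $\dive_{\mathcal{A}}(Mu)$ to $J^{-1}\partial_i u_i + (\text{terms involving }\partial_k(J\mathcal{A}_{jk}))$, and the Piola identity $\partial_k(J\mathcal{A}_{jk}) = 0$ kills the second group, leaving $\dive_{\mathcal{A}}(Mu) = K\dive u$, which is the claim. For (v), I would simply compute $M^\top \mathcal{N}$ on $\Sigma$ from $M = K\nabla\Phi$ and the explicit form of $\mathcal{N} = -\partial_1\zeta e_1 + e_2$ with $\zeta = \zeta_0 + \eta$; the columns of $\nabla\Phi$ at the top are precisely arranged so that the combination reproduces $\mathcal{N}_0$.

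The main obstacle I expect is bookkeeping rather than a genuine difficulty: the boundary preservation in (ii) must be verified carefully (the tangential direction to $\Sigma_s$ need not be preserved by $M$, only the normal condition is), and confirming that the entries of $M$ really are multipliers on both $W^{1,q_\pm}$ and $H^{1+\varepsilon_-/2}$ requires invoking the right trace/extension estimates from the definitions of $\Phi$ and $\bar\eta$ in \eqref{def:map}. Once these regularity inputs are in hand, each of (i)--(v) reduces to a direct algebraic or product-estimate computation under the smallness assumption $\|\eta\|_{W^{3-1/q_+, q_+}} < \delta_0$.
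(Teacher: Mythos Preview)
The paper's proof of (i)--(iv) is a bare citation to \cite{GT2020}, and (v) is dispatched in one line via the identity $J\mathcal{A}\mathcal{N}_0=\mathcal{N}$ on $\Sigma$, which upon inverting gives $M^\top\mathcal{N} = (J\mathcal{A})^{-1}\mathcal{N} = \mathcal{N}_0$. Your explicit roadmap for (i), (ii), and (iv) is essentially the content underlying that reference, and your direct coordinate computation for (v) would succeed but is more laborious than the paper's inversion argument. Your sketch of (iv) contains a harmless index slip---$\dive_{\mathcal{A}}(Mu)$ is a scalar, not indexed by $i$---but the Piola identity is indeed the mechanism; the clean packaging is exactly the relation $J\dive_{\mathcal{A}}v = \dive(M^{-1}v)$ that the paper records as \eqref{eq:div-a-div}, from which (iv) follows by setting $v=Mu$.

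There is, however, a genuine gap in your plan for (iii). You propose to interpolate between $L^2$ and $H^2$, but multiplication by the entries of $M$ is \emph{not} bounded on $H^2(\Om)$: those entries live on $\Om$ (not on $\Sigma$, so the trace space $W^{2-1/q_+,q_+}$ you invoke is the wrong object) and lie only in $W^{2,q_+}(\Om)$ with $q_+<2$, via the extension $\bar\eta$. Hence $\nabla^2 M\in L^{q_+}$ only, and the term $u\,\nabla^2 M$ in $\nabla^2(Mu)$ does not land in $L^2$ even for $u\in H^2\hookrightarrow L^\infty$. The fix is not to interpolate but to use the embedding $W^{2,q_+}(\Om)\hookrightarrow H^{1+\varepsilon_+}(\Om)$ (Sobolev index $2-2/q_+=\varepsilon_+$) and then the 2D product rule: since $1+\varepsilon_+>1$, $H^{1+\varepsilon_+}$ is a multiplier on $H^{1+\varepsilon_-/2}$, which yields (iii) directly. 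This is the route taken in \cite{GT2020}.
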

	\begin{proof}
		All the results are referred to \cite{GT2020}. The equality $J\mathcal{A}\mathcal{N}_0=\mathcal{N}$
		gives $M^\top\cdot\mathcal{N}= K\nabla \Phi^\top \cdot \mathcal{N}=\mathcal{N}_0$.
	\end{proof}
	The following proposition is also useful.
	\begin{proposition}\label{prop:solid_boundary}
		If $u\cdot\nu=0$ on $\Sigma_s$, then $Ru\cdot\nu=0$ on $\Sigma_s$, where $R:=\p_tMM^{-1}$.
	\end{proposition}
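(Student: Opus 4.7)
The plan is to compute $R$ explicitly from the block structure of $M$ dictated by \eqref{eq:Jaccobian}--\eqref{eq:components}, and then exploit the specific geometry of $\Sigma_s$, which splits into a curved bottom portion where the diffeomorphism $\Phi$ acts as the identity and the vertical side walls $x_1 = \pm \ell$ where the outward normal is $\nu = \pm e_1$.

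First I will write $M = K \nabla \Phi$ in block form. From \eqref{eq:Jaccobian} one reads off
\begin{equation*}
M = \begin{pmatrix} K & 0 \\ AK & 1 \end{pmatrix}, \qquad M^{-1} = J\mathcal{A}^{T} = \begin{pmatrix} J & 0 \\ -A & 1 \end{pmatrix},
\end{equation*}
so that $\p_t M$ has vanishing second column, and a direct matrix multiplication yields
\begin{equation*}
R = \p_t M \, M^{-1} = \begin{pmatrix} J\p_t K & 0 \\ J\p_t(AK) & 0 \end{pmatrix}.
\end{equation*}
In particular $(Ru)_1 = J\p_t K \, u_1$, independent of $u_2$, which is the crucial structural fact.

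Next I will analyze the two geometric components of $\Sigma_s$. On the curved bottom portion contained in $\mathcal{V}_{bot}$ we have $x_2 \le 0$, where the cut-off in the definition \eqref{def:map} satisfies $\phi(x_2) \equiv 0$ by construction (recall $\phi(z) = 0$ for $z \le \tfrac14 \min \zeta_0$). Hence $W = \phi/\zeta_0 \equiv 0$ there, which forces $A \equiv 0$ and $J \equiv 1$, $K \equiv 1$ by \eqref{eq:components}; consequently $\p_t M \equiv 0$ and $R \equiv 0$ pointwise on this portion, so trivially $(Ru) \cdot \nu = 0$. On the vertical walls $x_1 = \pm \ell$, the outward unit normal is $\nu = \pm e_1$, so the hypothesis $u \cdot \nu = 0$ is precisely $u_1 = 0$. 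Combining this with the observation that the second column of $R$ vanishes gives $(Ru) \cdot \nu = \pm (Ru)_1 = \pm J\p_t K \, u_1 = 0$, which completes the proof.

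There is no substantive obstacle; the argument is a direct consequence of the block triangular form of $M$ and the flatness of $\Sigma_s$ in each of its pieces. The only mildly delicate point is confirming that the two pieces genuinely cover $\Sigma_s$ and that the cut-off $\phi$ vanishes on the entirety of $\Sigma_s \cap \{x_2 \le 0\}$, both of which follow from the setup of $\mathcal{V}_{top}$, $\mathcal{V}_{bot}$ in the Introduction and from the choice of $\phi$ in \eqref{def:map}.
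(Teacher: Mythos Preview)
Your proof is correct, but it takes a genuinely different route from the paper's. The paper argues abstractly: it invokes the equivalence $Mu\cdot\nu=0 \Leftrightarrow u\cdot\nu=0$ on $\Sigma_s$ (cited from \cite{GT18}), hence also $M^{-1}u\cdot\nu=0 \Leftrightarrow u\cdot\nu=0$; then, treating $u$ as time-independent (which is harmless since the claim is pointwise in $t$), it writes $Ru = \p_t M\,M^{-1}u = -M\,\p_t(M^{-1}u)$ and observes that $w:=\p_t(M^{-1}u)$ satisfies $w\cdot\nu=\p_t(M^{-1}u\cdot\nu)=0$, so $Mw\cdot\nu=0$ by the equivalence again. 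Your approach instead computes $R$ explicitly from the lower-triangular block form of $M$, obtaining that the second column of $R$ vanishes, and then checks the two geometric pieces of $\Sigma_s$ separately (vertical walls where $\nu=\pm e_1$ and $u_1=0$; curved bottom where $\phi\equiv\phi'\equiv0$ forces $R\equiv0$). Your argument is fully self-contained and avoids the external citation; the paper's argument is more conceptual and would transfer to geometries where $M$ or $\nu$ are not as explicit. Both are short and valid.
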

	\begin{proof}
		From \cite{GT18}, it is known that $Mu\cdot\nu=0\Leftrightarrow u\cdot\nu=0$ on $\Sigma_s$, which implies that
		$M^{-1}u\cdot\nu=0\Leftrightarrow u\cdot\nu=0$ on $\Sigma_s$.
		Then by definition of $R$,
		\[
		Ru\cdot\nu=\p_tMM^{-1}u\cdot\nu=-M\p_t(M^{-1}u)\cdot\nu=0,
		\]
		since $\p_t(M^{-1}u)\cdot\nu=\p_t(M^{-1}u\cdot\nu)=0$.
	\end{proof}
	
	Define the time-dependent spaces:
	\begin{align}\label{def:w}
		\mathcal{W}(t):=\{u\in{}_0\mathcal{H}^1(t): \ u\cdot\mathcal{N}\in \mathring{\mathcal{H}}^1(-\ell, \ell)\}
	\end{align}
	endowed with the norm $\|u\|_{\mathcal{W}}=\|u\|_{{}_{0}\mathcal{H}^{1}}+\|u\cdot \mathcal{N}\|_{H^{1}}$ and the subspace $\mathcal{W}_\sigma(t)$
	\begin{align}\label{def:w-sigma}
		\mathcal{W}_\sigma(t):=\{u\in \mathcal{W}(t): \dive_{\mathcal{A}}u=0\}.
	\end{align}
	
	Define the operator $D_t$ as
	\begin{equation}\label{def:Dt_u}
		D_tu:=\p_tu-Ru\quad\text{for}\quad R:=\p_tMM^{-1},
	\end{equation}
	with $M=K\nabla\Phi$, where $K$ and $\Phi$ are defined as in \eqref{eq:components} and \eqref{def:map}, respectively. It is easy to see that $D_t$ preserves the $\dive_{\mathcal{A}}$--free condition, since
	\begin{equation}\label{eq:dive_dt}
		J\dive_{\mathcal{A}}(D_tv)=J\dive_{\mathcal{A}}(M\p_t(M^{-1}v))=\dive(\p_t(M^{-1}v))=\p_t\dive(M^{-1}v)=\p_t(J\dive_{\mathcal{A}}v),
	\end{equation}
	where in the second and last equalities, we used the fact that $J\dive_{\mathcal{A}}v=\dive(M^{-1}v)$, which is proved, according to \cite[Lemma A]{ZhT17} and the definition \eqref{def:M} of $M$, as
	\begin{equation}\label{eq:div-a-div}
		J\dive_{\mathcal{A}}v=J\mathcal{A}_{ij}\p_jv_i=\p_j(J\mathcal{A}_{ij}v_i)=\dive(J\mathcal{A}^\top v)=\dive(M^{-1}v).
	\end{equation}

	%%%%%%%%%%%%%%%%%%%%%%%%%%%%%%%%%%%%%%%%%%%%%%
	\section{Preliminaries for the Linear System}\label{sec:linear}
	%%%%%%%%%%%%%%%%%%%%%%%%%%%%%%%%%%%%%%%%%%%%%%

	In this section, we will introduce several preliminary results regarding the linear system \eqref{eq:quasi_linear} under the assumption that the forcing terms $F^i$ are given and independent of the unknowns. In particular, we will show that the weak solution $(v_d,\xi_d)$ can be used to recover the pressure $q_d$ and further be upgraded to the strong solution under proper regularity assumptions.

	%%%%%%%%%%%%%%%%%%%%%%%%%%%%%%%%%%%%%%%%%%%%%%
	\subsection{Uniqueness of Weak Solution}
	%%%%%%%%%%%%%%%%%%%%%%%%%%%%%%%%%%%%%%%%%%%%%%

	The definition of a weak solution was given in Definition \ref{def:weak}.
	In Section \ref{sec:app_initial}, we will see that weak solutions to \eqref{eq:weak_limit_1} will arise  as a byproduct of the construction of strong solutions to \eqref{eq:weak_limit_1}. Hence, we temporarily assume the existence of weak solutions and focus first on establishing uniqueness.
	
	\begin{lemma}{\label{elemma}}
		Suppose that $(v_{d}, \xi_{d})$ is a pressureless weak solution to \eqref{eq:weak_limit_1}. Then for almost every $t\in [0,T]$,
		\begin{equation}{\label{bound}}
			\begin{aligned}
				&\sup_{0\le t\le T}(\|v_{d}\|_{0}^2+\|\xi_{d}\|_1^2)+\|v_{d}\|_{L^\infty H^1}^2+\|v_{d}\|_{L^\infty H^0(\Sigma_s)}^2+\|[v_{d}\cdot\mathcal{N}]_\ell\|_{L^\infty([0,T])}^2
				\lesssim\exp(C_{0}T)\\
				&\quad\times\bigg\{\|u_0\|^{2}_{W^{2,q_+}(\Om)}+\|D_{t}u(0)\|^{2}_{H^{1+\frac{\varepsilon_{-}}{2}}}+\|\xi_0\|^{2}_{W^{3-1/q_+, q_+}}+\|\p_t\xi(0)\|^{2}_{H^{3/2+(\varepsilon_--\alpha)/2}}\\
				&\quad+\|(F^1-F^4-F^5)(0)\|^{2}_{(\mathcal{H}^1)^\ast}+\big(1+\|\p_t\eta\|_{L^\infty H^{3/2+\varepsilon_-/2}}^2\big)\big(\|F^1\|_{L^2L^{q_-}}^2+\|F^4\|_{L^2W^{1-1/q_-,q_-}}^2\\
				&\quad+\|F^5\|_{L^2W^{1-1/q_-,q_-}}^2\big)+\big(1+\|\p_t\eta\|_{L^\infty H^{3/2+\varepsilon_-/2}}^2\big)\|\p_t(F^1-F^4-F^5)\|_{L^{2}(\mathcal{H}^1_T)^{\ast}}^2
				+\sum_{j=0}^1\|[\p_{t}^{j}F^{7}]_\ell\|_{L^2}^2,
			\end{aligned}
		\end{equation}
		where $C_0(\eta):=\sup_{0\le t\le T}\|\p_tJK\|_{L^\infty}+\sup_{0\le t\le T}\|u\|_{W^{2,q_{+}}}+\sup_{0\le t\le T}\|\eta\|_{W^{3-\frac{1}{q_{+}},q_{+}}}$.
	\end{lemma}
	\begin{proof}
		The bound \eqref{bound} can be directly derived by choosing $v_{d}$ as the test function of equation \eqref{eq:weak_limit_1}. The detailed computation is given in \cite{GT2020}
	\end{proof}
	\begin{proposition}\label{prop:unique}
		The weak solution to \eqref{eq:weak_limit_1} is unique.
	\end{proposition}
	\begin{proof}
		If $(v_{d}^1,\xi_{d}^1)$ and $(v_{d}^2,\xi_{d}^2)$ are both weak solutions to \eqref{eq:weak_limit_1}, then $(v=v^1_d-v^2_d,\xi_d=\xi^1_d-\xi^2_d)$ is a weak solution to \eqref{eq:weak_limit_1} with $F^1=F^2=F^4=F^5=F^7=0$ and the initial data $v_d(0)=\xi_d(0)=u(0)= D_tu(0) = \p_t\xi(0)=0$. The bound \eqref{bound} implies that $v=0$, $\xi=0$. 
	\end{proof}

	%%%%%%%%%%%%%%%%%%%%%%%%%%%%%%%%%%%%%%%%%%%%%%
	\subsection{Pressure Estimates}
	%%%%%%%%%%%%%%%%%%%%%%%%%%%%%%%%%%%%%%%%%%%%%%

	Suppose that we have established the weak solution $(v_{d}, \xi_{d}) \in L^\infty H^0 \times L^\infty H^1$ such that $(\p_tv_{d}, \p_t\xi_{d}) \in L^\infty H^0 \times \left( L^\infty H^1 \cap L^2H^{3/2-\alpha} \right)$ in Theorem \ref{thm:linear_low}. We now introduce the pressure by treating it as a Lagrange multiplier and simultaneously get higher regularity of $(v_{d}, \xi_{d})$ to show that the weak solution coupled with the pressure is a strong solution.
	
	Before presenting the main theorem, we establish a lemma concerning the $\|\cdot\|_{1,\Sigma}$ norm defined by \eqref{eq:inner_product}. 
	
	\begin{lemma}{\label{lem:lemma1}}
		Suppose that $\|\eta^{k}\|\lesssim\vert \vert \eta\vert \vert_{L^{\infty}W^{3-\frac{1}{q_{+}},q_{+}}}\leq \delta\ll1$. We have 
		\begin{align}
			\|\p_{1}\mathcal{R}_{z}(\p_{1}\zeta_{0},\p_{1}\eta^{k})\|_{L_{t}^{\infty}W^{1-\frac{1}{q_{+}},q_{+}}}\lesssim\vert \vert \partial_{1}\mathcal{R}_{z}(\partial_{1}\zeta_{0},\partial_{1}\eta)\vert \vert_{L_{t}^{\infty}W^{1-\frac{1}{q_{+}},q_{+}}}\lesssim \delta.
		\end{align}
	\end{lemma}
	
	\begin{proof}
		
		Using \cite[Theorem A.1]{GT2020}, we have
		
		\[
		\begin{aligned}
			\vert \vert \partial_{1}\mathcal{R}_{z}(\partial_{1}\zeta_{0},\partial_{1}\eta)\vert \vert_{L_{t}^{\infty}W^{1-\frac{1}{q_{+}},q_{+}}}\lesssim& \vert\vert \partial_{1}^{2}\zeta_{0}\partial_{1}\eta\vert\vert_{L_{t}^{\infty}W^{1-\frac{1}{q_{+}},q_{+}}}+\vert \vert \partial_{1}^{2}\eta\vert \vert_{W^{1-\frac{1}{q_{+}},q_{+}}}\notag\\\lesssim& \vert \vert \zeta_{0}\vert \vert_{W^{3,q_{+}}}\vert \vert \eta\vert \vert_{W^{2-\frac{1}{q_{+}},q_{+}}}+\vert \vert \eta \vert \vert_{W^{3-\frac{1}{q_{+}},q_{+}}}\lesssim \delta
		\end{aligned}
		\]
		
		\noindent which finishes the proof.
	\end{proof}
	
	We then introduce the following lemma which is a simple corollary of Lemma \ref{lem:lemma1}.  
	
	\begin{lemma}{\label{cor:cor1}}
		Suppose that $\vert \vert \eta\vert \vert_{L^{\infty}W^{3-\frac{1}{q_{+}},q_{+}}}\leq \delta\ll1$ for a sufficiently small $\delta$ to be determined. Then the following inequality holds for any $\phi\in H^{1}(\Sigma)$.
		\[
		(\phi,\phi)_{1,\Sigma}\geq C(\delta)\vert \vert \phi\vert \vert_{H^{1}}.
		\]
	\end{lemma}
	
	\begin{proof}
		Using the definition of $(\cdot)_{1,\Sigma}$ norm, we have the following computation:
		\[
		(\phi,\phi)_{1,\Sigma}=\int_{-\ell}^{\ell}g\phi^2+\sigma\frac{(\p_1\phi)^2}{(1+|\p_1\zeta_0|^2)^{3/2}}+\sigma\int_{-\ell}^{\ell}\mathcal{R}_{z}(\partial_{1}\zeta_{0},\partial_{1}\eta)(\partial_{1}\phi)^2.\label{eq:cor11}
		\]
		\noindent From Lemma \ref{lem:lemma1}, we have:
		\[
		\vert \vert \mathcal{R}_{z}(\partial_{1}\zeta_{0},\partial_{1}\eta)\vert \vert_{L^{\infty}}\lesssim \vert \vert \mathcal{R}_{z}(\partial_{1}\zeta_{0},\partial_{1}\eta)\vert \vert_{W^{2-\frac{1}{q_{-}},q_{-}}}\lesssim \vert \vert \eta\vert \vert_{W^{3-\frac{1}{q_{-}},q_{-}}}\lesssim \delta.
		\]
		\noindent Therefore, we can choose a sufficiently small $\delta$ such that $
		\vert \vert \mathcal{R}_{z}(\partial_{1}\zeta_{0},\partial_{1}\eta)\vert \vert_{L^{\infty}}\lesssim \frac{1}{2}\inf_{x\in (-\ell,\ell)}\frac{1}{(1+\vert \partial_{1}\zeta_{0}\vert^{2})^{\frac{3}{2}}}$. Then we use this relation in the equation \eqref{eq:cor11} to show that:
		\[
		(\phi,\phi)_{1,\Sigma}\geq \int_{-\ell}^{\ell}g\phi\phi+\frac{1}{2}\sigma\frac{\p_1\phi\p_1\phi}{(1+|\p_1\zeta_0|^2)^{3/2}}dx-\frac{1}{2}\inf_{x\in (-\ell,\ell)}\frac{1}{(1+\vert \partial_{1}\zeta_{0}\vert^{2})^{\frac{3}{2}}}\|\phi\|_{H^{1}}^{2}\gtrsim \vert \vert \phi\vert \vert_{H^{1}}
		\]
		\noindent which finishes the proof.
	\end{proof}
	
	\begin{theorem}\label{thm:pressure}
		Suppose that we have the weak solution $( v_{d},  \xi_{d}) \in L^\infty H^0 \times \left( L^2 H^1 \cap L^2H^{3/2-\alpha} \right)$ to the system \eqref{eq:weak_limit_1} on the finite time interval $[0, T]$, $j=0, 1$ .  Then there exists a unique pressure $q_{d} \in L^\infty ([0, T]; H^0(\Om)) \cap L^2 ([0, T]; W^{1, q_-})$, such that
		\begin{equation}\label{est:avg_q1}
			\begin{aligned}
				\|q_{d}\|_{L^2 H^0}^2 \lesssim&\|\p_tv_{d}\|_{L_{t}^2H^0}^2  +\|\xi_{d}\|_{L_{t}^2 H^1}^2 + \|v_{d}\|_{L_{t}^2H^1}^2 + \|[v_{d}\cdot \mathcal{N}]_\ell\|_{L^2_t}^2\\
				&+\|[F^7]_\ell\|_{L^2_t}^2 + \|F^1\|_{L^2L^{q_-}}^2 + \|F^4\|_{L^2W^{1-1/q_-,q_-}}^2 + \|F^5\|_{L^2W^{1-1/q_-,q_-}}^2
			\end{aligned}
		\end{equation}
		and
		\begin{equation}\label{est:avg_q2}
			\begin{aligned}
				\|q_{d}\|_{L^\infty H^0}^2
				&\lesssim\|\p_tv_{d}\|_{L^\infty H^0}^2 +\|\xi_{d}\|_{L^\infty H^1}^2 + \|v_{d}\|_{L^\infty H^1}^2 +\|[v_{d}\cdot \mathcal{N}]_\ell\|_{L^\infty_t}^2+\\
				&\quad+\|[F^7]_\ell\|_{L^\infty_t}^2 + \|F^1-F^4-F^5\|_{L^\infty (\mathcal{H}^1)^\ast}^2.
			\end{aligned}
		\end{equation}
		\noindent Moreover, after deriving $q_{d}$, we may apply the ODE \eqref{eq:ODE} to recover $(v,q,\xi)$, together with the following elliptic estimate.
		\begin{equation}\label{est:diss_1}
			\begin{aligned}
				&\| v_{d}\|_{L^2W^{2,q_-}}^2 + \|q_{d}\|_{L^2W^{1,q_-}}^2 + \|\xi_{d}\|_{L^2W^{3-1/q_-,q_-}}^2 \\
				&\lesssim  \|\p_tv_{d}\|_{L^2H^0}^2 +\|\xi_{d} \|_{L^2H^1}^2 + \|v_{d}\|_{L^2H^1}^2 + \|\xi_{d}\|_{L^2H^{3/2-\alpha}}^2 + \|[v_{d}\cdot \mathcal{N}]_\ell\|_{L^2_t}^2\\
				&\quad + \|F^1\|_{L^2L^{q_-}}^2 + \|F^4\|_{L^2W^{1-1/q_-,q_-}}^2 + \|F^5\|_{L^2W^{1-1/q_-,q_-}}^2 + \|[F^7]_\ell\|_{L_t^{2}}^2.
			\end{aligned}
		\end{equation}
	\end{theorem}
	\begin{proof}
		\
		\paragraph{\underline{Step 1 -- Construction of Pressure $\overset{\circ}q_{d}$}}
		
		We state the weak formulation of the system \eqref{eq:weak_limit_1}:
		\begin{equation}{\label{eq:less_weak}}
			\begin{aligned}
				& (\p_tv_{d}, w)_{\mathcal{H}^0}+((v_{d},w))+ (\xi_{d}, w\cdot\mathcal{N})_{1,\Sigma}+ [v_{d}\cdot\mathcal{N},w\cdot\mathcal{N}]_\ell\\
				&=\int_\Om F^1\cdot wJ-\int_{-\ell}^{\ell} F^4\cdot w-\int_{\Sigma_s}F^5(w\cdot\tau)J -[F^7,w\cdot\mathcal{N}]_\ell+(\p_{t}(Rv),w)_{\mathcal{H}^{0}}+((Rv,w))
			\end{aligned}
		\end{equation}
		for any $w\in\mathcal{W}_{\sigma}$. Define the functional $\Lambda_t$ so that $\Lambda_t(w)$ is defined as the difference between the left-hand side and the right-hand side of \eqref{eq:less_weak} with $w\in\mathcal{W}_{\sigma}(t)$. By the energy estimate established in Lemma \ref{elemma}, we obtain that $\Lambda_{t}\in \mathcal{W}^{\ast}$ and $\Lambda_t(w)=0$ for all $w\in\mathcal{W}_\sigma(t)$.  By \cite[Theorem 4.6]{GT18}, there exists a unique $\overset{\circ}q_{d}(t)\in\mathring{H}^0(t)$ such that $(\overset{\circ}q_{d}(t),\dive_{\mathcal{A}}w)_{\mathcal{H}^0}=\Lambda_t(w)$ for all $w\in\mathcal{W}(t)$, which is equivalent to
		\begin{equation}\label{eq:weak_pressure_1}
			\begin{aligned}
				&(\p_tv_{d}, w)_{\mathcal{H}^0} + ((v_{d},w)) + (\xi_{d}, w\cdot\mathcal{N})_{1,\Sigma}-(\overset{\circ}q_{d},\dive_{\mathcal{A}}w)_{\mathcal{H}^0}
				+[v_{d}\cdot \mathcal{N},w\cdot\mathcal{N}]_\ell\\
				&=\int_\Om F^1\cdot wJ-\int_{-\ell}^{\ell} F^4\cdot w-\int_{\Sigma_s}F^5(w\cdot\tau)J -[F^7,w\cdot\mathcal{N}]_\ell+(\p_{t}(Rv),w)_{\mathcal{H}^{0}}+((Rv,w)).
			\end{aligned}
		\end{equation}
		Moreover, by Ne$\check{\text{c}}$as inequality (for instance, see \cite{boyer_fabrie}), we have
		\begin{equation}\label{est:pressure_3}
			\|\overset{\circ}q_{d}\|_0^2\lesssim \|v\|_{1}^{2}+ \|v_{d}\|_1^2+\|\p_tv_{d}\|^{2}_0 +\|\xi_{_{d}}\|_{1}^2 + \|F^1-F^{4}-F^{5} \|^{2}_{(\mathcal{H}^1)^\ast}+\sup_{0\leq s\leq t}\|\xi_{d}(s)\|_{H^{1}}^{2}+|[F^{7}]_{\ell}|^{2}.
		\end{equation}
		
		\paragraph{\underline{Step 2 -- Elliptic Estimates for $(v_d,\overset{\circ}q_{d},\xi_d)$}}
		For a.e. $t\in[0,T]$, the triple $(v_{d}(t),\overset{\circ}q_{d}(t),\xi_{d}(t))$ forms the unique weak solution to the elliptic problem.
		\begin{equation}{\label{eq:e-1}}
			\left\{
			\begin{aligned}
				&-\mu \Delta_{\mathcal{A}} v_{d} + \nabla_{\mathcal{A}} \overset{\circ}q_{d} = -\p_t v_{d}-\p_{t}(Rv)-\operatorname{div}_{\mathcal{A}}\nabla_{\mathcal{A}}(Rv) + F^1 \quad &\text{in}& \ \Om,\\
				& \dive_{\mathcal{A}} v_{d} =0 \quad &\text{in}& \ \Om,\\
				& S_{\mathcal{A}}(\overset{\circ}q_{d},v_{d})\mathcal{N}\cdot \mathcal{T}=-\nabla_{\mathcal{A}}(Rv)\mathcal{N}\cdot \mathcal{T}+F^{4}(u,p,\eta)\cdot \mathcal{T} \quad &\text{on}&~  \Sigma,\\
				& v_{d}\cdot \mathcal{N} = \p_t\xi_{d}-(Rv)\cdot \mathcal{N}-\p_{t}u_{1}\p_{1}\xi_{d} \quad &\text{on}& \ \Sigma,\\
				&(S_{\mathcal{A}}(\overset{\circ}q_{d},v_{d})\nu+\nabla_{\mathcal{A}}(Rv)-\beta v_{d})\cdot \tau=F^{5}(u,\eta,p) \quad &\text{on}& \ \Sigma_s,\\
				& v_{d} \cdot \nu =0 \quad &\text{on}& \ \Sigma_s,
			\end{aligned}
			\right.
		\end{equation}
		where $\mathcal{T} = (1, \p_1(\zeta_0 + \eta))$ is the tangential vector of free surface.
		
		Since $F^1(t)\in L_{t}^{2}L^{q_-}(\Om)$, $F^{4}(t)\in L_{t}^{2}W^{1-1/q_-, q_-}$ , $F^5(t)\in L_{t}^{2}W^{1-1/q_-, q_-}$, and according to the elliptic theory of \cite[Theorem 4.7]{GT2020}, the following elliptic problem
		\begin{equation}{\label{eq:e2}}
			\left\{
			\begin{aligned}
				&-\mu \Delta_{\mathcal{A}} v_{d} + \nabla_{\mathcal{A}} \overset{\circ}q_{d} = -\p_t v_{d}+ F^1 \quad &\text{in}& \ \Om,\\
				& \dive_{\mathcal{A}} v_{d} = 0 \quad &\text{in}& \ \Om,\\
				& -\mu \mathbb{D}_{\mathcal{A}} v_{d}\mathcal{N} \cdot \mathcal{T} = F^{4} \cdot \mathcal{T} \quad &\text{on}& \ \Sigma,\\
				& v_{d}\cdot \mathcal{N} = \p_t\xi_{d}+F^{4_{+}} \quad &\text{on}& \ \Sigma,\\
				&(-\mu \mathbb{D}_{\mathcal{A}} v_{d} \nu - \beta v_{d} ) \cdot \tau =F^{5} \quad &\text{on}& \ \Sigma_s,\\
				& v_{d} \cdot \nu =0 \quad &\text{on}& \ \Sigma_s,
			\end{aligned}
			\right.
		\end{equation}
		admits a unique strong solution $(v_{d}, \overset{\circ}q_{d})$ obeying
		\begin{equation}\label{est:bound_vq}
			\begin{aligned}
				&\|v_{d}(t)\|_{L_{t}^{2}W^{2,q_-}}^2+\|\nabla \overset{\circ}q_{d}(t)\|_{L_{t}^{2}L^{q_-}}^2 + \|v_{d}(t)\|_{L_{t}^{2}H^{1+\varepsilon_-/2}}^2 + \|\overset{\circ}q_{d}(t)\|_{L_{t}^{2}H^{\varepsilon_-/2}}^2\\
				&\lesssim \|\p_tv_{d}(t)\|_{L_{t}^{2}L^{q_-}}^2 +\|\p_t\xi_{d}(t)\|_{L_{t}^{2}W^{2-1/q_-,q_-}}^2+\|F^1\|_{L_{t}^{2}L^{q_a}}^2+\|F^{4}\|_{L_{t}^{2}W^{1-1/q_-,q_-}}^2\\
				& +\|F^5\|_{L_{t}^{2}W^{1-1/q_-,q_-}}^2+\|F^{4_{+}}\|_{L_{t}^{2}W^{2-\frac{1}{q_{-}},q_{-}}}.
			\end{aligned}
		\end{equation}
		\noindent In other words, the linear map $\Phi$ induced by the elliptic system 
		\[
		\Phi: X\ni(v_d,\overset{\circ}q_d)\rightarrow (F^{1},F^{4},F^{4_{+}},F^{5})\in Y,
		\]
		has a bounded inverse (The definitions of $X$ and $Y$ are given in Appendix \eqref{eq:def1} and \eqref{eq:def2}).
		
		Therefore, the system \eqref{eq:e-1} can be considered as a perturbation of \eqref{eq:e2}. Choosing $F^{i}$ to be the same in \eqref{eq:e-1} and \eqref{eq:e2} for $i=1,4,5$, and $F^{4_{+}}=0$, we denote the resulting perturbation by $\Phi_{\varepsilon}$, which has the following form:
		\[
		\Phi_{\varepsilon}(v_{d},\overset{\circ}q_{d})=\Phi(v_{d},\overset{\circ}q_{d})+(L_{1},L_{2},L_{3},L_{4}).
		\]
		\noindent where
		\[
		L_{1}=\p_{t}(Rv)-\operatorname{div}_{\mathcal{A}}\nabla_{\mathcal{A}}(Rv),\
		L_{2}=-\nabla_{\mathcal{A}}(Rv)\mathcal{N}\cdot \mathcal{T},\
		L_{3}=-(Rv)\cdot \mathcal{N}-\p_{t}u_{1}\p_{1}\xi_{d},\
		L_{4}=(\nabla_{\mathcal{A}}(Rv)\nu)\cdot \tau.
		\]
		\noindent Using H\"older's inequality and trace theorem, we have the following estimates
		\[
		\begin{aligned}
			& \|L_{1}\|_{L_{t}^{2}L^{q_{-}}}\lesssim T\|\p_{t}{\eta}\|_{L_{t}^{2}W^{3-\frac{1}{q_{-}},q_{-}}}\|v_{d}\|_{L_{t}^{2}W^{2,q_{-}}}+T\|\p_{t}^{2}{\eta}\|_{L_{t}^{2}H^{\frac{3}{2}-\alpha}}\|v_{d}\|_{L_{t}^{2}W^{2,q_{-}}}+\|\p_{t}\eta\|_{L_{t}^{\infty}H^{\frac{3}{2}-\alpha}}\|v_{d}\|_{L_{t}^{2}W^{2,q_{-}}},\\
			& \|L_{2}\|_{L_{t}^{2}W^{1-\frac{1}{q_{-}},q_{-}}(\Sigma)}\lesssim T\|\p_{t}\eta\|_{L_{t}^{\infty}W^{3-\frac{1}{q_{-}},q_{-}}}\|v_{d}\|_{L_{t}^{2}W^{2,q_{-}}},\\
			&\|L_{3}\|_{L_{t}^{2}W^{2-\frac{1}{q_{-}},q_{-}}(\Sigma)}\lesssim T\|\p_{t}\eta\|_{L_{t}^{2}W^{3-\frac{1}{q_{-}},q_{-}}}\|v_{d}\|_{L_{t}^{2}W^{2,q_{-}}}+\|\p_{t}u\|_{L_{t}^{\infty}H^{1+\frac{\varepsilon_{-}}{2}}}\|\xi_{d}\|_{L_{t}^{2}W^{3-\frac{1}{q_{-}},q_{-}}},\\
			&\|L_{4}\|_{L_{t}^{2}W^{1-\frac{1}{q_{-}},q_{-}}(\Sigma)}\lesssim T\|\p_{t}\eta\|_{L_{t}^{\infty}W^{3-\frac{1}{q_{-}},q_{-}}}\|v_{d}\|_{L_{t}^{2}W^{2,q_{-}}}.
		\end{aligned}
		\]
		\noindent
		Using the estimates above, there is an $\varepsilon_{0}$ such that $\Phi_{\varepsilon}$ also has a bounded inverse when $\varepsilon\leq \varepsilon_{0}$, given that $T\vert \vert \eta\vert \vert_{L_{t}^{\infty}W^{3-\frac{1}{q_{-}},q_{-}}}+T\|\p_{t}\eta\|_{L_{t}^{2}W^{3-\frac{1}{q_{-}},q_{-}}}+\|\p_{t}\eta\|_{L_{t}^{\infty}H^{\frac{3}{2}-\alpha}}\leq \delta \leq\varepsilon_{0}$. Therefore, the solution to \eqref{eq:e-1} also obeys the same elliptic estimate as \eqref{eq:e2}:
		\begin{equation}\label{est:bound_vq1}
			\begin{aligned}
				&\|v_{d}(t)\|_{L_{t}^{2}W^{2,q_-}}^2+\|\nabla \overset{\circ}q_{d}(t)\|_{L_{t}^{2}L^{q_-}}^2 + \|v_{d}(t)\|_{L_{t}^{2}H^{1+\varepsilon_-/2}}^2 + \|\overset{\circ}q_{d}(t)\|_{L_{t}^{2}H^{\varepsilon_-/2}}^2\\
				&\lesssim \|\p_tv_{d}(t)\|_{L_{t}^{2}L^{q_-}}^2 +\|\p_t\xi_{d}(t)\|_{L_{t}^{2}W^{2-1/q_-,q_-}}^2+\|F^1\|_{L_{t}^{2}L^{q_-}}^2+\|F^{4}\|_{L_{t}^{2}W^{1-1/q_-,q_-}}^2
				+\|F^5\|_{L_{t}^{2}W^{1-1/q_-,q_-}}^2.
			\end{aligned}
		\end{equation}
		{Then using Sobolev embedding $W^{2-1/q_a,q_a}(-\ell, \ell) \hookrightarrow H^{3/2-\alpha}(-\ell, \ell)$ and Poincar\'e inequality, the elliptic estimate \eqref{est:bound_vq1} can  be rewritten as
			\[
			\begin{aligned}
				&\|v_{d}(t)\|_{L_{t}^{2}W^{2,q_-}}^2+\|\nabla \overset{\circ}q_{d}(t)\|_{L_{t}^{2}L^{q_-}}^2 + \|v_{d}(t)\|_{L_{t}^{2}H^{1+\varepsilon_-/2}}^2 + \|\overset{\circ}q_{d}(t)\|_{L_{t}^{2}H^{\varepsilon_-/2}}^2\\
				&\lesssim \|\p_tv_{d}(t)\|_{L_{t}^{2}L^{q_-}}^2 +\|\p_t\xi_{d}(t)\|_{L_{t}^{2}H^{\frac{3}{2}-\alpha}}^2+\|F^1\|_{L_{t}^{2}L^{q_-}}^2+\|F^{4}\|_{L_{t}^{2}W^{1-1/q_-,q_-}}^2 +\|F^5\|_{L_{t}^{2}W^{1-1/q_-,q_-}}^2.
			\end{aligned}
			\]
		}
		
		\paragraph{\underline{Step 3 -- Elliptic Estimates for $\xi_{d}$}}
		The weak formulation of the stress-gravity-capillary equation
		\begin{align}\label{eq:sgc}
			\left\{
			\begin{aligned}
				&g(\xi_{d}) - \sigma \p_1 \left( \frac {\p_1(\xi_{d} )} {(1+|\p_1\zeta_0|^2)^{3/2}}\right) -\sigma \partial_{1}(\mathcal{R}_{z}(\partial_{1}\zeta_{0},\partial_{1}\eta)\partial_{1}\xi_{d})\\
				&\qquad\qquad\qquad\qquad\qquad\qquad= \left( S_{\mathcal{A}} (\overset{\circ}q_{d}, v_{d})\mathcal{N}\right)\cdot \frac{\mathcal{N}} {|\mathcal{N}|^2}+F^{4}\cdot \mathcal{N}
				+\nabla_{\mathcal{A}}(Rv_{d})\mathcal{N}\cdot \frac{\mathcal{N}}{|\mathcal{N}|^{2}} \quad \text{in} \ (-\ell, \ell),\\
				&\mp\sigma\frac{\p_1(\xi_{d})}{(1+|\p_1\zeta_0|^2)^{3/2}}(\pm\ell)\pm\sigma \mathcal{R}_{z}(\partial_{1}\zeta_{0},\partial_{1}\eta)\partial_{1}\xi_{d}({\pm\ell})=\kappa v_{d}\cdot \mathcal{N}(\pm\ell)-F^7(\pm\ell),
			\end{aligned}
			\right.
		\end{align}
		is given by
		\begin{equation}\label{eq:w_capillary}
			\begin{aligned}
				&\int_{-\ell}^\ell {g} (\xi_{d}) \vartheta + \sigma \frac{\p_1(\xi_{d}) \p_1\vartheta}{(1+|\p_1\zeta_0|^2)^{3/2}} +\sigma \int_{-\ell}^{\ell}(\mathcal{R}_{z}(\partial_{1}\zeta_{0},\partial_{1}\eta)\partial_{1}\xi_{d} ds)\partial_{1}\vartheta\\
				&=   \int_{-\ell}^\ell S_{\mathcal{A}}(\overset{\circ}q_{d},v_{d})\mathcal{N} \cdot \frac{\mathcal{N}}{|\mathcal{N}|^2} \vartheta+\int_{-\ell}^{\ell}\nabla_{\mathcal{A}}(Rv)\mathcal{N}\cdot \frac{\mathcal{N}}{|\mathcal{N}|^{2}}
				-[ v_{d}\cdot \mathcal{N}, \vartheta]_\ell
				+[F^7, \vartheta]_\ell
			\end{aligned}
		\end{equation}
		for each $\vartheta \in H^1((-\ell, \ell))$. Just like Step 2, system \eqref{eq:sgc} can be viewed as a linear perturbation of the elliptic system:
		\begin{align}\label{eq:sgc1}
			\left\{
			\begin{aligned}
				& g(\xi_{d}) - \sigma \p_1 \left( \frac {\p_1(\xi_{d} )} {(1+|\p_1\zeta_0|^2)^{3/2}}\right)= \left( S_{\mathcal{A}} (\overset{\circ}q_{d}, v_{d})\mathcal{N}\right)\cdot \frac{\mathcal{N}} {|\mathcal{N}|^2}+F^{4}\cdot \mathcal{N}
				+\nabla_{\mathcal{A}}(Rv_{d})\mathcal{N}\cdot \frac{\mathcal{N}}{|\mathcal{N}|^{2}} \quad \text{in} \ (-\ell, \ell),\\
				& \mp\sigma\frac{\p_1(\xi_{d})}{(1+|\p_1\zeta_0|^2)^{3/2}}(\pm\ell)=\kappa v_{d}\cdot \mathcal{N}(\pm\ell)-F^7(\pm\ell),
			\end{aligned}
			\right.
		\end{align}
		By the elliptic estimates of \eqref{eq:sgc1},
		we have
		\begin{align}\label{est:bound_theta1}
			\begin{aligned}
				\|\xi_{d}(t)\|_{L^{2}W^{3-1/q_-,q_-}}^2 & \lesssim \|\overset{\circ}q_{d}\|_{L^{2}W^{1-1/q_-, q_-}(-\ell, \ell)}^2 + \|v_{d}\|_{L^{2}W^{2-1/q_-, q_-}(-\ell, \ell)}^2   + \vert \vert[F^7]_\ell\vert \vert_{L_{t}^{2}}^{2}.
			\end{aligned}
		\end{align}
		\noindent For the perturbation terms, by H\"older's inequality, we have
		\[
		\|\p_{1}(\mathcal{R}_{z}(\p_{1}\zeta_{0},\p_{1}\eta))\p_{1}\xi_{d}\|_{L_{t}^{2}W^{3-\frac{1}{q_{-}},q_{-}}}\lesssim \|\eta\|_{L_{t}^{\infty}W^{3-\frac{1}{q_{-}},q_{-}}}\|\xi_{d}\|_{L_{t}^{2}W^{3-\frac{1}{q_{-}},q_{-}}},
		\]
		\noindent and similarly
		\[
		[\mathcal{R}_{z}(\p_{1}\zeta_{0},\p_{1}\eta)\p_{1}\xi_{d}]_{\ell}\lesssim \|\eta\|_{L_{t}^{\infty}W^{3-\frac{1}{q_{-}},q_{-}}}\|\xi_{d}\|_{L_{t}^{2}W^{3-\frac{1}{q_{-}},q_{-}}}.
		\]
		\noindent Therefore, using the similar argument as in Step 2 by regarding \eqref{eq:w_capillary} as a perturbation of \eqref{eq:sgc1}, and the smallness of $\vert \vert \eta\vert \vert_{L^{2}W^{3-\frac{1}{q_{-}},q_{-}}}$, we deduce that $\xi$ is a strong solution to \eqref{eq:sgc} which obeys \eqref{est:bound_theta1}.
		
		\paragraph{\underline{Step 4 --The Estimate of $\bar{q}_{d}$}}
		
		{From the previous two steps, in order to complete the elliptic estimate, it remains to derive the boundedness for $\bar{q}_{d}$ which is the mean value of $q_{d}$. We aim to derive the estimate for $\bar{q}_{d}$ from the capillary equation
			\[
			S_{\mathcal{A}}(q_{d},v_{d})\mathcal{N}=\mathcal{K}(\xi)\mathcal{N}+(\p_{1}(\mathcal{R}_{z}(\p_{1}\zeta_{0},\p_{1}\eta)\p_{1}\xi_{d}))\mathcal{N}+F^{4}+\nabla_{\mathcal{A}}(Rv)\mathcal{N}.
			\]
			Testing this equation with the function $\psi=M\nabla\tilde{\phi}$ with $\tilde{\phi}$ solving the following system
			\begin{align}{\label{equ:ellip_d}}
				\Delta\tilde{\phi}=g\frac{2\ell}{|\Omega|}\bar{q}_{d} ~\operatorname{in}~\Omega,\quad\tilde{\phi}\cdot\nu=\frac{g\bar{q}_{d}}{|\mathcal{N}_{0}|}~\operatorname{on}~\Sigma,\quad\tilde{\phi}\cdot \nu=0~\operatorname{on}~\Sigma_{s},
			\end{align}
			and using integration by parts, we obtain
			\begin{align}{\label{equ:ODE_1}}
				2g\ell|\bar{q}_{d}|^{2}=(\xi_{d},\psi\cdot \mathcal{N})_{1,\Sigma}-(S_{\mathcal{A}}(q_{d}-\bar{q}_{d},v_{d}+Rv)\mathcal{N},\psi)_{L^{2}}+\kappa[v\cdot \mathcal{N},\psi\cdot \mathcal{N}]_{\ell}-[F^{7},\psi\cdot \mathcal{N}]_{\ell}.
			\end{align}
			For the term $(S_{\mathcal{A}}(q_{d}-\bar{q}_{d},v_{d}+Rv)\mathcal{N},\psi)_{L^{2}}$, we obtain the following from equation $\p_{t}v_{d}+\operatorname{div}_{\mathcal{A}}(S_{\mathcal{A}}  (q_{d},v_{d}+Rv))=F^{1}$ and the fact that $v_{d}+Rv=\p_{t}v$
			\begin{align}{\label{equ:ODE_3}}
				\begin{aligned}
					(S_{\mathcal{A}}(q_{d}-\bar{q}_{d},\p_{t}v)\mathcal{N},\psi)_{L^{2}}=-\int_{\Omega}(\p_{t}^{2}v)\psi J+\int_{\Omega} JF^{1}\psi+\int_{\Omega} S_{\mathcal{A}}(q_{d}-\bar{q}_{d},\p_{t}v):D_{\mathcal{A}^{n}}\psi J\\
					-\int_{\Sigma_{s}}(S_{\mathcal{A}^{n}}(q_{d}-\bar{q}_{d},\p_{t}v)\nu\cdot \tau)\psi\cdot \tau J.
				\end{aligned}
			\end{align}
			The elliptic estimate for \eqref{equ:ellip_d} yields that
			\[
			\|\psi\|_{H^{1}(\Omega)}\lesssim |\bar{q}_{d}|.
			\]
			Therefore, applying this result and \eqref{equ:ODE_3} to equation \eqref{equ:ODE_1}, and using trace theorem, we obtain
			\[\begin{aligned}
				2g\ell|\bar{q}_{d}|^{2}\lesssim& g^{2}|\bar{q}_{d}|\int_{-\ell}^{\ell}|\xi_{d}|dx+|\bar{q}_{d}|(\|v_{d}\|_{H^{1}}+\|\p_{t}v_{d}\|_{L^{2}}+\|q_{d}-\bar{q}_{d}\|_{L^{2}}+\|F^{1}\|_{L^{q_-}})\notag\\
				&+|[v_{d}\cdot \mathcal{N}]_{\ell}||\bar{q}_{d}|+|[F^{7}]_{\ell}||\bar{q}_{d}|,
			\end{aligned}\]
			\noindent which implies that
			\begin{align}{\label{equ:ODE_2}}
				2g\ell|\bar{q}_{d}|\lesssim g^{2}\int_{-\ell}^{\ell}|\xi_{d}|dx+(\|v_{d}\|_{H^{1}}+\|\p_{t}v_{d}\|_{L^{2}}+\|q_{d}-\bar{q}_{d}\|_{W^{1,q_{-}}}+\|F^{1}\|_{L^{q_-}})
				&+[v_{d}\cdot \mathcal{N}]_{\ell}+[F^{7}]_{\ell}.
			\end{align}
			Therefore, we have
			\[
			|\bar{q}_{d}|_{L_{t}^{2}}\lesssim \|\xi_{d}\|_{L^{2}H^{1}}+(\|v_{d}\|_{L_{t}^{2}H^{1}}+\|\p_{t}v_{d}\|_{L_{t}^{2}L^{2}}+\|q_{d}-\bar{q}_{d}\|_{L_{t}^{2}L^{2}}+\|F^{1}\|_{L_{t}^{2}L^{q_-}})
			+\|[v_{d}\cdot \mathcal{N}]_{\ell}\|_{L_{t}^{2}}+\||[F^{7}]_{\ell}|\|_{L_{t}^{2}},
			\]
			\noindent and
			\[
			|\bar{q}_{d}|_{L_{t}^{\infty}}\lesssim \|\xi_{d}\|_{L^{\infty}H^{1}}+(\|v_{d}\|_{L_{t}^{\infty}H^{1}}+\|\p_{t}v_{d}\|_{L_{t}^{\infty}L^{2}}+\|q_{d}-\bar{q}_{d}\|_{L_{t}^{\infty}L^{2}}+\|F^{1}\|_{L_{t}^{\infty}(\mathcal{H}^1)^{*}})+\|[v_{d}\cdot \mathcal{N}]_{\ell}\|_{L_{t}^{\infty}}+\||[F^{7}]_{\ell}|\|_{L_{t}^{\infty}}.
			\]
			
			Combining Step 2 -- Step 4, we obtain the elliptic estimate \eqref{est:diss_1}, and the estimates \eqref{est:avg_q1} and \eqref{est:avg_q2} for $q_{d}$.  
		}
		\paragraph{\underline{Step 5 -- Strong Solution to \eqref{eq:quasi_linear_{s}}}}
		We now use the weak formulation with pressure to derive the strong formulation. Since we already have the estimate for $v_{d}$, we then solve the ODE \eqref{eq:ODE} to recover $(v,q,\xi)$ and use $(D_{t}v,\p_{t}\xi,\p_{t}p)$ to substitute $(v_{d},\xi_{d},q_{d})$. We first integrate $((\p_{t}v,\psi))-(\p_{t}q,\dive_{\mathcal{A}}\psi)_{\mathcal{H}^0}$ by parts to deduce that
		\begin{align}\label{eq:w_ibp1}
			\begin{aligned}
				&(\p_t^{2}v,\psi)_{\mathcal{H}^0}-\mu(\Delta_{\mathcal{A}}\p_{t}v,\psi)_{\mathcal{H}^0}+(\nabla_{\mathcal{A}}\p_{t}q, \psi)_{\mathcal{H}^0}+(\p_{t}\xi,\psi\cdot\mathcal{N})_{1,\Sigma_{0}}+(\mathcal{R}_{z}(\partial_{1}\zeta_{0},\partial_{1}\eta)\partial_{1}\partial_{t}\xi,\partial_{1}(\psi\cdot\mathcal{N}))_{L^{2}}\\&+ [\p_{t}v\cdot \mathcal{N}^{n},\psi\cdot\mathcal{N}]_{\ell}
				=\int_{\Om}F^1\cdot \psi J^{n}+\int_{-\ell}^\ell S_{\mathcal{A}}(\p_{t}q, \p_{t}v)\mathcal{N} \cdot \psi\\
				&\quad+\int_{\Sigma_s}\left(S_{\mathcal{A}}(\p_{t}q, \p_{t}v)\nu\cdot\tau\right) (\psi\cdot\tau)J-\beta\int_{\Sigma_s}(\p_{t}v\cdot\tau)(\psi\cdot\tau)J^{n}-[\psi\cdot\mathcal{N},F^7]_\ell.
			\end{aligned}
		\end{align}
		If we restrict $\psi \in C_c^1(\Om) \subset \mathcal{W}(t)$, all boundary terms in \eqref{eq:w_ibp1} vanish, so that $\p_t^{2}v + \nabla_{\mathcal{A}}\p_{t}q - \Delta_{\mathcal{A}}\p_{t}v= F^1$ in $\Om$. By plugging this into \eqref{eq:w_ibp1} and integrating by parts over $(-\ell, \ell)$, we have
		\begin{align}\label{eq:w_ibp2}
			\begin{aligned}
				&\bigg(g(\p_{t}\xi )-\sigma\p_1\Big(\frac{\p_1(\p_{t}\xi)}{(1+(\p_1\zeta_0)^2)^{3/2}}\Big)-\sigma\partial_{1}(\mathcal{R}_{z}(\partial_{1}\zeta_{0},\partial_{1}\eta)\partial_{1}\partial_{t}\xi),\psi\cdot\mathcal{N} \bigg)_{L^{2}} + \sigma\frac{\p_1(\p_{t}\xi)}{(1+(\p_1\zeta_0)^2)^{3/2}}(\psi\cdot\mathcal{N})\Big|_{-\ell}^\ell\\& +[\p_{t}v\cdot \mathcal{N},\psi\cdot\mathcal{N}]_{\ell}-[\mathcal{R}_{z}(\partial_{1}\zeta_{0},\partial_{1}\eta)\partial_{1}\partial_{t}\xi,\psi\cdot\mathcal{N}]_{\ell}
				=\int_{-\ell}^\ell S_{\mathcal{A}}(\p_{t}q, \p_{t}v)\mathcal{N} \cdot \psi- F^4\cdot \psi \\
				&\quad+\int_{\Sigma_s}\left(S_{\mathcal{A}}(\p_{t}q, \p_{t}v)\nu\cdot\tau\right) (\psi\cdot\tau)J-\int_{\Sigma_s}F^5(\psi\cdot\tau)J-\beta\int_{\Sigma_s}(\p_{t}v\cdot\tau)(\psi\cdot\tau)J-[\psi\cdot\mathcal{N},F^7]_\ell.
			\end{aligned}
		\end{align}
		Restricting $\psi \in \mathcal{W}_{\sigma}(t) \cap \{\psi| \psi \cdot\mathcal{N} \in C^1_c(-\ell, \ell), \psi =0 \ \text{on}\ \Sigma_s\}$, we deduce stress tensor boundary condition $S_{\mathcal{A}}(\p_{t}q,\p_{t}v)\mathcal{N}=\left(g(\p_{t}\xi)-\sigma\p_1\Big(\frac{\p_1(\p_{t}\xi)}{(1+(\p_1\zeta_0)^2)^{3/2}}\Big)-\sigma\partial_{1}(\mathcal{R}_{z}(\partial_{1}\zeta_{0},\partial_{1}\eta)\partial_{1}\partial_{t}\xi)\right)\mathcal{N}$. Here $\psi$ obeys the zero average constraint $\int_{-\ell}^\ell (\psi\cdot\mathcal{N})=0$, which results in a constant coming from the term $(\p_{t}\xi,\psi\cdot\mathcal{N})_{1,\Sigma_{0}}$ after integration by parts on $(-\ell, \ell)$. Similarly, we can restrict $\psi \in \mathcal{W}(t) \cap \{\psi | \psi|_{\Sigma_s} \in C^1_c(\Sigma_s)\}$ to deduce the boundary condition $S_{\mathcal{A}}(\p_{t}q, \p_{t}v)\nu\cdot\tau -\beta(\p_{t}v\cdot\tau) =F^{5}$ on $\Sigma_s$.
		
		Finally, the remaining boundary conditions at contact points in \eqref{eq:w_ibp2} are
		\begin{align}\label{eq:w_ibp3}
			\sigma\frac{\p_1(\p_{t}\xi )}{(1+(\p_1\zeta_0)^2)^{3/2}}(\psi\cdot\mathcal{N})\Big|_{-\ell}^\ell +[\p_{t}v\cdot\mathcal{N},\psi\cdot\mathcal{N}]_{\ell}-[\mathcal{R}_{z}(\partial_{1}\zeta_{0},\partial_{1}\eta)\partial_{1}\partial_{t}\xi,\psi\cdot\mathcal{N}]_{\ell} = -[\psi\cdot\mathcal{N},F^7]_\ell.
		\end{align}
		If we restrict $\psi \in \mathcal{W}_{\sigma}(t)$ and $\psi \cdot\mathcal{N} $ supported on $-\ell$, \eqref{eq:w_ibp3} gives the contact point condition on $-\ell$. Moreover, \eqref{eq:w_ibp3} with restriction of $\psi \in \mathcal{W}(t)$ and $\psi \cdot\mathcal{N} $ supported on $\ell$ gives the contact point condition on $\ell$.
		
		Using the definition of $v$, we derive the strong form of system \eqref{eq:quasi_linear_{s}} for $(v_{d},q_{d},\xi_{d})$. This finishes the proof. 
	\end{proof}
	
	From Theorem \ref{thm:pressure}, we have the following corollary.
	\begin{theorem}{\label{thm:pressure_s}}
		Suppose that we have the weak solution $( v_{d},  \xi_{d}) \in L^\infty H^0 \times \left( L^2 H^1 \cap L^2H^{3/2-\alpha} \right)$ to the smooth given data system \eqref{eq:weak_limit_0} on the finite time interval $[0, T]$, $j=0, 1$. Then there exists a unique pressure $q_{d} \in L^\infty ([0, T]; H^0(\Om)) \cap L^2 ([0, T]; W^{1, q_-})$ satisfying the estimate \eqref{est:avg_q1} and \eqref{est:avg_q2}. Moreover $(v_{d},q_{d},\xi_{d})$ is the strong solution to \eqref{eq:quasi_linear_{s}}, and obeys the following estimate 
		\begin{equation}\label{est:diss_100}
			\begin{aligned}
				&\| v_{d}\|_{L^2W^{2,q_-}}^2 + \|q_{d}\|_{L^2W^{1,q_-}}^2 + \|\xi_{d}\|_{L^2W^{3-1/q_-,q_-}}^2 \\
				&\lesssim  \|\p_tv_{d}\|_{L^2H^0}^2 +\|\xi_{d} \|_{L^2H^1}^2 + \|v_{d}\|_{L^2H^1}^2 + \|\xi_{d}\|_{L^2H^{3/2-\alpha}}^2 + \|[v_{d}\cdot \mathcal{N}]_\ell\|_{L^2_t}^2+ \|F^1\|_{L^2L^{q_-}}^2 + \|F^4\|_{L^2W^{1-1/q_-,q_-}}^2 \\
				&\quad + \|F^5\|_{L^2W^{1-1/q_-,q_-}}^2 + \|[F^7]_\ell\|_{L_t^{2}}^2+\|\p_{t}\eta^{n}\|^{2}_{L_{t}^{\infty}W^{3-\frac{1}{q_{-}},q_{-}}}\|v_{l}^{k}\|_{L_{t}^{2}W^{2,q_{-}}}^{2}+\|\p_{t}u^{k}\|_{L_{t}^{2}W^{2,q_{-}}}^{2}\|\p_{t}\xi_{l}^{k}\|_{L_{t}^{2}W^{3-\frac{1}{q_{-}},q_{-}}}^{2}\\
				&\quad+\|\p_{t}\eta^{n}\|_{L_{t}^{2}W^{3-\frac{1}{q_{-}},q_{-}}}^{2}\|\p_{t}\xi_{l}^{k}\|_{L_{t}^{2}W^{3-\frac{1}{q_{-}},q_{-}}}^{2}.
			\end{aligned}
		\end{equation}
	\end{theorem}
	\begin{proof}
		The proof is similar to that of Theorem \ref{thm:pressure}. We omit the details here.
	\end{proof}

	%%%%%%%%%%%%%%%%%%%%%%%%%%%%%%%%%%%%%%%%%%%%%%
	\subsection{Elliptic Estimates}
	%%%%%%%%%%%%%%%%%%%%%%%%%%%%%%%%%%%%%%%%%%%%%%

	In order to pass to the limit $n \to +\infty$ and recover the original system \eqref{eq:quasi_linear}, we need to establish $q_{+}$-elliptic estimates for the zero-order terms $(v,q,\xi)$. This is achieved in the following theorem.
	
	\begin{theorem}{\label{thm:pressure_+}}
		Suppose that $(\p_{t}v,\p_{t}q,\p_{t}\xi)\in L_{t}^{2}W^{2,q_{-}}(\Omega)\times L_{t}^{2}W^{1,q_{-}}(\Omega)\times L_{t}^{2}W^{3-\frac{1}{q_{-}},q_{-}}$, and that $(v,q,\xi)$ is the strong solution to the following equation
		\begin{equation}\label{eq:quasi_linear_2}
			\begin{cases}
				\partial_{t}^{2}v+\operatorname{div}_{\mathcal{A}^{n}}S_{\mathcal{A}^{n}}(\p_{t}v,\p_{t}q)=F^{1}(u,p,\eta)~~~&\operatorname{in}~~\Omega(t),\\
				\operatorname{div}_{\mathcal{A}^{n}}D_{t}v=0~~~&\operatorname{in}~~\Omega(t),\\
				S_{\mathcal{A}^{n}}(\p_{t}q,\p_{t}v)\mathcal{N}^{n}=g\p_{t}\xi\mathcal{N}^{n}-\sigma\partial_{1}\left(\frac{\p_1\p_t\xi}{(1+\vert \partial_{1}\zeta_{0}\vert^{2})^{3/2}}\right)\mathcal{N}^{n}\\
				\quad\quad\quad\quad\quad\quad\quad\quad\quad+\sigma\partial_{1}(\mathcal{R}_{z}(\partial_{1}\zeta_{0},\partial_{1}\eta)\partial_{1}\partial_{t}\xi)\mathcal{N}^{n}+F^{4}(u,p,\eta)~~~&\operatorname{on}~~\Sigma(t),\\
				(S_{\mathcal{A}^{n}}(\p_{t}q,\p_{t}v)\nu-\beta \p_{t}v)\cdot \tau=F^{5}(u,\eta,p)~~~&\operatorname{on}~~\Sigma_{s}(t),\\
				\p_{t}v\cdot \nu=0~~~&\operatorname{on}~~\Sigma_{s}(t),\\
				\partial_{t}^{2}\xi=\p_{t}v\cdot \mathcal{N}^{n}+u_{1}\partial_{1}\partial_{t}\xi~~~&\operatorname{on}~~\Sigma(t),\\
				\sigma(\mp \frac{\partial_{1}\p_{t}\xi}{(1+\vert \partial_{1}\zeta_{0}\vert^{2})^{\frac{3}{2}}}\pm \mathcal{R}_{z}(\partial_{1}\zeta_{0},\partial_{1}\eta)\partial_{t}\partial_{1}\xi)(\pm \ell)=\kappa (\p_{t}v\cdot \mathcal{N}^{n})(\pm \ell)-{F}^{7}.
			\end{cases}
		\end{equation}
		Given that $\int_{0}^{t}F^{1}(u,p,\eta)\in L_{t}^{\infty}L^{q_{+}}(\Omega)$, $\int_{0}^{t}F^{4}(u,p,\eta)\in L_{t}^{\infty}W^{1-\frac{1}{q_{+}},q_{+}}(\Sigma)$, $\int_{0}^{t}F^{5}(u,p,\eta)\in L_{t}^{\infty}W^{1-\frac{1}{q_{+}},q_{+}}(\Sigma)$, $\int_{0}^{t}[F^{7}(u,\eta,p)]_{\ell}\in L_{t}^{\infty}$, 
		and $F^{1,0}(u_{0},p_{0},\eta_{0})\in L^{q_{+}}(\Omega)$,
		where the definitions of \(F^{i}\) are given in Appendix \ref{sec:dive_forcing}, and the definition of \(F^{1,0}\) is given by \eqref{def:forcing_0} in Appendix \ref{sec:initial}, it follows that \((\partial_t v,\partial_t q,\partial_t \xi)\) satisfies the following estimate.
		\begin{equation}\label{est:diss_10}
			\begin{aligned}
				&\| v\|_{L^\infty W^{2,q_+}}^2 + \|q\|_{L^\infty W^{1,q_+}}^2 + \|\xi\|_{L^\infty W^{3-1/q_+,q_+}}^2 \\
				&\lesssim  \|\p_tv\|_{L^\infty H^0}^2 +\|\p_{t}\xi \|_{L^\infty H^1}^2 + \|v\|_{L^\infty H^1}^2 + \|\xi\|_{L^\infty H^{3/2-\alpha}}^2 + \|[v\cdot \mathcal{N}]_\ell\|_{L^\infty_t}^2+ \left\|\int_{0}^{t}F^1 \right\|_{L^\infty 
					L^{q_+}}^2\\
				&\quad  + \left\|\int_{0}^{t}F^4 \right\|_{L^\infty W^{1-1/q_+,q_+}}^2 + \left\|\int_{0}^{t}F^5 \right\|_{L^\infty W^{1-1/q_+,q_+}}^2 + \left\|\int_{0}^{t}[F^7]_\ell \right\|_{L_t^{\infty}}^2 +\|F^{1,0}\|_{L^{q_{+}}}^{2}+\|\eta_{0}\|_{W^{3-\frac{1}{q_{+}},q_{+}}}^{4}.
			\end{aligned}
		\end{equation}
	\end{theorem}
	
	\begin{proof}
		Integrating the equation \eqref{eq:quasi_linear_2} from $0$ to $t$, and using the compatibility condition for the initial condition, we obtain the following equation
		\begin{equation}\label{eq:quasi_linear_3}
			\begin{cases}
				\partial_{t}v+\operatorname{div}_{\mathcal{A}^{n}}S_{\mathcal{A}^{n}}(v,q)=\int_{0}^{t}F^{1}(u,p,\eta)+\int_0^{t}\operatorname{div}_{\p_{t}\mathcal{A}^{n}}S_{\mathcal{A}^{n}}(v,q)+\int_0^{t}\operatorname{div}_{\mathcal{A}^{n}}S_{\p_{t}\mathcal{A}^{n}}(v,q)\\
				\qquad\qquad\qquad\qquad\qquad+F^{1,0}(u(0),p(0),\eta(0))~~~&\operatorname{in}~~\Omega(t),\\
				\operatorname{div}_{\mathcal{A}^{n}}v=0~~~&\operatorname{in}~~\Omega(t),\\
				S_{\mathcal{A}^{n}}(q,v)\mathcal{N}^{n}=g\xi\mathcal{N}^{n}-\sigma\partial_{1}\left(\frac{\p_1\xi}{ (1+\vert \partial_{1}\zeta_{0}\vert^{2})^{3/2}} \right)\mathcal{N}^{n}+\int_{0}^{t}\mathcal{K}(\xi)\p_{t}\mathcal{N}^{n}+\int_{0}^{t}S_{\mathcal{A}^{n}}(v,q)\p_{t}\mathcal{N}^{n}\\
				\quad\quad\quad\quad\quad\quad\quad +\p_{1}(\mathcal{R}(\p_{1}\zeta_{0},\p_{1}\eta_{0}))+\sigma\int_{0}^{t}\partial_{1}(\mathcal{R}_{z}(\partial_{1}\zeta_{0},\partial_{1}\eta)\partial_{1}\partial_{t}\xi)\mathcal{N}^{n}+\int_{0}^{t}F^{4}(u,p,\eta)\\
				\quad\quad\quad\quad\quad\quad\quad +\int_{0}^{t}S_{\p_{t}\mathcal{A}^{n}}(q,v)\mathcal{N}^{n}~~~&\operatorname{on}~~\Sigma(t),\\
				(S_{\mathcal{A}^{n}}(q,v)\nu-\beta v)\cdot \tau=\int_{0}^{t}F^{5}(u,\eta,p)+\int_{0}^{t}(S_{\p_{t}\mathcal{A}^{n}}(q,v)\nu)\cdot \tau~~~&\operatorname{on}~~\Sigma_{s}(t),\\
				v\cdot \nu=0~~~&\operatorname{on}~~\Sigma_{s}(t),\\
				\partial_{t}\xi=v\cdot \mathcal{N}^{n}-\int_{0}^{t}v\cdot \p_{t}\mathcal{N}^{n}+\int_{0}^{t}(u_{1}\p_{1}\p_{t}\xi)~~~&\operatorname{on}~~\Sigma(t),\\
				\sigma(\mp \frac{\partial_{1}\xi}{(1+\vert \partial_{1}\zeta_{0}\vert^{2})^{\frac{3}{2}}}\mp\int_{0}^{t}\mathcal{R}_{z}(\partial_{1}\zeta_{0},\partial_{1}\eta)\partial_{t}\partial_{1}\xi(\pm \ell))
				=\kappa (v\cdot \mathcal{N}^{n})(\pm \ell)-\int_{0}^{t}{F}^{7}.
			\end{cases}
		\end{equation}
		
		Following an argument similar to that in the proof of Theorem \ref{thm:pressure}, we regard system \eqref{eq:quasi_linear_3} as a perturbation of the following system
		\[
		\begin{cases}
			\partial_{t}v+\operatorname{div}_{\mathcal{A}^{n}}S_{\mathcal{A}^{n}}(v,q)=\int_{0}^{t}F^{1}(u,p,\eta)+F^{1,0}(u_{0},\eta_{0},p_{0})~~~&\operatorname{in}~~\Omega(t),\\
			\operatorname{div}_{\mathcal{A}^{n}}v=0~~~&\operatorname{in}~~\Omega(t),\\
			S_{\mathcal{A}^{n}}(q,v)\mathcal{N}^{n}=g\xi\mathcal{N}^{n}-\sigma\partial_{1}\left(\frac{\p_1\xi}{(1+\vert \partial_{1}\zeta_{0}\vert^{2})^{3/2}} \right)\mathcal{N}^{n}
			+\int_{0}^{t}F^{4}(u,p,\eta)+\p_{1}(\mathcal{R}(\p_{1}\zeta_{0},\p_{1}\eta_{0}))~~~&\operatorname{on}~~\Sigma(t),\\
			(S_{\mathcal{A}^{n}}(q,v)\nu-\beta v)\cdot \tau=\int_{0}^{t}F^{5}(u,\eta,p)~~~&\operatorname{on}~~\Sigma_{s}(t),\\
			v\cdot \nu=0~~~&\operatorname{on}~~\Sigma_{s}(t),\\
			\partial_{t}\xi=v\cdot \mathcal{N}^{n}~~~&\operatorname{on}~~\Sigma(t),\\
			\sigma(\mp \frac{\partial_{1}\xi}{(1+\vert \partial_{1}\zeta_{0}\vert^{2})^{\frac{3}{2}}})(\pm \ell)
			=\kappa (v\cdot \mathcal{N}^{n})(\pm \ell)-\int_{0}^{t}{F}^{7}.
		\end{cases}
		\]
		Therefore, it remains to estimate the additional terms and show that they act as small perturbations. We begin with the Navier–Stokes equation, for which the following estimate follows from Hölder's inequality:
		\[
		\begin{aligned}
			\left     \|\int_{0}^{t}\operatorname{div}_{\p_{t}\mathcal{A}^{n}}S_{\mathcal{A}^{n}}(v,q) \right\|_{L_{t}^{\infty}L^{q_{+}}}\lesssim& T\|\p_{t}\eta^{n}\|_{L_{t}^{\infty}W^{3-\frac{1}{q_{-}},q_{-}}}(1+\|\eta^{n}\|_{L_{t}^{\infty}W^{3-\frac{1}{q_{+}},q_{+}}})(\|v\|_{L_{t}^{\infty}W^{2,q_{+}}}+\|q\|_{L_{t}^{\infty}W^{1,q_{+}}}),\\
			\left\|\int_{0}^{t}\operatorname{div}_{\mathcal{A}^{n}}S_{\p_{t}\mathcal{A}^{n}}(v,q) \right\|_{L_{t}^{\infty}L^{q_{+}}}
			\lesssim& T\|\p_{t}\eta^{n}\|_{L_{t}^{\infty}W^{3-\frac{1}{q_{-}},q_{-}}}(1+\|\eta^{n}\|_{L_{t}^{\infty}W^{3-\frac{1}{q_{+}},q_{+}}})(\|v\|_{L_{t}^{\infty}W^{2,q_{+}}}+\|q\|_{L_{t}^{\infty}W^{1,q_{+}}}).
		\end{aligned}
		\]
		For the capillary boundary condition, by temporal integration by part, we have
		\[
		\begin{aligned}
			& \left \|\int_{0}^{t}\mathcal{K}(\xi)\p_{t}\mathcal{N}^{n} \right\|_{L_{t}^{\infty}W^{1-\frac{1}{q_{+}},q_{+}}}\lesssim \|\xi\|_{L_{t}^{\infty}W^{3-\frac{1}{q_{+}},q_{+}}}\|\p_{t}\eta^{n}\|_{L_{t}^{\infty}W^{3-\frac{1}{q_{-}},q_{-}}}\lesssim \|\xi\|_{L_{t}^{\infty}W^{3-\frac{1}{q_{+}},q_{+}}}\|\p_{t}\eta\|_{L_{t}^{\infty}W^{3-\frac{1}{q_{-}},q_{-}}},\\
			& \left \|\int_{0}^{t}S_{\mathcal{A}^{n}}(v,q)\p_{t}\mathcal{N}^{n} \right\|_{L_{t}^{\infty}W^{1-\frac{1}{q_{+}},q_{+}}}\lesssim (\|v\|_{L_{t}^{\infty}W^{2,q_{+}}}+\|q\|_{L_{t}^{\infty}W^{1,q_{+}}})\|\p_{t}\eta^{n}\|_{L_{t}^{2}W^{3-\frac{1}{q_{-}},q_{-}}},\\
			& \left\|\int_{0}^{t}S_{\p_{t}\mathcal{A}^{n}}(v,q)\mathcal{N}^{n} \right\|_{L_{t}^{\infty}W^{1-\frac{1}{q_{+}},q_{+}}}\lesssim (\|v\|_{L_{t}^{\infty}W^{2,q_{+}}}+\|q\|_{L_{t}^{\infty}W^{1,q_{+}}})\|\p_{t}\eta^{n}\|_{L_{t}^{\infty}W^{3-\frac{1}{q_{-}},q_{-}}},
		\end{aligned}
		\]
		and
		\[
		\begin{aligned}
			&\left \|\int_{0}^{t}\sigma\p_{1}(\mathcal{R}_{z}(\p_{1}\zeta_{0},\p_{1}\eta)\p_{1}\p_{t}\xi)\mathcal{N}^{n}\right\|_{L_{t}^{\infty}W^{1-\frac{1}{q_{+}},q_{+}}}\\&\lesssim \left\|\int_{0}^{t}(\mathcal{R}_{z}(\p_{1}\zeta_{0},\p_{1}\eta)\p_{1}^{2}\p_{t}\xi) \right\|_{L_{t}^{\infty}W^{1-\frac{1}{q_{+}},q_{+}}}+\left\|\int_{0}^{t}(\mathcal{R}_{zz}(\p_{1}\zeta_{0},\p_{1}\eta)\p_{1}\p_{t}\xi\p_{1}^{2}\eta) \right\|_{L_{t}^{\infty}W^{1-\frac{1}{q_{+}},q_{+}}}\\&\lesssim \left\|\sigma\int_{0}^{t}(\mathcal{R}_{zz}(\p_{1}\zeta_{0},\p_{1}\eta)\p_{1}^{2}\xi\p_{1}\p_{t}\eta)\mathcal{N}^{n} \right\|_{L_{t}^{\infty}W^{1-\frac{1}{q_{+}},q_{+}}}+\|\sigma\p_{1}(\mathcal{R}_{z}(\p_{1}\zeta_{0},\p_{1}\eta)\p_{1}\xi)\mathcal{N}^{n}(0)\|_{L_{t}^{\infty}W^{1-\frac{1}{q_{+}},q_{+}}}\\
			&\quad+\|\sigma\p_{1}(\mathcal{R}_{z}(\p_{1}\zeta_{0},\p_{1}\eta)\p_{1}\xi)\mathcal{N}^{n}(T)\|_{L_{t}^{\infty}W^{1-\frac{1}{q_{+}},q_{+}}}+\left\|\int_{0}^{t}(\mathcal{R}_{zz}(\p_{1}\zeta_{0},\p_{1}\eta)\p_{1}\p_{t}\xi\p_{1}^{2}\eta)\right\|_{L_{t}^{\infty}W^{1-\frac{1}{q_{+}},q_{+}}}\\
			&\lesssim T\|\p_{t}\p_{1}\xi\|_{L_{t}^{2}W^{\max(\frac{1}{q_{+}},1-\frac{1}{q_{+}}),q_{+}}}\|\p_{1}\eta\|_{L_{t}^{\infty}W^{2-\frac{1}{q_{+}},q_{+}}}+T\|\p_{1}^{2}\xi\|_{L_{t}^{\infty}W^{1-\frac{1}{q_{+}},q_{+}}}\|\p_{1}\p_{t}\eta\|_{L_{t}^{2}W^{\max(\frac{1}{q_{+}},1-\frac{1}{q_{+}}),q_{+}}}\\
			&\quad+\|\p_{1}\xi\|_{L_{t}^{\infty}W^{2-\frac{1}{q_{+}},q_{+}}}\|\p_{1}\eta\|_{L_{t}^{\infty}W^{2-\frac{1}{q_{+}},q_{+}}}\\
			&\lesssim \left( T\|\p_{t}\xi\|_{L_{t}^{2}W^{3-\frac{1}{q_{-}},q_{-}}} +\|\xi\|_{L_{t}^{\infty}W^{3-\frac{1}{q_{+}},q_{+}}} \right)\|\eta\|_{L_{t}^{\infty}W^{3-\frac{1}{q_{+}},q_{+}}} +T\|\xi\|_{L_{t}^{\infty}W^{3-\frac{1}{q_{+}},q_{+}}}\|\p_{t}\eta\|_{L_{t}^{2}W^{3-\frac{1}{q_{-}},q_{-}}}.
		\end{aligned}
		\]
		For the boundary condition on $\Sigma_{s}$, we have
		\[
		\left\|\int_{0}^{t}(S_{\p_{t}\mathcal{A}^{n}}(q,v)\nu)\cdot \tau \right\|_{L_{t}^{\infty}W^{1-\frac{1}{q_{+}},q_{+}}}\lesssim \|\p_{t}\eta\|_{L_{t}^{\infty}W^{3-\frac{1}{q_{-}},q_{-}}}(\|q\|_{L_{t}^{\infty}W^{1,q_{+}}}+\|v\|_{L_{t}^{\infty}W^{2,q_{+}}}).
		\]
		For the kinematic boundary condition, by integration by part, we have
		\[
		\begin{aligned}
			&\|\int_{0}^{t}v\cdot \p_{t}\mathcal{N}^{n}\|_{L_{t}^{\infty}W^{2-\frac{1}{q_{+}},q_{+}}}\lesssim \|\int_{0}^{t}\p_{1}v\cdot \p_{t}\mathcal{N}^{n}\|_{L_{t}^{\infty}W^{1-\frac{1}{q_{+}},q_{+}}}+\|\int_{0}^{t}\p_{t}v\p_{1}^{2}\eta^{n}\|_{L_{t}^{\infty}W^{1-\frac{1}{q_{+}},q_{+}}}\\
			&\qquad\qquad+\|v\p_{1}^{2}\eta^{n}(0)\|_{L_{t}^{\infty}W^{1-\frac{1}{q_{+}},q_{+}}}+\|v\p_{1}^{2}\eta^{n}(T)\|_{L_{t}^{\infty}W^{1-\frac{1}{q_{+}},q_{+}}}\\&\lesssim \|v\|_{L_{t}^{\infty}W^{2,q_{+}}}\|\p_{t}\eta^{n}\|_{L_{t}^{\infty}W^{3-\frac{1}{q_{-}},q_{-}}}+\|\p_{t}v\|_{L_{t}^{\infty}W^{2,q_{-}}}\|\eta^{n}\|_{L_{t}^{\infty}W^{3-\frac{1}{q_{+}},q_{+}}},\\
			&\|\int_{0}^{t}(u_{1} \p_{t}\p_{1}\xi)\|_{L_{t}^{\infty}W^{2-\frac{1}{q_{+}},q_{+}}}\lesssim \|u\|_{L_{t}^{\infty}W^{2,q_{+}}}\|\xi\|_{L_{t}^{\infty}W^{3-\frac{1}{q_{+}},q_{+}}}+\|u\|_{L_{t}^{\infty}W^{2-\frac{1}{q_{+}},q_{+}}(\Sigma)}\|\p_{1}\p_{t}\xi\|_{L_{t}^{2}W^{\max(\frac{1}{q_{+}},1-\frac{1}{q_{+}}),q_{+}}}\\
			&\qquad\qquad+\|\p_{t}u\|_{L_{t}^{2}W^{\max(\frac{1}{q_{+}},1-\frac{1}{q_{+}}),q_{+}}(\Sigma)}\|\p_{t}\xi\|_{L_{t}^{\infty}W^{3-\frac{1}{q_{-}},q_{-}}}\\
			&\lesssim \|u\|_{L_{t}^{\infty}W^{2,q_{+}}}\|\xi\|_{L_{t}^{\infty}W^{3-\frac{1}{q_{+}},q_{+}}}+\|u\|_{L_{t}^{\infty}W^{2,q_{+}}}\|\p_{t}\xi\|_{L_{t}^{2}W^{3-\frac{1}{q_{-}},q_{-}}}+\|\p_{t}u\|_{L_{t}^{2}W^{2,q_{-}}}\|\xi\|_{L_{t}^{\infty}W^{3-\frac{1}{q_{-}},q_{-}}}.
		\end{aligned}
		\]
		Finally, for the contact point condition, using the similar computation as for the capillary equation, we obtain
		\[
		\left \|\int_{0}^{t}\mathcal{R}_{z}(\p_{1}\zeta_0,\p_{1}\eta)\p_{t}\p_{1}\xi(\pm\ell) \right\|_{L_{t}^{\infty}}\lesssim \|\xi\|_{L_{t}^{\infty}W^{3-\frac{1}{q_{+}},q_{+}}}\|\p_{t}\eta\|_{L_{t}^{2}W^{3-\frac{1}{q_{-}},q_{-}}}.
		\]
		\noindent Therefore, given that $\|\p_{t}\eta\|_{L_{t}^{2}W^{3-\frac{1}{q_{-}},q_{-}}}+\|\eta\|_{L_{t}^{\infty}W^{3-\frac{1}{q_{+}},q_{+}}}+\|u\|_{L_{t}^{\infty}W^{2,q_{+}}}+\|\p_{t}u\|_{L_{t}^{2}W^{2,q_{-}}}\leq\delta\ll 1$. We obtain the estimate \eqref{est:diss_10}.
	\end{proof}

	%%%%%%%%%%%%%%%%%%%%%%%%%%%%%%%%%%%%%%%%%%%%%%
	\section{Construction of Strong Solutions to the Linear System}\label{sec:strong}
	%%%%%%%%%%%%%%%%%%%%%%%%%%%%%%%%%%%%%%%%%%%%%%

	In this section, we aim to construct the solution to \eqref{eq:quasi_linear} under the assumptions that the forcing terms $F^i$ are given and independent of the unknowns. To begin with, we derive a solution of \eqref{eq:quasi_linear_{s}} and then solve \eqref{eq:quasi_linear} by passing to the limit. To solve \eqref{eq:quasi_linear_{s}}, we suppose $(u,\eta,p,v_{l},\xi_{l})$, $n$, $k$, as well as $\mathcal{A}(\eta^{n})$, $J(\eta^{n})$, $\mathcal{N}(\eta^{n})$, etc. are given.

	%%%%%%%%%%%%%%%%%%%%%%%%%%%%%%%%%%%%%%%%%%%%%%
	\subsection{Galerkin Setup}
	%%%%%%%%%%%%%%%%%%%%%%%%%%%%%%%%%%%%%%%%%%%%%%
	
	Our goal is to use the Galerkin method to establish the existence and uniqueness of solutions to \eqref{eq:quasi_linear_{s}}, in which the unknowns are $(v_{d}, q_{d}, \xi_{d})$ and their derivatives. We omit the subscripts $(k, n)$ when there is little chance of confusion. In order to utilize the Galerkin method, we must first construct a countable basis of $\mathcal{W}(t)$ for each $t\in[0, T]$. It is evident that $\mathcal{W}_{\sigma}(t)\hookrightarrow \mathcal{H}_{\sigma}^0(t)\hookrightarrow (\mathcal{W}_{\sigma}(t))^\ast$.
	Due to the time-dependent incompressible condition $u\cdot \mathcal{N}^{n}\in H^{1}(\Sigma)$, the basis should also be time-dependent.
	
	We note a key technical difficulty. We must ensure that the initial data for the sequence of Galerkin approximate solutions $(v_{d}^{m}(0),D_{t}v_{d}^{m}(0))$ converge as $m\to\infty$ to the initial data of the PDE \eqref{eq:quasi_linear} or equivalently \eqref{eq:quasi_linear_{s}}. The initial data
	\[
	(v_{d}(0),D_{t}v_{d}(0))=( v_{d0}^{n,k}, D_t^2u(0))\in W^{2,q_{+}}(\Omega)\times H^{1}(\Omega),
	\]
	are compatible with the dynamical energy. The definitions of the initial functions $v_{d0}^{n,k}, D_t^2u(0)$ are given in Appendix \ref{sec:initial} and \ref{sec:initial_l}.  
	
	However, the regularity $v_{d}(0)\in W^{2,q_{+}}(\Omega)$ is not strong enough to ensure the following property
	\[
	v_{d}(0)\cdot \mathcal{N}^{n}(0)\in H^{1}(\Sigma),
	\]
	which is required in the definition of space $\mathcal{W}(0)$ as \eqref{def:w}. Hence, it is not suitable to construct $v_{d}^{m}(0)$ by directly projecting $v_{d}(0)$ to some finite dimensional subspace of $\mathcal{W}$. This presents the primary technical difficulty in the Galerkin scheme.
	
	In addition, both terms in bulk and terms on the boundary are contained in the Galerkin approximated weak formulation, as \eqref{disc_1}. Consequently, it is challenging to discretize both $v_{d}$ and $\xi_{d}$ simultaneously to deduce the convergence from 
	$
	(v^{m}_{d}(0),D_{t}v^{m}_{d}(0),\xi_{d}^{m}(0),\p_{t}\xi_{d}^{m}(0)),$
	to $
	(v_{d}(0),D_{t}v_{d}(0),\xi_{d}(0),\p_{t}\xi_{d}(0))=(v_{d0}^{n,k}, D_t^2 u(0), \xi_{d0}^{n,k}, \p_t^2\eta(0)) $.
	
	Recalling the classical Galerkin method for solving Navier-Stokes equations (for instance, see \cite{boyer_fabrie} and \cite{Te}), we must choose a special basis for our Galerkin process, to cancel the influence resulting from the boundary terms. Our construction for the basis is much more subtle because of the time-dependence and the convergence of approximated initial data as stated above. 
	
	We plan to first construct a basis of $\mathcal{H}_{\sigma}^0(0)$ and $\mathcal{W}_{\sigma}(0)$, then we derive the time-dependent basis. For this purpose,
	we define the map $\mathcal{J}:\mathcal{W}_{\sigma}(0)\rightarrow \mathcal{W}^{\ast}_{\sigma}(0)$ via
	$\left<\mathcal{J}v, w\right>_\ast=(v, w)_{\mathcal{W}},\quad \forall v, w\in \mathcal{W}(0)$.
	By Riesz representation theorem, $\mathcal{J}$ is an isometric isomorphism. This enables us to define $A=\mathcal{J}^{-1}: \mathcal{W}_{\sigma}(0)^\ast\rightarrow \mathcal{W}_{\sigma}(0)$ so that for any $f\in \mathcal{W}_{\sigma}(0)^\ast$, $Af\in \mathcal{W}_{\sigma}(0)$ is uniquely determined via
	$\left<f, w\right>_\ast=(Af, w)_{\mathcal{W}(0)},\quad \forall w\in \mathcal{W}_{\sigma}(0)$,
	which is equivalent to saying that $Af=v$ is a weak solution to $\mathcal{J}v=f$.
	
	\begin{lemma}\label{lem:basis_initial}
		The operator $A$ restricted on $\mathcal{H}_{\sigma}^0(0)$ is a compact, positive and self-adjoint operator. Thus, the eigenvalues of $A$ are $\{\lambda_i\}_{i=1}^\infty$ such that $
		0<\lambda_1\le\lambda_2\le\cdots$
		and
		$\lambda_i\to\infty$, as $ i\to\infty$.
		Finally, there exists an orthonormal basis $\{\psi_i\}_{i=1}^\infty$ of $\mathcal{H}_{\sigma}^0(0)$, where $\psi_i\in \mathcal{W}_{\sigma}(0)$ is an eigenfunction of $\mathcal{J}$  corresponding to $\lambda_i$:
		$
		\mathcal{J}\psi_i=\lambda_i\psi_i.
		$
		Moreover, $\left\{\frac{\psi_i}{\sqrt{\lambda_i}}\right\}_{i=1}^\infty$ is an orthonormal basis of $\mathcal{W}_{\sigma}(0)$.
	\end{lemma}
	\begin{proof}
		Note that $\mathcal{W}_{\sigma}(0)\subset\subset \mathcal{H}_{\sigma}^0(0)\hookrightarrow\mathcal{W}_{\sigma}^\ast(0)$.
		The restriction $A|_{\mathcal{H}_{\sigma}^0(0)}: \mathcal{H}_{\sigma}^0(0)\rightarrow\mathcal{W}_{\sigma}(0)$ implies that $A|_{\mathcal{H}_{\sigma}^0(0)}: \mathcal{H}_{\sigma}^0(0)\rightarrow \mathcal{H}_{\sigma}^0(0)$ is compact.
		
		We claim that $A$ is also a positive, self-adjoint operator.  For $f, g\in \mathcal{H}^0(0)$, let $u=Af$ and $v=Ag$. Then
		$(u, v)_{\mathcal{W}^{*}(0)}=(Af , v)_{\mathcal{W}(0)}=(f, v)_{\mathcal{H}^0(0)}=(f,Ag)_{\mathcal{H}^0(0)}$,
		and
		$(v, u)_{\mathcal{W}(0)}=(Ag , u)_{\mathcal{W}(0)}=(g, u)_{\mathcal{H}^0(0)}=(g,Af)_{\mathcal{H}^0(0)}$.
		By symmetry, $(u, v)_{\mathcal{W}(0)}=(v, u)_{\mathcal{W}(0)}$. Thus,
		$(f,Ag)_{\mathcal{H}^0(0)}=(g,Af)_{\mathcal{H}^0(0)}=(Af, g)_{\mathcal{H}^0(0)}$.
		Also, $(Af, f)_{\mathcal{H}^0(0)}=(f, f)_{\mathcal{W}(0)}\ge0$. Therefore, our claim is valid.
		
		Consequently, by Riesz-Schauder theorem, there exists a complete orthonormal basis $\{\psi_i\}_{i=1}^\infty\subseteq \mathcal{H}_{\sigma}^0(0)$ such that
		$A\psi_i=\mu_i\psi_i$, and $\mu_i\to0$, as $i\to\infty$.
		But
		$A\psi_i=\mu_i\psi_i\in \mathcal{W}_{\sigma}(0)$, for all $\psi_i\in \mathcal{H}_{\sigma}^0(0)$,
		so
		$\{\psi_i\}_{i=1}^\infty\subseteq \mathcal{W}_{\sigma}(0)$,
		and
		$\mu_i(\psi_i, w)_{\mathcal{W}(0)}=(A\psi_i, w)_{\mathcal{W}(0)}=(\psi_i, w)_{\mathcal{H}^0(0)}, \forall w\in \mathcal{W}(0)$.
		Then we have
		$(\psi_i, w)_{\mathcal{H}^{0}(0)}=\lambda_i(\psi_i, w)_{\mathcal{W}(0)}$ with  $\lambda_i=\frac1{\mu_i}\to\infty$, as $k\to\infty$.
		Thus, $(\psi_i, \psi_j)_{\mathcal{W}(0)}=\lambda_i(\psi_i, \psi_j)_{\mathcal{H}^0(0)}=\lambda_i\delta_{ji}$ implies $\{\psi_i/\sqrt{\lambda_i}\}$ is a complete orthonormal basis of $\mathcal{W}_{\sigma}(0)$.
	\end{proof}
	
	Let $\phi^j=M(0)^{-1}\psi_j$. By Lemma \ref{lem:equivalence_norm} and \eqref{eq:div-a-div}, $L^2(\Om)$ is isomorphic to $\mathcal{H}_{\sigma}^0(0)$, so $\{\phi^j\}_{j=1}^\infty$ is a basis of $L^2(\Om)$. Moreover, ${W}_{\sigma}(0)$ is isomorphic to $\mathcal{W}_{\sigma}(t)$ and $\{\frac{\phi^j}{\sqrt{\lambda_j}}\}_{j=1}^\infty$ is a basis of ${W}_{\sigma}(t)$. With the above preparations, we construct the time-dependent basis in the following proposition.
	\begin{proposition}\label{prop:basis_galerkin}
		For each $i>0$, $\phi^{i}$ is defined above. Then for each $t\ge0$, $\{\omega^j(t)=M(t)\frac{\phi^j}{\sqrt{\lambda_j}}\}_{j=1}^\infty$ is a basis of $\mathcal{W}_{\sigma}(t)$ (defined by \eqref{def:w}) and $\sqrt{\lambda_j}\omega^j(t)$ is a basis of $\mathcal{H}_{\sigma}^0(t)$.
	\end{proposition}
	\begin{proof}
		By the definition $\omega^j(t)=M(t)\frac{\phi^j}{\sqrt{\lambda_j}}$, we have $\omega^j(0)=M(0)\frac{\phi^j(0)}{\sqrt{\lambda_j}}=\frac{\psi_j}{\sqrt{\lambda_j}}$ is an orthonormal basis of $\mathcal{W}_{\sigma}(0)$. We use Lemma \ref{lem:equivalence_norm} and \eqref{eq:div-a-div} to deduce that
		$\{\omega^j(t)\}_{j=1}^\infty$ is a basis of $\mathcal{W}_{\sigma}(t)$ and $\sqrt{\lambda_j}\omega^j(t)$ is a basis of $\mathcal{H}_{\sigma}^0(t)$.
	\end{proof}
	
	%%%%%%%%%%%%%%%%%%%%%%%%%%%%%%%%%%%%%%%%%%%%%%
	\subsection{Galerkin Approximation for Initial Data}\label{sec:app_initial}
	%%%%%%%%%%%%%%%%%%%%%%%%%%%%%%%%%%%%%%%%%%%%%%
	
	We note that $\xi_{d}(0)$ can be represented by the normal trace of $v_{d0}$ due to the kinematic boundary condition. In the Galerkin method for $t>0$, we construct the approximate solutions $v_{d}^m$ and $D_tv_{d}^m$ and then pass to the limit to find $v_{d}$ and $D_tv_{d}$, and require the initial data $(v_{d}^m(0), D_tv_{d}^m(0))$ to converge to $(v_{d0}^{n,k}(0), D_t^{2}v(0))$. Specifically, we can prove that
	\begin{theorem}\label{thm:initial_convergence}
		Suppose that $\{\omega^j(t)\}_{j=1}^\infty$ is the basis defined as in Proposition \ref{prop:basis_galerkin} for the Hilbert space $\mathcal{W}_{\sigma}(t)$ for each $t\ge0$. When $t=0$, $\mathcal{W}_{\sigma}^m(0):=\operatorname{span} \{\omega^1(0), \ldots, \omega^m(0)\}$, the Galerkin approximated initial data $(v_{d}^{m}(0), D_tv_{d}^m(0))\in \mathcal{W}_{\sigma}^m(0)$ obeys the estimates for any $n,k>0$
		\[
		\|D_{t}v_{d}^m(0)\|_{\mathcal{H}^0}  \lesssim \|D_{t}^{2}u(0)\|_{\mathcal{H}^0}+\|v_{d0}^{n,k}\|_{\mathcal{H}^1},
		\
		\|v_{d}^m(0)\|_{{}_{0}\mathcal{H}^1}  \lesssim \|D_{t}^{2}u(0)\|_{\mathcal{H}^0}+\|v_{d0}^{n,k}\|_{\mathcal{H}^1}.
		\]
		Moreover, $(v_{d}^m(0), D_tv_{d}^m(0))$ converge to $(v_{d0}^{n,k}(0), D_t^{2}u(0))$ weakly in $\mathcal{W}\times\mathcal{H}^0$.
	\end{theorem}
	This theorem follows directly from Proposition \ref{prop:basis_galerkin} and the subsequent analysis.
	
	With the basis in Proposition \ref{prop:basis_galerkin}, we use the Galerkin method to construct the approximate solutions $D_tv_{d}^m(t)$ and $v_{d}^{m}(t)$, under the space $\mathcal{W}_{\sigma}^m(t)=\text{span}\{\omega^j(t)\}_{j=1}^m$ with $t\ge0$. In order to do that, we need to construct the initial data $(D_tv_{d}^m(0),v_{d}^{m}(0))$, and prove that they are uniformly bounded.
	
	We first construct the initial data for the approximated solution $v_{d}^m(0)$. Instead of projecting $v_{d}(0)$ into the space $\mathcal{W}_{\sigma}^{m}(0)$, we introduce a novel scheme. 
	
	Suppose that $D_{t}v_{d}^{m}(0)$ is the $L^{2}$ projection of $D_{t}^{2}u(0)$ onto the space $\mathcal{W}_{\sigma}^{m}(0)$. This implies that $D_{t}v_{d}^{m}(0)\rightarrow D_{t}v_{d}(0)$ strongly in $H^{0}$. 
	Then we construct $v_{d}^{m}(0)$ via the following equation
	\begin{align}{\label{disc_1}}
		\begin{aligned}
			(D_{t}v_{d}^{m}(0),\psi)_{\mathcal{H}^{0}}+((v_{d}^m(0),\psi))
			+[(v_{d}^{m}\cdot \mathcal{N}^{n})(0),\psi\cdot\mathcal{N}^{n}]_\ell
			+(\xi_{d}(0),\psi\cdot\mathcal{N}^{n})_{1,\Sigma_{k}}=\mathscr{F}_1(\psi)-((R^{n}v_{d}^{m})(0),\psi)_{\mathcal{H}^{0}}
		\end{aligned}
	\end{align}
	for any $\psi\in \mathcal{W}^{m}$, where
	\begin{align}{\label{def:F_1}}
		\begin{aligned}
			\mathscr{F}_1(\psi)=\int_{\Omega}F^{1}\cdot \psi J^{n}-\int_{-\ell}^{\ell}F^{4}\cdot \psi-\int_{\Sigma_{s}}F^{5}(\psi\cdot \tau)J^{n}-[F^{7},\psi\cdot \mathcal{N}^{n}]_{\ell}+((R^{n}(v_{0}^{n}),\psi))+(\p_{t}(R^{n}v_{0}^{n}),\psi)_{\mathcal{H}^{0}(0)}.
		\end{aligned}
	\end{align}
	
	Assuming $v_{d}^{m}(0)=\sum_{k=1}^{m}d_{k}^{m}(0)\omega^{k}$, it satisfies the matrix equation induced by equation \eqref{disc_1}:
	\[
	\mathcal{B}\vec{d}^T-\mathscr{R}\vec{d}=\vec{F}^T-\vec{\xi}^T-\vec{Y}^T
	\]
	\noindent where $\vec{d}=(d_1(0),\ldots,d_{m}(0))$, $\vec{F}=(\mathscr{F}_{1}(\omega^{1}),\ldots,\mathscr{F}_{1}(\omega^{m}))$,$\vec{\xi}=((\xi_{d}(0),\omega^{1}\cdot \mathcal{N}^{n}),\ldots,(\xi_{d}(0),\omega^{m}\cdot \mathcal{N}^{n}))$, $\vec{Y}=(((D_{t}v_{d}^{m}(0)),\omega^{1})_{\mathcal{H}^{0}},\ldots,((D_{t}v_{d}^{m}(0)),\omega^{m})_{\mathcal{H}^{0}})$, $\mathscr{R}_{i,j}=(R^{n}\omega^{i},\omega^{j})_{\mathcal{H}^{0}}$. $\mathcal{B}$ is an invertible matrix matrix with elements $\mathcal{B}_{i,j}=(\omega^{i}(0),\omega^{j}(0))_{{}_{0}\mathcal{H}^{1}}$. Using the smallness of $\|\mathscr{R}\|\lesssim \|\p_{t}\eta(0)\|_{W^{3-\frac{1}{q_{-}},q_{-}}}$ and a Picard iteration, we can solve for $d_{j}^{m}$.
	
	Moreover, Subtracting the weak form equation for the initial condition from equation \eqref{disc_1}, we obtain:
	\begin{align}{\label{equ:disc_2}}
		\begin{aligned}
			((D_{t}v_{d}^{m}(0))-D_{t}v_{d}(0),\psi)_{\mathcal{H}^{0}}+((v_{d}^m(0)-v_{d}(0),\psi))+[v_{d}^{m}(0)\cdot \mathcal{N}^{n}-v_{d}(0)\cdot \mathcal{N}^{n},\psi\cdot\mathcal{N}^{n}]_\ell&\\
			+(R^{n}v_{d}^{m}(0)-R^{n}v_{d}(0),\psi)&=0
		\end{aligned}
	\end{align}
	for any $\psi\in \mathcal{W}^{m}$. Setting $\psi=v_{d}^{m}$ in equation \eqref{equ:disc_2}, we rewrite it as follows:
	\[\begin{aligned}
		&((D_{t}v_{d}^{m}(0)),v_{d}^{m}(0))_{\mathcal{H}^{0}}+((v_{d}^m(0),v_{d}^{m}(0)))
		+[v_{d}^{m}(0)\cdot \mathcal{N}^{n},v_{d}^{m}(0)\cdot\mathcal{N}^{n}]_\ell\notag\\
		&=((D_{t}v_{d}(0)),v_{d}^{m})_{\mathcal{H}^{0}}+((v_{d}(0),v_{d}^{m}(0)))
		+[v_{d}(0)\cdot \mathcal{N}^{n},v_{d}^{m}(0)\cdot\mathcal{N}^{n}]_\ell-(R^{n}v_{d}^{m}(0)-R^{n}v_{d}(0),v_{d}^{m}(0)).
	\end{aligned}\]
	\noindent Using Cauchy's inequality and the fact that $\|D_{t}v_{d}^{m}(0)\|_{H^{0}}\lesssim\|D_{t}v_{d}(0)\|$, we obtain the following uniform estimate for the initial data $v_{d}^{m}(0)$:
	\[
	((v_{d}^m(0),v_{d}^{m}(0)))
	+[v_{d}^{m}(0)\cdot \mathcal{N}^{n},v_{d}^{m}(0)\cdot\mathcal{N}^{n}]_\ell
	\leq \|\p_{t}v_{d}(0)\|^{2}_{\mathcal{H}^{0}}+\frac{1}{2}\|v_{d}^{m}(0)\|^{2}_{{}_0\mathcal{H}^{1}}+2\|v_{d}(0)\|^{2}_{{}_0\mathcal{H}^{1}}+\|v_{d}(0)\|_{\mathcal{H}^{0}}^{2}.
	\]
	\noindent Therefore, we derive the following inequality
	\begin{align}{\label{bound_dtu_0}}
		((v_{d}^m(0),v_{d}^{m}(0)))
		+[v_{d}^{m}(0)\cdot \mathcal{N}^{n},v_{d}^{m}(0)\cdot\mathcal{N}^{n}]_\ell
		\lesssim \|\p_{t}v_{d}(0)\|^{2}_{\mathcal{H}^{0}}+\|v_{d}(0)\|^{2}_{{}_0\mathcal{H}^{1}},
	\end{align}
	\noindent which leads to the following weak convergence theorem.
	\begin{proposition}\label{prop:converge_dtu2}
		It holds that $v_{d}^m(0)$ converges to $v_{d}(0)$ weakly in ${}_0\mathcal{H}^1$ as $m\to \infty$.
	\end{proposition}
	
	\begin{proof}
		The uniform bound for $v_{d}^m(0)$ in \eqref{bound_dtu_0} allows us to extract a weakly convergent subsequence, still denote $v_d^m(0)$, such that
		$v_{d}^m(0)\rightharpoonup w\quad\text{in}~~{}_0\mathcal{H}^1$,
		for some $w\in {}_0\mathcal{H}^1$, which implies that $\operatorname{div}_{\mathcal{A}^{n}}w=0$.
		
		Passing to the limit $m\to \infty$ in \eqref{bound_dtu_0} and
		using the fact that $(D_{t}v_{d}^{m}(0))\to D_{t}v_{d}(0)$ in $\mathcal{H}^0$,
		we obtain the following equation
		\begin{equation}\label{eq:in_dtu_1}
			(w-v_{d}(0), \psi)_{{}_0\mathcal{H}^1}+(R^{n}w-R^{n}v_{d}(0),\psi)_{\mathcal{H}^{0}}=0
		\end{equation}
		for any $\psi\in\mathcal{W}(0)$.  We now pull back to the initial state of the original deformable domain $\Om(0)$ by the inverse mapping $\Phi(0)^{-1}$ and let 
		\[
		\tilde{w}\circ \Phi(0)=w,\  \widetilde{v_{d}}(0)\circ\Phi(0)=v_{d}(0),
		\]
		so that $\tilde{w}\in L^2(\Om(0)):=\{u\in L^2(\Om(0)) \}$ and
		$
		\widetilde{v_{d}}(0)\in {}_{0}{H}^{1}(\Om(0)):=\{u\in L^2(\Om(0)): \|u\|_{\mathcal{H}^{1}(\Om(0))}^2+[u\cdot\mathcal{N}^{n}]_\ell^2<\infty\}$.
		We choose $\psi$ in the closure of $C_{c}^\infty(\Om)$ under $\mathcal{W}$ norm, that is a subset of $\mathcal{W}_{\sigma}$, so that \eqref{equ:disc_2} is reduced to
		\[
		(\tilde{w}-\widetilde{v_{d}}(0), \psi)_{H^{1}(\Om(0))}+(R^{n}\tilde{w}-R^{n}\tilde{v_{d}}(0),\psi)_{L^{2}(\Omega)}=0.
		\]
		Due to the smallness of $\|R^{n}(0)\|_{L^{\infty}(\Omega)}\lesssim\|\p_{t}\eta^{n}(0)\|_{W^{3-\frac{1}{q_{-}},q_{-}}}$, the inner product 
		$(\phi, \psi)_{H^{1}(\Om(0))}+(R^{n}\phi,\psi)_{L^{2}(\Omega)}$ is positive when $\phi=\psi$. Since $C_{c}^\infty(\Om)$ is dense in $H^1(\Om)$, we have
		$\tilde{w}-\widetilde{v_{d}}(0)=0$ and $w=v_{d}(0)$ by $\Phi(0)^{-1}$.
	\end{proof}

	%%%%%%%%%%%%%%%%%%%%%%%%%%%%%%%%%%%%%%%%%%%%%%
	\subsection{Construction of Strong Solutions}
	%%%%%%%%%%%%%%%%%%%%%%%%%%%%%%%%%%%%%%%%%%%%%%

	Suppose that all the forcing terms $F^{i}$ are in the spaces satisfying estimate \eqref{est:bound_linear}. We say that
	$(v_{d},q_{d},\xi_{d})\in L^{2}W^{2,q_{+}}\times L^{2}W^{1,q_{+}}\times L^{2}W^{3-1/q_{+},\,q_{+}}$
	is a strong solution to \eqref{eq:quasi_linear} if, for each \(j=0,1,2\), $
	\partial_t^j v \in L^\infty H^0 \cap L^2 H^1$ and $
	\partial_t^j \xi \in L^\infty H^1 \cap L^2 H^{3/2-\alpha}$,
	and if \((v_{d},q_{d},\xi_{d})\) satisfies \eqref{eq:quasi_linear}.
	
	In the following theorem, we construct strong solutions to \eqref{eq:quasi_linear} by first solving \eqref{eq:quasi_linear_{s}} via the Galerkin method. For fixed $n$, $k$ and prescribed functions $(v_{l},\xi_{l})$, the construction proceeds in four main steps:
	\begin{itemize}
		\item 
		Step 1. Using a basis of $\mathcal{W}_{\sigma}(t)$ and ODE theory, we construct the Galerkin solutions $(v_{d}^m, \xi_{d}^m)$ to \eqref{eq:quasi_linear_{s}} in an $m$-dimensional subspace.
		\item 
		Step 2. We differentiate the system in time to derive equations for $(D_t v_{d}^m, \partial_t \xi_{d}^m)$. Energy estimates yield uniform bounds for these quantities. Passing to the limit as $m \to \infty$ using functional analytic arguments, we recover the weak formulation and improve the regularity of $(\xi_{d}, \partial_t \xi_{d})$.
		\item 
		Step 3. We recover the pressure using Theorem~\ref{thm:pressure_s}. Furthermore, Theorem~\ref{thm:pressure} implies that $(v_{d}, \xi_{d})$ is in fact a strong solution to \eqref{eq:quasi_linear_{s}}.
		\item 
		Step 4. We derive uniform bounds for the solutions of \eqref{eq:quasi_linear_{s}} that are independent of $k$.
	\end{itemize}
	
	Once the solution to \eqref{eq:quasi_linear_{s}} and the corresponding $k$-independent uniform estimates are established, we pass the limit as $k \to +\infty$. We then apply the contraction mapping theorem to derive a fixed point satisfying $(v_{l}, \xi_{l}) = (v, \xi)$, which yields a solution to \eqref{eq:quasi_linear_n}.
	
	It then remains to pass to the limit $\eta^{n} \to \eta$. To this end, we first invoke Theorem~\ref{thm:pressure_+} to establish a $q_{+}$-elliptic estimate for the zeroth-order quantities $(v, q, \xi)$. We then apply the a priori estimates developed in \cite{GT2020} to derive uniform bounds independent of $n$, which enable us to pass to the limit $n \to \infty$ and thereby complete the proof
	
	Before constructing the solution, we first introduce the following definitions: 
	\begin{equation}\label{def:dissipation}
		\begin{aligned}
			\mathscr{D}(u,p,\eta)&:=\Big(\|u\|_{L^{2}W^{2,q_{+}}}^2+\|p\|_{L^{2}W^{1,q_{+}}}^{2}+\|\eta\|_{L^{2}W^{3-1/q_+, q_+}}^2\Big)+\sum_{j=0}^2\Big(\|\p_t^ju\|_{L^{2}H^{1}}^2+\|\p_t^ju\|_{L^{2}L^2(\Sigma_s)}^2\Big)\\&\quad
			+\sum_{j=0}^2\Big(\|\p_t^j\eta\|_{L^{2}H^{3/2-\alpha}}^2+\|[\p_t^{j}u\cdot \mathcal{N}]_\ell^2\|_{L_{t}^{2}}\Big)+\|\p_t^3\eta\|_{L^{2}H^{1/2-\alpha}}^2\\
			&\quad+\Big(\|\p_tu\|_{L^{2}W^{2,q_{-}}}^2+\|\p_{t}p\|_{L^{2}W^{1,q_{-}}}^{2}+\|\p_{t}\eta\|_{L^{2}W^{3-1/q_-, q_-}}^2\Big),
		\end{aligned}
	\end{equation}
	\begin{equation}\label{def:energy}
		\begin{aligned}
			\mathscr{E}(u,p,\eta)&: =\|u\|_{L^{\infty}W^{2,q_+}}^2+\sum_{i=0}^{1}\|\p_t^{i}u\|_{L_{t}^{\infty}H^{1+\varepsilon_-/2}}^2+\sum_{k=0}^2\|\p_t^ku\|_{L_{t}^{\infty}L^{2}}^2+\|p\|_{L_{t}^{\infty}W^{1,q_-}}^2+\sum_{i=0}^{1}\|\p_t^{i}p\|_{L_{t}^{\infty}H^{0}}^2\\&\quad+\|\p_{t}^{i}\eta\|_{L_{t}^{\infty}W^{3-1/q_-, q_-}}^2+\sum_{i=0}^{1}\|\p_t^{i}\eta\|_{L_{t}^{\infty}H^{3/2+(\varepsilon_--\alpha)/2}}^2+\sum_{j=0}^2\Big(\|\p_t^j\eta\|_{L_{t}^{\infty}H^{1}}^2+\|[\p_{t}^{j}u\cdot \mathcal{N}]_{\ell}\|_{L_{t}^{\infty}}^{2}\Big),
		\end{aligned}
	\end{equation}
	\begin{equation}\label{def:DEK_0}
		\mathfrak{K}(u,p,\eta):=\mathscr{E}(u,p,\eta)+\mathscr{D}(u,p,\eta).
	\end{equation}
	\noindent Compared to \eqref{energy} and \eqref{dissipation}, we add only regularity in time here.
	
	Now, we state our main theorem in this section as follows.
	\begin{theorem}\label{thm:linear_low}
		Assume that the initial data are constructed in Section \ref{sec:initial_linear}.
		Suppose that $\mathfrak{K}(u,\eta,p)\le\delta$ (as defined in \eqref{def:DEK_0}) is smaller than the constant $\delta_0$ defined in Lemma \ref{lem:lemma1}, Lemma \ref{lem:equivalence_norm} and Theorem 4.7 in \cite{GT2020}.
		Then there exists a unique strong solution $(v_{d},q_{d},\xi_{d}, v, q, \xi)$ solving \eqref{eq:quasi_linear} and \eqref{eq:quasi_linear2}. Moreover, the solution obeys the estimate
		\begin{align}\label{est:bound_linear}
			\begin{aligned}
				\mathfrak{K}(v,q,\xi)&\lesssim\exp\big\{T(\|\p_t\eta\|_{L^\infty H^{3/2+(\varepsilon_--\alpha)/2}}+\|\eta\|_{L^{\infty}W^{3-\frac{1}{q_{-}},q_{-}}})\big\}\\
				&\quad\quad\bigg\{ \mathcal{E}(u_{0},p_{0},\xi_{0}) +\|(F^1-F^4-F^5)(0)\|^{2}_{(\mathcal{H}^1)^\ast}+\mathfrak{F}+\mathcal{Z}+\mathfrak{K}(u,p,\eta)\bigg\},
			\end{aligned}
		\end{align}
		where
		\begin{align}
			\begin{aligned}
				\mathfrak{F}=\big(1+\|\p_t\eta\|_{L^\infty H^{3/2+\varepsilon_-/2}}^2\big)\Big(\|F^1\|_{L^2L^{q_-}}^2+\|F^4\|_{L^2W^{1-1/q_-,q_-}}^2
				+\|F^5\|_{L^2W^{1-1/q_-,q_-}}^2\Big) \\
				+\big(1+\|\p_t\eta\|_{L^\infty H^{3/2+\varepsilon_-/2}}^2\big)\|\p_t(F^1-F^4-F^5)\|_{(\mathcal{H}^1_T)^{\ast}}^2
				+\sum_{j=0}^1\|[F^{7,j}]_\ell\|_{L^2}^2,
			\end{aligned}
		\end{align}
		and
		\begin{align}
			\mathcal{Z}=\|\int_{0}^{t}F^1\|_{L^\infty 
				L^{q_+}}^2 + \|\int_{0}^{t}F^4\|_{L^\infty W^{1-1/q_+,q_+}}^2 + \|\int_{0}^{t}F^5\|_{L^\infty W^{1-1/q_+,q_+}}^2 + \|\int_{0}^{t}[F^7]_\ell\|_{L_t^{\infty}}^2
		\end{align}
		where "$\lesssim$" omits a constant depending on $(g, \sigma, \ell, |\Om|)$.
	\end{theorem}
	\begin{proof}
		$~$
		
		\paragraph{\underline{Step 1 -- Galerkin Setup}}
		We begin by constructing the solution to \eqref{eq:quasi_linear_{s}} for any given $n,k$ and prescribed functions $(u,\eta,p,\eta^{k},u^{k})$, $(\eta^{n},v_{l}^{k},\xi_{l}^{k})$. 
		
		In order to utilize the Galerkin method, we choose the same basis $\{\omega^j(t)\}_{j=1}^\infty$ of ${}\mathcal{W}_{\sigma}$ constructed in Proposition \ref{prop:basis_galerkin} for each $t\in[0, T]$, that satisfies the time-dependent requirement of condition $u\cdot \mathcal{N}^{n}\in H^{1}(-\ell,\ell)$. Furthermore, tthe time derivatives of these basis vectors are expressed as a finite linear combination of these basis vectors. In particular, $\p_t\omega^j=R^{n}\omega^j$,
		where $R(t)$ is defined by $R^{n}(t):=\p_tM^{n}(t)(M^{n})^{-1}(t)$.
		For any integer $m\ge1$, we define the finite dimensional space
		$
		\mathcal{W}_{\sigma}^m(t):=\text{span}\{\omega^1(t), \cdots, \omega^m(t)\}\subseteq \mathcal{W}_{\sigma}(t)$,
		and we write
		$
		\mathcal{P}^m_t: \mathcal{W}_{\sigma}(t)\rightarrow\mathcal{W}_{\sigma}^m(t)
		$
		for the $\mathcal{W}$ orthogonal projection onto $\mathcal{W}^m(t)$. Clearly, for each $\omega\in \mathcal{W}_{\sigma}(t)$, $\mathcal{P}^m_t\omega\rightarrow\omega$ as $m\rightarrow\infty$.

		\paragraph{\underline{Step 2 -- Solving the Approximate Problem}} In this step, we fix $n,k$ and the given functions $v_{l},\xi_{l}$. 
		
		For each $m\ge1$, we define an approximate solution $
		v_{d}^m(t):=d^m_j(t)\omega^j(t)$, with $d^m_j: [0, T]\rightarrow\mathbb{R}\ \text{for}\ j=1, \dots, m$,
		where, as usual, we use the Einstein convention of summation of the repeated index $j$. We define $\xi_{d}^{m}$ similarly via the following transport equation:
		\[
		\partial_{t}^{2}\xi_{d}^{m}=D_{t}v_{d}^{m}\cdot \mathcal{N}^{n}
		+D_{t}(R^{n}v_{l}^{k})\cdot \mathcal{N}^{n}+u_{1}^{k}\partial_{1}\partial_{t}\xi_{d}^{m}+\p_{t}u_{1}^{k}\partial_{1}\partial_{t}\xi_{l}^{k}.
		\]
		\noindent This system admits the following solution:
		\begin{align}
			\partial_{t}\xi_{d}^{m}(s,X(s,x))=&\partial_{t}\xi_{d}(0)(x)+\int_{0}^{t}((D_{t}v_{d}^{m})\cdot \mathcal{N}^{n}) (y,X(y,x))\mathrm{d}y\no\\
			&+\int_{0}^{t}(\partial_{t}u_{1}^{k}\partial_{1}\partial_{t}\xi_{l}^{k}+D_{t}(R^{n}v_{l}^{k})\cdot \mathcal{N}^{n}))(y,X(y,x))\mathrm{d}y \label{eq:c1},
		\end{align}
		\noindent where $X$ satisfies the following ODE:
		\[
		\frac{d}{dt}X(t,x)=u_{1}^{k}(t,x)~~~\operatorname{and}~~~X(0,x)=x.
		\]
		\noindent Then the equation \eqref{eq:c1} is equivalent to:
		\begin{align}\label{eq:trans}
			\begin{aligned}
				\partial_{t}\xi_{d}^{m}(s,x)=&\partial_{t}\xi_{d}(0)(X(-s,x))+\int_{0}^{s}((D_{t}v_{d}^{m})\cdot \mathcal{N}^{n}) (y,X(y,X(-s,x)))\mathrm{d}y\\
				&+\int_{0}^{s}(\partial_{t}u_{1}^{k}\partial_{1}\partial_{t}\xi_{l}^{k}+D_{t}(R^{n}v_{l}^{k})\cdot \mathcal{N}^{n})(y,X(y,X(-s,x)))\mathrm{d}y.
			\end{aligned}
		\end{align}
		\noindent Using the fact that
		$
		\frac{d}{dt}f(t,X(t,X(-s,x)))=\partial_{t}f-u_{1}^{k}\partial_{x}f,
		$
		\noindent integrating equation \eqref{eq:trans} from $0$ to $t$, and using integration by part, we obtain the following equation for $\xi_{d}^{m}$
		\begin{align}{\label{eq:xi_d}}
			\xi_{d}^{m}(t,x)=&\int_{0}^{t}\partial_{t}^{2}\eta_{0}(X(-s,x))ds+\int_{0}^{t}\int_{0}^{s}(\partial_{t}u_{1}^{k}\partial_{1}\partial_{t}\xi_{l}^{k}+D_{t}(R^{n}v_{l}^{k})\cdot \mathcal{N}^{n})(y,X(y,X(-s,x)))\mathrm{d}y\,\mathrm{d}s\notag\\
			&+\xi_{d}(0,x)-\int_{0}^{t}((v_{d}^{m})\cdot \mathcal{N}^{n})(0,X(-s,x))\mathrm{d}s+\int_{0}^{t} ((v_{d}^{m})\cdot \mathcal{N}^{^{n}})(s,x)\mathrm{d}s
			\\
			&-\int_{0}^{t}\int_{0}^{s}u_{1}^{k}\partial_{1}((v_{d}^{m})\cdot \mathcal{N}^{n})(y,X(y,X(-s,x)))\mathrm{d}y\,\mathrm{d}s\no,
		\end{align}
		where $v_{d}^m(s)$ and $u_{1}(t,x)$ denote their traces on $\Sigma$. Using the fact that $u_{1}(\pm \ell)=0$ and $W^{2,q_{-}}(\Omega)\hookrightarrow C^{0}(-\ell,\ell)$, it follows that $X(t,x)$ is well defined and
		\[
		X(t,\pm \ell)=\pm \ell~\operatorname{for}~\operatorname{any}~t\geq 0,
		\]
		\noindent ensuring that the flow $X$ induced by $u_{1}$ fixes the boundary.
		
		We choose the coefficients $d^m_j(t)$ so that
		\begin{equation}
			\begin{aligned}
				&(\p_tv_{d}^m,w)_{\mathcal{H}^0}+((v_{d}^m,w))+(\xi_{d}^m,w\cdot\mathcal{N}^{n})_{1,\Sigma_{0}}+(\int_{0}^{t}\mathcal{R}_{z}(\partial_{1}\zeta_{0},\partial_{1}\eta^{k})\partial_{1}\partial_{t}\xi_{d}^{m},\partial_{1}(w\cdot\mathcal{N}^{n}))_{L^{2}}+[v_{d}^{m}\cdot \mathcal{N}^{n},w\cdot\mathcal{N}^{n}]_{\ell}\\&
				=\int_{\Om}F^1\cdot wJ^{n}+\int_{-\ell}^{\ell}F^4\cdot w-\int_{\Sigma_s}F^5(w\cdot\tau)J^{n}-[F^7,w\cdot\mathcal{N}^{n}]_\ell+((R^{n}D_{t}v_{l},w))+(\p_{t}(R^{n}v_{l}),w)_{\mathcal{H}^{0}}\\
				&\quad+ \left(\int_{0}^{t}\p_{1}(\mathcal{R}_{zz}(\p_{1}\zeta_{0},\p_{1}\eta^{k})\p_{1}\p_{t}\xi_{l}^{k}\p_{1}\p_{t}\eta^{k}),(w\cdot \mathcal{N}^{n})\right)_{L^{2}}+[\int_{0}^{t}\mathcal{R}_{zz}(\p_{1}\zeta_{0},\p_{1}\eta^{k})\p_{1}\p_{t}\xi_{l}^{k}\p_{1}\p_{t}\eta^{k},w\cdot \mathcal{N}^{n}]_{\ell},\label{eq:galerkin}
			\end{aligned}
		\end{equation}
		for each $w\in \mathcal{W}^m(t)$.
		From equation \eqref{eq:xi_d}, we may compute
		\[\begin{aligned}
			&(\xi_{d}^m(t,x),w\cdot\mathcal{N}^{n}(t))_{1,\Sigma_{0}}
			= \left(\int_{0}^{t}\partial_{t}^{2}\eta_{0}(X(-s,x))ds+2\int_{0}^{t}\int_{0}^{s}(\partial_{t}u_{1}^{k}\partial_{1}\partial_{t}\xi_{l}^{k})(y,X(y,X(-s,x))dy\,ds,w\cdot\mathcal{N}^{n}(t) \right)_{1,\Sigma_{0}}\\
			&\quad+(\xi_{d}(0),w\cdot\mathcal{N}^{n}(t) )_{1,\Sigma_{0}}-\left(\int_{0}^{t}\Big[(v_{d}^{m}\cdot \mathcal{N}^{n})(0,X(-s,x)) + (v_{d}^{m}\cdot \mathcal{N}^{n})(s,x) \Big]ds,w\cdot \mathcal{N}^{n}(t) \right)_{1,\Sigma_{0}}\\&\quad-\left(\int_{0}^{t}\int_{0}^{s}\Big[u_{1}^{k}\partial_{1}((v_{d}^{m}\cdot \mathcal{N}^{n})(y,X(y,X(-s,x))) -2 (D_{t}(R^{n}v_{l}^{k})\cdot \mathcal{N}^{n})(y,X(y,X(-s,x)))\Big]dy\,ds, w\cdot\mathcal{N}^{n}(t) \right)_{1,\Sigma_{0}},
		\end{aligned}
		\]
		\noindent and
		\[\begin{aligned}
			&\left (\int_{0}^{t}\mathcal{R}_{z}(\partial_{1}\zeta_{0},\partial_{1}\eta^{k})\partial_{1}\partial_{t}\xi_{d},\partial_{1}(w\cdot\mathcal{N}^{n})\right)_{L^{2}}=\left(\int_{0}^{t}\mathcal{R}_{z}(\partial_{1}\zeta_{0},\partial_{1}\eta^{k})\partial_{t}\partial_{1}\xi_{d}(0)(X(-s,x)),\partial_{1}(w \cdot \mathcal{N}^{n}) \right)_{L^{2}}\notag\\
			&\quad+2(\int_{0}^{t}\mathcal{R}_{z}(\partial_{1}\zeta_{0},\partial_{1}\eta^{k})\int_{0}^{s}\p_{1}(\partial_{t}u_{1}^{k}\partial_{1}\partial_{t}\xi_{l}^{k})(y,X(y,X(-s,x))),\partial_{1}(w\cdot \mathcal{N}^{n}))_{L^{2}}\notag\\
			&\quad+\left(\int_{0}^{t}\mathcal{R}_{z}(\partial_{1}\zeta_{0},\partial_{1}\eta^{k})\big[ -\p_{1}(v_{d}^{m} \cdot \mathcal{N}^{n})(0,X(-s,x)) + \p_{1} (v_{d}^{m} \cdot \mathcal{N}^{n})(s,x)\big] ,\p_{1}(w\cdot \mathcal{N}^{n}) \right)_{L^{2}}\\
			&\quad+ \left(\int_{0}^{t}\int_{0}^{s}\mathcal{R}_{z}(\p_{1}\zeta_{0},\p_{1}\eta^{k})\big[\p_{1}(D_{t}(R^{n}v_{l}^{k})\cdot \mathcal{N}^{n}) - \p_{1}(u_{1}^{k}\partial_{1}(v_{d}^{m} \cdot \mathcal{N}^{n}))(y,X(y,X(-s,x)))\big],\partial_{1}(w\cdot \mathcal{N}^{n})\right)_{L^{2}}.
		\end{aligned}\]
		\noindent Substituting these two equations to \eqref{eq:galerkin}, this equation is equivalent to the equation of $d^m_j$ given by
		\begin{equation}\label{eq:galerkin_2}
			\begin{aligned}
				\dot{d}^m_i(\omega^i,\omega^j)_{\mathcal{H}^0}+d^m_i\tilde{\mathfrak{B}} -\int_{0}^{t}d_{i}^{m}(s) \tilde{\mathfrak{C}}(t,s)\,\mathrm{d}s-\int_{0}^{t}\int_{0}^{s}d_{i}^{m}(y) \tilde{\mathfrak{J}}(t,s,y)dy\,ds= \tilde{\mathfrak{F}}(t),
			\end{aligned}
		\end{equation}
		for $i,j=1, \dots, m$, where
		\[
		\tilde{\mathfrak{B}}(t) = (R^{n}(t)\omega^i,\omega^j)_{\mathcal{H}^0}+((\omega^i,\omega^j))+[\omega^i\cdot\mathcal{N}^{n},\omega^j\cdot\mathcal{N}^{n}]_{\ell}
		\]
		\[
		\begin{aligned}
			\tilde{\mathfrak{C}}(t,s)&=\Big((\omega^{i}\cdot \mathcal{N}^{n})(0,X(-s,x)) +(\omega^{i}\cdot \mathcal{N}^{n})(s,x),\omega^{j}\cdot \mathcal{N}^{n}(t) \Big)_{1,\Sigma_{0}}\\
			&\quad+ \Big( \mathcal{R}_{z}(\p_{1}\zeta_{0},\p_{1}\eta^{k})\omega^{i}\cdot \mathcal{N}^{n}(0,X(-s,x)) + \mathcal{R}_{z}(\p_{1}\zeta_{0},\p_{1}\eta^{k})\omega^{i}\cdot \mathcal{N}^{n}(s,x)ds,\p_{1}(\omega^{j}\cdot \mathcal{N}^{n}) \Big)_{L^{2}},
		\end{aligned} 
		\]
		\[
		\begin{aligned}
			\tilde{\mathfrak{J}}(t,s, y) & =(u_{1}^{k}\partial_{1}((v_{d}^{m})\cdot \mathcal{N}^{n})(y,X(y,X(-s,x))), \omega^{j}\cdot\mathcal{N}^{n}(t))_{1,\Sigma_{0}}\\
			&\quad- \Big(\mathcal{R}_{z}(\p_{1}\zeta_{0},\p_{1}\eta^{k})\p_{1}(u_{1}^{k}\p_{1}(\omega^{i}\cdot \mathcal{N}^{n}(y,X(y,X(-s,x))))),\p_{1}(\omega^{j}\cdot \mathcal{N}^{n}) \Big)_{L^{2}},
		\end{aligned}
		\]
		\[
		\begin{aligned}
			\tilde{\mathfrak{F}}(t) &= \int_{\Om}F^1\cdot \omega^jJ^{n}-\int_{-\ell}^{\ell} F^4\cdot \omega^j-\int_{\Sigma_s}F^5(\omega^j\cdot\tau)J^{n}-(\eta_0, \omega^j\cdot\mathcal{N}^{n}(t))_{1,\Sigma_{k}}-[F^7,\omega^j\cdot\mathcal{N}^{n}]_\ell\\
			&-(\xi_{d}(0),\omega^{j}\cdot \mathcal{N}^{n})_{1,\Sigma_{n}}
			-\Big(\int_{0}^{t}\partial_{t}^{2}\eta_{0}(X(-s,x))\mathrm{d}s+ \int_{0}^{s}(\p_{1}(\partial_{t}u_{1}^{k}\partial_{1}\partial_{t}\xi_{l}^{k})(y,X(y,X(-s,x)))\mathrm{d}y\,\mathrm{d}s,\omega^{i}\cdot \mathcal{N}^{n} \Big)_{1,\Sigma_{k}} \\
			&+\left(\int_{0}^{t}\int_{0}^{s}\mathcal{R}_{z}(\p_{1}\zeta_{0},\p_{1}\eta^{k})\p_{1}(D_{t}(R^{n}D_{t}v_{l}^{k})\cdot \mathcal{N}^{n}))(y,X(y,X(-s,x))) \,\mathrm{d}y\,\mathrm{d}s,\partial_{1}(\omega^{j}\cdot \mathcal{N}^{n})\right)_{L^{2}}+((R^{n}D_{t}v_{l},\omega^{j}))\\
			&+(\p_{t}(R^{n}v_{l}),\omega^{j})+\left(\int_{0}^{t}\p_{1}(\mathcal{R}_{zz}(\p_{1}\zeta_{0},\p_{1}\eta^{k})\p_{1}\p_{t}\xi_{l}^{k}\p_{1}\p_{t}\eta^{k})\mathrm{d}s,(\omega^{j}\cdot \mathcal{N}^{n}) \right)_{L^{2}}\\
			&\quad+\left[\int_{0}^{t}\mathcal{R}_{zz}(\p_{1}\zeta_{0},\p_{1}\eta^{k})\p_{1}\p_{t}\xi_{l}^{k}\p_{1}\p_{t}\eta^{k}\mathrm{d}s,\omega^{j}\cdot \mathcal{N}^{n}\right]_{\ell}.
		\end{aligned}
		\] 
		Since $\{\omega^j(t)\}_{j=1}^\infty$ is a basis of $\mathcal{W}_\sigma(t)$, the $m\times m$ matrix with $i,j$ entry $(\omega^i, \omega^j)_{\mathcal{H}^0}$ is invertible. Thus the initial data $d^m_i(0)$ are determined uniquely via $d^m_i(0)(\omega^i(0), \omega^j(0))_{\mathcal{W}}=(\mathcal{P}_0^mu_0, \omega^j(0))_{\mathcal{W}}$.
		Then we view \eqref{eq:galerkin_2} as an  integro-differentia system of the form
		\begin{equation}\label{eq:integral}
			\dot{d}^m(t)+\mathfrak{B}(t)d^m(t)+\int_0^t\mathfrak{C}(t,s)d^m(s)\,\mathrm{d}s+\int_{0}^{t}\int_{0}^{s}\mathfrak{J}(t,s,y)d^{m}(y)dy ds=\tilde{\mathfrak{F}}(t).
		\end{equation}
		The $m\times m$ matrix $\mathfrak{B}$ belongs to $C^1([0,T])$  and $\mathfrak{C}(t,s)$ belongs to $C^1(D)$, where $D=\{(t,s)|0\le s\le t\le T\}$, and $\mathfrak{I}$ belongs to $L^{2}(Q)$, where $Q=\{(t,s,y)|0\leq y\leq s\leq t\}$ and the forcing term $\tilde{\mathfrak{F}}\in H^1([0,T])$. By the linear theory of Volterra integro-differential equations developed by S. Grossman and R. Miller \cite{GM70}, or \cite{Wa}, directly using Picard iteration, \eqref{eq:integral} has a unique solution $d^m\in C^2([0,T])$ achieving the initial data $d^m(0)$.
		
		\paragraph{\underline{Step 3 -- Energy Estimate for $D_tv_{d}^m$}}
		
		In this step, we fix $n,k$ and given functions $v_{l},\xi_{l}$, and construct a uniform bound for $(v_{d}^{m},\xi_{d}^{m})$ independent of $m$. 
		
		Because the system’s kinematic boundary condition is quasilinear, we must first derive the energy estimate for $(D_{t}v_{d}^{m},\partial_{t}\xi_{d}^{m})$. 
		
		Suppose that $w=b_j^m\omega^j\in\mathcal{W}^m(t)$ for $b_j^m\in C^2([0, T])$. From the definition of $\mathcal{W}_{\sigma}(t)$, $\partial_{t}w$ may not be a function in $\mathcal{W}_{\sigma}^{m}(t)$ since the basis $w^{j}$ depends on time $t$. Therefore, we cannot establish the second-order system by simply taking a time derivative of the first-order equation. 
		
		Fortunately, $\p_tw(t)-R^{n}(t)w(t)\in\mathcal{W}_{\sigma}^m(t)$ provided that $w(t)\in\mathcal{W}_{\sigma}^{m}(t)$. From the definition of $\mathcal{W}_{\sigma}(t)$, we now use this $w$ in \eqref{eq:galerkin}, differentiate the resulting equation with respect to time, and then subtract this from the equation \eqref{eq:galerkin} with the test function $ D_tw=\partial_{t}w-R^{n}w$ where $R^{n}$ is defined in Proposition \ref{prop:solid_boundary}. This eliminates the $\p_tw$  terms and leaves us with the following equation
		\begin{equation}\label{eq:pa_tu_m_1}
			\begin{aligned}
				&\left(\p_tD_{t}v_{d}^m,w\right)_{\mathcal{H}^0}+(\p_{t}(R^{n}v_{d}^{m}),w)_{\mathcal{H}^{0}}+(D_tv_{d}^m+R^{n}v_{d}^{m},R^{n}w)_{\mathcal{H}^0}+((D_{t}v_{d}^m+R^{n}v_{d}^{m},w))+((v_{d}^{m},R^{n}w))\\
				&+(\partial_{t}\xi_{d}^{m},w\cdot\mathcal{N}^{n})_{1,\Sigma_{k}}+(\xi_{d}^m, R^{n}w\cdot\mathcal{N}^{n})_{1,\Sigma_{k}}+(\xi_{d}^m,w\cdot\p_t\mathcal{N}^{n})_{1,\Sigma_{k}}+[\p_tv_{d}^{m}\cdot \mathcal{N}^{n},w\cdot\mathcal{N}^{n}]_\ell+[v_{d}^{m}\cdot \partial_{t}\mathcal{N}^{n},w\cdot \mathcal{N}^{n}]_{\ell}\\
				&+[v_{d}^{m}\cdot \mathcal{N}^{n},R^{n} w\cdot\mathcal{N}^{n}]_\ell+[v_{d}^{m},w\cdot\p_t\mathcal{N}^{n}]_\ell+(\p_tv_{d}^m, w\p_tJ^{n})_{L^2(\Om)}+\beta(D_{t}v_{d}^m\cdot\tau,(w\cdot\tau)\p_tJ^{n})_{L^2(\Sigma_s)}\\
				&+\Big(\int_{0}^{t}\mathcal{R}_{z}(\partial_{1}\zeta_{0},\partial_{1}\eta^{k})\partial_{1}\partial_{t}\xi_{d},\partial_{1}(w\cdot \partial_{t}\mathcal{N}^{n}) \Big)_{L^{2}}+\Big(\int_{0}^{t}\mathcal{R}_{z}(\partial_{1}\zeta_{0},\partial_{1}\eta^{k})\partial_{1}\partial_{t}\xi_{d},\partial_{1}(R^{n}w\cdot \mathcal{N}^{n}) \Big)_{L^{2}}\\
				&+\Big(\p_{1}(\mathcal{R}_{zz}(\p_{1}\zeta_{0},\p_{1}\eta^{k})\p_{1}\p_{t}\xi_{l}^{k}\p_{1}\p_{t}\eta^{k}),(w\cdot \mathcal{N}^{n}) \Big)_{L^{2}}+[\mathcal{R}_{zz}(\p_{1}\zeta_{0},\p_{1}\eta^{k})\p_{1}\p_{t}\xi_{l}^{k}\p_{1}\p_{t}\eta^{k},w\cdot \mathcal{N}^{n}]_{\ell}+(\p_{t}^{2}(R^{n}v_{l}),w)_{\mathcal{H}^{0}}\\
				&+\Big(\p_t(R^{n}v_{l}),R^{n}w)_{\mathcal{H}^{0}}+((R^{n}v_{l}),w\p_{t}J^{n}(J^{n})^{-1} \Big)_{\mathcal{H}^{0}}\\
				&=\p_t\mathcal{F}(w)-\mathcal{F}(\p_tw)+\mathcal{F}(R^{n}w)+((\p_{t}(R^{n}v_{l}),w))+((R^{n}D_{t}v_{l},R^{n}w))\\
				&\quad-\int_\Om\frac{\mu}{2}\mathbb{D}_{\p_t\mathcal{A}^{n}}v_{d}^m:\mathbb{D}_{\mathcal{A}^{n}}wJ^{n}
				-\int_\Om\frac{\mu}{2} J^{n}\Big(\mathbb{D}_{\mathcal{A}^{n}}v_{d}^m:\mathbb{D}_{\p_t\mathcal{A}^{n}}w+\p_tJ^{n}K^{n}\mathbb{D}_{\mathcal{A}^{n}}v_{d}^m:\mathbb{D}_{\mathcal{A}^{n}}w \Big)\\
				&\quad-\int_\Om\frac{\mu}{2}\mathbb{D}_{\p_t\mathcal{A}^{n}}(R^{n}v_{l}):\mathbb{D}_{\mathcal{A}^{n}}wJ^{n}
				-\int_\Om\frac{\mu}{2}J^{n} \Big(\mathbb{D}_{\mathcal{A}^{n}}(R^{n}v_{l}):\mathbb{D}_{\p_t\mathcal{A}^{n}}w+\p_tJ^{n}K^{n}\mathbb{D}_{\mathcal{A}^{n}}(R^{n}v_{l}):\mathbb{D}_{\mathcal{A}^{n}}w \Big),
			\end{aligned}
		\end{equation}
		where for brevity we have written
		\begin{equation}\label{eq:cal_f}
			\begin{aligned}
				\mathcal{F}(w)&=\int_{\Om}F^1\cdot wJ^{n}+\int_{-\ell}^{\ell}F^4\cdot w-\int_{\Sigma_s}F^5(w\cdot\tau)J^{n}-[F^{7},w\cdot \mathcal{N}^{n}]_{\ell}.
			\end{aligned}
		\end{equation}
		
		We first consider the terms on the boundary $\Sigma$ and the contact points. According to \cite[Lemma A.1]{ZhT17}, we know that $(R^{n})^T\mathcal{N}^{n}=-\p_t\mathcal{N}^{n}$. Then it holds that
		\[
		(\xi_{d}^m, R^{n}w\cdot\mathcal{N}^{n})_{1,\Sigma_{k}}+(\xi_{d}^m,w\cdot\p_t\mathcal{N}^{n})_{1,\Sigma_{k}}+[v_{d}^{m}\cdot \mathcal{N}^{n},R^{n} w\cdot\mathcal{N}^{n}]_\ell+[v_{d}^{m}\cdot \mathcal{N}^{n},w\cdot\p_t\mathcal{N}^{n}]_\ell=0,
		\]
		\[
		\left(\int_{0}^{t}\mathcal{R}_{z}(\partial_{1}\zeta_{0},\partial_{1}\eta^{k})\partial_{1}\partial_{t}\xi_{d},\partial_{1}(w\cdot \partial_{t}\mathcal{N}^{n}) \right)_{L^{2}}+ \left(\int_{0}^{t}\mathcal{R}_{z}(\partial_{1}\zeta_{0},\partial_{1}\eta^{k})\partial_{1}\partial_{t}\xi_{d},\partial_{1}(R^{n}w\cdot \mathcal{N}^{n}) \right)_{L^{2}}=0.
		\]
		Let the test function be $w=D_{t}v_{d}^{m}$. We have the following relation.
		\[
		[\partial_{t}v_{d}^{m}\cdot \mathcal{N}^{n},D_{t}v_{d}^{m}\cdot \mathcal{N}^{n}]_{\ell}=[D_{t}v_{d}^{m}\cdot \mathcal{N}^{n}]^{2}_{\ell}+[D_{t}v_{d}^{m}\cdot \mathcal{N}^{n},R^{n}v_{d}^{m}\cdot \mathcal{N}^{n}]_{\ell}.
		\]
		The second term in the equation above vanishes since $R^{n}v_{d}^{m}\cdot \mathcal{N}^{n}(\pm\ell)=-v_{d}^{m}\cdot \partial_{t}\mathcal{N}^{n}(\pm \ell)=0$. Moreover, by the same reason, we have
		\begin{equation}\label{eq:bc_dtum}
			[\partial_{t}v_{d}^{m}\cdot \partial_{t}\mathcal{N}^{n},w \cdot \mathcal{N}^{n}]_{\ell}=[D_{t}v_{d}^{m}\cdot \partial_{t}\mathcal{N}^{n},w \cdot \mathcal{N}^{n}]_{\ell}+[R^{n}v_{d}^{m}\cdot \partial_{t}\mathcal{N}^{n},w \cdot \mathcal{N}^{n}]=0,
		\end{equation}
		where we used the fact that $v_{d}^{m}$ and $D_{t}v_{d}^{m}$ are functions in $\mathcal{W}_{\sigma}(t)$, which implies that $(D_{t}v_{d}^{m}\cdot \partial_{t}\mathcal{N}^{n})(\pm \ell)=0$, and $R^{n}v_{d}^{m}\cdot \partial_{t}\mathcal{N}^{n}(\pm\ell)=v_{d}^{m}\cdot \partial_{t}\mathcal{N}^{n}(\pm \ell)=0$.
		
		For the term in $(\cdot)_{1,\Sigma}$, using the kinematic boundary condition
		\[
		\partial_{t}^{2}\xi_{d}^{m}=D_{t}(v_{d}^{m})\cdot \mathcal{N}^{n}+D_{t}(R^{n}v_{l}^{k})\cdot \mathcal{N}^{n} - \partial_{t}u_{1}^{k} \partial_{t}\partial_{1}\xi_{l}^{k}+u_{1}^{k}\partial_{1}\partial_{t}\xi_{d}^{m},
		\]
		we obtain
		\[
		\begin{aligned}
			(\partial_{t}\xi_{d}^{m}, D_{t}v_{d}^{m}\cdot\mathcal{N}^{n})_{1,\Sigma_{k}}=(\partial_{t}\xi_{d}^{m},\partial_{t}^{2}\xi_{d}^{m})_{1,\Sigma_{k}} +(\partial_{t}\xi_{d}^{m},\partial_{t}u_{1}^{k} \partial_{t}\partial_{1}\xi_{l}^{k})_{1,\Sigma_{k}}-(\partial_{t}\xi_{d}^{m}, u_{1}^{n}\partial_{1}\partial_{t}\xi_{d}^{m})_{1,\Sigma_{k}}\\
			-(\p_{t}\xi_{d}^{m},D_{t}(R^{n}v_{l}^{k})\cdot \mathcal{N}^{n})_{1,\Sigma_{k}}
			=\frac{d}{dt}\frac12\|\partial_{t}\xi_{d}^{m}\|_{1,\Sigma_{k}}^2-\int_{-\ell}^{\ell}\mathcal{R}_{zz}(\partial_{1}\zeta_{0},\partial_{1}\eta^{k})\partial_{1}\partial_{t}\eta^{k}(\partial_{t}\partial_{1}\xi_{d}^{m})^{2}\\
			+(\partial_{t}\xi_{d}^{m},\partial_{t}u_{1}^{k}\partial_{t}\partial_{1}\xi_{l}^{k})_{1,\Sigma_{k}}-(\partial_{t}\xi_{d}^{m}, u_{1}^{k}\partial_{1}\partial_{t}\xi_{d}^{m})_{1,\Sigma_{k}}
			-(\p_{t}\xi_{d}^{m},D_{t}(R^{n}v_{l}^{k})\cdot \mathcal{N}^{n})_{1,\Sigma_{k}},
		\end{aligned}
		\]
		\noindent where we recall the definition of $(\cdot)_{1,\Sigma_{k}}$ inner product as follows
		\[
		(\partial_{t}\xi_{d},\partial_{t}^{2}\xi_{d})_{1,\Sigma_{k}}=\frac{1}{2}\frac{d}{dt}\vert \vert \partial_{t}\xi_{d}\vert \vert_{1,\Sigma_{k}}^{2}-\frac{1}{2}\sigma\int_{-\ell}^{\ell} \mathcal{R}_{zz}(\partial_{1}\zeta_{0},\partial_{1}\eta^{k}) \partial_{1}\partial_{t}\eta^{k} (\partial_{1}\partial_{t}\xi_{d})^{2}.
		\]
		
		Applying all the computations above to equation \eqref{eq:pa_tu_m_1}, setting the test function to $w=D_{t}v_{d}^{m}$, and using integration by parts, we deduce
		\[
		(\partial_{t}D_{t}v_{d}^{m},D_{t}v_{d}^{m})_{\mathcal{H}^{0}}=\frac{1}{2}\frac{d}{dt}\vert \vert D_{t}v_{d}^{m}\vert \vert^{2}_{\mathcal{H}^{0}}-\int_{\Omega}\partial_{t}J^{n}(D_{t}v_{d}^{m})^{2}.
		\]
		\noindent Thus, equation \eqref{eq:pa_tu_m_1} can be rewritten as
		\begin{equation}\label{eq:e_2}
			\begin{aligned}
				&\frac{d}{dt}\left(\frac12\|D_{t}v_{d}^m\|_{\mathcal{H}^0}^2+\frac12\|\partial_{t}\xi_{d}^{m}\|_{1,\Sigma_{k}}^2\right)+\|D_tv_{d}^m\|_{\mathcal{H}^1}^2+[D_{t}v_{d}^{m}\cdot \mathcal{N}^{n}]_\ell^2=I+II+III+IV,
			\end{aligned}
		\end{equation}
		where
		\[
		\begin{aligned}
			I=&-\beta(\partial_{t}v_{d}^m\cdot\tau,(\partial_{t}v_{d}^{m}\cdot\tau)\p_tJ^{n})_{L^2(\Sigma_s)} -(\partial_{t}\xi_{d}^{m},\partial_{t}u_{1}^{k} \partial_{t}\partial_{1}\xi_{l}^{k})_{1,\Sigma_{k}}-(\partial_{t}\xi_{d}^{m}, u_{1}^{k}\partial_{1}\partial_{t}\xi_{d}^{m})_{1,\Sigma_{k}}\\
			&
			+(\p_{t}\xi_{d}^{m},D_{t}(R^{n}v_{l}^{k})\cdot \mathcal{N}^{n})_{1,\Sigma_{k}}+(\p_{1}(\mathcal{R}_{zz}(\p_{1}\zeta_{0},\p_{1}\eta^{k})\p_{1}\p_{t}\xi_{l}^{k}\p_{1}\p_{t}\eta^{k}),(D_{t}v_{d}^{m}\cdot \mathcal{N}^{n}))_{L^{2}}\\
			&+[\mathcal{R}_{zz}(\p_{1}\zeta_{0},\p_{1}\eta^{k})\p_{1}\p_{t}\xi_{l}^{k}\p_{1}\p_{t}\eta^{k},D_{t}v_{d}^{m}\cdot \mathcal{N}^{n}]_{\ell},
		\end{aligned}
		\]
		\[
		\begin{aligned}
			II&=((v_{d}^{m},R^{n}D_{t}v_{d}^{m}))-((R^{n}v_{d}^{m},D_{t}v_{d}^{m}))-\int_\Om\frac{\mu}{2}(\mathbb{D}_{\p_t\mathcal{A}^{n}}v_{d}^m:\mathbb{D}_{\mathcal{A}^{n}}(D_{t}v_{d}^{m})J^{n}\\
			&\quad-\int_\Om\frac{\mu}{2}\Big(\mathbb{D}_{\mathcal{A}^{n}}v_{d}^m:\mathbb{D}_{\p_t\mathcal{A}^{n}}(D_{t}v_{d}^{m})+\p_tJ^{n}K^{n}\mathbb{D}_{\mathcal{A}^{n}}v_{d}^m:\mathbb{D}_{\mathcal{A}^{n}}(D_{t}v_{d}^{m}) \Big)J^{n}\\
			&\quad+((\p_{t}(R^{n}v_{l}),D_{t}v_{d}^{m}))+((R^{n}v_{l},R^{n}D_{t}v_{d}^{m}))-\int_\Om\frac{\mu}{2}\mathbb{D}_{\p_t\mathcal{A}^{n}}(R^{n}v_{l}):\mathbb{D}_{\mathcal{A}^{n}}(D_{t}v_{d}^{m})J^{n}\\
			&
			\quad-\int_\Om\frac{\mu}{2}J^{n}\Big(\mathbb{D}_{\mathcal{A}^{n}}(R^{n}v_{l}):\mathbb{D}_{\p_t\mathcal{A}^{n}}(D_{t}v_{d}^{m})+\p_tJ^{n}K^{n}\mathbb{D}_{\mathcal{A}^{n}}(R^{n}v_{l}):\mathbb{D}_{\mathcal{A}^{n}}(D_{t}v_{d}^{m}) \Big),
		\end{aligned}
		\]
		
		\[
		\begin{aligned}
			III%&=\p_t\mathcal{F}(D_{t}v_{d}^{m})-\mathcal{F}(D_{t}^{2}v_{d}^{m})\\
			&=\int_{\Om}\left[\p_tF^1\cdot(D_{t}v_{d}^{m})+\p_tJ^{n}K^{n}F^1\cdot(D_{t}v_{d}^{m})+F^1\cdot R^{n}(D_{t}v_{d}^{m})\right]J^{n}-\int_{\Sigma}\partial_{t}F^{4}\cdot D_{t}v_{d}^{m}-F^{4}\cdot R^{n}D_{t}v_{d}^{m}\\
			&\quad-\int_{\Sigma_s}\left[\p_tF^5 (D_{t}v_{d}^{m})+\p_tJ^{n}K^{n}F^5 (D_{t}v_{d}^{m})+F^5 R^{n}(D_{t}v_{d}^{m})\right]\cdot\tau J^{n}-[\p_tF^7,D_tv_{d}^m\cdot\mathcal{N}^{n}]_\ell,
		\end{aligned}
		\]
		
		\[
		\begin{aligned}
			IV&=(\partial_{t}v_{d}^{m},R^{n}D_{t}v_{d}^{m})_{\mathcal{H}^{0}}-(\p_{t}(R^{n}v_{d}^{m}),D_{t}v_{d}^{m})_{\mathcal{H}^{0}(\Omega)}+((R^{n}v_{d}^{m},D_{t}v_{d}^{m}))+(\p_{t}^{2}(R^{n}v_{l}),D_{t}v_{d}^{m})_{\mathcal{H}^{0}}\\
			&\quad+(\p_{t}(R^{n}v_{l}),R^{n}D_{t}v_{d}^{m})_{\mathcal{H}^{0}}+(\p_{t}(R^{n}v_{l}),D_{t}v_{d}^{m}\p_{t}J^{n}(J^{n})^{-1})_{\mathcal{H}^{0}}.
		\end{aligned}
		\]
		
		We now estimate each term on the right-hand side of the second line in \eqref{eq:e_2}. We first estimate each term in $I$. From the fact that $\vert \mathcal{R}_{zz}(\partial_{1}\zeta_{0},\partial_{1}\eta)\vert\leq C$ for some constant $C$, we obtain the following inequality
		\begin{align}{\label{est:dt_u}}
			\int_{-\ell}^{\ell}\beta(\partial_{t}v_{d}^{m}\cdot \tau,(\partial_{t}v_{d}^{m} \cdot \tau)\partial_{t}J^{n})_{L^{2}(\Sigma_{s})}\lesssim \vert \vert \partial_{t}J^{n}\vert \vert_{L^{\infty}}\vert \vert \partial_{t}v_{d}^{m}\vert\vert^{2}_{L^{2}(\Sigma_{s})}\lesssim \vert \vert \partial_{t}\eta^{n}\vert \vert_{H^{\frac{3}{2}+\frac{\varepsilon_{-}-\alpha}{2}}}\vert \vert \partial_{t}v_{d}^{m}\vert \vert_{H^{1}}^{2}.
		\end{align}
		For the terms in $(\cdot)_{1,\Sigma}$ norm, we have:
		\begin{align}
			(\partial_{t}\xi_{d}^{m},\partial_{t}u_{1}^{k}\partial_{t}\partial_{1}\xi_{l}^{k})_{1,\Sigma_{k}}\lesssim \vert \vert \partial_{t}\xi_{d}^{m}\vert \vert_{H^{1}}\vert \vert \partial_{t}u_{1}^{k}\vert \vert_{W^{1,+\infty}(\Sigma)}\vert \vert \partial_{t}\xi_{l}^{k}\vert \vert_{H^{2}}\lesssim \vert \vert \partial_{t}\xi_{d}^{m}\vert \vert_{H^{1}}\vert \vert \partial_{t}u^{k}\vert \vert_{H^{\frac{5}{2}}}\vert \vert \partial_{t}\xi_{l}^{k}\vert \vert_{H^{2}}\label{eq:n1},
		\end{align}
		\noindent and
		\begin{align}
			&(\partial_{t}\xi_{d}^{m},u_{1}^{k}\partial_{1}\partial_{t}\xi_{d}^{m})_{1,\Sigma_{k}}\notag\\
			&=\int_{-\ell}^{\ell}g\partial_{t}\xi_{d}^{m}u_{1}^{k}\partial_{1}\partial_{t}\xi_{d}^{m}+\sigma\frac{\p_1\partial_{t}\xi_{d}^{m}u_{1}^{k}\partial_{1}^{2}\partial_{t}\xi_{d}^{m}}{(1+|\p_1\zeta_0|^2)^{3/2}}+\int_{-\ell}^{\ell}\mathcal{R}_{z}(\partial_{1}\zeta_{0},\partial_{1}\eta^{k})u_{1}^{k}\partial_{t}\partial_{1}\xi_{d}^{m}\partial_{1}^{2}\partial_{t}\xi_{d}^{m}\notag\\
			&\quad+\int_{-\ell}^{\ell}g\partial_{t}\xi_{d}^{m}\partial_{1}u_{1}^{k}\partial_{t}\xi_{d}^{m}+\sigma\frac{\p_1\partial_{t}\xi_{d}^{m}\partial_{1}u_{1}^{k}\partial_{1}\partial_{t}\xi_{d}^{m}}{(1+|\p_1\zeta_0|^2)^{3/2}}+\int_{-\ell}^{\ell}\mathcal{R}_{z}(\partial_{1}\zeta_{0},\partial_{1}\eta^{k})\partial_{1}u_{1}^{k}\partial_{t}\partial_{1}\xi_{d}^{m}\partial_{1}\partial_{t}\xi_{d}^{m}\notag\\
			&=-\frac{1}{2}\int_{-\ell}^{\ell}\mathcal{R}_{zz}(\partial_{1}\zeta_{0},\partial_{1}\eta^{k})u_{1}^{k}\partial_{t}\partial_{1}\xi_{d}^{m}\partial_{1}\partial_{t}\xi_{d}^{m}(\partial_{1}^{2}\zeta_{0}+\partial_{1}^{2}\eta^{k})\notag\\
			&\quad+\frac{1}{2}\int_{-\ell}^{\ell}g\partial_{t}\xi_{d}^{m}\partial_{1}u_{1}^{k}\partial_{t}\xi_{d}^{m}+\sigma\frac{\p_1\partial_{t}\xi_{d}^{m}\partial_{1}u_{1}^{k}\partial_{1}\partial_{t}\xi_{d}^{m}}{(1+|\p_1\zeta_0|^2)^{3/2}}+\frac{1}{2}\int_{-\ell}^{\ell}\mathcal{R}_{z}(\partial_{1}\zeta_{0},\partial_{1}\eta^{k})\partial_{1}u_{1}^{k}\partial_{t}\partial_{1}\xi_{d}^{m}\partial_{1}\partial_{t}\xi_{d}^{m}\notag\\
			&\lesssim \vert \vert \partial_{t}\xi_{d}^{m}\vert \vert^{2}_{H^{1}}(\vert \vert \eta^{k}\vert \vert_{H^{3}}+\vert \vert u^{k}\vert\vert_{H^{\frac{5}{2}}})\label{eq:n2},
		\end{align}
		\noindent where we used integration by part, and the boundary terms vanish due to the fact $u_{1}(\pm \ell)=0$. 
		
		Finally, for the term including $v_{l}^{k},\xi_{l}^{k}$, by H\"older's inequality, we have the following estimate
		\begin{align}{\label{eq:n0}}
			\begin{aligned}
				(\p_{t}\xi_{d}^{m},D_{t}(R^{n}v_{l}^{k})\cdot \mathcal{N}^{n})_{1,\Sigma_{k}}
				& \lesssim \|\p_{t}\xi_{d}^{m}\|_{H^{1}}\bigg(\|D_{t}v_{l}^{k}\|_{H^{1}(\Sigma)}\|\p_{t}\eta^{n}\|_{W^{1,+\infty}}+\|v_{l}^{k}\|_{H^{1}(\Sigma)}\|\p_{t}^{2}\eta^{n}\|_{W^{1,+\infty}}\bigg)\\
				&\lesssim \|\p_{t}\xi_{d}^{m}\|_{H^{1}}\bigg(\|D_{t}v_{l}^{k}\|_{H^{\frac{3}{2}}}\|\p_{t}\eta^{n}\|_{W^{3-\frac{1}{q_{-}},q_{-}}}+\|v_{l}^{k}\|_{H^{\frac{3}{2}}}\|\p_{t}^{2}\eta^{n}\|_{H^{\frac{5}{2}}}\bigg),
			\end{aligned}
		\end{align}
		\begin{align}
			\begin{aligned}
				&(\p_{1}(\mathcal{R}_{zz}(\p_{1}\zeta_{0},\p_{1}\eta^{k})\p_{1}\p_{t}\xi_{l}^{k}\p_{1}\p_{t}\eta^{k}),(D_{t}v_{d}^{m}\cdot \mathcal{N}^{n}))_{L^{2}}+[\mathcal{R}_{zz}(\p_{1}\zeta_{0},\p_{1}\eta^{k})\p_{1}\p_{t}\xi_{l}^{k}\p_{1}\p_{t}\eta^{k},D_{t}v_{d}^{m}\cdot \mathcal{N}^{n}]_{\ell}\\
				\lesssim& \|\p_{t}\xi_{l}^{k}\|_{W^{3-\frac{1}{q_{-}},q_{-}}}\|\p_{t}\eta^{k}\|_{H^{\frac{3}{2}+\frac{\varepsilon_{-}-\alpha}{2}}}\|D_{t}v_{d}^{m}\|_{H^{1}}+\|\p_{t}\eta^{k}\|_{W^{3-\frac{1}{q_{-}},q_{-}}}\|\p_{t}\xi_{l}^{k}\|_{H^{\frac{3}{2}+\frac{\varepsilon_{-}-\alpha}{2}}}\|D_{t}v_{d}^{m}\|_{H^{1}}\\
				&+\|\p_{t}\eta^{k}\|_{H^{\frac{3}{2}+\frac{\varepsilon_{-}-\alpha}{2}}}\|\p_{t}\xi_{l}^{k}\|_{H^{\frac{3}{2}+\frac{\varepsilon_{-}-\alpha}{2}}}\|[D_{t}v_{d}^{m}\cdot\mathcal{N}^{n}]_{\ell}\|_{H^{1}}.
			\end{aligned}
		\end{align}
		
		In order to estimate terms in $II$, we use the relation 
		$R^{n}\sim \nabla\p_t\bar{\eta}^{n}+\nabla\p_t\bar{\eta}^{n}\nabla\bar{\eta}^{n}\quad \text{and}\ \|J^{n}\|_{L^\infty}\lesssim 1$.
		Applying Cauchy's inequality, Sobolev's inequality and the trace theory, we begin by estimating the term inside the $\mathcal{W}$ inner product. We have
		\begin{align}
			\begin{aligned}
				((v_{d}^{m},R^{n}D_{t}v_{d}^{m}))\lesssim& \int_{\Omega}\vert \nabla v_{d}^{m}\vert\vert \nabla R^{n}\vert\vert D_{t}v_{d}^{m}\vert+\int_{\Omega}\vert \nabla v_{d}^{m}\vert\vert R^{n}\vert\vert \nabla D_{t}v_{d}^{m}\vert\\
				\lesssim& \vert \vert v_{d}^{m}\vert\vert_{H^{1}}\vert \vert \partial_{t}\bar{\eta}^{n}\vert \vert_{H^{2+\frac{\varepsilon_{-}-\alpha}{2}}}\vert \vert D_{t}v_{d}^{m}\vert \vert_{L^{\frac{4}{\varepsilon_{-}-\alpha}}}+\vert \vert \partial_{t}\bar{\eta}^{n}\vert \vert_{W^{1,+\infty}}\vert \vert v_{d}^{m}\vert \vert_{H^{1}}\vert \vert D_{t}v_{d}^{m}\vert \vert_{H^{1}}\\
				\lesssim& \vert \vert \partial_{t}{\eta}^{n}\vert \vert_{H^{\frac{3}{2}+\frac{\varepsilon_{-}-\alpha}{2}}}\vert \vert v_{d}^{m }\vert \vert_{H^{1}}\vert \vert D_{t}v_{d}^{m}\vert \vert_{H^{1}},
			\end{aligned}
		\end{align}
		and similarly
		\begin{align}
			\begin{aligned}
				((R^{n}v_{d}^{m},D_{t}v_{d}^{m}))\lesssim \vert \vert \partial_{t}{\eta}^{n}\vert \vert_{H^{\frac{3}{2}+\frac{\varepsilon_{-}-\alpha}{2}}}\vert \vert v_{d}^{m }\vert \vert_{H^{1}}\vert \vert D_{t}v_{d}^{m}\vert \vert_{H^{1}}.
			\end{aligned}
		\end{align}
		Then for other terms involved in II, we have the following estimate
		\begin{equation}
			\begin{aligned}
				\int_{\Omega} \frac{\mu}{2}(\mathbb{D}_{\partial_{t}\mathcal{A}^{n}}v_{d}^{m}:\mathbb{D}_{\mathcal{A}^{n}}(D_{t}v_{d}^{m}))J^{n}&\lesssim \vert \vert D_{t}v_{d}^{m}\vert \vert_{H^{1}}\vert\vert v_{d}^{m}\vert \vert_{H^{1}}\vert \vert \partial_{t}\eta^{n}\vert \vert_{H^{\frac{3}{2}+\frac{\varepsilon_{-}-\alpha}{2}}}\\
				&\lesssim \vert \vert D_{t}v_{d}^{m}\vert \vert^{2}_{H^{1}}\vert \vert \partial_{t}\eta^{n}\vert \vert^{2}_{H^{\frac{3}{2}+\frac{\varepsilon_{-}-\alpha}{2}}}+\vert \vert v_{d}^{m}\vert \vert_{H^{1}}^{2}.
			\end{aligned}
		\end{equation}
		Similarly, we have
		\begin{align}
			\int_{\Omega} \frac{\mu}{2}\mathbb{D}_{\mathcal{A}^{n}}v_{d}^{m}:\mathbb{D}_{\partial_{t}\mathcal{A}^{n}}(D_{t}v_{d}^{m})\lesssim\vert \vert D_{t}v_{d}^{m}\vert \vert^{2}_{H^{1}}\vert \vert \partial_{t}\eta^{n}\vert \vert^{2}_{H^{\frac{3}{2}+\frac{\varepsilon_{-}-\alpha}{2}}}+\vert \vert v_{d}^{m}\vert \vert_{H^{1}}^{2},
		\end{align}
		\begin{align}
			\int_{\Omega} \partial_{t}J^{n}\mathbb{D}_{\mathcal{A}^{n}}v_{d}^{m}:\mathbb{D}_{\mathcal{A}}^{n}(D_{t}v_{d}^{m})\lesssim \vert \vert D_{t}v_{d}^{m}\vert \vert^{2}_{H^{1}}\vert \vert \partial_{t}\eta\vert \vert^{2}_{H^{\frac{3}{2}+\frac{\varepsilon_{-}-\alpha}{2}}}+\vert \vert v_{d}^{m}\vert \vert_{H^{1}}^{2},
		\end{align}
		\begin{align}
			\begin{aligned}
				((\p_{t}(R^{n}v_{l}),D_{t}v_{d}^{m}))\lesssim  \|D_{t}v_{d}^{m}\|_{\mathcal{H}^{1}}(\|\p_{t}^{2}\eta^{n}\|_{W^{3-\frac{1}{q_{-}},q_{-}}}\|v_{l}^{k}\|_{H^{1}}+\|\p_{t}\eta^{n}\|_{W^{3-\frac{1}{q_{-}},q_{-}}}\|D_{t}v_{l}^{k}\|_{H^{1}}),
			\end{aligned}
		\end{align}
		\begin{align}
			\begin{aligned}
				&((R^{n}D_{t}v_{l},R^{n}D_{t}v_{d}^{m}))
				\lesssim \|D_{t}v_{d}^{m}\|_{H^{1}}(\|\p_{t}\eta^{n}\|_{W^{3-\frac{1}{q_{-}},q_{-}}}\|D_{t}v_{l}^{k}\|_{H^{1}})\\
				& 
				-\int_\Om\frac{\mu}{2}(J^{n}\mathbb{D}_{\mathcal{A}^{n}}(R^{n}v_{l}):\mathbb{D}_{\p_t\mathcal{A}^{n}}(D_{t}v_{d}^{m})+\p_tJ^{n}K^{n}\mathbb{D}_{\mathcal{A}^{n}}(R^{n}v_{l}):\mathbb{D}_{\mathcal{A}^{n}}(D_{t}v_{d}^{m}))\\
				&-\int_\Om\frac{\mu}{2}(\mathbb{D}_{\p_t\mathcal{A}^{n}}(R^{n}v_{l}):\mathbb{D}_{\mathcal{A}^{n}}(D_{t}v_{d}^{m})J^{n}\lesssim  \vert \vert D_{t}v_{d}^{m}\vert \vert^{2}_{H^{1}}\vert \vert \partial_{t}\eta\vert \vert^{2}_{H^{\frac{3}{2}+\frac{\varepsilon_{-}-\alpha}{2}}}+\|\p_{t}\eta^{n}\|_{W^{3-\frac{1}{q_{-}},q_{-}}}^{2}\vert \vert v_{l}\vert \vert_{H^{1}}^{2}.
			\end{aligned} 
		\end{align}
		
		To estimate terms involved in $III$, we require more refined bounds, which we establish individually. We use the dual space estimate, the standard Sobolev embedding theorem and H\"older's inequality to bound terms involving one time derivative of forces as follows
		\begin{align}
			\begin{aligned}
				\int_{\Om}\p_tF^1\cdot(D_{t}v_{d}^{m})J^{n}-\int_{-\ell}^{\ell}\p_tF^4\cdot D_{t}v_{d}^{m}-\int_{\Sigma_s}\p_tF^5(D_{t}v_{d}^{m})\cdot\tau J^{n}
				\lesssim \|\p_t(F^1-F^4-F^5)\|_{(\mathcal{H}^1)^{\ast}}\|D_{t}v_{d}^m\|_{\mathcal{H}^1}.
			\end{aligned}
		\end{align}
		For terms involving $F^1$, we estimate them by H\"older's inequality with $\f1{q_-}+\f{\varepsilon_-}2=1$, and Sobolev embedding $H^1\hookrightarrow L^{\varepsilon_-/2}$
		\begin{align}
			\begin{aligned}
				\int_{\Om}\left[\p_tJ^{n}K^{n}F^1\cdot(D_{t}v_{d}^{m})+F^1\cdot R^{n}(D_{t}v_{d}^{m})\right]J^{n}
				&\lesssim (\|\p_tJ^{n}K^{n}\|_{L^\infty}+\|R^{n}\|_{L^\infty})\|F^1\|_{L^{q_-}(\Om)}\|D_tv_{d}^m\|_{L^{\varepsilon_-/2}(\Om)}\\
				&\lesssim\|\p_t\eta^{n}\|_{H^{3/2+(\varepsilon_--\alpha)/2}}\|F^1\|_{L^{q_-}(\Om)}(\|D_tv_{d}^m\|_{\mathcal{H}^1}).
			\end{aligned}
		\end{align}
		Next, we estimate the integral involving $F^4$. By H\"older's inequality and the Sobolev embedding
		$
		W^{1-1/q_-, q_-}(\Sigma)\hookrightarrow L^{1/\varepsilon_-}(\Sigma)\quad \text{and} \ H^{1/2}(\Sigma)\hookrightarrow L^{1/\varepsilon_-}(\Sigma)$,
		and trace theory, we have
		\begin{align}
			\begin{aligned}
				\int_{-\ell}^{\ell}F^4\cdot R^{n}(D_{t}v_{d}^{m})&\lesssim \|F^4\|_{L^{1/(1-\varepsilon_-)}(\Sigma)}\|R^{n}\|_{L^\infty(\Sigma)}(\|D_tv_{d}^m\|_{L^{1/\varepsilon_-}(\Sigma)})\\
				&\lesssim\|\p_t\eta^{n}\|_{H^{3/2+(\varepsilon_--\alpha)/2}}\|F^4\|_{W^{1-1/q_-, q_-}}(\|D_tv_{d}^m\|_{\mathcal{H}^1}).
			\end{aligned}
		\end{align}
		The integral involving $F^5$ is bounded similarly as $F^4$ via
		\begin{equation}\label{est:iii}
			\begin{aligned}
				&\int_{\Sigma_s}\left[\p_tJ^{n}K^{n}F^5(D_{t}v_{d}^{m})+F^5 R^{n}(D_{t}v_{d}^{m})\right]\cdot\tau J^{n}\\
				&\lesssim (\|\p_tJ^{n}K^{n}\|_{L^\infty(\Sigma_s)}+\|R^{n}\|_{L^\infty(\Sigma_s)})\|F^5\|_{L^{1/(1-\varepsilon_-)}(\Sigma_s)}(\|D_tv_{d}^m\|_{L^{1/\varepsilon_-}(\Sigma_s)})\\
				&\lesssim\|\p_t\eta^{n}\|_{H^{3/2+(\varepsilon_--\alpha)/2}}\|F^5\|_{W^{1-1/q_-, q_-}}(\|D_tv_{d}^m\|_{\mathcal{H}^1}).
			\end{aligned}
		\end{equation}
		Finally, the estimate for the term involving $F^{7}$ is straightforward to estimate as follows 
		\[ [\p_tF^7,D_tv_{d}^m\cdot\mathcal{N}^{n}]_\ell\lesssim [\p_tF^7]_\ell[D_tv_{d}^m\cdot\mathcal{N}^{n}]_\ell.
		\]

		For the terms involved in $IV$, they can be estimated as follows
		\begin{align}
			(\partial_{t}v_{d}^{m},R^{n}D_{t}v_{d}^{m})_{L^{2}(\Omega)}\lesssim \vert \vert \partial_{t}{\eta}^{n}\vert \vert_{H^{\frac{3}{2}+\frac{\varepsilon_{-}-\alpha}{2}}}\vert \vert D_{t}v_{d}^{m}\vert \vert^{2}_{L^{2}}+\vert \vert \partial_{t}\eta^{n}\vert \vert_{H^{\frac{3}{2}+\frac{\varepsilon_{-}-\alpha}{2}}}^{2}\vert \vert v_{d}^{m}\vert \vert_{L^{2}}\vert \vert D_{t}v_{d}^{m}\vert \vert_{L^{2}},
		\end{align}
		and
		\begin{align}
			(\partial_{t}(R^{n}v_{d}^{m}),D_{t}v_{d}^{m})_{L^{2}(\Omega)}=&\vert \vert D_{t}v_{d}^{m}\vert \vert_{L^{2}}\vert \vert \partial_{t}v_{d}^{m}\vert \vert_{L^{2}}\vert \vert \partial_{t}\eta^{n}\vert \vert_{H^{\frac{3}{2}+\frac{\varepsilon_{-}-\alpha}{2}}}+\vert \vert \partial_{t}^{2}\eta^{n}\vert \vert_{H^{\frac{3}{2}+\frac{\varepsilon_{-}-\alpha}{2}}}\vert \vert v_{d}^{m}\vert \vert_{L^{2}(\Omega)}\vert \vert D_{t}v_{d}^{m}\vert \vert_{L^{2}(\Omega)}\notag\\
			\lesssim& \vert \vert D_{t}v_{d}^{m}\vert \vert^{2}_{L^{2}}\vert \vert \partial_{t}\eta^{n}\vert \vert_{H^{\frac{3}{2}+\frac{\varepsilon_{-}-\alpha}{2}}}+\vert \vert \partial_{t}^{2}\eta^{n}\vert \vert_{H^{\frac{3}{2}+\frac{\varepsilon_{-}-\alpha}{2}}}\vert \vert v_{d}^{m}\vert \vert_{L^{2}(\Omega)}\vert \vert D_{t}v_{d}^{m}\vert \vert_{L^{2}(\Omega)}\notag\\
			&+\vert \vert D_{t}v_{d}^{m}\vert \vert_{L^{2}}\vert \vert v_{d}^{m}\vert \vert_{L^{2}}\vert \vert \partial_{t}\eta^{n}\vert \vert_{H^{\frac{3}{2}+\frac{\varepsilon_{-}-\alpha}{2}}}^{2},
		\end{align}
		\begin{align}
			((R^{n}v_{d}^{m},D_{t}v_{d}^{m}))\lesssim \|D_{t}v_{d}^{m}\|_{H^{1}(\Omega)}\|R^{n}\|_{L^{\infty}}\|v_{d}^{m}\|_{H^{1}(\Omega)}\lesssim \|D_{t}v_{d}^{m}\|_{H^{1}(\Omega)}\|\p_{t}\eta^{n}\|_{W^{3-\frac{1}{q_{-}},q_{-}}}\|v_{d}^{m}\|_{H^{1}(\Omega)},
		\end{align}
		\begin{align}
			\begin{aligned}
				& (\p_{t}^{2}(R^{n}v_{l}),D_{t}v_{d}^{m})_{\mathcal{H}^{0}}\\
				\lesssim& \|D_{t}v_{d}^{m}\|_{L^{2}(\Omega)}\bigg(\|\p_{t}^{3}\bar{\eta}^{n}\|_{H^{1}}\|v_{l}\|_{W^{2,q_{+}}}+\|\p_{t}^{2}\bar{\eta}^{n}\|_{H^{1}(\Omega)}\|\p_{t}v_{l}\|_{L^{\infty}}+\|\p_{t}\bar{\eta}^{n}\|_{H^{2-\alpha }(\Omega)}\|\p_{t}^{2}v_{l}\|_{H^{1}}\bigg)\\
				\lesssim& \|D_{t}v_{d}^{m}\|_{L^{2}(\Omega)}\bigg(\|\p_{t}^{3}\bar{\eta}^{n}\|_{H^{1}}\|v_{l}\|_{W^{2,q_{+}}}+\|\p_{t}^{2}\bar{\eta}^{n}\|_{H^{1}(\Omega)}\|\p_{t}v_{l}\|_{W^{2,q_{-}}}+\|\p_{t}\bar{\eta}^{n}\|_{H^{2-\alpha }(\Omega)}\|\p_{t}^{2}v_{l}\|_{H^{1}}\bigg),
			\end{aligned}
		\end{align}
		and similarly,
		\begin{align}{\label{est:ep5}}
			\begin{aligned}
				(\p_{t}(R^{n}v_{l}),R^{n}v_{d}^{m})_{\mathcal{H}^{0}}\lesssim& \|\p_{t}\eta^{n}\|_{W^{3-\frac{1}{q_{-}},q_{-}}}\|v_{d}^{m}\|_{L^{2}(\Omega)}\|\p_{t}\eta^{n}\|_{W^{3-\frac{1}{q_{-}},q_{-}}}\|\p_{t}v_{l}\|_{L^{2}(\Omega)}\\
				&+\|\p_{t}\eta^{n}\|_{W^{3-\frac{1}{q_{-}},q_{-}}}\|v_{d}^{m}\|_{L^{2}(\Omega)}\|\p_{t}^{2}\eta^{n}\|_{H^{1}}\|v_{l}\|_{W^{2,q_{+}}(\Omega)},\\
				(\p_{t}(R^{n}v_{l}),\p_{t}J^{n}(J^{n})^{-1}v_{d}^{m})_{\mathcal{H}^{0}}\lesssim& \|\p_{t}\eta^{n}\|_{W^{3-\frac{1}{q_{-}},q_{-}}}\|v_{d}^{m}\|_{L^{2}(\Omega)}\|\p_{t}\eta^{n}\|_{W^{3-\frac{1}{q_{-}},q_{-}}}\|\p_{t}v_{l}\|_{L^{2}(\Omega)}\\
				&+\|\p_{t}\eta^{n}\|_{W^{3-\frac{1}{q_{-}},q_{-}}}\|v_{d}^{m}\|_{L^{2}(\Omega)}\|\p_{t}^{2}\eta^{n}\|_{H^{1}}\|v_{l}\|_{W^{2,q_{+}}(\Omega)}.
			\end{aligned}
		\end{align}
		
		Thus, by the Cauchy-Schwarz inequality and by combining \eqref{est:dt_u}--\eqref{est:ep5}, we conclude the energy structure
		\begin{align}
			\begin{aligned}
				&\frac{d}{dt}\frac12\left(\|D_tv_{d}^m\|_{\mathcal{H}^0}^2+\|\partial_{t}\xi_{d}^{m}\|_{H^{1}}^2\right)+\frac14\|D_tv_{d}^m\|_{\mathcal{H}^1}^2+\frac12[D_{t}v_{d}^{m}\cdot \mathcal{N}^{n}]_\ell^2\\
				&\le C(\|\p_t\eta^{n}\|_{H^{3/2+(\varepsilon_--\alpha)/2}}+\vert \vert u^{k}\vert \vert_{H^{\frac{5}{2}}}+\vert \vert \eta^{n}\vert \vert_{H^{3}})\left(\frac12\|D_tv_{d}^m\|_{\mathcal{H}^0}^2+\frac{1}{2}\vert \vert \partial_{t}\xi_{d}^{m}\vert \vert_{H^{1}}^{2}\right)\\
				&\quad +C\left(1+\|\p_t\eta^{n}\|_{H^{3/2+(\varepsilon_--\alpha)/2}}^2\right) ( \|v_{d}^m\|_{\mathcal{H}^1}^2 +\|\p_t(F^1-F^4-F^5)\|_{(\mathcal{H}^1)^{\ast}}^2)+C[\p_tF^7]_\ell^2\\
				&\quad+C\left(1+\|\p_t\eta^{n}\|_{H^{3/2+(\varepsilon_--\alpha)/2}}^2\right)(\|F^1\|_{L^{q_-}}^2+\|F^4\|_{W^{1-1/q_-,q_-}}^2+\|F^5\|_{W^{1-1/q_-,q_-}}^2)+\|\p_{t}u^{k}\|_{H^{\frac{5}{2}}}\|\p_{t}\xi_{l}^{k}\|^{2}_{H^{2}}\\
				&\quad+ \|D_{t}v_{l}^{k}\|^{2}_{H^{2}}\|\p_{t}\eta^{n}\|^{2}_{H^{\frac{5}{2}}}+\|v_{l}^{k}\|^{2}_{H^{2}}\|\p_{t}^{2}\eta^{n}\|^{2}_{H^{\frac{5}{2}}}+\|\p_{t}\eta^{n}\|_{W^{3-\frac{1}{q_{-}},q_{-}}}\|\p_{t}\xi_{d}^{m}\|_{H^{1}}^{2}+\|\p_{t}^{3}\bar{\eta}^{n}\|_{H^{1}}\|v_{l}\|_{W^{2,q_{+}}}^{2}\label{est:pa_tu_m1}.
			\end{aligned}
		\end{align}
		
		\noindent Note that the sequence $\{\|D_tv_{d}^m(0)\|_{H^0}\}$ are uniformly bounded and $D_tv_{d}^m(0) \rightharpoonup D_t^{2}u(0)$ strongly in $H^0$, as proved in Theorem \ref{thm:initial_convergence}. Integrating both sides of equation \eqref{est:pa_tu_m1} from $0$ to $T$, we then employ Gronwall's inequality, H\"older's inequality, the smallness of $\mathfrak{K}(\eta)$ and the initial data $\p_tu(0)$ to derive that

		\begin{equation}\label{eq:pa_tu_m1}
			\begin{aligned}
				&\sup_{0\le t\le T}(\|D_tv_{d}^m\|_{0}^2+\|\p_t\xi_{d}^m\|_1^2)+\|D_tv_{d}^m\|_{L^2H^1}^2+\|D_tv_{d}^m\|_{L^2H^0(\Sigma_s)}^2+\|[D_tv_{d}^m\cdot\mathcal{N}^{n}]_\ell\|_{L^2([0,T])}^2\\
				&\lesssim \exp\big\{(\|\p_t\eta^{n}\|_{L^\infty H^{3/2+\varepsilon_-/2}}+\vert \vert u^{k}\vert \vert_{L^{\infty}H^{\frac{5}{2}}}+\vert \vert \eta^{n}\vert \vert_{L^{\infty}H^{3}})T\big\}\bigg( 
				\sum_{j=0}^{2}\|D_t^{j}u(0)\|_{L^2(\Om)}^2+\|\p_t^{2}\xi(0)\|_{H^{3/2+(\varepsilon_--\alpha)/2}}^2\\
				&\quad +\|(F^1-F^4-F^5)(0)\|_{(\mathcal{H}^1)^\ast}^2
				+(1+\|\p_t\eta^n\|_{L^\infty H^{3/2+\varepsilon_-/2}}^2)\|v_{d}^m\|_{L_{t}^{2}\mathcal{H}^1}^2 +\mathfrak{F}^{n} +\mathscr{S}^{n, k} \bigg),
			\end{aligned}
		\end{equation}
		where $\mathfrak{F}^{n}$ is defined by replacing $\eta$ with $\eta^{n}$ in $\mathfrak{F}$, and
		\[
		\begin{aligned}
			\mathscr{S}^{n, k}=&\|\partial_{t}\eta^{k}\|^{2}_{L_{t}^{2}W^{3-1/q_{-},q_{-}}}
			\|\partial_{t}\xi_{l}^{k}\|^{2}_{L_{t}^{\infty}H^{3/2+(\varepsilon_{-}-\alpha)/2}}
			+\|\partial_{t}u^{k}\|_{L_{t}^{2}H^{5/2}}
			\|\partial_{t}\xi_{l}^{k}\|_{L_{t}^{\infty}H^{2}}^{2}+T^{\frac{1}{2}}\|\p_{t}^{3}\bar{\eta}^{n}\|_{L_{t}^{2}H^{1}}\|v_{l}\|_{L_{t}^{\infty}W^{2,q_{+}}}^{2}
			\\
			&+ \|D_{t}v_{l}^{k}\|^{2}_{L_{t}^{\infty}H^{2}}\|\p_{t}\eta^{n}\|^{2}_{L_{t}^{2}H^{\frac{5}{2}}}+T\|v_{l}^{k}\|^{2}_{L_{t}^{\infty}H^{2}}\|\p_{t}^{2}\eta^{n}\|^{2}_{L_{t}^{\infty}H^{\frac{5}{2}}}
			+\|\partial_{t}\xi_{l}^{k}\|^{2}_{L_{t}^{2}W^{3-1/q_{-},q_{-}}}
			\|\partial_{t}\eta^{k}\|^{2}_{L_{t}^{\infty}H^{3/2+(\varepsilon_{-}-\alpha)/2}}.
		\end{aligned}
		\]
		
		Using the identity
		$ v_{d}^{m}(t)
		= v_{d}^{m}(0)
		+ \int_{0}^{t}\bigl(D_{t}v_{d}^{m}+R^{n}v_{d}^{m}\bigr)\,ds $
		together with Theorem~\ref{thm:initial_convergence}, we choose $T>0$ sufficiently
		small (specifically, $T\lesssim \min(\delta,\frac{1}{2})$) so that 
		$
		C\Bigl(1+\|\partial_{t}\eta^{n}\|_{H^{3/2+(\varepsilon_{-}-\alpha)/2}}^{2}\Bigr)
		\|v_{d}^{m}\|_{\mathcal{H}^{1}}^{2}
		\;\lesssim\; 
		T\,\|\partial_{t}v_{d}^{m}\|_{L_{t}^{2}H^{1}}^{2}$,
		and hence can be absorbed into the left-hand side. Here we have also used the estimate
		for the difference between $D_{t}v_{d}$ and $\partial_{t}v_{d}$, which will be
		established in Lemma~\ref{lem:difference_u_Dt_u}.
		As a consequence, \eqref{eq:pa_tu_m1} can be rewritten in the form 
		\begin{equation}\label{eq:pa_tu_m2}
			\begin{aligned}
				&\sup_{0\le t\le T}
				\bigl(
				\|\partial_{t}v_{d}^{m}\|_{0}^{2}
				+\|\partial_{t}\xi_{d}^{m}\|_{1}^{2}
				\bigr)
				+\|\partial_{t}v_{d}^{m}\|_{L^{2}H^{1}}^{2}
				+\|\partial_{t}v_{d}^{m}\|_{L^{2}H^{0}(\Sigma_{s})}^{2}
				+\|[\partial_{t}v_{d}^{m}\cdot\mathcal{N}^{n}]_{\ell}\|_{L^{2}([0,T])}^{2}
				\\
				&\;\lesssim\;
				\exp\!\Bigl\{
				\bigl(
				\|\partial_{t}\eta^{n}\|_{L^{\infty}H^{3/2+\varepsilon_{-}/2}}
				+\|u^{k}\|_{L^{\infty}H^{5/2}}
				+\|\eta^{n}\|_{L^{\infty}H^{3}}
				\bigr)T
				\Bigr\} \Bigg(\mathcal{E}(0)
				+\|(F^{1}-F^{4}-F^{5})(0)\|_{(\mathcal{H}^1)^{*}}^{2}+\mathfrak{F}^{n}+\mathscr{S}^{n, k}\Bigg).
			\end{aligned}
		\end{equation}

		This estimate provides a uniform energy–dissipation bound for
		$(\partial_{t}v_{d}^{m},\partial_{t}\xi_{d}^{m})$ on the time interval $[0,T]$.
		
		\paragraph{\underline{Step 4 -- Energy-Dissipation bound for $(v_{d}^{m},\xi_{d}^{m})$}}
		
		The regularity of these lower-order terms follows immediately from integrating their temporal derivatives. Using \eqref{eq:pa_tu_m2}, we obtain the following estimate for $(v_{d}^{m},\xi_{d}^{m})$
		\begin{equation}\label{eq:xi_m}
			\begin{aligned}
				&\sup_{0\le t\le T}(\|v_{d}^m\|_{0}^2+\|\xi_{d}^m\|_1^2)+\|v_{d}^m\|_{L^\infty H^1}^2+\|v_{d}^m\|_{L^\infty H^0(\Sigma_s)}^2+\|[v_{d}^m\cdot\mathcal{N}]_\ell\|_{L^\infty([0,T])}^2\\
				&\lesssim \exp\{(\|\p_t\eta^{n}\|_{L^\infty H^{3/2+\varepsilon_-/2}}+\vert \vert u_{1}^{k}\vert \vert_{L^{\infty}H^{\frac{5}{2}}}+\vert \vert \eta^{n}\vert \vert_{L^{\infty}H^{3}})T\}\Bigg(\mathcal{E}(0)
				+\|(F^{1}-F^{4}-F^{5})(0)\|_{(\mathcal{H}^1)^{*}}^{2}+\mathfrak{F}^{n}+\mathscr{S}^{n, k}\Bigg).
			\end{aligned}
		\end{equation}
		
		\paragraph{\underline{Step 5 -- Passing to the Limit}}
		We now use the energy estimates \eqref{eq:xi_m} and \eqref{eq:pa_tu_m2} to pass to the limit as $m \to \infty$, with $n$ and $k$ fixed, and $v_{l},\xi_{l}$ are given.
		
		According to Proposition \ref{prop:isomorphism} and energy estimates, the sequences $\{v_{d}^m\}$ and $\{\p_tv_{d}^m\}$ are uniformly bounded both in $L^2H^1 \cap L^2H^0(\Sigma_s) \cap L^\infty H^0$. $\{\xi_{d}^m\}$ and $\{\p_t\xi_{d}^m\}$ are uniformly bounded in $L^\infty H^1$, $\{\p_t\xi^m\}$, $\{[v_{d}^m\cdot\mathcal{N}]_\ell\}$ and $\{[\p_tv_{d}^m\cdot\mathcal{N}]_\ell\}$ are uniformly bounded in $L^2([0,T])$.  Up to the extraction of a subsequence, it holds that
		\begin{align}\label{converge_1}
			\begin{aligned}
				v_{d}^m\rightharpoonup v_{d}\ \text{weakly-}\ \text{in}\ L^2 H^1\cap L^2H^0(\Sigma_s),\quad \p_tv_{d}^m\rightharpoonup\p_tv_{d}\ \text{weakly in}\ L^2H^1\cap L^2H^0(\Sigma_s),\\
				v_{d}^m\stackrel{\ast}\rightharpoonup v_{d} \text{weakly-}\ast\ \text{in}\ L^\infty H^0, \quad\p_t v_{d}^m\stackrel{\ast}\rightharpoonup \p_tv_{d}\ \text{weakly-}\ast\ \text{in}\ L^\infty H^0,\\
				\xi_{d}^m\stackrel{\ast}\rightharpoonup \xi_{d}\ \text{weakly-}\ast\ \text{in}\ L^\infty H^1, \quad\p_t\xi_{d}^m\stackrel{\ast}\rightharpoonup \p_t\xi_{d}\ \text{weakly-}\ast\ \text{in}\ L^\infty H^1,\\
				[v_{d}^m\cdot\mathcal{N}^{n}]_\ell\rightharpoonup[v_{d}\cdot\mathcal{N}^{n}]_\ell\ \text{weakly-}\ \text{in}\ L^2,\ [\p_tv_{d}^m\cdot\mathcal{N}^{n}]_\ell\rightharpoonup[\p_tv_{d}\cdot\mathcal{N}^{n}]_\ell\ \text{weakly in}\ L^2.
			\end{aligned}
		\end{align}
		By lower semicontinuity, the energy estimates \eqref{eq:pa_tu_m2} and \eqref{eq:xi_m} imply that
		\begin{equation}\label{eq:xi_m=}
			\begin{aligned}
				\|v_{d}\|_{L^\infty H^0}^2+\|\p_tv_{d}\|_{L^\infty H^0}^2+\|v_{d}\|_{L^2 H^1}^2+\|v_{d}\|_{L^2 H^0(\Sigma_s)}^2+\|\p_tv_{d}\|_{L^2H^1}^2+\|\p_tv_{d}\|_{L^2 H^0(\Sigma_s)}^2+\|[v_{d}\cdot\mathcal{N}^{n}]_\ell\|_{L^2}^2\\
				+\|[\p_tv_{d}\cdot\mathcal{N}^{n}]_\ell\|_{L^2}^2+\|\xi_{d}\|_{L^\infty H^1}^2+\|\p_t\xi_{d}\|_{L^\infty H^1}^2
			\end{aligned}
		\end{equation}
		is bounded by the terms in \eqref{eq:xi_m} from second line to the last up to a universal constant.
		
		\paragraph{\underline{Step 6 -- Improved Bound for $\p_t\xi_{d}$}}
		
		In this step, we fix $n,k$ and the prescribed functions $\xi_{l},v_{l}$, and apply the functional calculus developed in \cite[Section 8]{GT2020} to the modified gravity-capillary operator $\mathcal{K}_{k}$ induced by the $(\cdot)_{1,\Sigma_{k}}$ inner product defined by equation \eqref{eq:inner_product——1}.
		
		To be specific, $\mathcal{K}_{k}$ is defined as follows:
		\[
		\mathcal{K}_{k}\varphi:=g\varphi-\sigma \p_{1}\Big(\frac{\p_{1}\varphi}{(1+|\p_{1}\zeta_{0}|^{2})^{\frac{3}{2}}}\Big)-\sigma\p_{1}\big(\mathcal{R}_{z}(\p_{1}\zeta_{0},\p_{1}\eta^{k})\p_{1}\varphi\big).
		\]
		Since $\eta^{k}$ is smooth, we use the same argument as in \cite{GT2020} to develop the same results as in \cite{GT2020} for $\mathcal{K}_{k}$ and $D_{j}^{s}$ induced by $\mathcal{K}_{k}$.
		
		Let's first derive some preliminary results. Let the auxiliary function $\psi$ satisfy
		\begin{equation}\label{def:psi}
			-\Delta\psi=0 \quad \text{in} \ \Om,\quad \p_{\nu}\psi=(D_j^s(\partial_{t}\xi_{d}-a_{0}(t)\zeta_{0}))/|\mathcal{N}_0| \quad \text{on} \ \Sigma,\quad \p_{\nu}\psi=0 \quad \text{on} \ \Sigma_s,
		\end{equation}
		where $s=1-2\tilde{\alpha}\in [0,1)$ for some $\tilde{\alpha}\in (0,\frac{1}{2})$ to be determined, 
		$\zeta_{0}$ is the equilibrium state of the free surface, and $a_{0}(t)$ is a function depending only on $t$ such that:
		\begin{align}{\label{def:a}}
			\int_{-\ell}^{\ell} \p_{t}\xi_{d}- a_{0}(t)\zeta_{0} =0.
		\end{align}
		\noindent Using kinematic boundary condition (the fourth equation in system \eqref{eq:quasi_linear_{s}}), we have
		\begin{align}{\label{eq:mean}}
			\int_{-\ell}^{\ell}\p_{t}\xi_{d}=&\int_{-\ell}^{\ell}v_{d}\cdot \mathcal{N}^{n} +\int_{-\ell}^{\ell}(\int_{0}^{t}\p_{t}u_{1}^{k}\p_{1}\p_{t}^{2}\xi_{l}^{k} )  +\int_{-\ell}^{\ell}(\int_{0}^{t}u_{1}^{k}\p_{1}\p_{t}\xi_{d} )  +\int_{-\ell}^{\ell} I^{n,k}_{2} +\int_{-\ell}^{\ell} (R^{n}v_{l}^{k})\cdot \mathcal{N}^{n}.
		\end{align}
		Inserting \eqref{eq:mean} into \eqref{def:a} yields the explicit expression for $a_{0}(t)$
		\begin{align}{\label{eq:a}}
			\begin{aligned}
				a_{0}(t)=&\frac{\int_{-\ell}^{\ell}v_{d}\cdot \mathcal{N}^{n} +\int_{-\ell}^{\ell}(\int_{0}^{t}\p_{t}u_{1}^{k}\p_{1}\p_{t}^{2}\xi_{l}^{k} )  +\int_{-\ell}^{\ell}(\int_{0}^{t}u_{1}^{k}\p_{1}\p_{t}\xi_{d})  +\int_{-\ell}^{\ell} I^{n,k}_{2} +\int_{-\ell}^{\ell} (R^{n}v_{l}^{k})\cdot \mathcal{N}^{n}}{\int_{-\ell}^{\ell}\zeta_{0}}.
			\end{aligned}
		\end{align}
		\noindent Using \eqref{eq:a}, we have the following estimate using Sobolev embedding, Cauchy's inequality and trace theorem
		\begin{align}{\label{eq:bdd_a_1}}
			|a_{0}(t)|\lesssim&\|v_{d}\|_{H^{1}}+t\|\p_{t}u^{k}\|_{L^{\infty}H^{1}}\|\p_{t}\xi_{l}^{k}\|_{L_{t}^{\infty}H^{1}}+t\|u^{k}\|_{L_{t}^{\infty}H^{1}}\|\p_{t}\xi_{d}\|_{L_{t}^{\infty}H^{1}}+\|u(0)\|_{H^{1}}\|\p_{t}\eta(0)\|_{L_{t}^{\infty}H^{1}}\\
			&+\|\p_{t}\eta^{n}\|_{H^{1}}\|v_{l}^{k}\|_{H^{1}}. \no
		\end{align}
		\noindent Taking the temporal derivative of both sides of \eqref{eq:a}, we obtain the expression for $a_{0}^{\prime}(t)$:
		\begin{align}
			\begin{aligned}
				a_{0}^{\prime}(t)=&\frac{\int_{-\ell}^{\ell}\p_{t}v_{d}\cdot \mathcal{N}^{n}+\int_{-\ell}^{\ell}v_{d}\cdot \p_{t}\mathcal{N}^{n}+\int_{-\ell}^{\ell}\p_{t}u_{1}^{k}\p_{1}\p_{t}\xi_{l}^{k}  +\int_{-\ell}^{\ell}u_{1}^{k}\p_{1}\p_{t}\xi_{d} }{\int_{-\ell}^{\ell}\zeta_{0}}+\frac{\int_{-\ell}^{\ell} D_{t}(R^{n}v_{l}^{k})\cdot \mathcal{N}^{n}}{\int_{-\ell}^{\ell}\zeta_{0}},
			\end{aligned}
		\end{align}
		\noindent which leads to the following estimate:
		\begin{align}{\label{eq:bdd_a_2}}
			\begin{aligned}
				|a^{\prime}(t)|\lesssim &\|\p_{t}v_{d}\|_{H^{1}}+\|\p_{t}\eta^{n}\|_{H^{1}}\|v_{d}\|_{H^{1}}+\|\p_{t}u^{k}\|_{H^{1}}\|\p_{t}\xi_{l}^{k}\|_{H^{1}}+\|u^{k}\|_{H^{1}}\|\p_{t}\xi_{d}\|_{H^{1}}+\|\p_{t}^{2}\eta^{n}\|_{H^{1}}\|v_{d}^{k}\|_{H^{1}}\\
				& +\|\p_{t}\eta^{n}\|_{H^{1}}\|D_{t}v_{l}^{k}\|_{H^{1}}.
			\end{aligned}
		\end{align}
		
		For the test function $\psi$ defined in \eqref{def:psi}, \cite[Proposition 9.1]{GT2020} provides the estimates
		\begin{equation}\label{est:psi_t_0}
			\|\psi\|_{H^1}\lesssim\|(\partial_{t}\xi_{d}-a_{0}(t)\zeta_{0})\|_{H^{s-1/2}},\ \|\psi\|_{H^2}\lesssim\|D_j^s(\partial_{t}\xi_{d}-a_{0}(t)\zeta_{0})\|_{H^{1/2}},\ \|\p_t\psi\|_{H^1}\lesssim\|\p_t(\partial_{t}\xi_{d}-a_{0}(t)\zeta_{0})\|_{H^{s-1/2}}.
		\end{equation}
		Inserting \eqref{eq:bdd_a_1} and \eqref{eq:bdd_a_2} into \eqref{est:psi_t_0}, we have
		\begin{align}\label{est:psi_t}
			\begin{aligned}
				\|\psi\|_{H^1}\lesssim&\|(\partial_{t}\xi_{d})\|_{H^{s-1/2}}+\|v_{d}\|_{H^{1}}+t\|\p_{t}u^{k}\|_{L^{\infty}H^{1}}\|\p_{t}\xi_{l}^{k}\|_{L_{t}^{\infty}H^{1}}+t\|u^{k}\|_{L_{t}^{\infty}H^{1}}\|\p_{t}\xi_{d}\|_{L_{t}^{\infty}H^{1}}\\
				&\quad+\|u(0)\|_{H^{1}}\|\p_{t}\eta(0)\|_{L_{t}^{\infty}H^{1}}+\|\p_{t}\eta^{n}\|_{H^{1}}\|v_{l}^{k}\|_{H^{1}},\\
				\|\psi\|_{H^2}\lesssim&\|D_j^s(\partial_{t}\xi_{d})\|_{H^{1/2}}\|v_{d}\|_{H^{1}}+t\|\p_{t}u^{k}\|_{L^{\infty}H^{1}}\|\p_{t}\xi_{l}^{k}\|_{L_{t}^{\infty}H^{1}}+t\|u^{k}\|_{L_{t}^{\infty}H^{1}}\|\p_{t}\xi_{d}\|_{L_{t}^{\infty}H^{1}}\\
				&\quad+\|u(0)\|_{H^{1}}\|\p_{t}\eta(0)\|_{L_{t}^{\infty}H^{1}}+\|\p_{t}\eta^{n}\|_{H^{1}}\|v_{l}^{k}\|_{H^{1}},\\ \|\p_t\psi\|_{H^1}\lesssim&\|\p_t(\partial_{t}\xi_{d})\|_{H^{s-1/2}}+\|\p_{t}v_{d}\|_{H^{1}}+\|\p_{t}\eta^{n}\|_{H^{1}}\|v_{d}\|_{H^{1}}+\|\p_{t}u^{k}\|_{H^{1}}\|\p_{t}\xi_{l}^{k}\|_{H^{1}}+\|u^{k}\|_{H^{1}}\|\p_{t}\xi_{d}\|_{H^{1}}\\
				& +\|\p_{t}^{2}\eta^{n}\|_{H^{1}}\|v_{d}^{k}\|_{H^{1}}+\|\p_{t}\eta^{n}\|_{H^{1}}\|D_{t}v_{l}^{k}\|_{H^{1}}.
			\end{aligned}
		\end{align}
		
		We now work to enhance the regularity of $\partial_{t}\xi_{d}$. Integrating both sides of \eqref{eq:pa_tu_m_1} from $0$ to $T$ and using the convergence results from Step 5, we obtain 
		\begin{equation}\label{eq:4th}
			\begin{aligned}
				&\left<\p_t^2v_{d}, w\right>_{\ast}+\int_0^T\Big[((\p_tv_{d},w))+(\partial_{t}v_{d},R^{n}w)+((v_{d},R^{n}w))+(\p_t\xi_{d},w\cdot\mathcal{N}^{n})_{1,\Sigma_{k}}+[D_tv_{d}\cdot\mathcal{N}^{n},w\cdot\mathcal{N}^{n}]_\ell\Big]\\
				&+\int_{0}^{T}[\p_tF^7,w\cdot\mathcal{N}^{n}]_\ell+\int_{0}^{T}(\partial_{t}v_{d},w \partial_{t}J^{n})_{L^2(\Omega)}+\int_{0}^{T}\beta(\partial_{t}v_{d}\cdot \tau,(w \cdot \tau)\partial_{t}J^{n})\\
				&=\int_0^T\int_{\Om}\left[\p_tF^1\cdot wJ^{n}+\p_tJ^{n}KF^1\cdot w\right]+\int_{0}^{T}(\p_{t}(R^{n}v_{l}),w)_{\mathcal{H}^{0}}+\int_{0}^{T}((\p_{t}(R^{n}v_{l}),w))+(((R^{n}v_{l}),R^{n}w))\\
				&\quad +\int_{0}^{T}((R^{n}v_{l}),R^{n}w)_{\mathcal{H}^{0}}+\int_{0}^{T}((R^{n}v_{l}),w\p_{t}J^{n}(J^{n})^{-1})_{\mathcal{H}^{0}} -\int_0^T\int_{\Sigma_s}\beta(v_{d}\cdot\tau)(w\cdot\tau)\p_tJ\\
				&\quad-\int_0^T\int_{-\ell}^{\ell}\p_tF^4\cdot w-\int_0^T\int_{\Sigma_s}\left[\p_tF^5 w+\p_tJ^{n}K^{n}F^5w\right]\cdot\tau J^{n}\\
				&\quad-\int_0^T\int_\Om\frac{\mu}{2}(\mathbb{D}_{\p_t\mathcal{A}^{n}}v_{d}:\mathbb{D}_{\mathcal{A}^{n}}w+\mathbb{D}_{\mathcal{A}^{n}}v_{d}:\mathbb{D}_{\p_t\mathcal{A}^{n}}w+\p_tJK\mathbb{D}_{\mathcal{A}^{n}}v_{d}:\mathbb{D}_{\mathcal{A}^{n}}w)J^{n}-\int_{0}^{T}(\p_tJ^{n}K^{n}\p_tv_{d}, w)_{\mathcal{H}^0_T}\\
				&\quad+\big(\p_{1}(\mathcal{R}_{zz}(\p_{1}\zeta_{0},\p_{1}\eta^{k})\p_{1}\p_{t}\xi_{l}^{k}\p_{1}\p_{t}\eta^{k}),(w\cdot \mathcal{N}^{n})\big)_{L^{2}}+[\mathcal{R}_{zz}(\p_{1}\zeta_{0},\p_{1}\eta^{k})\p_{1}\p_{t}\xi_{l}^{k}\p_{1}\p_{t}\eta^{k},w\cdot \mathcal{N}^{n}]_{\ell},
			\end{aligned}
		\end{equation}
		where
		$\langle\partial_{t}^{2}v_{d},w\rangle_{*}=(\partial_{t}v_{d},w)_{\mathcal{H}^{0}}(T)-(\partial_{t}v_{d},w)_{\mathcal{H}^{0}}(0)-\int_{0}^{T}(\partial_{t}v_{d},\p_{t} w)_{\mathcal{H}^{0}}-\int_{0}^{T}(\p_{t}v_{d},\p_{t}J^{n}w)_{L^{2}(\Omega)}
		$ for any $w\in \mathcal{W}_{\sigma}$ such that $\p_{t}w\in \mathcal{H}^{0}$.
		
		Set the test function to be $w=M^{n}\nabla\psi\in \mathcal{W}_\sigma$ in \eqref{eq:4th} where $\psi$ is defined by \eqref{def:psi}. Using the fact that $\operatorname{div}_{\mathcal{A}^{n}}M^{n}\psi=0$, we obtain that
		\begin{equation}\label{eq:test_2}
			\begin{aligned}
				\int_{0}^{T}(\partial_{t}\xi_{d},D_{j}^{s}\partial_{t}\xi_{d})_{1,\Sigma_{k}}=I+II+III+IV+V,
			\end{aligned}
		\end{equation}
		where $I=\left<\partial_{t}^{2}v_{d},M^{n}\nabla\psi \right>_{*}$, and
		\[\begin{aligned}  
			II=&\int_{0}^{T} \left[((\p_tv_{d},M^{n}\nabla \psi))+(\partial_{t}v_{d},R^{n}M^{n}\nabla \psi)+((v_{d},R^{n}M^{n}\nabla\psi))+[\p_tv_{d}\cdot\mathcal{N}^{n},M^{n}\nabla\psi\cdot \mathcal{N}^{n}]_\ell\right]\notag\\&+\int_{0}^{T}\left[[\p_tF^7,M^{n}\nabla\psi\cdot\mathcal{N}^{n}]_\ell+(\partial_{t}v_{d},M^{n}\nabla\psi \partial_{t}J^{n})_{L^2(\Omega)}+\beta(\partial_{t}v_{d}\cdot \tau,(M^{n}\nabla\psi \cdot \tau)\partial_{t}J^{n})\right],
		\end{aligned}\]
		\[\begin{aligned}
			III=&\int_0^T\int_{\Om}\left[\p_tF^1\cdot M^{n}\nabla\psi +\p_tJ^{n}K^{n}F^1\cdot M^{n}\nabla\psi\right]J^{n}
			-\int_0^T\int_{-\ell}^{\ell} \p_tF^4\cdot M^{n}\nabla\psi \notag\\
			&-\int_0^T\int_{\Sigma_s}\left[\p_tF^5 M^{n}\nabla\psi+\p_tJ^{n}K^{n}F^5M^{n}\nabla\psi\right]\cdot\tau J^{n},\\
			IV=&-\int_0^T\int_\Om\frac{\mu}{2}(\mathbb{D}_{\p_t\mathcal{A}^{n}}v_{d}:\mathbb{D}_{\mathcal{A}^{n}}M^{n}\nabla\psi+\mathbb{D}_{\mathcal{A}^{n}}v_{d}:\mathbb{D}_{\p_t\mathcal{A}^{n}}M^{n}\nabla\psi+\p_tJ^{n}K^{n}\mathbb{D}_{\mathcal{A}^{n}}v_{d}:\mathbb{D}_{\mathcal{A}^{n}}M^{n}\nabla\psi)J^{n}\\
			&\quad-\int_{0}^{T}(\p_{t}v_{d},R^{n}M^{n}\nabla\psi)_{\mathcal{H}^{0}}-\int_0^T\int_{\Sigma_s}\beta(v_{d}\cdot\tau)(M^{n}\nabla\psi\cdot\tau)\p_tJ^{n}-\int_{0}^{T}(\p_tJ^{n}K^{n}\p_tv_{d}, M^{n}\nabla\psi)_{\mathcal{H}^0_T}\notag\\
			&\quad+\int_{0}^{T}(\p_{t}\xi_{d},a_{0}(t)D_{j}^{s}\zeta_{0})_{1,\Sigma_{k}},
		\end{aligned}\]
		and
		\[\begin{aligned}
			\begin{aligned}
				V=&\int_{0}^{T}(\p_{t}(R^{n}v_{l}),M^{n}\nabla \psi)_{\mathcal{H}^{0}}+\int_{0}^{T}((R^{n}v_{l}),R^{n}M^{n}\nabla \psi)_{\mathcal{H}^{0}}+\int_{0}^{T}(R^{n}v_{l},M^{n}\nabla\psi\p_{t}J^{n}(J^{n})^{-1})_{\mathcal{H}^{0}}\\
				&\quad+\int_{0}^{T}((\p_{t}(R^{n}v_{l}),M^{n}\nabla \psi))+(
				((R^{n}v_{l}),R^{n}M^{n}\nabla\psi))\\
				&\quad+\int_{0}^{T}(\p_{1}(\mathcal{R}_{zz}(\p_{1}\zeta_{0},\p_{1}\eta^{k})\p_{1}\p_{t}\xi_{l}^{k}\p_{1}\p_{t}\eta^{k}),M^{n}\nabla \psi)_{L^{2}}+\int_{0}^{T}[\mathcal{R}_{zz}(\p_{1}\zeta_{0},\p_{1}\eta^{k})\p_{1}\p_{t}\xi_{l}^{k}\p_{1}\p_{t}\eta^{k},M^{n}\nabla \psi]_{\ell}.
			\end{aligned}
		\end{aligned}\]
		The left hand side of \eqref{eq:test_2} can be rewritten as follows
		$
		(\partial_{t}\xi_{d},D_j^s\p_t\xi_{d})_{1,\Sigma}
		=\|D_j^s\p_t\xi_{d}\|_{\mathcal{H}^{1-\f{s}2}_\mathcal{K}}^2$.
		Hence, it suffices to estimate each term from $I$ to $V$.
		
		To estimate $I$, we rewrite $I$ by definition and estimate:
		\begin{align}\label{est:dtu_m1}
			\left<\partial_{t}^{2}v_{d},M^{n}\nabla\psi\right>_{*}&=(\partial_{t}v_{d},M^{n}\nabla \psi)_{L^{2}(\Omega)}(T)-(\partial_{t}v_{d},M^{n}\nabla\psi)_{L^{2}(\Omega)}(0)-\int_{0}^{T}(\partial_{t}v_{d},\p_{t} (M^{n}\nabla \psi))_{L^{2}(\Omega)}\notag\\
			&\lesssim \vert \vert \partial_{t}v_{d}\vert \vert_{L^{\infty}L^{2}}\vert \vert \psi\vert \vert_{L^{\infty}H^{1}}+T^{\frac{1}{2}}\vert \vert \partial_{t}v_{d}\vert \vert_{L^{\infty}L^{2}}\vert \vert \partial_{t}\psi\vert \vert_{L^{2}H^{1}}\notag\\
			&\lesssim \vert \vert \partial_{t}v_{d}\vert \vert_{L^{\infty}L^{2}}\Big(\vert \vert (\partial_{t}\xi_{d})\vert \vert_{L_{t}^{\infty}H^{s-\frac{1}{2}}}+T^{\frac{1}{2}}\vert \vert \partial_{t}^{2}\xi_{d}\vert \vert_{L^{2}H^{s-\frac{1}{2}}}\Big)+\|\p_{t}v_{d}\|_{L^{\infty}L^{2}}\Big(\|\p_t(\partial_{t}\xi_{d})\|_{H^{s-1/2}}\notag\\
			&\quad+\|\p_{t}v_{d}\|_{H^{1}}+\|\p_{t}\eta^{n}\|_{H^{1}}\|v_{d}\|_{H^{1}}+\|\p_{t}u^{k}\|_{H^{1}}\|\p_{t}\xi_{l}^{k}\|_{H^{1}}+\|u^{k}\|_{H^{1}}\|\p_{t}\xi_{d}\|_{H^{1}}\notag\\
			& \quad+\|\p_{t}^{2}\eta^{n}\|_{H^{1}}\|v_{d}^{k}\|_{H^{1}}+\|\p_{t}\eta^{n}\|_{H^{1}}\|D_{t}v_{l}^{k}\|_{H^{1}}\Big).
		\end{align}
		Using the kinematic boundary condition and trace theorem, we have:
		\begin{align}
			\vert \vert\partial_{t}^{2}\xi_{d}\vert \vert_{H^{s-\frac{1}{2}}}\lesssim& \vert \vert (\p_{t}v_{d})\cdot \mathcal{N}^{n}\vert \vert_{H^{s-\frac{1}{2}}}+\|(v_{d})\cdot \p_{t}\mathcal{N}^{n}\|_{H^{s-\frac{1}{2}}}+\vert \vert \partial_{t}u_{1}^{k}\partial_{t}\partial_{1}\xi_{l}^{k}\vert \vert_{H^{s-\frac{1}{2}}}+\vert \vert u_{1}^{k}\partial_{t}\partial_{1}\xi_{d}\vert \vert_{H^{s-\frac{1}{2}}}\notag\\
			&+\|D_{t}(R^{n}v_{l}^{k})\cdot \mathcal{N}^{n}\|_{H^{s-\frac{1}{2}}}\notag\\
			\lesssim &\vert \vert \p_{t}v_{d}\vert \vert_{H^{1}}+\|v_{d}\|_{H^{1}}\|\p_{t}\eta^{n}\|_{W^{3-\frac{1}{q_{-}},q_{-}}}+\vert \vert \partial_{t}u^{k}\vert \vert_{W^{2,q_{-}}}\vert \vert \partial_{t}\xi_{l} ^{k}\vert \vert_{H^{\frac{3}{2}+\frac{\varepsilon_{-}-\alpha}{2}}}\notag\\
			&+\vert \vert u_{1}^{k}\vert \vert_{W^{2,q_{-}}}\vert \vert \partial_{t}\xi_{d}\vert \vert_{H^{s+\frac{1}{2}}}+\|\p_{t}^{2}\eta^{n}\|_{H^{\frac{3}{2}-\alpha}}\|v_{l}^{k}\|_{W^{2,q_{-}}}.
		\end{align}
		Plugging this inequality into \eqref{est:dtu_m1} and using the fact that $s=1-2\tilde\alpha<1$, we have
		\begin{equation}
			\begin{aligned}
				\left<\partial_{t}^{2}v_{d},M^{n}\nabla \psi\right>_{*}&\lesssim \vert \vert \partial_{t}v_{d}\vert \vert_{L^{\infty}L^{2}}\vert \vert \partial_{t}\xi_{d}\vert \vert_{L^{\infty}H^{1}}+\vert \vert \p_{t}v_{d}\vert \vert_{H^{1}}+\vert \vert \partial_{t}u^{k}\vert \vert_{L^{\infty}W^{2,q_{-}}}\vert \vert \partial_{t}\xi^{k} \vert \vert_{L^{2}H^{\frac{3}{2}+\frac{\varepsilon_{-}-\alpha}{2}}}\\
				&\quad+\vert \vert u_{1}^{k}\vert \vert_{L^{\infty}W^{2,q_{-}}}\vert \vert \partial_{t}\xi_{d}\vert \vert_{L^{2}H^{s+\frac{1}{2}}}+\|v_{d}\|_{H^{1}}\|\p_{t}\eta^{n}\|_{W^{3-\frac{1}{q_{-}},q_{-}}}\\
				&\quad+\|\p_{t}v_{d}\|_{L^{\infty}L^{2}}\Big(\|v_{d}\|_{L_{t}^{\infty}H^{1}}+\|\p_{t}v_{d}\|_{L_{t}^{2}H^{1}}+\|\p_{t}u^{k}\|_{L^{\infty}H^{1}}\|\p_{t}\xi_{l}^{n}\|_{L_{t}^{\infty}H^{1}}\Big)\\
				&\quad+\|\p_{t}v_{d}\|_{L^{\infty}L^{2}}\Big(\|u\|_{L_{t}^{\infty}H^{1}}\|\p_{t}\xi_{d}\|_{L_{t}^{\infty}H^{1}}+\|u(0)\|_{H^{1}}\|\p_{t}^{2}\eta(0)\|_{H^{1}}\Big)+\|\p_{t}v_{d}\|_{L_{t}^{2}H^{1}}\\
				&\quad+\|\p_{t}\eta\|_{L_{t}^{\infty}H^{1}}\|v_{d}\|_{L_{t}^{\infty}H^{1}}+\|\p_{t}^{2}u\|_{L_{t}^{2}H^{1}}\|\p_{t}^{2}\eta\|_{L_{t}^{\infty}H^{1}}+\|u\|_{L_{t}^{\infty}H^{1}}\|\p_{t}\xi_{d}\|_{L_{t}^{\infty}H^{1}}\\
				&\quad+\|\p_{t}v_{d}\|_{L_{t}^{\infty}L^{2}}\Big(\|\p_{t}^{2}\eta^{n}\|_{L_{t}^{\infty}H^{1}}\|v_{l}^{k}\|_{L_{t}^{\infty}H^{1}}+\|\p_{t}\eta^{n}\|_{L_{t}^{\infty}H^{1}}\|D_{t}v_{l}^{k}\|_{L_{t}^{2}H^{1}}\Big).
			\end{aligned}
		\end{equation}
		
		We now turn to II and estimate each term directly:
		\begin{align}
			&\int_{0}^{T} ((\partial_{t}v_{d},M^{n}\nabla \psi))\lesssim\vert \vert \partial_{t}v_{d}\vert \vert_{L^{2}H^{1}}\vert \vert \psi\vert \vert_{L^{2}H^{2}}\\
			&\lesssim\ \vert \vert \partial_{t}v_{d}\vert \vert_{L^{2}H^{1}}\vert \vert D_{j}^{s}(\partial_{t}\xi_{d})\vert \vert_{L^{2}H^{\frac{1}{2}}}+\|\p_{t}v_{d}\|_{L^{2}H^{1}}\Big(\|v_{d}\|_{L_{t}^{\infty}H^{1}}+\|\p_{t}u^{k}\|_{L^{\infty}H^{1}}\|\p_{t}\xi_{l}^{k}\|_{L_{t}^{\infty}H^{1}}\notag\\
			&\qquad+\|u^{k}\|_{L_{t}^{\infty}H^{1}}\|\p_{t}\xi_{d}\|_{L_{t}^{\infty}H^{1}}+\|u(0)\|_{H^{1}}\|\p_{t}\eta(0)\|_{H^{1}}+\|\p_{t}\eta^{n}\|_{L_{t}^{\infty}H^{1}}\|v_{l}^{k}\|_{L_{t}^{\infty}H^{1}}\Big),\notag\\
			&\int_{0}^{T}(\partial_{t}v_{d},R^{n}M^{n}\nabla \psi)\lesssim T\vert \vert \partial_{t}v_{d}\vert \vert_{L^{\infty}L^{2}}\vert \vert \psi\vert \vert_{L_{t}^{\infty}H^{1}}\\
			&\lesssim\ T\vert \vert \partial_{t}v_{d}\vert \vert_{L^{\infty}L^{2}}\vert \vert \partial_{t}\xi_{d}\vert \vert_{L^{\infty}H^{1}}+\|\p_{t}v_{d}\|_{L^{\infty}L^{2}}\Big(\|v_{d}\|_{L_{t}^{\infty}H^{1}}+\|\p_{t}u^{k}\|_{L^{\infty}H^{1}}\|\p_{t}\xi_{l}^{k}\|_{L_{t}^{\infty}H^{1}}\notag\\
			&\qquad+\|u^{k}\|_{L_{t}^{\infty}H^{1}}\|\p_{t}\xi_{d}\|_{L_{t}^{\infty}H^{1}}+\|u(0)\|_{H^{1}}\|\p_{t}\eta(0)\|_{H^{1}}+\|\p_{t}\eta^{n}\|_{L_{t}^{\infty}H^{1}}\|v_{l}^{k}\|_{L_{t}^{\infty}H^{1}}\Big),\notag
		\end{align}
		and
		\begin{align}
			\begin{aligned}
				&\int_{0}^{T}((v_{d},R^{n}M^{n}\nabla \psi)) \lesssim\ T^{\frac{1}{2}}\vert \vert v_{d}\vert \vert_{L^{\infty}H^{1}}\vert \vert D_{j}^{s}(\partial_{t}\xi_{d})\vert \vert_{L^{2}H^{\frac{1}{2}}}+\|v_{d}\|_{L^{\infty}H^{1}}\big(\|v_{d}\|_{L_{t}^{\infty}H^{1}}+\|\p_{t}u^{k}\|_{L^{\infty}H^{1}}\|\p_{t}\xi_{l}^{k}\|_{L_{t}^{\infty}H^{1}}\\
				&\qquad+\|u^{k}\|_{L_{t}^{\infty}H^{1}}\|\p_{t}\xi_{d}\|_{L_{t}^{\infty}H^{1}}+\|u(0)\|_{H^{1}}\|\p_{t}\eta(0)\|_{H^{1}}+\|\p_{t}\eta^{n}\|_{L_{t}^{\infty}H^{1}}\|v_{l}^{k}\|_{L_{t}^{\infty}H^{1}}\big),\\
				&\int_{0}^{T}[\partial_{t}v_{d}\cdot \mathcal{N}^{n},M^{n}\nabla \psi\cdot \mathcal{N}^{n}]_{\ell}\lesssim\ \vert \vert [\partial_{t}v_{d}\cdot \mathcal{N}^{n}]_{\ell}\vert \vert_{L_{t}^{2}}\vert \vert D_{j}^{s}\partial_{t}\xi_{d}\vert \vert_{L^{2}H^{\frac{1}{2}+}}+\|[\p_{t}v_{d}\cdot \mathcal{N}^{n}]_{\ell}\|_{L^{2}H^{1}}\big(\|v_{d}\|_{L_{t}^{\infty}H^{1}}\\
				&\qquad +\|\p_{t}u^{k}\|_{L^{\infty}H^{1}}\|\p_{t}\xi_{l}^{k}\|_{L_{t}^{\infty}H^{1}}+\|u^{k}\|_{L_{t}^{\infty}H^{1}}\|\p_{t}\xi_{d}\|_{L_{t}^{\infty}H^{1}}+\|u(0)\|_{H^{1}}\|\p_{t}\eta(0)\|_{H^{1}}+\|\p_{t}\eta^{n}\|_{L_{t}^{\infty}H^{1}}\|v_{l}^{k}\|_{L_{t}^{\infty}H^{1}}\big),\\
				&\int_{0}^{T}[\partial_{t}F^{7},M^{n}\nabla \psi\cdot \mathcal{N}^{n}]_{\ell}\lesssim\  \vert \vert [\partial_{t}F^{7}]\vert \vert_{\ell}\vert \vert D_{j}^{s}\partial_{t}\xi_{d}\vert \vert_{L^{2}H^{\frac{1}{2}+}}+\|[\p_{t}F^{7}]_{\ell}\|_{L^{2}}\big(\|v_{d}\|_{L_{t}^{\infty}H^{1}}+\|\p_{t}u^{k}\|_{L^{\infty}H^{1}}\|\p_{t}\xi_{l}^{k}\|_{L_{t}^{\infty}H^{1}}\\
				&\qquad+\|u^{k}\|_{L_{t}^{\infty}H^{1}}\|\p_{t}\xi_{d}\|_{L_{t}^{\infty}H^{1}}+\|u(0)\|_{H^{1}}\|\p_{t}\eta(0)\|_{H^{1}}+\|\p_{t}\eta^{n}\|_{L_{t}^{\infty}H^{1}}\|v_{l}^{k}\|_{L_{t}^{\infty}H^{1}}\big),\\
				&\int_{0}^{T}(\partial_{t}v_{d},M^{n}\nabla \psi\partial_{t}J^{n})_{\mathcal{H}^{0}(\Sigma)}\lesssim\ T^{\frac{1}{2}}\vert \vert \partial_{t}v_{d}\vert \vert_{L^{\infty}L^{2}}\vert \vert \p_{t}\xi_{d}\vert \vert_{L^{2}H^{1}}+\|\p_{t}v_{d}\|_{L^{2}H^{1}}\big(\|v_{d}\|_{L_{t}^{\infty}H^{1}}\\
				&\qquad+\|\p_{t}u^{k}\|_{L^{\infty}H^{1}}\|\p_{t}\xi_{l}^{k}\|_{L_{t}^{\infty}H^{1}}+\|u^{k}\|_{L_{t}^{\infty}H^{1}}\|\p_{t}\xi_{d}\|_{L_{t}^{\infty}H^{1}}+\|u(0)\|_{H^{1}}\|\p_{t}\eta(0)\|_{H^{1}}+\|\p_{t}\eta^{n}\|_{L_{t}^{\infty}H^{1}}\|v_{l}^{k}\|_{L_{t}^{\infty}H^{1}}\big),
			\end{aligned}
		\end{align}
		as well as
		\begin{align}
			\int_{0}^{T}\beta(\partial_{t}v_{d}\cdot \tau, (M\nabla \psi\cdot \tau)\partial_{t}J^{n})_{\mathcal{H}^{0}(\Sigma_{s})}=&\ 0,
		\end{align}
		where we used the fact that $\p_{\nu}\psi=0$ on $\Sigma_{s}$ and \eqref{eq:bdd_a_2}.
		
		For III, we note that it can be rewritten as:
		\begin{align}
			III=\p_t\mathcal{F}(M^{n}\nabla\psi)-\mathcal{F}(\p_t(M^{n}\nabla\psi))+\mathcal{F}(R^{n}M^{n}\nabla\psi).
		\end{align}
		According to \eqref{eq:cal_f}, for forcing terms $F^j$ with $j=1, 3, 4, 5$, we may apply the similar estimates as in the energy bound for $\p_tv_{d}^m$ in Step 3. In this way, we obtain
		\begin{equation}
			\begin{aligned}
				&\p_t\mathcal{F}(M^{n}\nabla\psi)-\mathcal{F}(\p_t(M^{n}\nabla\psi))+\mathcal{F}(R^{n}M^{n}\nabla\psi)\\
				&\lesssim\big[\|\p_t(F^1-F^4-F^5)\|_{(\mathcal{H}^1)^{\ast}}+\|\p_t\eta^{n}\|_{H^{3/2+(\varepsilon_--\alpha)/2}}(\|F^1\|_{L^{q_-}}+\|F^4\|_{W^{1-1/q_-,q_-}}
				+\|F^5\|_{W^{1-1/q_-,q_-}})\big]\|\psi\|_{H^2}\\
				&\lesssim\big[\|\p_t(F^1-F^4-F^5)\|_{(\mathcal{H}^1)^{\ast}}+\|\p_t\eta^{n}\|_{H^{3/2+(\varepsilon_--\alpha)/2}}(\|F^1\|_{L^{q_-}}+\|F^4\|_{W^{1-1/q_-,q_-}}
				+\|F^5\|_{W^{1-1/q_-,q_-}})\big]\|D_j^s\p_t\xi_{d}\|_{\mathcal{H}^{1/2}_\mathcal{K}}\\
				&\quad+\big[\|\p_t(F^1-F^4-F^5)\|_{(\mathcal{H}^1)^{\ast}}+\|\p_t\eta^{n}\|_{H^{3/2+(\varepsilon_--\alpha)/2}}(\|F^1\|_{L^{q_-}}+\|F^4\|_{W^{1-1/q_-,q_-}}
				+\|F^5\|_{W^{1-1/q_-,q_-}})\big]\\
				&\quad \times \big(\|v_{d}\|_{L_{t}^{\infty}H^{1}}+\|\p_{t}u^{k}\|_{L^{\infty}H^{1}}\|\p_{t}\xi_{l}^{k}\|_{L_{t}^{\infty}H^{1}}+\|u^{k}\|_{L_{t}^{\infty}H^{1}}\|\p_{t}\xi_{d}\|_{L_{t}^{\infty}H^{1}}+\|u(0)\|_{H^{1}}\|\p_{t}\eta(0)\|_{H^{1}}+\|\p_{t}\eta^{n}\|_{H^{1}}\|v_{l}^{k}\|_{H^{1}}\big).
			\end{aligned}
		\end{equation}
		
		We then estimate each term in IV. For terms in the first line of IV, we bound them by
		\begin{equation}
			\begin{aligned}
				&-\int_\Om\frac{\mu}{2}(\mathbb{D}_{\p_t\mathcal{A}^{n}}v_{d}:\mathbb{D}_{\mathcal{A}^{n}}(M^{n}\nabla\psi)+\mathbb{D}_{\mathcal{A}^{n}}v_{d}:\mathbb{D}_{\p_t\mathcal{A}^{n}}(M^{n}\nabla\psi)+\p_tJ^{n}K^{n}\mathbb{D}_{\mathcal{A}^{n}}v_{d}:\mathbb{D}_{\mathcal{A}^{n}}(M^{n}\nabla\psi))J^{n}\\
				&\lesssim \|\p_t\bar{\eta}^{n}\|_{W^{1,\infty}}\|v_{d}\|_{\mathcal{H}^1}\|\psi\|_{H^2}\\
				&\lesssim\|\p_t\eta^{n}\|_{H^{3/2+\varepsilon_-/2}}\|v_{d}\|_{\mathcal{H}^1}\|D_j^s\p_t\xi_{d}\|_{\mathcal{H}^{1/2}_\mathcal{K}}
				+\|\p_t\eta^{n}\|_{H^{3/2+\varepsilon_-/2}}\|v_{d}\|_{\mathcal{H}^1}\big(\|v_{d}\|_{L_{t}^{\infty}H^{1}}\\
				&\quad+\|\p_{t}u^{k}\|_{L^{\infty}H^{1}}\|\p_{t}\xi_{l}^{k}\|_{L_{t}^{\infty}H^{1}}+\|u^{k}\|_{L_{t}^{\infty}H^{1}}\|\p_{t}\xi_{d}\|_{L_{t}^{\infty}H^{1}}+\|u(0)\|_{H^{1}}\|\p_{t}\eta(0)\|_{H^{1}}+\|\p_{t}\eta^{n}\|_{H^{1}}\|v_{l}^{k}\|_{H^{1}}\big).
			\end{aligned}
		\end{equation}
		For the remaining terms in IV, we bound them as
		\begin{align}
			\begin{aligned}
				&-(\p_tv_{d},R^{n}M^{n}\nabla\psi)_{\mathcal{H}^0}+\int_\Om\p_tv_{d}\cdot M^{n}\nabla\psi\p_tJ^{n}+\int_{\Sigma_s}\beta(v_{d}\cdot\tau)(M^{n}\nabla\psi\cdot\tau)\p_tJ^{n}+(\p_{t}\xi_{d},a_{0}(t)D_{j}^{s}\zeta_{0})_{1,\Sigma_{n}}\\
				&\lesssim\|\p_tv_{d}\|_{\mathcal{H}^0}(\|R^{n}\|_{L^\infty(\Om)}+\|\p_tJ^{n}\|_{L^\infty(\Om)})\|\psi\|_{H^1}+\|v_{d}\|_{L^2(\Sigma_s)}\|\p_tJ^{n}\|_{L^\infty(\Sigma_s)}\|\psi\|_{H^2}\\
				&\lesssim \|\p_t\eta^{n}\|_{H^{3/2+\varepsilon_-/2}}\|\p_tv_{d}\|_{\mathcal{H}^0}\|\p_t\xi_{d}\|_{\mathcal{H}^{s-1/2}}+\|\p_t\eta^{n}\|_{H^{3/2+\varepsilon_-/2}}\|v_{d}\|_{L^2(\Sigma_s)}\|D_j^s\p_t\xi_{d}\|_{\mathcal{H}^{1/2}_\mathcal{K}}\\
				&\quad+\big(\|\p_{t}\eta\|_{H^{\frac{3}{2}+\frac{\varepsilon_{-}-\alpha}{2}}}\|\p_{t}v_{d}\|_{\mathcal{H}^{0}}+\|\p_{t}\eta\|_{H^{\frac{3}{2}+\frac{\varepsilon_{-}-\alpha}{2}}}\|v_{d}\|_{H^{1}}\big)\Big(\|v_{d}\|_{L_{t}^{\infty}H^{1}}+\|\p_{t}u^{k}\|_{L^{\infty}H^{1}}\|\p_{t}\xi_{l}^{k}\|_{L_{t}^{\infty}H^{1}}\\
				&\quad\quad+\|u^{k}\|_{L_{t}^{\infty}H^{1}}\|\p_{t}\xi_{d}\|_{L_{t}^{\infty}H^{1}}+\|u(0)\|_{H^{1}}\|\p_{t}\eta(0)\|_{H^{1}}+\|\p_{t}\eta^{n}\|_{H^{1}}\|v_{l}^{k}\|_{H^{1}}\Big).
			\end{aligned}
		\end{align}
		
		Finally, for the term $V$, using the similar computation as in Step 4, we obtain
		\begin{align}\label{est:dtu_m2}
			\begin{aligned}
				V\lesssim& \|\psi\|_{L_{t}^{2}H^{2}}\Big(\|\p_{t}^{2}{\eta}^{n}\|_{L_{t}^{2}H^{\frac{3}{2}}}\|v_{i}\|_{L_{t}^{\infty}W^{2,q_{-}}}+\|\p_{t}{\eta}^{n}\|_{L_{t}^{2}W^{3-\frac{1}{q_{-}},q_{-}}}\|D_{t}v_{l}\|_{L_{t}^{\infty}H^{1+\frac{\varepsilon_{-}}{2}}}\\
				&\quad+\|\p_{t}\eta^{n}\|_{L_{t}^{\infty}H^{\frac{3}{2}+\frac{\varepsilon_{-}-\alpha}{2}}}\|D_{t}v_{l}\|_{L_{t}^{\infty}W^{2,q_{-}}}+\|\p_{t}{\eta}^{n}\|_{L_{t}^{2}W^{3-\frac{1}{q_{-}},q_{-}}}\|v_{l}\|_{L_{t}^{\infty}W^{2,q_{-}}}\Big)\\
				&\quad+\|\p_{t}\xi_{l}^{k}\|_{L_{t}^{\infty}H^{\frac{3}{2}+\frac{\varepsilon_{-}-\alpha}{2}}}\|\p_{t}\eta^{k}\|_{L_{t}^{2}W^{3-\frac{1}{q_{-}},q_{-}}}\|\psi\|_{L_{t}^{2}H^{1}}+\|\p_{t}\eta^{k}\|_{L_{t}^{\infty}H^{\frac{3}{2}+\frac{\varepsilon_{-}-\alpha}{2}}}\|\p_{t}\xi^{k}\|_{L_{t}^{2}W^{3-\frac{1}{q_{-}},q_{-}}}\|\psi\|_{L_{t}^{2}H^{1}}\\
				\lesssim&\mathscr{S}^{n,k}\cdot \Big(\|v_{d}\|_{L_{t}^{\infty}H^{1}}+\|\p_{t}u^{k}\|_{L^{\infty}H^{1}}\|\p_{t}\xi_{l}^{k}\|_{L_{t}^{\infty}H^{1}}+\|u^{k}\|_{L_{t}^{\infty}H^{1}}\|\p_{t}\xi_{d}\|_{L_{t}^{\infty}H^{1}}\\
				&\quad+\|u(0)\|_{H^{1}}\|\p_{t}\eta(0)\|_{H^{1}}+\|\p_{t}\eta^{n}\|_{H^{1}}\|v_{l}^{k}\|_{H^{1}}\Big)+\big(\|v_{d}\|_{L_{t}^{\infty}H^{1}}+\|\p_{t}u^{k}\|_{L^{\infty}H^{1}}\|\p_{t}\xi_{l}^{k}\|_{L_{t}^{\infty}H^{1}}\\
				&\quad+\|u^{k}\|_{L_{t}^{\infty}H^{1}}\|\p_{t}\xi_{d}\|_{L_{t}^{\infty}H^{1}}+\|u(0)\|_{H^{1}}\|\p_{t}\eta(0)\|_{H^{1}}+\|\p_{t}\eta^{n}\|_{H^{1}}\|v_{l}^{k}\|_{H^{1}}\big).
			\end{aligned}
		\end{align}

		Finally, using \eqref{est:dtu_m1}--\eqref{est:dtu_m2}, together with Cauchy's inequality, and letting $j\rightarrow +\infty$, we obtain the following enhanced estimate for $\p_t\xi_{d}$: 
		\begin{equation}\label{est:enhance_2'}
			\begin{aligned}
				\|\p_t\xi_{d}\|_{L^2H^{3/2-\tilde{\alpha}}}^2
				&\lesssim\exp\big\{(\|\p_t\eta^{n}\|_{L^\infty H^{3/2+\varepsilon_-/2}}+\vert \vert u^{k}\vert \vert_{L^{\infty}H^{\frac{5}{2}}}+\vert \vert \eta^{n}\vert \vert_{L^{\infty}H^{3}})T\big\}\bigg(\mathcal{E}(0)\\
				&\quad+\|(F^1-F^4-F^5)(0)\|_{(\mathcal{H}^1)^\ast}^2+\mathfrak{F}^{n}+\|u_{1}^{k}\|_{L_{t}^{\infty}W^{2,q_{-}}}\|\p_{t}\xi_{d}\|^{2}_{H^{\frac{3}{2}-2\tilde{\alpha}}}+\mathscr{S}^{n,k}\bigg).
			\end{aligned}
		\end{equation}
		Notice that the right-hand side of \eqref{est:enhance_2'} contains the term $\vert \vert \partial_{t}\xi_{d}\vert \vert_{H^{\frac{3}{2}-2\tilde{\alpha}}}$, which cannot be controlled by the existing bound $\partial_{t}\xi_{d}\in L^{2}H^{1}$ from the energy estimate \eqref{eq:xi_m=}. Thus, we utilize a bootstrapping argument to improve the $\dt\xi_d$ estimates.
		
		Setting $\tilde{\alpha}=\frac{1}{4}$ which implies $\frac{3}{2}-2\tilde\alpha=1$, we have $\vert \vert \partial_{t}\xi_{d}\vert \vert_{H^{\frac{3}{2}-2\tilde{\alpha}}}=\vert \vert \partial_{t}\xi_{d}\vert \vert_{H^{1}}$ is controllable. Based on \eqref{est:enhance_2'}, we obtain the improved bound for $\partial_{t}\xi_{d}\in L^{2}H^{\frac{3}{2}-\tilde\alpha}=\partial_{t}\xi_{d}\in L^{2}H^{\frac{5}{4}}$. 
		Resetting $\tilde\alpha=\frac{1}{8}$, the term $\vert \vert \partial_{t}\xi_{d}\vert \vert_{H^{\frac{3}{2}-2\tilde{\alpha}}}=\vert \vert \partial_{t}\xi_{d}\vert \vert_{H^{\frac{5}{4}}}$ is bounded by the result of the previous step. Again based on \eqref{est:enhance_2'}, we obtain the improved bound for $\partial_{t}\xi_{d}\in L^{2}H^{\frac{3}{2}-\tilde\alpha}=\partial_{t}\xi_{d}\in L^{2}H^{\frac{11}{8}}$.
		Iterating this procedure allows us to bootstrap the regularity of $\p_{t}\xi_{d}$. Let $\tilde{\alpha}_{i}$ denote the value of $\tilde{\alpha}$ chosen at the $i$-th step. {We have $\tilde{\alpha}_{i+1}=\frac{1}{2}\tilde{\alpha}_{i}$}. It is then straightforward to verify that $\tilde{\alpha}_{i}\rightarrow 0$. Therefore, there exists an integer $N$ such that $\tilde{\alpha}_{i}\leq \alpha$ for $i>N$. At that stage, the term including $\|\p_{t}\xi_{d}\|_{L_{t}^{2}H^{\frac{3}{2}-2\tilde{\alpha}}}$ can be absorbed by the LHS, and we obtain the following estimate:
		\begin{equation}\label{est:enhance_2}
			\begin{aligned}
				\|\p_t\xi_{d}\|_{L^2H^{3/2-{\alpha}}}^2&\lesssim\exp\big\{(\|\p_t\eta^{n}\|_{L^\infty H^{3/2+\varepsilon_-/2}}+\vert \vert u^{k}\vert \vert_{L^{\infty}H^{\frac{5}{2}}}+\vert \vert \eta^{n}\vert \vert_{L^{\infty}H^{3}})T\big\}\bigg(\mathcal{E}(0)\\
				&\quad +\|(F^1-F^4-F^5)(0)\|_{(\mathcal{H}^1)^\ast}^2 +\mathfrak{F}^{n}+\mathscr{S}^{n,k}\bigg)
			\end{aligned}
		\end{equation}
		
		\paragraph{\underline{Step 7 -- Pressure Estimates and Strong Solution}}
		In this step, we fix $n,k$ and the prescribed functions $\xi_{l},v_{l}$. 
		Using the convergence in \eqref{converge_1} and the estimate \eqref{est:enhance_2}, we pass to the limit in \eqref{eq:galerkin} for almost every $t\in [0, T]$ so that
		\begin{equation}\label{eq:weak_limit}
			\begin{aligned}
				&(\p_tv_{d},w)_{\mathcal{H}^0}+((v_{d},w))+(\xi_{d},w\cdot\mathcal{N}^{n})_{1,\Sigma_{0}}+ \left(\int_{0}^{t}\mathcal{R}_{z}(\partial_{1}\zeta_{0},\partial_{1}\eta^{k})\partial_{1}\partial_{t}\xi_{d},\partial_{1}(w\cdot\mathcal{N}^{n}) \right)_{L^{2}}+[v_{d}\cdot \mathcal{N}^{n},w\cdot\mathcal{N}^{n}]_{\ell}\\&
				=\int_{\Om}F^1\cdot wJ^{n}+\int_{-\ell}^{\ell}F^4\cdot w-\int_{\Sigma_s}F^5(w\cdot\tau)J^{n}-[F^7,w\cdot\mathcal{N}^{n}]_\ell+((R^{n}D_{t}v_{l}^{k}),w)_{\mathcal{H}^{0}}\\
				&\quad+((R^{n}v_{l}^{k},w))-(I^{n,k}_{1},\p_{1}(w\cdot \mathcal{N}^{n}))_{L^{2}(-\ell.\ell)}-\left(\int_{0}^{t}\mathcal{R}_{z}(\partial_{1}\zeta_{0},\partial_{1}\eta^{k})\partial_{1}\p_{1}\p_{t}\eta^{k}\p_{1}\p_{t}\xi_{l}^{k},\partial_{1}(w\cdot\mathcal{N}^{n}) \right)_{L^{2}},
			\end{aligned}
		\end{equation}
		coupled with the kinematic boundary condition \eqref{eq:kinematic_0}
		for any $w\in \mathcal{W}(t)$.
		Then applying Theorem \ref{thm:pressure}, we recover the pressure $q_{d}$.
		Moreover, $(v_{d},q_{d},\xi_{d})$ is the strong solution to equation \eqref{eq:quasi_linear_{s}} satisfies the following bound
		\begin{equation}\label{est:diss_4}
			\begin{aligned}
				&\| v_{d}\|_{L^2W^{2,q_-}}^2 + \|q_{d}\|_{L^2W^{1,q_-}}^2 + \|\xi_{d}\|_{L^2W^{3-1/q_-,q_-}}^2 \\
				&\lesssim  \|\p_tv_{d}\|_{L^2H^0}^2 +\|\xi_{d} \|_{L^2H^1}^2 + \|v_{d}\|_{L^2H^1}^2 + \|\xi_{d}\|_{L^2H^{3/2-\alpha}}^2 + \|[v_{d}\cdot \mathcal{N}]_\ell\|_{L^2_t}^2\\
				&\quad + \|F^1\|_{L^2L^{q_-}}^2 + \|F^4\|_{L^2W^{1-1/q_-,q_-}}^2 + \|F^5\|_{L^2W^{1-1/q_-,q_-}}^2 + \|[F^7]_\ell\|_{L_t^{2}}^2\\
				&\quad+\|\p_{t}\eta^{n}\|_{L_{t}^{\infty}W^{3-\frac{1}{q_{-}},q_{-}}}^{2}\|v_{l}^{k}\|^{2}_{L_{t}^{2}W^{2,q_{+}}}+T\|\p_{t}u^{k}\|^{2}_{L_{t}^{2}W^{2,q_{-}}}\|\p_{t}\xi_{l}^{k}\|^{2}_{L_{t}^{2}W^{3-\frac{1}{q_{-}},q_{-}}}.
			\end{aligned}
		\end{equation}

		Applying the Sobolev embedding $H^{3/2-\alpha}\hookrightarrow W^{2-1/q_-,q_-}$, we substitute \eqref{est:enhance_2} into \eqref{est:diss_4} to obtain
		\begin{equation}\label{est:ellip_1}
			\begin{aligned}
				&\|v_{d}\|_{L^2W^{2,q_-}}^2+\|q_{d}\|_{L^2W^{1,q_-}}^2+\|\xi_{d}\|_{L^2W^{3-1/q_-,q_-}}^2\\
				& \lesssim \exp\big\{(\|\p_t\eta^{n}\|_{L^\infty H^{3/2+\varepsilon_-/2}}+\vert \vert u^{k}\vert \vert_{L^{\infty}H^{\frac{5}{2}}}+\vert \vert \eta^{n}\vert \vert_{L^{\infty}H^{3}})T\big\}\bigg(\mathcal{E}(0)+\|(F^1-F^4-F^5)(0)\|_{(\mathcal{H}^1)^\ast}^2
				+\mathfrak{F}+\mathscr{S}^{n,k}\bigg)
			\end{aligned}
		\end{equation}
		Therefore, the bound \eqref{est:ellip_1} implies that $(v_{d}, q_{d}, \xi_{d})$ enjoys higher regularity than that required for weak solutions. This allows us to derive a uniform bound for the solution to \eqref{eq:quasi_linear_{s}}  that is independent of $k$ which we establish in the next step.
		
		\paragraph{\underline{Step 8 -- Uniform $k$ Bound}}
		
		In this step, we fix $n$ and the prescribed functions $\xi_{l},v_{l}$.
		
		Suppose that the strong solution of \eqref{eq:quasi_linear_{s}} is denoted by $(v_{d}^{k},q_{d}^{k},\xi_{d}^{k})$. We now aim to show that this solution converges to the solution of the following system
		\begin{equation}{\label{eq:quasi_linear_{s2}}}
			\begin{cases}
				\partial_{t}v_{d}+\operatorname{div}_{\mathcal{A}^{n}}S_{\mathcal{A}^{n}}(v_{d},q_{d})+\dive_{\mathcal{A}^{n}}\nabla_{\mathcal{A}}(R^{n}v_{l})+\p_{t}(R^{n}v_{l})=F^{1}(u,p,\eta)~~~&\operatorname{in}~~\Omega,\\
				\operatorname{div}_{\mathcal{A}^{n}}v_{d}=0~~~&\operatorname{in}~~\Omega,\\
				S_{\mathcal{A}^{n}}(q_{d},v_{d})\mathcal{N}^{n}+\nabla_{\mathcal{A}^{n}}(R^{n}v_{l})\mathcal{N}^{n}=g\xi_{d}\mathcal{N}^{n}-\sigma\partial_{1}(\frac{1}{\sqrt{1+\vert \partial_{1}\zeta_{0}\vert^{2}}}\partial_{1}\xi_{d})\mathcal{N}^{n}+\p_{1}I_{1}^{n}\mathcal{N}^{n}\\
				\quad\quad\quad\quad\quad\quad\quad+\sigma\partial_{1}(\int_{0}^{t}\mathcal{R}_{z}(\partial_{1}\zeta_{0},\partial_{1}\eta)(\partial_{1}\p_{t}\xi_{l})\p_{1}\p_{t}\eta)\mathcal{N}^{n}+\sigma\partial_{1}(\int_{0}^{t}\mathcal{R}_{z}(\partial_{1}\zeta_{0},\partial_{1}\eta)\partial_{1}\partial_{t}\xi_{d})\mathcal{N}^{n}\\
				\quad\quad\quad\quad\quad\quad\quad+F^{4}(u,p,\eta)~~~&\operatorname{on}~~\Sigma,\\
				(S_{\mathcal{A}^{n}}(q_{d},v_{d})\nu+\nabla_{\mathcal{A}^{n}}(R^{n}v_{l})\nu-\beta v_{d})\cdot \tau=F^{5}(u,\eta,p)~~~&\operatorname{on}~~\Sigma_{s},\\
				v_{d}\cdot \nu=0~~~&\operatorname{on}~~\Sigma_{s},\\
				\partial_{t}\xi_{d}=v_{d}\cdot \mathcal{N}^{n}+(R^{n}v_{l})\cdot \mathcal{N}^{n}+\int_{0}^{t}(\partial_{t}u_{1}\cdot\partial_{t}\p_{1}\xi_{l}+u_{1}\partial_{1}\partial_{t}\xi_{d})+I_{2}^{n}~~~&\operatorname{on}~~\Sigma,\\
				\sigma(\mp \frac{\partial_{1}\xi_{d}}{(1+\vert \partial_{1}\zeta_{0}\vert^{2})^{\frac{3}{2}}}\pm \int_{0}^{t}\mathcal{R}_{z}(\partial_{1}\zeta_{0},\partial_{1}\eta)\partial_{t}\partial_{1}\xi_{d})(\pm \ell)\pm\int_{0}^{t}\mathcal{R}_{z}(\p_{1}\zeta_{0},\p_{1}\eta)(\p_{t}\p_{1}\xi_{l})\p_{t}\p_{1}\eta\pm I_{1}^{n})\\
				\qquad\qquad\qquad\qquad\qquad\qquad\qquad=\kappa (v_{d}\cdot \mathcal{N}^{n})(\pm \ell)-{F}^{7},
			\end{cases}
		\end{equation}
		as $k\to \infty$, where
		$I_{1}^{n}=\p_{1}(\mathcal{R}_{z}(\p_{1}\zeta_{0},\p_{1}\eta(0))\p_{1}\p_{t}\xi_{0}^{n})~~~~\operatorname{and }~~~~~~~I_{2}^{n}=u(0)\p_{1}\p_{t}\xi_{0}^{n}$.
		To prove this convergence result, we need to first establish a uniform bound for $(v_{d}^{k},q_{d}^{k},\xi_{d}^{k})$. Observing that the bounds for $(v_{d}^{k},\xi_{d}^{k},q_{d}^{k})$ depend on $(\vert \vert u^{k}\vert \vert_{L^{\infty}H^{\frac{5}{2}}}$, $\vert \vert \eta^{k}\vert \vert_{L^{\infty}H^{3}})$, which may blow up as $k\rightarrow \infty$, we modify the estimates for several terms in Steps 3, 4 and 6 to remove the dependency.
		
		For the energy-dissipation estimate, we only modify the bounds for \eqref{eq:n1}, \eqref{eq:n2} and \eqref{eq:n0}, while keeping the estimates for all other terms unchanged. For \eqref{eq:n1}, we estimate 
		\begin{align}
			(\partial_{t}\xi_{d}^{m},\partial_{t}u_{1}^{k}\partial_{t}\partial_{1}\xi_{l}^{k})_{1,\Sigma_{k}}\lesssim& \vert \vert \partial_{t}\xi_{d}^{m}\vert \vert_{W^{1,\frac{1}{ \varepsilon_{-}}}}\vert \vert \partial_{t}u_{1}^{k}\vert \vert_{W^{1,\frac{1}{1-\varepsilon_{-}}}(\Sigma)}\vert \vert \partial_{t}\xi_{l}^{k}\vert \vert_{W^{1,+\infty}}
			+\vert \vert \partial_{t}\xi_{d}^{m}\vert \vert_{W^{1,\frac{1}{\varepsilon_{-}}}} \vert \vert  \p_{t}u_{1}^{k}\vert\vert_{L^{\infty}(\Sigma)}\vert \vert \partial_{t}\xi_{l}^{k}\vert \vert_{W^{2,\frac{1}{1-\varepsilon_{-}}}}\notag\\\lesssim& \vert \vert \partial_{t}\xi_{d}^{m}\vert \vert_{H^{\frac{3}{2}-\alpha}}\vert \vert \partial_{t}u^{k}\vert \vert_{W^{2,q_{-}}}\vert \vert \partial_{t}\xi_{l}^{k}\vert \vert_{H^{\frac{3}{2}+\frac{\varepsilon_{-}-\alpha}{2}}}+ \vert \vert \partial_{t}\xi_{d}^{m}\vert \vert_{H^{\frac{3}{2}-\alpha}}\vert \vert \partial_{t}u^{k}\vert \vert_{H^{1+\frac{\varepsilon_{-}}{2}}}\vert \vert \partial_{t}\xi_{l}^{k}\vert \vert_{W^{3-\frac{1}{q_{-}},q_{-}}}\label{eq:n3}.
		\end{align}
		Similarly for \eqref{eq:n2}, we have
		\begin{align}
			(\partial_{t}\xi_{d}^{k},u_{1}^{k}\partial_{1}\partial_{t}\xi_{d}^{k})_{1,\Sigma_{k}}\notag
			&=\int_{-\ell}^{\ell}g\partial_{t}\xi_{d}^{k}u_{1}^{k}\partial_{1}\partial_{t}\xi_{d}^{k}+\sigma\frac{\p_1\partial_{t}\xi_{d}^{k}u_{1}^{k}\partial_{1}^{2}\partial_{t}\xi_{d}^{k}}{(1+|\p_1\zeta_0|^2)^{3/2}}+\int_{-\ell}^{\ell}\mathcal{R}_{z}(\partial_{1}\zeta_{0},\partial_{1}\eta^{k})u_{1}^{k}\partial_{t}\partial_{1}\xi_{d}^{k}\partial_{1}^{2}\partial_{t}\xi_{d}^{k}\notag\\
			&\quad+\int_{-\ell}^{\ell}g\partial_{t}\xi_{d}^{k}\partial_{1}u_{1}^{k}\partial_{t}\xi_{d}^{k}+\sigma\frac{\p_1\partial_{t}\xi_{d}^{k}\partial_{1}u_{1}^{k}\partial_{1}\partial_{t}\xi_{d}^{k}}{(1+|\p_1\zeta_0|^2)^{3/2}}+\int_{-\ell}^{\ell}\mathcal{R}_{z}(\partial_{1}\zeta_{0},\partial_{1}\eta^{k})\partial_{1}u_{1}^{k}\partial_{t}\partial_{1}\xi_{d}^{k}\partial_{1}\partial_{t}\xi_{d}^{k}\notag\\
			&=-\frac{1}{2}\int_{-\ell}^{\ell}\mathcal{R}_{zz}(\partial_{1}\zeta_{0},\partial_{1}\eta^{k})u_{1}^{k}\partial_{t}\partial_{1}\xi_{d}^{k}\partial_{1}\partial_{t}\xi_{d}^{k}(\partial_{1}^{2}\zeta_{0}+\partial_{1}^{2}\eta^{k})\notag\\
			&\quad+\frac{1}{2}\int_{-\ell}^{\ell}g\partial_{t}\xi_{d}^{k}\partial_{1}u_{1}^{k}\partial_{t}\xi_{d}^{k}+\sigma\frac{\p_1\partial_{t}\xi_{d}^{k}\partial_{1}u_{1}^{k}\partial_{1}\partial_{t}\xi_{d}^{k}}{(1+|\p_1\zeta_0|^2)^{3/2}}+\frac{1}{2}\int_{-\ell}^{\ell}\mathcal{R}_{z}(\partial_{1}\zeta_{0},\partial_{1}\eta^{k})\partial_{1}u_{1}^{k}\partial_{t}\partial_{1}\xi_{d}^{k}\partial_{1}\partial_{t}\xi_{d}^{k}\notag\\
			&\lesssim \vert \vert \partial_{t}\xi_{d}^{k}\vert \vert^{2}_{H^{\frac{3}{2}-\alpha}}(\vert \vert \eta^{k}\vert \vert_{W^{3-\frac{1}{q_{-}},q_{-}}}+\vert \vert u^{k}\vert\vert_{W^{1,\frac{1}{1-\varepsilon_{-}}}(\Sigma)}) \notag\\
			&\lesssim \vert \vert \partial_{t}\xi_{d}^{k}\vert \vert^{2}_{H^{\frac{3}{2}-\alpha}}(\vert \vert \eta^{k}\vert \vert_{W^{3-\frac{1}{q_{-}},q_{-}}}+\vert \vert u^{k}\vert\vert_{W^{2,q_{-}}}), \label{eq:n4}
		\end{align}
		and similarly
		\[
		\begin{aligned}
			(\partial_{t}\xi_{d}^{k},D_{t}(R^{n}v_{l}^{k})\cdot \mathcal{N}^{n})_{1,\Sigma_{k}}\lesssim &\|\p_{t}\xi_{d}^{k}\|_{H^{\frac{3}{2}-\alpha}}(\|\p_{t}\eta^{n}\|_{H^{\frac{3}{2}+\frac{\varepsilon_{-}-\alpha}{2}}}\|D_{t}v_{l}^{k}\|_{W^{2,q_{-}}}+\|\p_{t}\eta^{n}\|_{W^{3-\frac{1}{q_{-}},q_{-}}}\|D_{t}v_{l}^{k}\|_{1+\frac{\varepsilon_{-}}{2}}\\
			&\quad +\|\p_{t}^{2}\eta^{n}\|_{H^{2}}\|v_{l}^{k}\|_{W^{2,q_{-}}}+\|\p_{t}^{2}\eta^{n}\|_{H^{\frac{3}{2}-\alpha}}\|v_{l}^{k}\|_{W^{2,q_{-}}}).
		\end{aligned}
		\]
		
		For the uniform $\frac{3}{2}-\alpha$ estimate of free surface, we apply the functional calculus for gravity-capillary operator $\mathcal{K}$(Not the modified operator). Most of the computations are identical to those in Step 6 except for the estimate of the term including $\mathcal{R}_{z}(\p_{1}\zeta_{0},\p_{1}\eta^{k})$. We estimate this term as follows
		\begin{align}{\label{eq:n5}}
			\begin{aligned}
				&\int_{-\ell}^{\ell}\p_{1}D_{j}^{s}(\p_{t}\xi_{d}-a_{0}(t)\rho_{0})\mathcal{R}_{z}(\p_{1}\zeta_{0},\p_{1}\eta^{k})\p_{1}\p_{t}\xi_{d}^{k}\lesssim \|\p_{1}D_{j}^{s}(\p_{t}\xi_{d}-a_{0}(t)\rho_{0})\|_{H^{-\frac{s}{2}}}\|\mathcal{R}_{z}(\p_{1}\zeta_{0},\p_{1}\eta^{k})\|_{H^{\frac{1}{2}+}}\|\p_{1}\p_{t}\xi_{d}^{k}\|_{H^{\frac{s}{2}}}\\
				\lesssim& \big(\|\p_{t}\xi_{d}\|_{H^{\frac{3}{2}-\alpha}}+\|v_{d}\|_{H^{1}}+\|\p_{t}u^{k}\|_{L_{t}^{\infty}H^{1}}\|\p_{t}\xi_{l}^{k}\|_{L_{t}^{\infty}H^{1}}
				+\|u^{k}\|_{L_{t}^{\infty}H^{1}}\|\p_{t}\xi_{d}^{k}\|_{L_{t}^{\infty}H^{1}}+\|\p_{t}\eta^{n}\|_{W^{3-\frac{1}{q_{+}},q_{+}}}\|v_{l}^{k}\|_{W^{2,q_{-}}}\big)\\
				&
				\times\|\eta^{k}\|_{W^{3-\frac{1}{q_{-}},q_{-}}}\|\p_{t}\xi_{d}^{k}\|_{H^{\frac{3}{2}-\alpha}}.
			\end{aligned}
		\end{align}
		
		We now substitute estimates \eqref{eq:n3} and \eqref{eq:n4} into the bounds obtained in Step 3, and apply \eqref{eq:n5} in the 
		$\frac{3}{2}-\alpha$ estimate. 
		Combing these estimates, we obtain the following uniform bound for $(v_{d}^{k},\xi_{d}^{k},q_{d}^{k})$:
		\begin{equation}\label{est:uniform_n1}
			\begin{aligned}
				&\sum_{i=0}^{1}\bigg(\sup_{0\le t\le T}
				\bigl(
				\|\partial_{t}^{i}v_{d}^{m}\|_{0}^{2}
				+\|\partial_{t}^{i}\xi_{d}^{m}\|_{1}^{2}
				\bigr)
				+\|\partial_{t}^{i}v_{d}^{m}\|_{L^{2}H^{1}}^{2}+\|[\p_{t}^{i}v_{d}^{m}\cdot \mathcal{N}^{n}]_{\ell}\|_{L_t^2}^{2}+\|\p_{t}^{i}\xi_{d}\|_{L_{t}^{2}H^{\frac{3}{2}-\alpha}}\bigg)\\
				&\quad+\|v_{d}\|_{L^2W^{2,q_-}}^2+\|q_{d}\|_{L^2W^{1,q_-}}^2+\|\xi_{d}\|_{L^2W^{3-1/q_-,q_-}}^2\\
				&\lesssim\exp\{(\|\p_t\eta^{n}\|_{L^\infty H^{3/2+(\varepsilon_--\alpha)/2}}+\vert \vert \eta^{n}\vert \vert_{L^{\infty}H^{3}})T\}\bigg(\mathcal{E}(0)+\|(F^1-F^4-F^5) +\mathfrak{F}^{n}(0)\|_{(\mathcal{H}^1)^\ast}^2\\
				&\quad+\mathfrak{K}(u^{k},p^{k},\eta^{k})\mathfrak{K}_{-}(v_{l}^{k},\xi_{l}^{k})+(T^{\frac{1}{2}}\|\p_{t}^{2}\eta^{n}\|^{2}_{L_{t}^{\infty}H^{\frac{5}{2}}}+T^{\frac{1}{2}}\|\p_{t}^{3}\bar{\eta}^{n}\|_{L_{t}^{\infty}H^{1}}^{2})(\mathfrak{K}(u^{k},p^{k},\eta^{k})+\mathfrak{K}_{-}(v_{l}^{k},\xi_{l}^{k}))\bigg),
			\end{aligned}
		\end{equation}
		where $\mathfrak{K}_{-}$ is defined by \eqref{def:DEK_0-} in the next step.
		\noindent Using the convergence result established in \eqref{eq:convergence_e}, we obtain the following bound from \eqref{est:uniform_n1}.
		\begin{equation}\label{est:uniform_n2}
			\begin{aligned}
				&\sum_{i=0}^{1}\bigg(\sup_{0\le t\le T}
				\bigl(
				\|\partial_{t}^{i}v_{d}^{m}\|_{0}^{2}
				+\|\partial_{t}^{i}\xi_{d}^{m}\|_{1}^{2}
				\bigr)
				+\|\partial_{t}^{i}v_{d}^{m}\|_{L^{2}H^{1}}^{2}+\|[\p_{t}^{i}v_{d}^{m}\cdot \mathcal{N}^{n}]_{\ell}\|^{2}+\|\p_{t}^{i}\xi_{d}\|_{L_{t}^{2}H^{\frac{3}{2}-\alpha}}\bigg)\\
				&\quad+\|v_{d}\|_{L^2W^{2,q_-}}^2+\|q_{d}\|_{L^2W^{1,q_-}}^2+\|\xi_{d}\|_{L^2W^{3-1/q_-,q_-}}^2\\
				&\lesssim\exp\{(\|\p_t\eta^{n}\|_{L^\infty H^{3/2+(\varepsilon_--\alpha)/2}}+\vert \vert \eta^{n}\vert \vert_{L^{\infty}H^{3}})T\}\bigg(\mathcal{E}(0)+\|(F^1-F^4-F^5)(0)\|_{(\mathcal{H}^1)^\ast}^2 +\mathfrak{F}^{n}\\
				&\quad+\mathfrak{K}(u,p,\eta)\mathfrak{K}_{-}(v_{l},\xi_{l})+(T^{\frac{1}{2}}\|\p_{t}^{2}\eta^{n}\|^{2}_{L_{t}^{\infty}H^{\frac{5}{2}}}+T^{\frac{1}{2}}\|\p_{t}^{3}\bar{\eta}^{n}\|_{L_{t}^{\infty}H^{1}}^{2})(\mathfrak{K}(u,p,\eta)+\mathfrak{K}_{-}(v_{l},\xi_{l}))\bigg)
				.
			\end{aligned}
		\end{equation} 
		
		We now have a uniform bound for $(v^{k}_{d},q^{k}_{d},\xi^{k}_{d})$ and $(\partial_{t}v_{d}^{k},\partial_{t}\xi_{d}^{k})$ independent of $k$. This uniform boundedness implies that, up to the extraction of a subsequence,
		\[
		(v_{d}^{k},\xi_{d}^{k},q_{d}^{k})\rightharpoonup (v_{d},\xi_{d},q_{d})~~\operatorname{in}~~(L^{2}W^{2,q_{-}}\times L^{2}W^{3-\frac{1}{q_{-}},q_{-}}\times L^{2}W^{1,q_{-}}),
		\]
		\[ (v_{d}^{k},\xi_{d}^{k},q_{d}^{k})\stackrel{\ast}\rightharpoonup (v_{d},\xi_{d},q_{d})~~\operatorname{in}~~(L^{\infty}H^{1}\times L^{\infty}H^{1}\times L^{\infty}L^{2}),
		\]
		\[
		(\partial_{t} v_{d}^{k},\partial_{t}\xi_{d}^{k})\stackrel{\ast}\rightharpoonup (\partial_{t}v_{d},\partial_{t}\xi_{d}) \ \text{in}\ L^{\infty}L^{2}\times L^{\infty}H^{1}),\  (\partial_{t} v_{d}^{k},\partial_{t}\xi_{d}^{k})\rightharpoonup (\partial_{t}v_{d},\partial_{t}\xi_{d})~~\operatorname{in}~~( (L_{t}^{2}H^{1}\times L_{t}^{2}H^{\frac{3}{2}-\alpha}).
		\]
		
		\noindent From the weak convergence above, together with the strong convergence of $\eta^{k}$ to $\eta$, we conclude that $(v_d,\xi_{d})$ is the weak solution to the following equation
		\begin{equation}\label{eq:weak_limit1}
			\begin{aligned}
				& (\p_tv_{d}, w)_{\mathcal{H}^0}+((v_{d},w))+ (\xi_{d}, w\cdot\mathcal{N}^{n})_{1,\Sigma_{0}}+(\int_{0}^{t}\mathcal{R}_{z}(\partial_{1}\zeta_{0},\partial_{1}\eta)\partial_{1}\p_{t}\xi_{d},\partial_{1}(w\cdot\mathcal{N}^{n}))_{L^{2}} + [v_{d}\cdot\mathcal{N}^{n},w\cdot\mathcal{N}^{n}]_\ell\\ &
				=\int_\Om F^1\cdot wJ^{n}-\int_{-\ell}^{\ell} F^4\cdot w-\int_{\Sigma_s}F^5(w\cdot\tau)J^{n} -[F^7,w\cdot\mathcal{N}^{n}]_\ell +(\p_{t}(R^{n}v_{l}),w)_{\mathcal{H}^{0}}+((R^{n}v_{l},w))\\
				&\quad-(I_{1}^{n},\p_{1}(w\cdot \mathcal{N}^{n}))_{L^{2}(-\ell.\ell)} +(\p_{1}\int_{0}^{t}\mathcal{R}_{z}(\partial_{1}\zeta_{0},\partial_{1}\eta)\partial_{1}\p_{t}\xi_{l}\p_{t}\p_{1}\eta,(w\cdot\mathcal{N}^{n}))_{L^{2}}\\
				&\quad+\left[\int_{0}^{t}\mathcal{R}_{z}(\partial_{1}\zeta_{0},\partial_{1}\eta)\partial_{1}\p_{t}\xi_{l}\p_{t}\p_{1}\eta,w\cdot \mathcal{N}^{n} \right]_\ell.
			\end{aligned}
		\end{equation}
		For the kinematic boundary condition, we note that the kinematic boundary condition for smooth-value problem is expressed as
		\begin{align}{\label{eq:kinematic_s}}
			\partial_{t}\xi_{d}^{k}=v_{d}^{k}\cdot \mathcal{N}^{n}+R^{n}v_{l}^{k}\cdot \mathcal{N}^{n}+\int_{0}^{t}\partial_{t}u_{1}^{k}\cdot \partial_{t}\partial_{1}\xi_{l}^{k}+\int_{0}^{t}u_1^{k}\partial_{1}\partial_{t}\xi_{d}+I^{n,k}_{2}.
		\end{align}
		
		\noindent For the right hand side of the equation above, we have the following estimate
		\[\begin{aligned}
			\vert \vert v_{d}^{k}\cdot \mathcal{N}+R^{n}v_{l}^{k}+\int_{0}^{t}\partial_{t}u_{1}^{k}\cdot \partial_{t}\partial_{1}\xi_{l}^{k}+\int_{0}^{t}u_1^{k}\partial_{1}\partial_{t}\xi_{d}^{k}+I_{2}^{n,k}\vert \vert_{L_{t}^{\infty}H^{\frac{1}{2}-\alpha}}
			\lesssim \vert \vert \partial_{t}u\vert \vert_{L^{\infty}W^{2,q_{-}}}\vert \vert \partial_{t}\xi_{l}^{k}\vert \vert_{L^{\infty}W^{3-\frac{1}{q_{-}},q_{-}}}\\
			+\vert \vert v_{d}^{n}\vert \vert_{L_{t}^{\infty}H^{1}}+\vert \vert u^{k}\vert \vert_{L^{\infty}W^{2,q_{-}}}\vert \vert \partial_{t}\xi_{d}^{k}\vert\vert_{L_{t}^{2}H^{\frac{3}{2}-\alpha}}+\vert \vert I_{2}^{n,k}\vert \vert_{H^{\frac{1}{2}-\alpha}}
			+\|\p_{t}\eta^{n}\|_{L_{t}^{2}W^{3-\frac{1}{q_{-}},q_{-}}}\|v_{l}^{k}\|_{L_{t}^{\infty}W^{2,q_{-}}}.
		\end{aligned}\]
		
		\noindent Therefore, both the left- and right-hand sides of the kinematic boundary condition for the smooth, given-data problem are uniformly bounded in $L^{\infty}H^{\frac{1}{2}-\alpha}$. Using this uniform boundedness and the compact embedding theorem, we infer that equation \eqref{eq:kinematic_s} converges to:
		\[
		\partial_{t}\xi_{d}=v_{d}\cdot \mathcal{N}^{n}+R^{n}v_{l}\cdot \mathcal{N}^{n}+\int_{0}^{t}\partial_{t}u_{1}\cdot \partial_{t}\partial_{1}\xi_{l}+\int_{0}^{t}u_1\partial_{1}\partial_{t}\xi_{d}+I^{n}_{2}
		\]
		\noindent strongly in $L^{\infty}L^{2}$ as $k\rightarrow \infty$. Therefore,  $(\xi_{d},v_{d})$ is the weak solution to \eqref{eq:quasi_linear_{s2}}. Moreover, since $(v_{d},\xi_{d},q_{d})\in (L^{2}W^{2,q_{-}}\times L^{2}W^{3-\frac{1}{q_{-}},q_{-}}\times L^{2}W^{1,q_{-}})$, this weak solution is a strong solution to \eqref{eq:quasi_linear_{s2}} and is bounded by the right-hand side of \eqref{est:uniform_n2}.
		
		{\paragraph{\underline{Step 9 -- Contraction Map}}
			
			In this step, we fix $n$ and prove that system \eqref{eq:quasi_linear_{s2}} induces a linear map from $(v_{l},\xi_{l})$ to $(v,\xi)$.
			
			Setting $v_{d}=D_{t}v, q_{d}=\p_{t}q, \xi_{d}=\p_{t}\xi$, and combining this with the initial data constructed in Appendix \ref{sec:initial_l}, we can recover $(v, q, \xi)$.
			
			To construct the contraction map, we introduce the following definitions:
			\begin{equation}\label{def:dissipation-}
				\begin{aligned}
					\mathscr{D}_{-}(u,\eta)&:=(\|u\|_{L^2W^{2,q_-}}^2+\|\eta\|_{L^2W^{3-1/q_-, q_-}}^2)+\sum_{j=0}^2\Big(\|\p_t^ju\|_{L^{2}H^{1}}^2+\|\p_t^ju\|_{L^{2}L^2(\Sigma_s)}^2\Big) +\|\p_t^3\eta\|_{L^{2}H^{1/2-\alpha}}^2\\&\quad
					+\sum_{j=0}^2\Big(\|\p_t^j\eta\|_{L^{2}H^{3/2-\alpha}}^2+\|[\p_t^{j}u\cdot \mathcal{N}^{n}]_\ell^2\|_{L_{t}^{2}}\Big)+\Big(\|\p_tu\|_{L^{2}W^{2,q_{-}}}^2+\|\p_{t}\eta\|_{L^{2}W^{3-1/q_-, q_-}}^2\Big),
				\end{aligned}
			\end{equation}
			\begin{equation}\label{def:energy-}
				\begin{aligned}
					\mathscr{E}_{-}(u,\eta)&: =\|u\|_{L^{\infty}W^{2,q_-}}^2+\sum_{i=0}^{1}\|\p_t^{i}u\|_{L_{t}^{\infty}H^{1+\varepsilon_-/2}}^2+\sum_{k=0}^2\|\p_t^ku\|_{L_{t}^{\infty}H^0}^2\\&\quad+\|\eta\|_{L_{t}^{\infty}W^{3-1/q_-, q_-}}^2+\sum_{i=0}^{1}\|\p_t^{i}\eta\|_{L_{t}^{\infty}H^{3/2+(\varepsilon_--\alpha)/2}}^2+\sum_{j=0}^2(\|\p_t^j\eta\|_{L_{t}^{\infty}H^{1}}^2+\|[\p_{t}^{j}u\cdot \mathcal{N}^{n}]_{\ell}\|_{L_{t}^{\infty}}^{2}),
				\end{aligned}
			\end{equation}
			\begin{equation}\label{def:DEK_0-}
				\mathfrak{K}_{-}(u,\eta ):=\mathscr{E}_{-}(u,\eta)+\mathscr{D}_{-}(u,\eta).
			\end{equation}
			\noindent together with the functional space
			\begin{align}
				\mathscr{H}:=\{(v,\xi)\in L_{t}^{\infty}\mathcal{W}_{\sigma}\times L_{t}^{\infty}H^{1}|\mathfrak{K}_{-}(v,\xi)<+\infty,~v(0,x)=v_{0}^{n},~\p_{t}v(0,x)=\p_{t}v_{0}^{n},~\xi(0,x)=\xi_{0}^{n},\p_{t}\xi(0,x)=\p_{t}\xi_{0}^{n}\}
			\end{align}
			\noindent where $v_{0}^{n},\p_{t}v_{0}^{n},\xi_{0}^{n},\p_{t}\xi_{0}^{n}$ are defined as in Appendix \ref{sec:initial_l}. 
			
			{We first prove that the space $\mathscr{H}$ is complete. Since $\mathfrak{K}_{-}(u,\eta)$ induces a complete Sobolev space,it suffices to show that the limit satisfies the fixed boundary condition. 
				
				When $(v,\xi)\in \mathscr{H}$, it holds that
				\[
				\|\p_{t}v\|_{L_{t}^{2}W^{2,q_{-}}}+\|\p_{t}\xi\|_{L_{t}^{2}W^{3-\frac{1}{q_{-}},q_{-}}}+\|\p_{t}^{2}v\|_{L_{t}^{2}H^{1}}+\|\p_{t}^{2}\xi\|_{L_{t}^{2}H^{\frac{3}{2}-\alpha}}<\mathfrak{K}_{-}(v,\xi)<\infty
				\]
				which implies that
				$(v,\xi,\p_{t}v,\p_{t}\xi)\in H_{t}^{1}W^{2,q_{-}} \times H_{t}^{1}W^{3-\frac{1}{q_{-}},q_{-}}\times H_{t}^{1}H^{1}\times H_{t}^{1}H^{\frac{3}{2}-\alpha}$.
				Therefore, any Cauchy sequence $\{(v^{r},\xi^{r})\}$ in $\mathscr{H}$ implies that $\{(v^{r},\xi^{r},\p_{t}v^{r},\p_{t}\xi^{r})\}$ is a Cauchy sequence in 
				$H_{t}^{1}W^{2,q_{-}} \times H_{t}^{1}W^{3-\frac{1}{q_{-}},q_{-}}\times H_{t}^{1}H^{1}\times H_{t}^{1}H^{\frac{3}{2}-\alpha}$.
				This implies that the limit of the Cauchy sequence satisfies the initial condition required in the definition of the space $\mathscr{H}$. Therefore, $\mathscr{H}$ is a complete space.
			}
			
			We denote the metric induced by norm $\mathfrak{K}_{-}(u,\eta)$ as $d(u,\eta)$. Then the estimate \eqref{est:uniform_n2} implies that
			\[
			\begin{aligned}
				\mathfrak{K}_{-}(v,\xi )\lesssim& \exp\{(\|\p_{t}\eta^{n}\|_{L^{\infty}H^{\frac{3}{2}+\frac{\varepsilon_{-}}{2}}}+\|\eta^{n}\|_{L^{\infty}H^{3}})T\}\big(\mathscr{E}(u(0),p(0),\eta(0))\\
				&+ \mathfrak{F}^n +(\delta+(T^{\frac{1}{2}}\|\p_{t}^{2}\eta^{n}\|^{2}_{L_{t}^{\infty}H^{\frac{5}{2}}}+T^{\frac{1}{2}}\|\p_{t}^{3}\bar{\eta}^{n}\|_{L_{t}^{\infty}H^{1}}^{2})) \mathfrak{K}_{-}(v_{l},\xi_{l})\big),
			\end{aligned}
			\]
			\noindent where for simplicity, $\mathfrak{F}$ denotes the norms of the forcing terms.
			
			Moreover, given two different tuples $(v_{l}^{1},\xi_{l}^{1})$ and $(v_{l}^{2},\xi_{l}^{2})$. We subtract the respective evaluations of equation \eqref{eq:quasi_linear_{s2}} to obtain a new system for the difference functions $(v^{1}-v^{2},\xi^{1}-\xi^{2},q^{1}-q^{2})$:
			\[
			\begin{cases}
				\partial_{t}(v_{d}^{1}-v_{d}^{2})+\operatorname{div}_{\mathcal{A}^{n}}S_{\mathcal{A}^{n}}(v_{d}^{1}-v_{d}^{2},q_{d}^{1}-q_{d}^{2})+\dive_{\mathcal{A}^{n}}\nabla_{\mathcal{A}}(R^{n}(v_{l}^{1}-v_{l}^{2}))+\p_{t}(R^{n}(v_{l}^{1}-v_{l}^{2}))=0~~~&\operatorname{in}~~\Omega,\\
				\operatorname{div}_{\mathcal{A}^{n}}(v_{d}^{1}-v_{d}^{2})=0~~~&\operatorname{in}~~\Omega,\\
				S_{\mathcal{A}^{n}}(q_{d}^{1}-q_{d}^{2},v_{d}^{1}-v_{d}^{2})\mathcal{N}^{n}+\nabla_{\mathcal{A}^{n}}(R^{n}(v_{l}^{1}-v_{l}^{2}))\mathcal{N}^{n}=g(\xi_{d}^{1}-\xi_{d}^{2})\mathcal{N}^{n}-\sigma\partial_{1}(\frac{1}{\sqrt{1+\vert \partial_{1}\zeta_{0}\vert^{2}}}\partial_{1}(\xi_{d}^{1}-\xi_{d}^{2}))\mathcal{N}^{n}\\
				\quad\quad\quad\quad\quad\quad\quad+\sigma\partial_{1}(\int_{0}^{t}\mathcal{R}_{z}(\partial_{1}\zeta_{0},\partial_{1}\eta)(\partial_{1}\p_{t}(\xi_{l}^{1}-\xi_{l}^{2}))\p_{1}\p_{t}\eta)\mathcal{N}^{n}\\
				\quad\quad\quad\quad\quad\quad\quad+\sigma\partial_{1}(\int_{0}^{t}\mathcal{R}_{z}(\partial_{1}\zeta_{0},\partial_{1}\eta)\partial_{1}\partial_{t}(\xi_{d}^{1}-\xi_{d}^{2}))\mathcal{N}^{n}~~~&\operatorname{on}~~\Sigma,\\
				(S_{\mathcal{A}^{n}}(q_{d}^{1}-q_{d}^{2},v_{d}^{1}-v_{d}^{2})\nu+\nabla_{\mathcal{A}^{n}}(R^{n}(v_{l}^{1}-v_{l}^{2}))\nu-\beta (v_{d}^{1}-v_{d}^{2})\cdot \tau=0~~~&\operatorname{on}~~\Sigma_{s},\\
				(v_{d}^{1}-v_{d}^{2})\cdot \nu=0~~~&\operatorname{on}~~\Sigma_{s},\\
				\partial_{t}(\xi_{d}^{1}-\xi_{d}^{2})=(v_{d}^{1}-v_{d}^{2})\cdot \mathcal{N}^{n}+(R^{n}(v_{l}^{1}-v_{l}^{2}))\cdot \mathcal{N}^{n}+\int_{0}^{t}(\partial_{t}u_{1}\cdot\partial_{t}\p_{1}(\xi_{l}^{1}-\xi_{l}^{2})+u_{1}\partial_{1}\partial_{t}(\xi_{d}^{1}-\xi_{d}^{2}))~~~&\operatorname{on}~~\Sigma,\\
				\sigma(\mp \frac{\partial_{1}(\xi_{d}^{1}-\xi_{d}^{2})}{(1+\vert \partial_{1}\zeta_{0}\vert^{2})^{\frac{3}{2}}}\pm \int_{0}^{t}\mathcal{R}_{z}(\partial_{1}\zeta_{0},\partial_{1}\eta)\partial_{t}\partial_{1}(\xi_{d}^{1}-\xi_{d}^{2}))(\pm \ell)ds\pm\int_{0}^{t}\mathcal{R}_{z}(\p_{1}\zeta_{0},\p_{1}\eta)(\p_{t}\p_{1}(\xi_{l}^{1}-\xi_{l}^{2}))\p_{t}\p_{1}\eta)\\
				\quad\quad\quad\quad\quad\quad\quad=\kappa ((v_{d}^{1}-v_{d}^{2})\cdot \mathcal{N}^{n})(\pm \ell).
			\end{cases}
			\]
			Then, applying energy and elliptic estimates similar to those in the previous steps, we obtain
			\[
			\begin{aligned}
				\mathfrak{K}_{-}\bigl(
				v^{1}-v^{2},
				\xi^{1}-\xi^{2}
				\bigr)
				\;\lesssim&\;
				\delta\exp\{(\|\p_{t}\eta^{n}\|_{L^{\infty}H^{\frac{3}{2}+\frac{\varepsilon_{-}}{2}}}+\|\eta^{n}\|_{L^{\infty}H^{3}})T\}\,
				\bigg(\delta+(T^{\frac{1}{2}}\|\p_{t}^{2}\eta^{n}\|^{2}_{L_{t}^{\infty}H^{\frac{5}{2}}}\\
				& +T^{\frac{1}{2}}\|\p_{t}^{3}\bar{\eta}^{n}\|_{L_{t}^{\infty}H^{1}}^{2}) \cdot \mathfrak{K}_{-}\bigl(
				v_{l}^{1}-v_{l}^{2},
				\xi_{l}^{1}-\xi_{l}^{2}
				\bigr)\bigg),
			\end{aligned}
			\]
			for the same $\delta\in(0,1)$.
			
			Together with the estimates obtained above, this inequality implies that the
			Picard iteration induced by \eqref{eq:quasi_linear_{s}} defines a contraction
			mapping in the functional space $\mathscr{H}$. Consequently, for each fixed $n$, the iteration admits a unique fixed
			point $(v^{n},\xi^{n},q^{n})$, which solves the system \eqref{eq:quasi_linear_n}
			
			Moreover, the solution satisfies the estimate
			\[
			\begin{aligned}
				&\mathfrak{K}_{-}(v^{n},\xi^{n})+\|q^{n}\|_{L^{\infty}W^{1,q_{-}}}+\|\p_{t}q^{n}\|_{L^{2}W^{1,q_{-}}}+\|\p_{t}q^{n}\|_{L_{t}^{\infty}H^{0}}
				\\
				\;\lesssim\;&
				\exp\!\Bigl\{
				\bigl(
				\|\partial_{t}\eta^{n}\|_{L^{\infty}H^{3/2+(\varepsilon_{-}-\alpha)/2}}
				+\|\eta^{n}\|_{L^{\infty}H^{3}}
				\bigr)T
				\Bigr\}\bigg\{ \mathcal{E}(u_{0},p_{0},\xi_{0}) \\
				&+\|(F^1-F^4-F^5)(0)\|^{2}_{(\mathcal{H}^1)^\ast}+\mathfrak{F}^{n}+T^{\frac{1}{2}}\|\p_{t}^{2}\eta^{n}\|^{2}_{L_{t}^{\infty}H^{\frac{5}{2}}}+T^{\frac{1}{2}}\|\p_{t}^{3}\bar{\eta}^{n}\|_{L_{t}^{\infty}H^{1}}^{2})(\mathfrak{K}(u,p,\eta))\bigg\}
				.
			\end{aligned}
			\]
			
			\paragraph{\underline{Step 10 -- Uniform $n$ Bound}}
			In this step, we derive a uniform bound for the solution to \eqref{eq:quasi_linear_{s1}}.
			
			First, for any prescribed $n$, we apply the Theorem \ref{thm:pressure_+} to equation \eqref{eq:quasi_linear_n} to show that
			\[
			\begin{aligned}
				&\| v^{n}\|_{L^\infty W^{2,q_+}}^2 + \|q^{n}\|_{L^\infty W^{1,q_+}}^2 + \|\xi^{n}\|_{L^\infty W^{3-1/q_+,q_+}}^2 \\
				&\lesssim  \mathcal{Z}+\exp\!\Bigl\{
				\bigl(
				\|\partial_{t}\eta^{n}\|_{L^{\infty}H^{3/2+(\varepsilon_{-}-\alpha)/2}}
				+\|\eta^{n}\|_{L^{\infty}H^{3}}
				\bigr)T
				\Bigr\}\bigg\{ \mathcal{E}(u_{0},p_{0},\xi_{0}) \\
				&\quad\quad\quad\quad+\|(F^1-F^4-F^5)(0)\|^{2}_{(\mathcal{H}^1)^\ast}+\mathfrak{F}^{n}+T^{\frac{1}{2}}\|\p_{t}^{2}\eta^{n}\|^{2}_{L_{t}^{\infty}H^{\frac{5}{2}}}+T^{\frac{1}{2}}\|\p_{t}^{3}\bar{\eta}^{n}\|_{L_{t}^{\infty}H^{1}}^{2})(\mathfrak{K}(u,p,\eta))\bigg\}.
			\end{aligned}
			\]
			With this $q_{+}$ estimate, we define a function $w$ as follows:
			\[
			\operatorname{div}_{\mathcal{A}}w=J^{n}(-2\operatorname{div}_{\p_{t}\mathcal{A}^{n}}\p_{t}v^{n}-\operatorname{div}_{\mathcal{A}^{n}}\p_{t}^{2}v^{n})-\langle J^{n}(-2\operatorname{div}_{\p_{t}\mathcal{A}^{n}}\p_{t}v^{n}-\operatorname{div}_{\mathcal{A}^{n}}\p_{t}^{2}v^{n})\rangle,
			\]
			where $\langle f\rangle:=\frac{1}{|\Omega|}\int_{\Omega} f$.
			With the $q_{+}$ estimate, the function $w$ is well-defined and enjoys good regularity. The details are given in \cite[Proposition 10.1]{GT2020}.
			
			After differentiating \eqref{eq:quasi_linear_n} with respect to time, we take $\partial_{t}^{2}v^{n} - w$ as the test function. Combining the results of \cite[Section 10.2]{GT2020}, the $\frac{3}{2}-\alpha$ estimate, and the elliptic estimates derived in Step 8, we obtain the following bound:
			\[
			\begin{aligned}
				\mathfrak{K}(v^{n},\xi^{n},q^{n})\lesssim&(1+T)\exp\big\{T(\|\p_t\eta\|_{L^\infty H^{3/2+(\varepsilon_--\alpha)/2}}+\|\eta\|_{L^{\infty}W^{3-\frac{1}{q_{-}},q_{-}}})\big\}\\
				&\quad\quad \times\bigg\{ \mathcal{E}(u_{0},p_{0},\xi_{0}) +\|(F^1-F^4-F^5)(0)\|^{2}_{(\mathcal{H}^1)^\ast}+\mathfrak{F}^{n}+\mathcal{Z}+\mathfrak{K}(u,p,\eta)\bigg\}.
			\end{aligned}
			\]
			\noindent This uniform bound implies the existence of a time \(T>0\), independent of \(n\), such that \((v^{n},\xi^{n},q^{n})\) exists on the interval \((0,T)\) for every \(n\). Moreover, combining this estimate with the convergence result \eqref{convergence_n}, we obtain the following uniform bound
			\begin{align}
				\begin{aligned}
					\mathfrak{K}(v^{n},\xi^{n},q^{n})\lesssim&(1+T)\exp\big\{T(\|\p_t\eta\|_{L^\infty H^{3/2+(\varepsilon_--\alpha)/2}}+\|\eta\|_{L^{\infty}W^{3-\frac{1}{q_{-}},q_{-}}})\big\}\\
					&\quad\quad \times \bigg\{ \mathcal{E}(u_{0},p_{0},\xi_{0}) +\|(F^1-F^4-F^5)(0)\|^{2}_{(\mathcal{H}^1)^\ast}+\mathfrak{F}+\mathcal{Z}+\mathfrak{K}(u,p,\eta)\bigg\}.
				\end{aligned}
			\end{align}
			\noindent With this uniform bound, which is independent of $n$, we may let $n\to\infty$.
			Using the same argument as Step~9 to establish convergence,
			the sequence $(v^{n},\xi^{n},q^{n})$ converges to a solution of
			\eqref{eq:quasi_linear}. This completes the proof.}
	\end{proof}
	
	Now, it remains to prove Lemma \ref{lem:difference_u_Dt_u} showing the difference between $D_{t}v_{d}$ and $\p_{t}v_{d}$.
	
	\begin{lemma}\label{lem:difference_u_Dt_u}
		
		$D_{t}=\p_{t}-R$ where $R$ is defined in Proposition \ref{prop:solid_boundary}. It holds that
		\[
		\begin{aligned}
			\|\p_tv_{d}-D_tv_{d}\|_{L^2W^{2,q_-}}^2\lesssim \vert \vert \p_{t}\eta\vert \vert_{L_{t}^{\infty}W^{3-\frac{1}{q_{-}},q_{-}}}^{2}\vert \vert v_{d}\vert \vert_{L^{2}W^{2,q_{-}}}^{2},\\
			\|\p_t v_{d}-D_tv_{d}\|_{L^2H^1}^2+\|\p_tv_{d}-D_tv_{d}\|_{L^2H^0(\Sigma_s)}^2\lesssim \vert \vert \p_{t}\eta\vert \vert^{2}_{L_{t}^{\infty}H^{\frac{3}{2}+\frac{\varepsilon_{-}-\alpha}{2}}}\|v_{d}\|_{L^2H^1}^2,\\
			\|\operatorname{div}_{\mathcal{A}}\partial_{t}v_{d}-\operatorname{div}_{\mathcal{A}}D_{t}v_{d}\|^{2}_{L_{t}^{2}L^{2}}\lesssim T\vert \vert \p_{t}\eta\vert \vert^{2}_{L_{t}^{\infty}H^{\frac{3}{2}+\frac{\varepsilon_{-}-\alpha}{2}}}\|v_{d}\|_{L^\infty H^1}^2.
		\end{aligned}
		\]
	\end{lemma}
	
	\begin{proof}
		
		For the first inequality, we have the following computation using the fact that $R\sim \partial_{t}\nabla\bar{\eta}$:
		\[\begin{aligned}
			\vert \vert Rv_{d}\vert \vert^{2}_{W^{2,q_{-}}}\lesssim& \vert \vert \nabla^{2}R v_{d}\vert \vert^{2}_{L^{q_{-}}}+\vert \vert \nabla R\nabla v_{d}\vert \vert_{L^{q_{-}}}^{2}+\vert\vert R\nabla^{2}v_{d}\vert\vert_{L^{q_{-}}}^{2}\notag\\
			\lesssim&\vert \vert \nabla^{2}R\vert \vert^{2}_{L^{q_{-}}}\vert \vert v_{d}\vert \vert^{2}_{L^{\infty}}+\vert \vert \nabla R\vert \vert^{2}_{L^{\frac{1}{1-{\varepsilon_{-}}}}}\vert \vert \nabla v_{d}\vert \vert_{L^{\frac{1}{1-\varepsilon_{-}}}}^{2}+\vert \vert R\vert \vert^{2}_{L^{\infty}}\vert \vert \nabla^{2}v_{d}\vert \vert_{L^{q_{-}}}\notag\\
			\lesssim& \vert \vert \partial_{t}\bar{\eta}\vert \vert^{2}_{W^{3,q_{-}}}\vert \vert v_{d}\vert \vert_{H^{1+\frac{\varepsilon_{-}}{2}}}^{2}+\vert \vert \partial_{t}\bar{\eta}\vert \vert_{W^{3,q_{-}}}^{2}\vert \vert v_{d}\vert \vert^{2}_{W^{2,q_{-}}}+\vert \vert \partial_{t}\bar{\eta}\vert \vert_{H^{2+\frac{\varepsilon_{-}-\alpha}{2}}}^{2}\vert \vert v_{d}\vert \vert_{W^{2,q_{-}}}.
		\end{aligned}\]
		
		\noindent Using the relation between $\bar{\eta}$ and $\eta$, we have:
		\[
		\vert \vert Rv_{d}\vert \vert^{2}_{L^{2}W^{2,q_{-}}}\lesssim \vert \vert \eta\vert \vert_{L_{t}^{\infty}W^{3-\frac{1}{q_{-}},q_{-}}}^{2}\vert \vert v_{d}\vert \vert_{L^{2}W^{2,q_{-}}}^{2}.
		\]
		Similarly, we derive the second inequality as follows
		\[\begin{aligned}
			\|\p_t v_{d}-D_tv_{d}\|_{L^2H^1}^2+\|\p_tv_{d}-D_tv_{d}\|_{L^2H^0(\Sigma_s)}^2\lesssim & \vert \vert Rv_{d}\vert \vert_{L^{2}H^{1}}^{2}\lesssim \vert \vert \nabla Rv_{d}\vert \vert^{2}_{L^{2}L^{2}}+\vert \vert R\nabla v_{d}\vert \vert_{L^{2}L^{2}}^{2}\notag\\
			\lesssim& \vert \vert \partial_{t}\bar{\eta}\vert \vert_{L^{\infty}W^{2,\frac{4}{2-(\varepsilon_{-}-\alpha)}}}^{2}\vert\vert v_{d}\vert \vert_{L^{2}L^{\frac{4}{\varepsilon_{-}-\alpha}}}^{2}+\vert \vert \partial_{t}\bar{\eta}\vert \vert^{2}_{L^{\infty}W^{1,+\infty}}\vert \vert v_{d}\vert \vert^{2}_{L^{2}H^{1}}\notag\\
			\lesssim& \|\p_{t}\eta\|^{2}_{L^{\infty}H^{2+\frac{\varepsilon_{-}-\alpha}{2}}}\|v_{d}\|_{L_{t}^{2}H^{1}}^{2}.
		\end{aligned}\]
		
		Finally, the third inequality can be derived by
		\[
		\|\operatorname{div}_{\mathcal{A}}\partial_{t}v_{d}-\operatorname{div}_{\mathcal{A}}D_{t}v_{d}\|^{2}_{L_{t}^{2}L^{2}}\lesssim T\vert \vert Rv_{d}\vert \vert_{L_{t}^{\infty}H^{1}}^{2}\lesssim T\vert \vert \eta\vert \vert^{2}_{L_{t}^{\infty}H^{\frac{3}{2}+\frac{\varepsilon_{-}-\alpha}{2}}}\|v_{d}\|_{L^\infty H^1}^2.
		\]
		Hence, we have proved the lemma.
	\end{proof}
	\begin{remark}
		The result of Lemma 4.6 continues to hold when $R$ is replaced by $R^{n}$.
	\end{remark}
	
	%%%%%%%%%%%%%%%%%%%%%%%%%%%%%%%%%%%%%%%%%%%%%%
	\section{Local Well-Posedness of the Nonlinear System}
	%%%%%%%%%%%%%%%%%%%%%%%%%%%%%%%%%%%%%%%%%%%%%%

	In this section, we investigate the local well-posedness of the nonlinear \eqref{eq:geometric} or equivalently \eqref{eq:quasi_linear}. In contrast to Section \ref{sec:strong}, we now assume that the forcing terms $F^i$ indeed depend on the unknowns and thus an iteration/contraction-mapping argument is necessary. We will heavily rely on the estimates \eqref{est:bound_linear} obtained in Section \ref{sec:strong}.

	%%%%%%%%%%%%%%%%%%%%%%%%%%%%%%%%%%%%%%%%%%%%%%
	\subsection{Forcing Term Estimates}\label{sec:nonlinear}
	%%%%%%%%%%%%%%%%%%%%%%%%%%%%%%%%%%%%%%%%%%%%%%

	In this subsection, we derive estimates for the forcing terms $F^i$ in \eqref{eq:quasi_linear}. We first recall the definitions of $\mathscr{D}$ and $\mathscr{E}$ from \eqref{def:dissipation} and \eqref{def:energy}, respectively, and then introduce two additional spaces.
	
	\begin{equation}\label{def:xy}
		\begin{aligned}
			\mathcal{X}=\Big\{(u,p,\eta)|\mathscr{E}(u,p,\eta)<\infty\Big\},\quad \|(u,p,\eta)\|_{\mathcal{X}}=\left[\mathscr{E}(u,p,\eta)\right]^{1/2},\\ \mathcal{Y}=\Big\{(u,p,\eta)|\mathscr{D}(u,p,\eta)<\infty\Big\}, \quad\|(u,p,\eta)\|_{\mathcal{Y}}=\left[\mathscr{D}(u,p,\eta)\right]^{1/2}.
		\end{aligned}
	\end{equation}
	
	From the formulation of \eqref{eq:geometric}, the nonlinear interaction terms acting as forcing functions are given in Appendix \ref{sec:dive_forcing}.
	To close the energy estimates, we must control the forcing terms in the sense of Theorem \ref{thm:linear_low}. We first estimate the forcing terms in the bulk.
	\begin{proposition}\label{prop:force_bulk}
		Let $F^{1}, F^{4}, F^{5}$ be the  forcing terms defined in Appendix \ref{sec:dive_forcing}. Then,
		\[
		\begin{aligned}
			\|F^1\|_{L^2 L^{q_-}}+\|F^{4}\|_{L^2W^{1-\frac{1}{q_{-}},q_{-}}}+\|F^{5}\|_{L^{2}W^{1-\frac{1}{q_{-}},q_{-}}}\lesssim \mathscr{E}^{1/2}\mathscr{D}^{1/2}.
		\end{aligned}
		\]
	\end{proposition}
	\begin{proof}
		\ 
		\paragraph{\underline{Step 1 -- Estimate of $F^{1}$}} 
		
		Using H\"older's inequality, Sobolev embedding, the trace theory, and the bound on $\mathcal{A}$, we estimate terms including  $\operatorname{div}_{\mathcal{A}}$ in $F^{1}$ as follows
		\begin{align}{\label{eq:f_1}}
			\begin{aligned}
				\vert \vert \operatorname{div}_{\partial_{t}\mathcal{A}}S_{\mathcal{A}}(p,u)\vert \vert_{L^{2}L^{q_{-}}}\lesssim& \vert \vert \partial_{t}\bar{\eta}\vert \vert_{L^{\infty}W^{1,+\infty}}\vert \vert \bar{\eta}\vert \vert_{L^{\infty}W^{2,\frac{2}{1-\varepsilon_{-}}}}(\vert \vert p\vert \vert_{L^{\infty}W^{0,\frac{2}{1-\varepsilon_{-}}}}+\vert \vert u\vert \vert_{L^{\infty}W^{1,\frac{2}{1-\varepsilon_{-}}}})\notag\\
				&+\vert \vert \partial_{t}\bar{\eta}\vert \vert_{L^{\infty}W^{1,\infty}}\vert \vert \bar{\eta}\vert \vert_{L^{\infty}W^{1,+\infty}}(\vert \vert p\vert \vert_{L^{\infty}W^{1,q_{-}}}+\vert \vert u\vert \vert_{L^{\infty}W^{2,q_{-}}})\notag\\
				\lesssim& \vert \vert \partial_{t}\eta\vert \vert_{L^{\infty}H^{\frac{3}{2}+\frac{\varepsilon_{-}-\alpha}{2}}}\vert \vert \eta\vert \vert_{L^{\infty}W^{3-\frac{1}{q_{-}},q_{-}}}(\vert \vert p\vert \vert_{L^{\infty}W^{1,q_{-}}}+\vert \vert u\vert \vert_{L^{\infty}W^{2,q_{-}}}).
			\end{aligned}
		\end{align}
		
		\noindent Similarly, we have
		\begin{align}
			\begin{aligned}
				\vert \vert \operatorname{div}_{\mathcal{A}}\mathbb{D}_{\partial_{t}\mathcal{A}}u\vert \vert_{L^{2}L^{q_{-}}}\lesssim& \vert \vert \partial_{t}\eta\vert \vert_{L^{\infty}W^{3-\frac{1}{q_{-}},q_{-}}}\vert \vert u\vert \vert_{L^{\infty}W^{2,q_{-}}}.
			\end{aligned}
		\end{align}
		
		\noindent  We now estimate terms of the form $\partial_{t}^{i}u\nabla_{\partial_{t}^{j}\mathcal{A}}\partial_{t}^{k}u$, we use the fact that $|\partial_{t}^{i}\nabla^{j}\mathcal{A}|\lesssim  \sum_{k=0}^{j}|\partial_{t}^{i}\partial_{1}^{k}\bar{\eta}|$:
		\begin{align}{\label{eq:f_2}}
			\begin{aligned}
				\| {\partial_{t}}u\cdot \nabla_{\mathcal{A}}u\|_{L^{2}L^{q_{-}}}\lesssim& \vert \vert \partial_{t}u\vert \vert_{L^{\infty}L^{\frac{4}{2-\varepsilon_-}}}\| u\|_{L^{\infty}W^{1,\frac{2}{1-\varepsilon_{-}}}}\lesssim \|\partial_{t}u\|_{L^{\infty}H^{1+\frac{\varepsilon_{-}}{2}}}\|u\|_{L^{\infty}W^{2,q_{-}}},\\
				\| u\cdot \nabla_{\partial_{t}\mathcal{A}}u\|_{L^{2}L^{q_{-}}}\lesssim& \vert \vert u\vert \vert_{L^{\infty}L^{\frac{4}{2-\varepsilon_-}}}\| u\|_{L^{\infty}W^{1,\frac{2}{1-\varepsilon_{-}}}}\vert \vert \partial_{t}\eta\vert \vert_{L^{\infty}W^{1,+\infty}}\\
				\lesssim& \|u\|_{L^{\infty}H^{1+\frac{\varepsilon_{-}}{2}}}\|u\|_{L^{\infty}W^{2,q_{-}}}\vert \vert \partial_{t}{\eta}\vert \vert_{L^{\infty}H^{\frac{3}{2}+\frac{\varepsilon_{-}-\alpha}{2}}},\\
				\| u\cdot \nabla_{\mathcal{A}}\p_{t}u\|_{L^{2}L^{q_{-}}}\lesssim& \vert \vert u \vert \vert_{L^{\infty}L^{\infty}}\| \partial_{t}u\|_{L^{2}W^{1,\frac{2}{1-\varepsilon_{-}}}}\lesssim \|\partial_{t}u\|_{L^{2}W^{1,q_{-}}}\|u\|_{L^{\infty}W^{2,q_{-}}}.
			\end{aligned}
		\end{align}
		For the remaining terms involved in $F^{1}$, we have the following computation:
		\begin{align}
			\partial_{t}(\p_{t}\bar{\eta}WK\partial_{2}u)=&\partial_{t}^{2}\bar{\eta}WK\partial_{2}u+\partial_{t}\bar{\eta}\p_{t}WK\partial_{2}u+\partial_{t}\bar{\eta}W\p_{t}K\partial_{2}u+\partial_{t}\bar{\eta}WK\partial_{2}\p_{t}u.
		\end{align}
		
		\noindent Using the fact that $|\partial_{t}^{i}\nabla^{j}K|\lesssim \sum_{k=0}^{j}|\partial_{t}^{i}\partial_{1}^{k}\bar{\eta}|$, and that $|\p_{t}^{i}\nabla^{j}W|\lesssim \sum_{k=0}^{j}|\partial_{t}^{i}\partial_{1}^{k}\bar{\eta}|$, we obtain the following estimates 
		\begin{align}{\label{eq:f_3}}
			\begin{aligned}
				\|\partial_{t}^{2}\bar{\eta}WK\partial_{2}u\|_{L^{2}L^{q_{-}}}\lesssim &\|\p_{t}^{2}\bar{\eta}\|_{L^{2}L^{\frac{2}{1-\varepsilon_{-}}}}\|u\|_{L^{\infty}W^{1,\frac{2}{1-\varepsilon_{-}}}}\\
				\lesssim& \|\p_{t}^{2}\bar{\eta}\|_{L^{2}H^{1-\alpha}}\|u\|_{L^{\infty}W^{1,\frac{2}{1-\varepsilon_{-}}}}\lesssim \|\p_{t}^{2}{\eta}\|_{L^{2}H^{\frac{1}{2}-\alpha}}\|u\|_{L^{\infty}W^{2,q_{-}}},\\
				\|\partial_{t}\bar{\eta}\p_{t}WK\partial_{2}u\|_{L^{2}L^{q_{-}}}\lesssim &\|\p_{t}\bar{\eta}\|_{L^{\infty}L^{+\infty}}\|u\|_{L^{\infty}W^{1,\frac{2}{1-\varepsilon_{-}}}}\|\partial_{t}\bar{\eta}\|_{L^{2}W^{1,+\infty}}\\\lesssim& \|\p_{t}{\eta}\|_{L^{\infty}W^{3-\frac{1}{q_{-}},q_{-}}}\|u\|_{L^{\infty}W^{2,q_{-}}}\|\partial_{t}{\eta}\|_{L^{2}W^{3-\frac{1}{q_{-}},q_{-}}},\\
				\|\partial_{t}\bar{\eta}W\p_{t}K\partial_{2}u\|_{L^{2}L^{q_{-}}}\lesssim &\|\p_{t}\bar{\eta}\|_{L^{\infty}L^{+\infty}}\|u\|_{L^{\infty}W^{1,\frac{2}{1-\varepsilon_{-}}}}\|\partial_{t}\bar{\eta}\|_{L^{2}W^{1,+\infty}}\\\lesssim& \|\p_{t}{\eta}\|_{L^{\infty}W^{3-\frac{1}{q_{-}},q_{-}}}\|u\|_{L^{\infty}W^{2,q_{-}}}\|\partial_{t}{\eta}\|_{L^{2}W^{3-\frac{1}{q_{-}},q_{-}}},\\
				\|\partial_{t}\bar{\eta}WK\partial_{2}\p_{t}u\|_{L^{2}L^{q_{-}}}\lesssim &\|\p_{t}\bar{\eta}\|_{L^{\infty}L^{+\infty}}\|\p_{t}u\|_{L^{2}W^{1,\frac{2}{1-\varepsilon_{-}}}}\lesssim \|\p_{t}{\eta}\|_{L^{\infty}W^{3-\frac{1}{q_{-}},q_{-}}}\|\p_{t}u\|_{L^{2}W^{2,q_{-}}}.
			\end{aligned}
		\end{align}
		\noindent Combining estimates \eqref{eq:f_1} to \eqref{eq:f_3}, we obtain the estimate for $F^{1}$.
		%\begin{align}
		%  \|F^{1}\|_{L^{2}L^{q_{-}}}\lesssim \mathscr{E}^{\frac{1}{2}}\mathscr{D}^{\frac{1}{2}}.
		% \end{align}
	
	\paragraph{\underline{Step 2 -- Estimate of $F^{4}$}}
	
	We now bound each term involved in $F^{4}$. First, we bound the terms including the velocity field $u$ as follows
	\[
	\begin{aligned} \|\mathbb{D}_{\p_{t}\mathcal{A}}u\mathcal{N}\|_{L^{2}W^{1-\frac{1}{q_{-}},q_{-}}}&\lesssim\|\partial_{t}\partial_{1}\eta\|_{L^{2}W^{1,q_{-}}}\|\nabla u\|_{L^{\infty}W^{1-\frac{1}{q_{-}},q_{-}}(\Sigma)}\lesssim \|\partial_{t}\eta\|_{L^{2}W^{3-\frac{1}{q_{-}},q_{-}}}\|u\|_{L^{\infty}W^{2,q_{-}}},\\
		\|S_{\mathcal{A}}(u,p)\p_{t}\mathcal{N}\|_{L^{2}W^{1-\frac{1}{q_{-}},q_{-}}} &\lesssim \|\partial_{t}\partial_{1}\eta\|_{L^{2}W^{1,q_{-}}}(\|\nabla u\|_{L^{\infty}W^{1-\frac{1}{q_{-}},q_{-}}(\Sigma)}+\| p\|_{L_{t}^{\infty}W^{1-\frac{1}{q_{-}},q_{-}}})\\
		&\lesssim \|\partial_{t}\eta\|_{L^{2}W^{3-\frac{1}{q_{-}},q_{-}}}(\|u\|_{L^{\infty}W^{2,q_{-}}}+\|p\|_{L_{t}^{\infty}W^{1,q_-{}}}).
	\end{aligned}
	\]
	\noindent Then we estimate each term involving the mean curvature. We have
	\[
	\begin{aligned}
		&\|2\left[g(\eta)-\sigma\p_1\left(\frac{\p_1\eta}{(1+|\p_1\zeta_0|)^{3/2}}\right)-\sigma\partial_{1}(\mathcal{R}(\partial_{1}\zeta_{0},\partial_{1}\eta))\right]\p_t\mathcal{N}\|_{L^{2}W^{1-\frac{1}{q_{-}},q_{-}}}\\
		\quad &\lesssim \|2\left[g(\eta)-\sigma\p_1\left(\frac{\p_1\eta}{(1+|\p_1\zeta_0|)^{3/2}}\right)-\sigma\partial_{1}(\mathcal{R}(\partial_{1}\zeta_{0},\partial_{1}\eta))\right]\|_{L^{2}W^{1-\frac{1}{q_{-}},q_{-}}}\|\p_{t}\mathcal{N}\|
		_{L^{\infty}W^{1,q_{-}}}   \\
		\quad &\lesssim  (1+\|\eta\|_{L^{\infty}W^{3-\frac{1}{q_{-}},q_{-}}})\|\eta\|_{L^{\infty}W^{3-\frac{1}{q_{-}},q_{-}}}\|\p_{t}\eta\|_{L_{t}^{2}W^{3-\frac{1}{q_{-}},q_{-}}},
	\end{aligned}
	\]
	where we used the trace theorem. 
	
	Therefore, combining all the computations in this step, we obtain the estimate for $\|F^{4}\|_{L^{2}W^{1-\frac{1}{q_{-}},q_{-}}}$.
	% \begin{align}
		%  \begin{aligned}
			%      \|F^{4}\|_{L^{2}W^{1-\frac{1}{q_{-}},q_{-}}}\lesssim \mathscr{E}^{\frac{1}{2}}\mathscr{D}^{\frac{1}{2}}.
			%  \end{aligned}
		%\end{align}
		
		\paragraph{\underline{Step 3 -- Estimate for $F^{5}$}}
		
		By the definition of $F^{5}$, we have:
		\[
		\begin{aligned}
			\| \mathbb{D}_{\p_{t}\mathcal{A}}u\nu\cdot \tau\|_{L^{2}W^{1-\frac{1}{q_{-}},q_{-}}}\lesssim& \|\p_{t}\eta\|_{L^{2}W^{2,q_{-}}}\|u\|_{L^{\infty}W^{2-\frac{1}{q_{-}},q_{-}}(\Sigma)}\lesssim \|\p_{t}\eta\|_{L^{2}W^{3-\frac{1}{q_{-}},q_{-}}}\|u\|_{L^{\infty}W^{2,q_{-}}}
		\end{aligned}
		\]
		\noindent which implies the estimate for $\|F^{5}\|_{L^{2}W^{1-\frac{1}{q_{-}},q_{-}}}$.
		% \begin{align}
			%    \|F^{5}\|_{L^{2}W^{1-\frac{1}{q_{-}},q_{-}}}\lesssim \mathscr{E}^{\frac{1}{2}}\mathscr{D}^{\frac{1}{2}}.
			%\end{align}
			
			Combining the estimates for $F^{1}$, $F^{4}$ and $F^{5}$ completes the proof.
		\end{proof}
		
		We now estimate the time derivatives of $F^{1},F^{4}$ and $F^{5}$ in $(\mathcal{H}^1)^{*}$.
		\begin{theorem}\label{prop:force_adjoint}
			It holds that
			\[
			\|\p_{t}(F^{1}-F^{4}-F^{5})\|_{L^{2}(\mathcal{H}^1)^{*}}\lesssim \mathscr{E}^{1/2}\mathscr{D}^{1/2}.
			\]
		\end{theorem}
		\begin{proof}
			We will mainly use duality as defined in \eqref{duality}.
			
			\paragraph{\underline{Step 1 -- Estimate for $\partial_{t}F^{1}$ in $(\mathcal{H}^1)^{*}$}} 
			
			By definition of $F^{1}$, we first estimate the terms of the form $\operatorname{div}_{\p_{t}^{i}\mathcal{A}}\nabla_{\p_{t}^{j}\mathcal{A}}\p_{t}^{k}u$ included in $F^{1}$. It holds that
			\[
			\begin{aligned}
				\left| \int_{0}^{T}\int_{\Omega}\p_{t}(\operatorname{div}_{\p_{t}\mathcal{A}}S_{\mathcal{A}}(p,u))w J \right|
				&= \left| \int_{0}^{T}\int_{\Omega}(\operatorname{div}_{\p_{t}^{2}\mathcal{A}}S_{\mathcal{A}}(p,u)+\operatorname{div}_{\p_{t}\mathcal{A}}S_{\p_{t}\mathcal{A}}(p,u)+\operatorname{div}_{\p_{t}\mathcal{A}}S_{\mathcal{A}}(\p_{t}p,\p_{t}u))w \right| \\
				&\lesssim \|\p_{t}^{2}\bar{\eta}\|_{L^{2}W^{1,\frac{2}{\alpha}}}(1+\|\bar\eta\|_{L^{\infty}W^{2,\frac{2}{1-\varepsilon_{-}}}})(\|p\|_{L^{\infty}W^{1,q_{-}}}+\|u\|_{L^{\infty}W^{2,q_{-}}})\|w\|_{L^{2}L^{\frac{2}{\varepsilon_{-}-\alpha}}}\\
				&\quad+\|\p_{t}\bar{\eta}\|_{L^{2}W^{1,+\infty}}(\|\p_{t}\bar\eta\|_{L^{\infty}W^{2,\frac{2}{1-\varepsilon_{-}}}})(\|p\|_{L^{\infty}W^{1,q_{-}}}+\|u\|_{L^{\infty}W^{2,q_{-}}})\|w\|_{L^{2}L^{\frac{2}{\varepsilon_{-}}}}\\
				&\quad+\|\p_{t}\bar{\eta}\|_{L^{2}W^{1,+\infty}}(1+\|\bar\eta\|_{L^{\infty}W^{2,\frac{2}{1-\varepsilon_{-}}}})(\|\p_{t}p\|_{L^{\infty}W^{1,q_{-}}}+\|\p_{t}u\|_{L^{\infty}W^{2,q_{-}}})\|w\|_{L^{2}L^{\frac{2}{\varepsilon_{-}}}}\\
				&\lesssim \mathscr{E}^{\frac{1}{2}}\mathscr{D}^{\frac{1}{2}}\|w\|_{L^{2}H^{1}}.
			\end{aligned}
			\]
			
			\noindent Similarly, we have
			\[
			\begin{aligned}
				\left|\int_{0}^{T}\int_{\Omega}\p_{t}(\operatorname{div}_{\mathcal{A}}\mathbb{D}_{\partial_{t}\mathcal{A}}(u)w J \right| & = \left| \int_{0}^{T}\int_{\Omega}(\operatorname{div}_{\p_{t}\mathcal{A}}\mathbb{D}_{\p_{t}\mathcal{A}}(u)+\operatorname{div}_{\mathcal{A}}\mathbb{D}_{\p_{t}^{2}\mathcal{A}}(u)+\operatorname{div}_{\mathcal{A}}\mathbb{D}_{\p_{t}\mathcal{A}}(\p_{t}u))w J\right|\\
				&\lesssim \|\p_{t}\bar{\eta}\|_{L^{\infty}W^{1,\infty}}(\|\p_{t}\bar{\eta}\|_{L^{2}W^{2,\frac{2}{1-\varepsilon_{-}}}})(\|u\|_{L^{+\infty}W^{2,q_{-}}})\|w\|_{L^{2}L^{\frac{2}{\varepsilon_{-}}}}\\
				&\quad+(1+\|\bar{\eta}\|_{L^{\infty}W^{1,+\infty}})(\|\p_{t}^{2}\bar{\eta}\|_{L^{2}W^{2,\frac{2}{1+\alpha}}})(\|u\|_{L^{\infty}W^{2,q_{-}}})\|w\|_{L^{2}L^{\frac{2}{\varepsilon_{-}-\alpha}}}\\
				&\quad+\|\p_{t}\bar{\eta}\|_{L^{+\infty}W^{1,+\infty}}(1+\|\p_{t}\bar{\eta}\|_{L^{2}W^{2,\frac{2}{1-\varepsilon_{-}}}})(\|\p_{t}u\|_{L^{\infty}W^{2,q_{-}}})\|w\|_{L^{2}L^{\frac{2}{\varepsilon_{-}}}}\\
				&\lesssim \mathscr{E}^{\frac{1}{2}}\mathscr{D}^{\frac{1}{2}}\|w\|_{L^{2}H^{1}}.
			\end{aligned}
			\]
			We now estimate the nonlinear terms of the form $\partial_{t}^{i}u\cdot \nabla_{\partial_{t}^{j}\mathcal{A}}\partial_{t}^{k}u$ included in $\p_{t}F^{1}$. We have:
			\[\begin{aligned}
				& \int_{0}^{T}\int_{\Omega}\p_{t}(u\cdot \nabla_{\p_{t}\mathcal{A}}u)w= \int_{0}^{T}\int_{\Omega}(\p_{t}u\cdot \nabla_{\p_{t}\mathcal{A}}u)w+\int_{0}^{T}\int_{\Omega}(u\cdot \nabla_{\p_{t}^{2}\mathcal{A}}u)w +\int_{0}^{T}\int_{\Omega}(u\cdot \nabla_{\p_{t}\mathcal{A}}\p_{t}u)w\notag\\
				\lesssim& \|\p_{t}u\|_{L^{\infty}L^{\infty}}\|\p_{t}\bar{\eta}\|_{L^{2}W^{1,\infty}}\|u\|_{L^{\infty}H^{1}}\|w\|_{L^{2}L^{2}}\notag +\|u\|_{L^{\infty}L^{\infty}}\|\p_{t}^{2}\bar{\eta}\|_{L^{2}W^{1,\frac{2}{\alpha}}}\|u\|_{L^{\infty}H^{1}}\|w\|_{L^{2}L^{\frac{2}{1-\alpha}}}\notag\\
				&+\|u\|_{L^{\infty}L^{\infty}}\|\p_{t}\bar{\eta}\|_{L^{2}W^{1,\infty}}\|\p_{t}u\|_{L^{\infty}H^{1}}\|w\|_{L^{2}L^{2}}\notag\\
				\lesssim&\|\p_{t}u\|_{L^{\infty}W^{2,q_{-}}}\|\p_{t}{\eta}\|_{L^{2}W^{3-\frac{1}{q_{-}},q_{-}}}\|u\|_{L^{\infty}H^{1}}\|w\|_{L^{2}H^{1}}+\|u\|_{L^{\infty}W^{2,q_{-}}}\|\p_{t}^{2}{\eta}\|_{L^{2}H^{\frac{3}{2}-\alpha}}\|u\|_{L^{\infty}H^{1}}\|w\|_{L^{2}H^{1}}\notag\\
				\lesssim&  \mathscr{E}^{\frac{1}{2}}\mathscr{D}^{\frac{1}{2}}\|w\|_{L^{2}H^{1}}.
			\end{aligned}\]
			\noindent Similarly, we have
			\[\begin{aligned}
				\int_{0}^{T}\int_{\Omega}\p_{t}(\p_{t}u\cdot \nabla_\mathcal{A}u)w
				&\lesssim\|\p_{t}^{2}u\|_{L^{2}H^{1}}\|{\eta}\|_{L^{\infty}W^{3-\frac{1}{q_{-}},q_{-}}}\|u\|_{L^{\infty}W^{2,q_{-}}}\|w\|_{L^{2}H^{1}}+\|\p_{t}u\|_{L^{2}W^{2,q_{-}}}\|\p_{t}{\eta}\|_{L^{\infty}W^{3-\frac{1}{q_{-}},q_{-}}}\\
				&\qquad\qquad\times \|u\|_{L^{\infty}H^{1}}\|w\|_{L^{2}H^{1}}
				+\|\p_{t}u\|_{L^{2}W^{2,q_{-}}}\|{\eta}\|_{L^{\infty}W^{3-\frac{1}{q_{-}},q_{-}}}\|\p_{t}u\|_{L^{\infty}H^{1}}\|w\|_{L^{2}H^{1}}\\
				&\lesssim  \mathscr{E}^{\frac{1}{2}}\mathscr{D}^{\frac{1}{2}}\|w\|_{L^{2}H^{1}},
			\end{aligned}\]
			\[\begin{aligned}
				\int_{0}^{T}\int_{\Omega}\p_{t}(u\cdot \nabla_\mathcal{A}\p_{t}u)w
				& \lesssim\|\p_{t}^{2}u\|_{L^{2}H^{1}}\|{\eta}\|_{L^{\infty}W^{3-\frac{1}{q_{-}},q_{-}}}\|u\|_{L^{\infty}W^{2,q_{-}}}\|w\|_{L^{2}H^{1}}+\|\p_{t}u\|_{L^{2}W^{2,q_{-}}}\|\p_{t}{\eta}\|_{L^{\infty}W^{3-\frac{1}{q_{-}},q_{-}}}\\
				&\qquad\qquad\times\|u\|_{L^{\infty}H^{1}}\|w\|_{L^{2}H^{1}}+\|\p_{t}u\|_{L^{2}W^{2,q_{-}}}\|{\eta}\|_{L^{\infty}W^{3-\frac{1}{q_{-}},q_{-}}}\|\p_{t}u\|_{L^{\infty}H^{1}}\|w\|_{L^{2}H^{1}} \\
				&\lesssim  \mathscr{E}^{\frac{1}{2}}\mathscr{D}^{\frac{1}{2}}\|w\|_{L^{2}H^{1}}.
			\end{aligned}\]
			Finally, we estimate the term $\partial_{t}^{2}(\p_{t}\bar{\eta}WK\p_{2}u)$ involved in $F^{1}$. After decomposing $\partial_{t}^{2}(\p_{t}\bar{\eta}WK\p_{2}u)$, we first have the following computation
			\begin{align}{\label{eq:f_5}}
				\begin{aligned}
					\int_{0}^{T}\int_{\Omega}(\p_{t}^{3}\bar{\eta}WK\p_{2}u)w\lesssim&\|\p_{t}^3\bar{\eta}\|_{L^{2}L^{\frac{2}{\alpha}}}\|\p_{2}u\|_{L^{\infty}L^{\frac{2}{1-\varepsilon_{-}}}}\|\omega\|_{L^{2}L^{\frac{2}{1+\varepsilon_{-}-\alpha}}}\\
					\lesssim &\|\p_{t}^{3}\bar{\eta}\|_{L^{2}H^{\frac{1}{2}-\alpha}}\|u\|_{L^{\infty}W^{2,q_{-}}}\|w\|_{L^{2}H^{1}}\lesssim \mathscr{E}^{\frac{1}{2}}\mathscr{D}^{\frac{1}{2}}\|w\|_{L^{2}H^{1}},\\
					\int_{0}^{T}\int_{\Omega}(\p_{t}\bar{\eta}\p_{t}^{2}WK\p_{2}u)w\lesssim&\|\p_{t}\bar{\eta}\|_{L^{\infty}L^{\infty}}\|\p_{2}u\|_{L^{\infty}L^{\frac{2}{1-\varepsilon_{-}}}}\|\p_{t}^{2}\bar{\eta}\|_{L^{2}W^{1,\frac{2}{\alpha}}}\|w\|_{L^{2}L^{\frac{2}{1+\varepsilon_{-}-\alpha}}}\\
					\lesssim &\|\p_{t}\bar{\eta}\|_{L^{\infty}W^{3-\frac{1}{q_{-}},q_{-}}}\|\p_{t}^{2}\bar{\eta}\|_{L^{2}H^{\frac{3}{2}-\alpha}}\|u\|_{L^{\infty}W^{2,q_{-}}}\|w\|_{L^{2}H^{1}}\lesssim\mathscr{E}^{\frac{1}{2}}\mathscr{D}^{\frac{1}{2}}\|w\|_{L^{2}H^{1}},\\
					\int_{0}^{T}\int_{\Omega}(\p_{t}\bar{\eta}W\p_{t}^{2}K\p_{2}u)w\lesssim&\|\p_{t}\bar{\eta}\|_{L^{\infty}L^{\infty}}\|\p_{2}u\|_{L^{\infty}L^{\frac{2}{1-\varepsilon_{-}}}}\|\p_{t}^{2}\bar{\eta}\|_{L^{2}W^{1,\frac{2}{\alpha}}}\|w\|_{L^{2}L^{\frac{2}{1+\varepsilon_{-}-\alpha}}}\\
					\lesssim &\|\p_{t}\bar{\eta}\|_{L^{\infty}W^{3-\frac{1}{q_{-}},q_{-}}}\|\p_{t}^{2}\bar{\eta}\|_{L^{2}H^{\frac{3}{2}-\alpha}}\|u\|_{L^{\infty}W^{2,q_{-}}}\|w\|_{L^{2}H^{1}}\lesssim \mathscr{E}^{\frac{1}{2}}\mathscr{D}^{\frac{1}{2}}\|w\|_{L^{2}H^{1}},\\
					\int_{0}^{T}\int_{\Omega}(\p_{t}\bar{\eta}WK\p_{2}\p_{t}^{2}u)w\lesssim&\|\p_{t}\bar{\eta}\|_{L^{\infty}L^{\infty}}\|\p_{2}\p_{t}^{2}u\|_{L^{\infty}H^{1}}\|w\|_{L^{2}L^{2}}\lesssim \mathscr{E}^{\frac{1}{2}}\mathscr{D}^{\frac{1}{2}}\|w\|_{L^{2}H^{1}}.
				\end{aligned}
			\end{align}
			
			\noindent For other terms obtained from the decomposition of $\p_{t}^{2}(\p_{t}\bar{\eta}WK\p_{2}u)$, we notice that they do not contain the highest-order temporal derivative terms. Therefore, they can be estimated via similar computation as in \eqref{eq:f_5}. We omit the details and only write the final result as follows
			\[
			\int_{0}^{T}\int_{\Omega}\p_{t}^{2}(\p_{t}\bar{\eta}WK\p_{2}u)w\lesssim \mathscr{E}^{\frac{1}{2}}\mathscr{D}^{\frac{1}{2}}\|w\|_{L^{2}H^{1}}.
			\]
			From the discussion and computation above, we obtain the estimate for $F^{1}$ in $(\mathcal{H}^1)^{*}$
			\[
			\|\p_{t}F^{1}\|_{L^{2}(\mathcal{H}^1)^{*}}\lesssim \mathscr{E}^{\frac{1}{2}}\mathscr{D}^{\frac{1}{2}}.
			\]
			
			\paragraph{\underline{Step 2 -- Estimate for $\p_{t}F^{4}$ in $(\mathcal{H}^1)^{*}$}} 
			
			We first estimate the surface pressure terms included in $\p_{t}F^{4}$.
			\[
			\begin{aligned}
				\int_{0}^{T}\int_{-\ell}^{\ell}\p_{t}(\mu \mathbb{D}_{\p_{t}\mathcal{A}}u\mathcal{N})\cdot w=& \int_{0}^{T}\int_{-\ell}^{\ell}(\mu \mathbb{D}_{\p_{t}^{2}\mathcal{A}}u\mathcal{N})\cdot w+ \int_{0}^{T}\int_{-\ell}^{\ell}(\mu \mathbb{D}_{\p_{t}\mathcal{A}}\p_{t}u\mathcal{N})\cdot w+ \int_{0}^{T}\int_{-\ell}^{\ell}(\mu \mathbb{D}_{\p_{t}\mathcal{A}}u\p_{t}\mathcal{N})\cdot w\\
				\lesssim& \|\p_{t}^{2}\eta\|_{L^{2}W^{1,\frac{1}{\alpha}}}\|u\|_{L^{\infty}W^{1,\frac{1}{1-\varepsilon_{-}}}(\Sigma)}\| w\|_{L^{2}L^{\frac{1}{\varepsilon_{-}-\alpha}}}+\|\p_{t}\eta\|_{L^{2}W^{1,\infty}}\|\p_{t}u\|_{L^{\infty}W^{1,\frac{1}{1-\varepsilon_{-}}}(\Sigma)}\\
				&\times\| w\|_{L^{2}L^{\frac{1}{\varepsilon_{-}}}}+\|\p_{t}\eta\|_{L^{2}W^{1,\infty}}\|u\|_{L^{\infty}W^{1,\frac{1}{1-\varepsilon_{-}}}(\Sigma)}\|\p_{t}\eta\|_{L^{\infty}W^{1,+\infty}}\| w\|_{L^{2}L^{\frac{1}{\varepsilon_{-}}}}\\
				\lesssim& \|\p_{t}^{2}\eta\|_{L^{2}H^{\frac{3}{2}-\alpha}}\|u\|_{L^{\infty}W^{2,q_{-}}}\| w\|_{L^{2}H^{1}}+\|\p_{t}\eta\|_{L^{2}W^{3-\frac{1}{q_{-}},q_{-}}}\|\p_{t}u\|_{L^{\infty}W^{2,q_{-}}}\| w\|_{L^{2}H^{1}}\\
				&+\|\p_{t}\eta\|_{L^{2}W^{3-\frac{1}{q_{-}},q_{-}}}\|u\|_{L^{\infty}W^{2,q_{-}}}\|\p_{t}\eta\|_{L^{\infty}W^{3-\frac{1}{q_{-}},q_{-}}}\| w\|_{L^{2}H^{1}}\\
				\lesssim&\mathscr{E}^{\frac{1}{2}}\mathscr{D}^{\frac{1}{2}}\|w\|_{L^{2}H^{1}}.
			\end{aligned}
			\]
			
			\noindent Similarly, we have
			\[
			\begin{aligned}
				\int_{0}^{T}\int_{-\ell}^{\ell}\p_{t}(\mu \mathbb{D}_{\mathcal{A}}u\p_{t}\mathcal{N})\cdot w
				\lesssim\mathscr{E}^{\frac{1}{2}}\mathscr{D}^{\frac{1}{2}}\|w\|_{L^{2}H^{1}}.
			\end{aligned}
			\]
			We then estimate the terms including the capillary operator after decomposing $\p_{t}F^{4}$. We have:
			\[
			\begin{aligned}
				&\quad \int_{0}^{T}\int_{-\ell}^{\ell} \p_{t}(2\left[g(\eta)-\sigma\p_1\left(\frac{\p_1\eta}{(1+|\p_1\zeta_0|)^{3/2}}\right)-\sigma\partial_{1}\mathcal{R}(\partial_{1}\zeta_{0},\partial_{1}\eta)\right]\p_t\mathcal{N})\cdot w\\
				&\lesssim \|\p_{t}\eta\|_{L^{2}W^{2,\frac{1}{1-\varepsilon_{-}}}}\|\p_{t}\eta\|_{L^{\infty}W^{1,+\infty}}\|w\|_{L^{2}L^{\frac{1}{\varepsilon_{-}}}}+\|\eta\|_{L^{\infty}W^{2,\frac{1}{1-\varepsilon_{-}}}}\|\p_{t}^{2}\eta\|_{L^{2}W^{1,\frac{1}{\alpha}}}\|w\|_{L^{2}L^{\frac{1}{\varepsilon_{-}}}}\\
				&\lesssim \|\p_{t}\eta\|_{L^{2}W^{3-\frac{1}{q_{-}},q_{-}}}\|\p_{t}\eta\|_{L^{\infty}W^{3-\frac{1}{q_{-}},q_{-}}}\|w\|_{L^{2}L^{\frac{1}{\varepsilon_{-}}}}+\|\eta\|_{L^{\infty}W^{3-\frac{1}{q_{-}},q_{-}}}\|\p_{t}^{2}\eta\|_{L^{2}H^{\frac{3}{2}-\alpha}}\|w\|_{L^{2}L^{\frac{1}{\varepsilon_{-}}}}\\
				&\lesssim \mathscr{E}^{\frac{1}{2}}\mathscr{D}^{\frac{1}{2}}\|w\|_{L^{2}H^{1}}.
			\end{aligned}
			\]
			\noindent Therefore, combining all the computations for $\p_{t}F^{4}$ above, we obtain
			\[
			\|\partial_{t}F^{4}\|_{L^{2}(\mathcal{H}^1)^{*}}\lesssim \mathscr{E}^{\frac{1}{2}}\mathscr{D}^{\frac{1}{2}}.
			\]
			
			\paragraph{\underline{Step 3 -- Estimate for $\partial_{t}F^{5}$ in $(\mathcal{H}^1)^{*}$}} 
			
			We have the following estimate:
			\[
			\begin{aligned}
				&\int_{0}^{T}\int_{-\ell}^{\ell} \p_{t}(\mathbb{D}_{\p_{t}\mathcal{A}}u\nu\cdot \tau)w
				=\int_{0}^{T}\int_{-\ell}^{\ell} (\mathbb{D}_{\p_{t}^{2}\mathcal{A}}u\nu\cdot \tau)w+\int_{0}^{T}\int_{-\ell}^{\ell} (\mathbb{D}_{\p_{t}\mathcal{A}}\p_{t}u\nu\cdot \tau)w\\
				\lesssim &\|\p_{t}^{2}\eta\|_{L^{2}W^{1,\frac{1}{\alpha}}}\|u\|_{L^{\infty}W^{1,\frac{1}{1-\varepsilon_{-}}}(\Sigma)}\|w\|_{L^{2}L^{\frac{1}{\varepsilon_{-}-\alpha}}}+\|\p_{t}\eta\|_{L^{2}W^{1,+\infty}}\|u\|_{L^{\infty}W^{1,\frac{1}{1-\varepsilon_{-}}}(\Sigma)}\|w \|_{L^{2}L^{\frac{1}{\varepsilon_{-}}}}\\
				\lesssim& \|\p_{t}^{2}\eta\|_{L^{2}H^{\frac{3}{2}-\alpha}}\|u\|_{L^{\infty}W^{2,q_{-}}}\|w \|_{L^{2}H^{1}}+\|\p_{t}\eta\|_{L^{2}W^{3-\frac{1}{q_{-}},q_{-}}}\|u\|_{L^{\infty}W^{2,q_{-}}}\|w\|_{L^{2}H^{1}}\\
				\lesssim& \mathscr{E}^{\frac{1}{2}}\mathscr{D}^{\frac{1}{2}}\|w\|_{L^{2}H^{1}}.
			\end{aligned}
			\]
			Therefore, we obtain
			\[
			\|\p_{t}F^{5}\|_{L^{2}(\mathcal{H}^1)^{*}}\lesssim \mathscr{E}^{\frac{1}{2}}\mathscr{D}^{\frac{1}{2}}.
			\]
			Finally, combining Step 1 to Step 3, we obtain the following result.
			\[
			\|\p_{t}(F^{1}-F^{4}-F^{5})\|_{L^{2}(\mathcal{H}^1)^{*}}\lesssim\mathscr{E}^{\frac{1}{2}}\mathscr{D}^{\frac{1}{2}},
			\]
			\noindent which finishes the proof.
		\end{proof}
		
		We now estimate $F^{7}$ via the following theorem.
		\begin{theorem}{\label{prop:force_contact}}
			$F^{7}$ is defined in Appendix \ref{sec:dive_forcing}. It holds that
			\[
			\|[\p_{t}F^{7}]_{\ell}\|_{L_{t}^{2}}\lesssim(1+T) \mathscr{E}^{\frac{1}{2}}\mathscr{D}^{\frac{1}{2}}.
			\]
		\end{theorem}
		
		\begin{proof}
			We have
			\[
			\|[\kappa\p_{t}(\hat{\mathscr{W}}'(u\cdot \mathcal{N})\p_{t}u\cdot \mathcal{N})]\|_{L_{t}^{2}}\lesssim \|[\p_{t}u\cdot \mathcal{N}]_{\ell}\|_{L_{t}^{\infty}}\|[\p_{t}u\cdot \mathcal{N}]_{\ell}\|_{L_{t}^{2}}+\|[u\cdot \mathcal{N}]_{\ell}\|_{L_{t}^{\infty}}\|[\p_{t}^{2}u\cdot \mathcal{N}]\|_{L_{t}^{2}}\lesssim \mathscr{E}^{\frac{1}{2}}\mathscr{D}^{\frac{1}{2}}.
			\]
		\end{proof}
		
		Next, we establish the $q_{+}$ boundedness for the nonlinear forcing terms by the following theorem.
		\begin{theorem}{\label{thm:forcing_+}}
			$F^{1},F^{4},F^{5},F^{7}$ are defined in Appendix \ref{sec:dive_forcing}. They satisfy the following estimate
			\[
			\|\int_{0}^{t}F^1\|_{L^\infty 
				L^{q_+}} + \|\int_{0}^{t}F^4\|_{L^\infty W^{1-1/q_+,q_+}} + \|\int_{0}^{t}F^5\|_{L^\infty W^{1-1/q_+,q_+}} + \|\int_{0}^{t}[F^7]_\ell\|_{L_t^{\infty}}\lesssim \mathscr{E}^{\frac{1}{2}}\mathscr{D}^{\frac{1}{2}}.
			\]
		\end{theorem}
		\begin{proof}
			\ 
			\paragraph{\underline{Step 1 -- Estimate for $F^{1}$}} 
			By the definition of $F^{1}$ and the temporal integration by parts, we have
			\[
			\begin{aligned}
				\|\int_{0}^{t}\dive_{\p_{t}\mathcal{A}}S_{\mathcal{A}}(p,u)\|_{L_{t}^{\infty}L^{q_{+}}}\lesssim &\|\p_{t}\bar{\eta}\|_{L_{t}^{\infty}W^{1,\infty}}(1+\|\eta\|_{L_{t}^{\infty}W^{3-\frac{1}{q_{-}},q_{-}}})(\|p\|_{L_{t}^{\infty}W^{1,q_{+}}}+\|u\|_{L_{t}^{\infty}W^{2,q_{+}}})\\
				\lesssim&\|\p_{t}{\eta}\|_{L_{t}^{\infty}H^{\frac{3}{2}+\frac{\varepsilon_{-}-\alpha}{2}}}(1+\|\eta\|_{L_{t}^{\infty}W^{3-\frac{1}{q_{-}},q_{-}}})(\|p\|_{L_{t}^{\infty}W^{1,q_{+}}}+\|u\|_{L_{t}^{\infty}W^{2,q_{+}}})\\
				\lesssim&\mathscr{E}^{\frac{1}{2}}\mathscr{D}^{\frac{1}{2}},\\
				\|\int_{0}^{t}\dive_{\mathcal{A}}S_{\p_{t}\mathcal{A}}(p,u)\|_{L_{t}^{\infty}L^{q_{+}}}\lesssim &T^{\frac{1}{2}}(\|\nabla^{2}\bar{\eta}\|_{L_{t}^{\infty}L^{\frac{2}{1-\varepsilon_{+}}}})(\|\p_{t}p\|_{L_{t}^{2}L^{\frac{2}{1-\varepsilon_{-}}}}+\|\p_{t}u\|_{L_{t}^{2}W^{1,\frac{2}{1-\varepsilon_{-}}}})\\
				&\quad+T(\|\p_{t}\nabla\bar{\eta}\|_{L_{t}^{\infty}L^{\infty}})(\|p\|_{L_{t}^{\infty}W^{1,q_{+}}}+\|u\|_{L_{t}^{\infty}W^{2,q_{+}}})\\
				\lesssim&T\|\p_{t}{\eta}\|_{L_{t}^{\infty}H^{\frac{3}{2}+\frac{\varepsilon_{-}-\alpha}{2}}}(\|p\|_{L_{t}^{\infty}W^{1,q_{+}}}+\|u\|_{L_{t}^{\infty}W^{2,q_{+}}})\\
				&+T^{\frac{1}{2}}\|\eta\|_{L_{t}^{\infty}W^{3-\frac{1}{q_{+}},q_{+}}}(\|\p_{t}p\|_{L_{t}^{2}W^{1,q_{-}}}+\|\p_{t}u\|_{L_{t}^{2}W^{2,q_{-}}})
				\\
				\lesssim&(1+T)\mathscr{E}^{\frac{1}{2}}\mathscr{D}^{\frac{1}{2}}.
			\end{aligned}
			\]
			For the remaining terms involved in $\int_{0}^{t}F^{1}$, they can be bounded as follows
			\[
			\begin{aligned}
				\|u\cdot \nabla_{\mathcal{A}}u\|_{L_{t}^{\infty}L^{q_{+}}}\lesssim&\|u\|_{L_{t}^{\infty}L^{\infty}}\|u\|_{L_{t}^{\infty}W^{1,q_{+}}}\lesssim\|u\|_{L_{t}^{\infty}W^{2,q_{+}}}^{2}\lesssim\mathscr{E}^{\frac{1}{2}}\mathscr{D}^{\frac{1}{2}},\\
				\|\p_{t}\bar{\eta}WK\p_{2}u\|_{L_{t}^{\infty}L^{q_{+}}}\lesssim&\|\p_{t}\eta\|_{L_{t}^{\infty}H^{1}}\|u\|_{L_{t}^{\infty}W^{2,q_{+}}}\lesssim \mathscr{E}^{\frac{1}{2}}\mathscr{D}^{\frac{1}{2}}.
			\end{aligned}
			\]
			
			\paragraph{\underline{Step 2 -- Estimates for $F^{4}$ and $F^5$}} 
			Noticing that $\int_{0}^{t}F^{5}$ has a similar form compared to $\int_{0}^{t}F^{4}$, we only give the detailed estimate of $\int_{0}^{t}F^{4}$ here. We have
			\[
			\begin{aligned}
				&\|\int_{0}^{t}S_{\mathcal{A}}(p,u)\p_{t}\mathcal{N}\|_{L_{t}^{\infty}W^{1-\frac{1}{q_{+}},q_{+}}}
				\lesssim T^{\frac{1}{2}}\|\p_{t}{\eta}\|_{L_{t}^{2}W^{3-\frac{1}{q_{-}},q_{-}}}(1+\|\eta\|_{L_{t}^{\infty}W^{3-\frac{1}{q_{-}},q_{-}}})(\|p\|_{L_{t}^{\infty}W^{1,q_{+}}}+\|u\|_{L_{t}^{\infty}W^{2,q_{+}}})\\
				& \qquad \qquad \qquad \qquad\qquad\qquad\qquad\lesssim T\mathscr{E}^{\frac{1}{2}}\mathscr{D}^{\frac{1}{2}},\\
				&\|\int_{0}^{t}S_{\p_{t}\mathcal{A}}(p,u)\mathcal{N}\|_{L_{t}^{\infty}W^{1-\frac{1}{q_{+}},q_{+}}}\lesssim T^{\frac{1}{2}}\|\p_{t}{\eta}\|_{L_{t}^{2}W^{3-\frac{1}{q_{-}},q_{-}}}(1+\|\eta\|_{L_{t}^{\infty}W^{3-\frac{1}{q_{-}},q_{-}}})(\|p\|_{L_{t}^{\infty}W^{1,q_{+}}}+\|u\|_{L_{t}^{\infty}W^{2,q_{+}}})\\
				& \qquad \qquad \qquad \qquad\qquad\qquad\qquad\lesssim T\mathscr{E}^{\frac{1}{2}}\mathscr{D}^{\frac{1}{2}},\\
				&\|\int_{0}^{t}(\mathcal{K}(\eta)-\p_{1}\mathcal{R}(\p_{1}\zeta_{0},\p_{1}\eta))\p_{t}\mathcal{N}\|_{L_{t}^{\infty}W^{1-\frac{1}{q_{+}},q_{+}}}\lesssim T^{\frac{1}{2}}\|\p_{t}{\eta}\|_{L_{t}^{2}W^{3-\frac{1}{q_{-}},q_{-}}}(\|\eta\|_{L_{t}^{\infty}W^{3-\frac{1}{q_{-}},q_{-}}})
				\lesssim T\mathscr{E}^{\frac{1}{2}}\mathscr{D}^{\frac{1}{2}}.\\
			\end{aligned}
			\]
			
			\paragraph{\underline{Step 3 -- Estimate for $F^{7}$}} 
			By the definition of $F^{7}$, we have
			\[
			\|\int_{0}^{t}F^{7}\|_{L_{t}^{\infty}}=\|\hat{\mathcal{W}}(u\cdot\mathcal{N})(\pm\ell)\|_{L_{t}^{\infty}}\lesssim \|[u\cdot \mathcal{N}]_{\pi}^{2}\|_{L_{t}^{\infty}}\lesssim\mathscr{E}^{\frac{1}{2}}\mathscr{D}^{\frac{1}{2}}.
			\]
			Combining Step 1 -- Step 3, the theorem is proved.
		\end{proof}

		%%%%%%%%%%%%%%%%%%%%%%%%%%%%%%%%%%%%%%%%%%%%%%
		\subsection{Initial Data for the Nonlinear System}\label{sec:initial_nonlinear1}
		%%%%%%%%%%%%%%%%%%%%%%%%%%%%%%%%%%%%%%%%%%%%%%

		We assume that the initial data
		\begin{align}\label{initial_ep_nonlinear}
			(u_0, p_0, \eta_0, \p_tu(0), \p_tp(0), \p_t\eta(0), \p_t^2u(0),\p_t^2\eta(0))
		\end{align}
		for \eqref{eq:geometric} are in the space $X$ defined via
		\[
		\begin{aligned}
			X:=W^{2,q_+}(\Om)\times W^{1,q_+}(\Om)\times W^{3-1/q_+,q_+}(\Sigma)\times H^{1+\frac{\varepsilon_{-}}{2}}(\Om)\times H^{\frac{\varepsilon_{-}}{2}}(\Om)\times H^{\frac{3}{2}+\frac{\varepsilon_{-}-\alpha}{2}}(\Sigma)
			\times L^{2}(\Om)\times H^{1}(\Sigma),
		\end{aligned}
		\]
		and satisfy the compatibility conditions \eqref{compat_C2}
		as well as zero average conditions \eqref{cond:zero},
		where $X$ is a Banach space, with the square norm
		\[
		\begin{aligned}
			&\|(u_0, p_0, \eta_0, \p_tu(0), \p_tp(0), \p_t\eta(0), \p_t^2u(0), \p_{t}^{2}p(0),\p_t^2\eta(0), \p_{t}^{3}u(0),\p_t^3\eta(0))\|_{X^\varepsilon}^2
			=\mathcal{E}(0),
		\end{aligned}
		\]
		where $\mathcal{E}(0)$ is defined via \eqref{energy}.
		From Theorem \ref{thm:initial}, we have known that our initial data set is non-empty, provided that $\|\p_t^2u(0)\|_{H^0}^2+\|\p_t^2\eta(0)\|_{H^1}^2+\|\p_t^2\eta(0)\|_{W^{2-1/q_-,q_-}}^2$ is sufficiently small.

		%%%%%%%%%%%%%%%%%%%%%%%%%%%%%%%%%%%%%%%%%%%%%%
		\subsection{Existence and Uniqueness of Solution}
		%%%%%%%%%%%%%%%%%%%%%%%%%%%%%%%%%%%%%%%%%%%%%%

		We now consider the local well-posedness of the nonlinear problem \eqref{eq:geometric}. Our strategy is to work in a complete metric space that imposes high-regularity bounds while being equipped with a metric involving only low-regularity norms, and then to show that a fixed point in this space yields a solution to \eqref{eq:geometric}.
		
		We now define the desired metric space.
		\begin{definition}\label{def:S}
			Suppose that $T>0$. For $\delta\in(0,\infty)$ we define the space
			\[
			\begin{aligned}
				S(T, \delta)
				&=\Big\{(u,p,\eta):\Om\to\mathbb{R}^2\times\mathbb{R}\times\mathbb{R}\Big|(u,p,\eta)\in\mathcal{X}\cap\mathcal{Y},\ \text{with}\ \mathfrak{K}(u,p,\eta)^{1/2}\le\delta\\
				&\quad\text{and}\ (u,p,\eta)\ \text{achieve the initial data in Appendix \ref{sec:initial}}\Big\}.
			\end{aligned}
			\]
			We endow this space with the metric
			\begin{equation}\label{def:metric}
				\begin{aligned}
					d((u,p,\eta),(v,q,\xi))&=\|u-v\|_{L^\infty H^1}+\|u-v\|_{L^{2}W^{2,q_{-}}}+\|\p_{t}u-\p_{t}v\|_{L^2 H^1}\\&\quad+\sum_{i=0}^{1}(\|\p_{t}^{i}u-\p_{t}^{i}v\|_{L^{\infty}H^{0}})+\|p-q\|_{L^{2}W^{1,q_{-}}}+\|p-q\|_{L_{t}^{\infty}H^{0}}\\&\quad+\sum_{i=0}^{1}\|\p_{t}^{i}\eta-\p_{t}^{i}\xi\|_{L^\infty H^1}
					+\|\eta-\xi\|_{L^\infty H^{3/2-\alpha}}+\|\eta-\xi\|_{L^2 W^{3-1/q_-.q_-}}
					\\
					&\quad+\|\p_t\eta-\p_t\xi\|_{L^2 H^{3/2-\alpha}}+\sum_{i=0}^{1}\|[\p_t^{i}\eta-\p_t^{i}\xi]_\ell\|_{L^\infty([0,T])}+\|[\p_t^2\eta-\p_t^2\xi]_\ell\|_{L^2([0,T])},
				\end{aligned}
			\end{equation}
			for any $(u,p,\eta),(v,q,\xi)\in S(T, \delta)$, where the temporal norm is evaluated on the set $[0,T]$.
		\end{definition}
		
		\begin{remark}
			In the metric defined above, we only require the $q_{-}$ elliptic regularity. This is enough for the energy estimate. We do not require $q_{+}$ since the integral structure prevents us from deriving a $q_{+}$ estimate for the difference functions.
		\end{remark}
		
		For $\mathcal{X}$ and $\mathcal{Y}$ defined in \eqref{def:xy}, the subset satisfying $\mathfrak{K}(u,p,\eta)^{\frac{1}{2}}\leq \delta$ is weak* compact in $\mathcal{X}$ and weak compact in $\mathcal{Y}$. Thus it is easy to see that the space $S(T, \delta)$ is complete for each fixed $\delta$. 
		
		Now we employ the metric space $S(T,\delta)$ and a contraction mapping argument to construct the solutions $(u, p, \eta)$. The procedure is completed by several steps: 
		\begin{itemize}
			\item 
			Firstly, we choose an appropriate $\delta$ to determine the working space $S(T, \delta)$.
			\item 
			Secondly, we define the iterative operator from $S(T, \delta)$ to $S(T, \delta)$. In order to show that the operator is a contraction map in the following steps, we give the PDEs of differences. 
			\item 
			Thirdly, we use the energy method to control first time derivative of velocity and surface function in the energy space. 
			\item 
			In the fourth step, we improve the regularity of first time derivative of surface function so that the first time derivative of velocity and surface function are controlled under the metric of $S(T, \delta)$. 
			\item 
			In the fifth step, we give elliptic estimate for velocity and surface function in low regularity under the metric of $S(T, \delta)$. 
			\item 
			Finally, we combine the results of above steps to give the contraction.
		\end{itemize}
		
		\begin{theorem}\label{thm: fixed point}
			There exists a universal constant $\delta>0$ which is sufficiently small such that if the initial data for the nonlinear system are given as in Section \ref{sec:initial_nonlinear1} satisfying
			$\|(u_0, p_0, \eta_0, \p_tu(0), \p_tp(0), \p_t\eta(0), \p_t^2u(0), \p_t^2\eta(0)\|_{X}^2\le \delta$,
			then there exists a unique solution $(u, p, \eta)$ to \eqref{eq:geometric}, belonging to the metric space $S(T, \delta)$, where $T>0$ is sufficiently small.  In particular $(u, p, \eta)\in \mathcal{X}\cap\mathcal{Y}$, where $\mathcal{X}$ and $\mathcal{Y}$ are defined in \eqref{def:xy}.
		\end{theorem}
		
		\begin{proof}
			\
			\paragraph{\underline{Step 1 -- Solving the Linear Problem}}
			Suppose that $\mathfrak{K}(u,p,\eta)\le\delta^2$ is sufficiently small. For every $(u,p,\eta)\in S(T, \delta)$ given, let $(\tilde{u},\tilde{p},\tilde{\eta})$ instead of $(u, p, \xi)$ be the unique solution to the linear problem \eqref{eq:quasi_linear_0}.
			Then we apply the Theorem \ref{thm:linear_low} to the system \eqref{eq:quasi_linear_0}, with the Propositions \ref{prop:force_bulk}--\ref{thm:forcing_+} under the assumption on $\mathfrak{K}(u,p,\eta)$ to obtain that
			\[
			\mathfrak{K}(\tilde{u}, \tilde{p}, \tilde{\eta})\le C_0\Big[1+T\Big]e^{T\delta}\left(C_1\mathcal{E}(0)+C_2\Big(1+{T}\Big)\delta^4\right),
			\]
			for some universal constants $C_0$, $C_1$ and $C_2$. 
			
			We first choose $T$ such that $T\le 1$. Then we choose $\delta\in (0, 1)$ such that $4C_0C_2e^{\delta}\delta^2\le\frac12$. Finally, we restrict the initial data $\mathcal{E}(0)$ so that $4C_0C_1e\mathcal{E}(0)\le\frac12\delta^2$. Consequently, we obtain $\mathfrak{K}(\tilde{u}, \tilde{p}, \tilde{\eta})\le\delta^2$, which implies that $(\tilde{u}, \tilde{p}, \tilde{\eta})\in S(T, \delta)$.
			
			\paragraph{\underline{Step 2 -- PDEs for the Differences}}
			Define the operator $A:S(T, \delta)\rightarrow S(T, \delta)$ via
			\begin{equation}\label{def:map_a}
				A(u,p,\eta)=(\tilde{u},\tilde{p}, \tilde{\eta}) \ \text{for each}\ (u,p,\eta)\in S(T, \delta).
			\end{equation}
			
			In order to show that the operator $A$ is contract on $S(T, \delta)$, we choose two elements $(u^i,p^i,\eta^i)\in S(T, \delta)$, 
			and define $A(u^i,p^i,\eta^i)=(\tilde{u}^i,\tilde{p}^i, \tilde{\eta}^i)$ as above, $i=1,2$. For simplicity, we will abuse notation and denote $u=u^1-u^2$, $p=p^1-p^2$, $\eta=\eta^1-\eta^2$ and the same for $\tilde{u},\tilde{p}, \tilde{\eta}$. From the difference of equation for $(\tilde{u}^i,\tilde{p}^i, \tilde{\eta}^i)$, $i=1,2$, we know that
			the solutions are regular enough to be differentiated in time to result in the following equation
			\begin{equation}\label{linear_fix2}
				\left\{
				\begin{aligned}
					&\p_t(\p_t\tilde{u})+\dive_{\mathcal{A}^1}S_{\mathcal{A}^1}(\p_t\tilde{p},\p_t\tilde{u})=-\mu \dive_{\mathcal{A}^1}(S_{\p_t\mathcal{A}^1-\p_t\mathcal{A}^2}({u}^2,{p}^{2}))+R^{1,1} \  &\text{in}&\ \Om,\\
					&\dive_{\mathcal{A}^1}\p_t\tilde{u}=R^{2,1} \  &\text{in}&\ \Om,\\
					&S_{\mathcal{A}^1}(\p_t\tilde{p},\p_t\tilde{u})\mathcal{N}^1=\mu\mathbb{D}_{\p_t\mathcal{A}^1-\p_t\mathcal{A}^2}\tilde{u}^2\mathcal{N}^1+g(\p_t\tilde{\eta})\mathcal{N}^1-\sigma\p_1\left(\frac{\p_1\p_t\tilde{\eta}}{(1+|\p_1\zeta_0|^2)^{3/2}}\right)\mathcal{N}^1\\&\quad\quad\quad\quad\quad\quad\quad-\sigma\p_{1}(\mathcal{R}_{z}(\p_{1}\zeta_{0},\p_{1}\eta^{1})\p_{1}\p_{t}\tilde{\eta})\mathcal{N}^{1}(t)\\&\quad\quad\quad\quad\quad\quad\quad-\sigma \p_{1}((\mathcal{R}_{z}(\p_{1}\zeta_{0},\p_{1}\eta^{2})-\mathcal{R}_{z}(\p_{1}\zeta_{0},\p_{1}\eta^{1}))\p_{1}\p_{t}\tilde{\eta}^{2})\mathcal{N}^{1}(t)+R^{4,1} \ &\text{on}&\ \Sigma,\\
					&(S_{\mathcal{A}^1}(\p_t\tilde{p},\p_t\tilde{u})\nu-\beta \p_t\tilde{u})\cdot\tau=\mu\mathbb{D}_{\p_t\mathcal{A}^1-\p_t\mathcal{A}^2}{u}^2\nu\cdot\tau+R^{5,1} \  &\text{on}&\ \Sigma_s,\\
					&\p_t\tilde{u}\cdot\nu=0 \  &\text{on}&\ \Sigma_s,\\
					&\p_t(\p_t\tilde{\eta})=\p_t\tilde{u}\cdot\mathcal{N}^1+R^{6,1}+u_{1}^{1}\p_{1}\p_{t}\tilde{\eta}+u_{1}\p_{1}\p_{t}\tilde{\eta}^{2} \  &\text{on}&\ \Sigma,\\
					&\kappa(\p_{t}\tilde{u}^{1}\cdot \mathcal{N}^{1}-\p_{t}\tilde{u}^{2}\cdot \mathcal{N}^{2})(\pm\ell,t)=\mp\sigma\frac{\p_1\p_t\tilde{\eta}}{(1+|\p_1 \zeta_0|^2)^{3/2}}(\pm\ell,t)\mp\sigma\p_{1}(\mathcal{R}_{z}(\p_{1}\zeta_{0},\p_{1}\eta^{1})\p_{1}\p_{t}\tilde{\eta}\\&\quad\quad\quad\quad\quad\quad\quad\mp\sigma \p_{1}((\mathcal{R}_{z}(\p_{1}\zeta_{0},\p_{1}\eta^{2})-\mathcal{R}_{z}(\p_{1}\zeta_{0},\p_{1}\eta^{1}))\p_{1}\p_{t}\tilde{\eta}^{2})-R^{7,1}
				\end{aligned}
				\right.
			\end{equation}
			with zero initial data, where $R^{1},R^{4},R^{7}$,$R^{1,1}$, $R^{2,1}$, $R^{4,1}$,$R^{5,1}$, $R^{6,1}$, $R^{7,1}$ are defined by 
			\begin{equation}\label{remainder2'}
				\begin{aligned}
					R^{1,1}&=\p_tR^1+\mu\dive_{\p_t\mathcal{A}^1}(S_{\mathcal{A}^1}({u},{p}))+\mu\dive_{\p_t\mathcal{A}^1}(S_{(\mathcal{A}^1-\mathcal{A}^2)}({u}^2,{p}^{2}))\\
					&\quad+\mu\dive_{\mathcal{A}^1}(S_{(\p_{t}\mathcal{A}^1)}({u},{p}))+\mu \dive_{\mathcal{A}^{1}-\mathcal{A}^{2}}S_{\p_{t}\mathcal{A}^{2}}S(u^{2},p^{2})\\
					&\quad+\mu \operatorname{div}_{\p_{t}\mathcal{A}^{1}-\p_{t}\mathcal{A}^{2}}S_{\mathcal{A}^{2}}(p,u)+\mu\operatorname{div}_{\mathcal{A}^{1}}S_{\mathcal{A}^{1}-\mathcal{A}^{2}}(\p_{t}\tilde{p}^{2},\p_{t}\tilde{u}^{2})+\mu\operatorname{div}_{\mathcal{A}^{1}-\mathcal{A}^{2}}S_{\mathcal{A}^{1}}(\p_{t}\tilde{p}^{2},\p_{t}\tilde{u}^{2}),\\
					R^{2,1}&=-(\operatorname{div}_{\p_{t}\mathcal{A}^{1}-\p_{t}\mathcal{A}^{2}}{u}^{2})-(\operatorname{div}_{\mathcal{A}^{1}-\mathcal{A}^{2}}\p_{t}\tilde{u}^{2})-\operatorname{div}_{\p_{t}\mathcal{A}^{1}}u,\\
					R^{4,1}&=\p_tR^4 +\mu S_{\mathcal{A}^1}({u},{p})\p_t\mathcal{N}^1 +\mu S_{\p_{t}\mathcal{A}^{1}}({u},{p})\mathcal{N}^{1}+g({\eta})\p_t\mathcal{N}^1-\sigma\p_1\left(\frac{\p_1{\eta}}{(1+|\p_1\zeta_0|^2)^{3/2}}\right)\p_t\mathcal{N}^1\\
					&\quad-\sigma \p_{1}(\mathcal{R}_{z}(\p_{1}\zeta_{0},\p_{1}\eta^{1})\p_{1}\p_{t}\tilde{\eta}^{1})(\mathcal{N}^{1}-\mathcal{N}^{2})-\p_{1}(\mathcal{R}(\p_{1}\zeta_{0},\p_{1}\eta^{1}))\p_{t}(\mathcal{N}^{1}-\mathcal{N}^{2})\\
					&\quad-(\p_{1}\mathcal{R}(\p_{1}\zeta_{0},\p_{1}\eta^{1}
					)-\p_{1}\mathcal{R}(\p_{1}\zeta_{0},\p_{1}\eta^{2}))\p_{t}\mathcal{N}^{2}+\mu S_{\mathcal{A}^{1}-\mathcal{A}^{2}}(\p_{t}\tilde{u}^{2},\p_{t}\tilde{p}^{2})\mathcal{N}^{2}+\mu S_{\mathcal{A}^{1}-\mathcal{A}^{2}}({u}^{2},{p}^{2})\p_{t}\mathcal{N}^{2}\\
					&\quad+g({\eta}^{2})(\p_t\mathcal{N}^1-\p_{t}\mathcal{N}^{2})-\sigma\p_1\left(\frac{\p_1{\eta}^{2}}{(1+|\p_1\zeta_0|^2)^{3/2}}\right)(\p_t\mathcal{N}^1-\p_{t}\mathcal{N}^{2})\\
					&\quad+g(\p_{t}\tilde{\eta}^{2})(\mathcal{N}^1-\mathcal{N}^{2})-\sigma\p_1\left(\frac{\p_1{\p_{t}\tilde{\eta}}^{2}}{(1+|\p_1\zeta_0|^2)^{3/2}}\right)(\mathcal{N}^1-\mathcal{N}^{2}),\\
					R^{5,1}&=-S_{\p_{t}\mathcal{A}^{1}}({p},{u})
					+\mu S_{\mathcal{A}^{1}-\mathcal{A}^{2}}(\p_{t}\tilde{u}^{2},\p_{t}\tilde{p}^{2}),\\
					\quad R^{6,1}&= \p_{t}\tilde{u}^{2}\cdot(\mathcal{N}^{1}-\mathcal{N}^{2}),\\
					\quad R^{7,1}&=\p_tR^7,
				\end{aligned}
			\end{equation}
			where
			\begin{equation}{\label{remainder0'}}
				\begin{aligned}
					R^{1}=&u\cdot \nabla_{\mathcal{A}^{1}}u^{1}+u^{2}\cdot \nabla_{\mathcal{A}^{1}-\mathcal{A}^{2}}u^{1}+u^{2}\cdot \nabla_{\mathcal{A}^{2}}u+\p_{t}\bar{\eta}W^{1}K^{1}\p_{2}u^{1}+\p_{t}\bar{\eta}^{2}(W^{1}-W^{2})K^{1}\p_{2}u^{1}\\
					&\quad+\p_{t}\bar{\eta}^{2}W^{2}(K^{1}-K^{2})\p_{2}u^{1}+\p_{t}\bar{\eta}^{2}W^{2}K^{2}\p_{2}u^1,\\
					R^{4}=&S_{\mathcal{A}^{2}}(\tilde{p}^{2},\tilde{u}^{2})(\mathcal{N}^{1}-\mathcal{N}^{2}),&\\
					R^{7}=&\hat{\mathcal{W}}(\p_{t}\eta^{1})-\hat{\mathcal{W}}(\p_{t}\eta^{2}).&
				\end{aligned}
			\end{equation}
			
			\paragraph{\underline{Step 3 -- Introducing the Test Function}}
			In this step, we introduce the test function. 
			We begin by introducing the following notations about the metrics.
			\[
			d(u,p,\eta):=d((u^{1},p^{1},\eta^{1}),(u^{2},p^{2},\eta^{2})), \quad
			d(\tilde{u},\tilde{p},\tilde{\eta}):=d((\tilde{u}^{1},\tilde{p}^{1},\tilde{\eta}^{1}),(\tilde{u}^{2},\tilde{p}^{2},\tilde{\eta}^{2})).
			\]
			\noindent We then introduce $w\in H_{0}^{1}(\Omega)$ that solves the following equation
			\begin{align}{\label{eq:Bogo}}
				\begin{aligned}
					J^{1}\dive_{\mathcal{A}^{1}}w=J^{1}R^{2,1}-\left<J^{1}R^{2,1}\right>_{\Omega},~~~~\operatorname{where}~ \left<J^{1}R^{2,1} \right>_{\Omega}=\frac{1}{|\Omega|}\int_{\Omega}J^{1}R^{2,1}.
				\end{aligned}
			\end{align}
			We refer to \cite[Section 10]{GT2020} for the details of solvability of \eqref{eq:Bogo} and the corresponding estimate for $w$.
			
			\paragraph{\underline{Step 4 -- Energy Estimates for $(\p_t\tilde{u}, \p_t\tilde{\eta})$}}
			In this step, we aim to establish the energy estimates for $(\p_t\tilde{u}, \p_t\tilde{\eta})$. Multiplying the first equation of \eqref{linear_fix2} by {$J^{1}(\p_{t}\tilde{u}-w)$}, integrating over $\Om$ and using integration by parts, we obtain the following equation
			\begin{equation}\label{energy_fix}
				\begin{aligned}
					&\frac{d}{dt}\left(\int_\Om\frac{|\p_t\tilde{u}|^2}2J^1+\int_{-\ell}^\ell\frac g2|\p_t\tilde{\eta}|^2+\frac\sigma2\frac{|\p_1\p_t\tilde{\eta}|^2}{(1+|\p_1\zeta_0|^2)^{3/2}}-\sigma\int_{-\ell}^{\ell}\mathcal{R}_{z}(\p_{1}\zeta_{0},\p_{1}\eta^{1})(\p_{1}\p_{t}\tilde{\eta})^{2}-\int_{\Omega}J^{1}\p_{t}\tilde{u}w\right)\\  &\quad+\frac\mu2\int_\Om|\mathbb{D}_{\mathcal{A}^1}\p_t\tilde{u}|^2J^1+\beta\int_{\Sigma_s}|\p_t\tilde{u}|^2J^1+[\p_t{u}^{1}\cdot\mathcal{N}^1-\p_{t}u^{2}\cdot \mathcal{N}^{2}]_\ell^2 \\
					&=\int_\Om\frac{|\p_t\tilde{u}|^2}2\p_tJ^1-\frac\mu2\int_\Om\left( \mathbb{D}_{\p_t\mathcal{A}^1-\p_t\mathcal{A}^2}{u}^2\right):\mathbb{D}_{\mathcal{A}^1}(\p_t\tilde{u}-w)J^1+\int_{\Omega}\p_{t}\tilde{u}\p_{t}(J^{1}w)+\int_{\Omega}\p_{t}\tilde{p}\langle R^{2,1}J^{1}\rangle\\
					&\quad-\int_{-\ell}^\ell R^{4,1}\cdot\p_t\tilde{u}+\p_{1}R^{6,1}(\sigma\frac{\p_{1}\p_{t}\tilde{\eta}}{(1+\vert \p_{1}\zeta_{0}\vert^{\frac{3}{2}})^{\frac{3}{2}}}-\sigma(\mathcal{R}_{z}(\p_{1}\zeta_{0},\p_{1}\eta^{1})\p_{1}\p_{t}\tilde{\eta}))\\
					&\quad+\int_{-\ell}^{\ell}(R^{6,1})(g\p_{t}^{2}\tilde{\eta})+\int_{-\ell}^{\ell} (\p_{1}R^{6,1})\sigma (\mathcal{R}_{z}(\p_{1}\zeta_{0},\p_{1}\eta^{2})-\mathcal{R}_{z}(\p_{1}\zeta_{0},\p_{1}\eta^{1})\p_{1}\p_{t}\tilde{\eta}^{2})\\
					&\quad+\int_{-\ell}^{\ell} \p_{1}(u_{1}^{1}\p_{1}\p_{t}\tilde{\eta}+u_{1}\p_{1}\p_{t}\tilde{\eta}^{2})((g\p_{t}\tilde{\eta}+\sigma\frac{\p_{1}\p_{t}\tilde{\eta}}{(1+\vert \p_{1}\zeta_{0}\vert^{\frac{3}{2}})^{\frac{3}{2}}}-\sigma(\mathcal{R}_{z}(\p_{1}\zeta_{0},\p_{1}\eta^{1})\p_{1}\p_{t}\tilde{\eta})))\\
					&\quad+ \int_{-\ell}^{\ell} \p_{1}(u^{1}\p_{1}\p_{t}\tilde{\eta}+u\p_{1}\p_{t}\tilde{\eta}^{2})(\sigma ((\mathcal{R}_{z}(\p_{1}\zeta_{0},\p_{1}\eta^{2})-\mathcal{R}_{z}(\p_{1}\zeta_{0},\p_{1}\eta^{1}))\p_{1}\p_{t}\tilde{\eta}^{2}))\\
					&\quad+\int_{-\ell}^{\ell}\p_{1}\p_{t}\tilde{\eta}\sigma ((\mathcal{R}_{z}(\p_{1}\zeta_{0},\p_{1}\eta^{2})-\mathcal{R}_{z}(\p_{1}\zeta_{0},\p_{1}\eta^{1}))\p_{1}\p_{t}\tilde{\eta}^{2})+\int_\Om R^{1,1}\cdot(\p_t\tilde{u}J^1-wJ^{1})\\
					&\quad+\int_{-\ell}^{\ell} R^{5,1}\cdot \p_{t}\tilde{u}-[\p_t\tilde{u}\cdot\mathcal{N}^1,  R^{7,1}]_\ell-[\p_{t}^{2}\tilde{u}^{1}\cdot \mathcal{N}^{1}-\p_{t}\tilde{u}^{2}\cdot \mathcal{N}^{2},\p_{t}u^{2}\cdot (\mathcal{N}^{2}-\mathcal{N}^{1})]_{\ell}.
				\end{aligned}
			\end{equation}
			
			We now estimate each term on the right-hand side of \eqref{energy_fix}. For the first line of the RHS of \eqref{energy_fix}, using H\"older's inequality, we have
			\[
			\begin{aligned}
				\int_{0}^{T}\int_{\Omega} \frac{|\p_{t}\tilde{u}|^{2}}{2}\p_{t}J^{1}&\lesssim \|\p_{t}u\|_{L_{t}^{2}L^{2}}^{2}\|\p_{t}\bar{\eta}^{1}\|_{L_{t}^{\infty
					}W^{1,\infty}}\lesssim \|\p_{t}\tilde{u}\|_{L_{t}^{2}L^{2}}^{2}\|\p_{t}{\eta}^{1}\|_{L_{t}^{\infty
					}H^{\frac{3}{2}+\frac{\varepsilon_{-}-\alpha}{2}}}\lesssim\delta^{\frac{1}{2}}d(\tilde{u},\tilde{\eta},\tilde{p})d(u,\eta,p),
			\end{aligned}
			\]
			\[
			\begin{aligned}
				\int_{0}^{T}\int_{\Omega}\frac{\mu}{2}(\mathbb{D}_{\p_{t}\mathcal{A}^{1}-\p_{t}\mathcal{A}^{2}}u^{2}):\mathbb{D}_{\mathcal{A}^{1}}(\p_{t}\tilde{u})J^{1}&\lesssim \|\p_{t}\bar{\eta}\|_{L_{t}^{\infty}H^{1}}\|u^{2}\|_{L_{t}^{\infty}H^{1}}\|\p_{t}^{2}\tilde{u}\|_{L_{t}^{2}H^{1}}&\lesssim \delta^{\frac{1}{2}}d(\tilde{u},\tilde{\eta},\tilde{p})d(u,\eta,p).
			\end{aligned}
			\]
			
			\subparagraph{\underline{Terms related to $R^1$}}
			For terms in the bulk, by definition in \eqref{remainder2'}, we first handle terms involving $\p_tR^1$ as follows
			\begin{equation}\label{est:remain_bulk11}
				\begin{aligned}
					&\int_\Om \p_tR^1\cdot\p_t\tilde{u}J^1\\
					&\lesssim
					\int_\Om ((|\p_t\bar{\eta}^1|+|\p_t\bar{\eta}^2|)|\nabla\p_{t}\bar{\eta}|+|\p_t^2\bar{\eta}^2||\nabla\bar{\eta}|)|\p_2u^1||\p_t\tilde{u}|+\int_\Om ((|\p_t\bar{\eta}^1|+|\p_t\bar{\eta}^2|)|\nabla\bar{\eta}|)|\p_2\p_{t}u^1||\p_t\tilde{u}|\\
					&\quad+\int_\Om ((|\p_t\bar{\eta}^1|+|\p_t\bar{\eta}^2|)|\nabla\p_t\bar{\eta}^{1}|+(|\p_t^{2}\bar{\eta}^1|+|\p_t^{2}\bar{\eta}^2|)|\nabla\bar{\eta}^{1}|)|\p_2u||\p_t\tilde{u}|+\int_\Om ((|\p_t\bar{\eta}^1|+|\p_t\bar{\eta}^2|)|\nabla\bar{\eta}^{1}|)|\p_2\p_{t}u||\p_t\tilde{u}|\\
					&\quad+\int_\Om ((|\p_t^{2}\bar{\eta}|)|\nabla\bar{\eta}^{2}|)|\p_2u^{2}||\p_t\tilde{u}|+\int_\Om ((|\p_t\bar{\eta}|)|\nabla\bar{\eta}^{2}|)|\p_2\p_{t}u^{2}||\p_t\tilde{u}|+((|\p_t\bar{\eta}|)|\nabla\p_t\bar{\eta}^{2}|)|\p_2u^{2}||\p_t\tilde{u}|\\
					&\quad+\int_\Om(|\p_tu|+|u||\nabla\p_t\bar{\eta}^1|)|\nabla u^1||\p_t\tilde{u}|+\int_{\Omega}(|\p_{t}u|)|\nabla \p_tu^1||\p_t\tilde{u}|+\int_{\Omega}(|u^{1}||\nabla \bar{\eta}|)|\nabla \p_{t}u^{1}||\p_{t}\tilde{u}|\\
					&\quad+\int_\Om(|\p_tu^2|+|u^{2}|\nabla \p_{t}\bar{\eta}^{2}||)|\nabla u||\p_t\tilde{u}|+(u^2)|\nabla\p_tu||\p_t\tilde{u}|+\int_{\Omega}(|\p_{t}u^{1}||\nabla\bar{\eta}|+|u^{1}||\p_{t}\bar{\eta}|)|\nabla u||\p_{t}\tilde{u}|,
				\end{aligned}
			\end{equation}
			where we have used $\|\mathcal{A}^i\|_{L^\infty}+\|J^i\|_{L^\infty}+\|K^i\|_{L^\infty}\lesssim 1$, $i=1, 2$.
			Then by H\"older's inequality, Sobolev embedding and trace theory, after integrating over $[0, T]$, the second line in \eqref{est:remain_bulk11} is bounded as follows
			\begin{equation}\label{est:remain_bulk12}
				\begin{aligned}
					&\int_0^T\|\nabla\p_t\bar{\eta}\|_{L^{2/\alpha}}((\|\p_{t}\bar{\eta}^2\|_{L^{\infty}}+\|\p_{t}\bar{\eta}^{1}\|_{L^{\infty}})\|\nabla\tilde{u}^2\|_{L^2}
					)\|\p_t\tilde{u}\|_{L^{2/(1-\alpha)}}\\
					&\quad+\|\nabla\bar{\eta}\|_{L^\infty}(\|\p_t\bar{\eta}^2\|_{L^{\infty}}+\|\p_{t}\bar{\eta}^{1}\|_{L^{\infty}})\|\p_{t}\nabla\tilde{u}^2\|_{L^{q_-}}\|\p_t\tilde{u}\|_{L^{2/\varepsilon_-}}\\&\quad+\|\nabla \bar{\eta}\|_{L^{\frac{2}{\alpha}}}(\|\p_t^{2}\bar{\eta}^2\|_{L^{\infty}}+\|\p_{t}^{2}\bar{\eta}^{1}\|_{L^{\infty}})\|\nabla\tilde{u}^2\|_{L^{q_-}}\|\p_{t}\tilde{u}\|_{L^{\frac{2}{\varepsilon_{-}-\alpha}}}\\
					&\lesssim \int_0^T\|\p_t\eta\|_{H^{3/2-\alpha}}(\|\p_{t}\eta^{1}\|_{H^{\frac{3}{2}+\frac{\varepsilon_{-}-\alpha}{2}}}+\|\p_{t}\eta^{2}\|_{H^{\frac{3}{2}+\frac{\varepsilon_{-}-\alpha}{2}}})\|u^{2}\|_{W^{2,q_{-}}}\|\p_t\tilde{u}\|_1\\
					&\quad+\int_0^T\|\eta\|_{W^{3-\frac{1}{q_{-}},q_{-}}}(\|\p_{t}\eta^{1}\|_{H^{\frac{3}{2}+\frac{\varepsilon_{-}-\alpha}{2}}}+\|\p_{t}\eta^{2}\|_{H^{\frac{3}{2}+\frac{\varepsilon_{-}-\alpha}{2}}})\|\p_{t}u^{2}\|_{W^{2,q_{-}}}\|\p_t\tilde{u}\|_1\\
					&\quad+\int_0^T\|\eta\|_{W^{3-\frac{1}{q_{-}},q_{-}}}(\|\p_{t}^{2}\eta^{1}\|_{H^{1}}+\|\p_{t}^{2}\eta^{2}\|_{H^{1}})\|u^{2}\|_{W^{2,q_{-}}}\|\p_t\tilde{u}\|_1\\
					&\lesssim \delta^{1/2}(\|\p_t\eta\|_{L^2H^{3/2-\alpha}}+\|\eta\|_{L^{\infty}W^{3-\frac{1}{q_{-}},q_{-}}})\|\p_t\tilde{u}\|_{L^2H^1}.
				\end{aligned}
			\end{equation}
			The third line in \eqref{est:remain_bulk11}, after an integration over $[0, T]$, can be estimated similarly by
			\begin{equation}\label{est:remain_bulk13}
				\begin{aligned}
					&\int_0^T\|\nabla\p_t\bar{\eta}^{1}\|_{L^{2/\alpha}}((\|\p_{t}\bar{\eta}^2\|_{L^{\infty}}+\|\p_{t}\bar{\eta}^{1}\|_{L^{\infty}})\|\nabla{u}\|_{L^2}
					)\|\p_t\tilde{u}\|_{L^{2/(1-\alpha)}}\\
					&\quad+\|\nabla\bar{\eta}^{1}\|_{L^{4/(\alpha-\varepsilon_-)}}(\|\p_t\bar{\eta}^2\|_{L^{\infty}}+\|\p_{t}\bar{\eta}^{1}\|_{L^{\infty}})\|\nabla\p_{t}{u}\|_{L^{2}}\|\p_t\tilde{u}\|_{L^{2/(1+\varepsilon_{-}-\alpha)}}\\
					&\quad+\|\nabla\bar{\eta}^{1}\|_{L^{\frac{2}{\alpha}}}(\|\p_t\bar{\eta}^2\|_{L^{\infty}}+\|\p_{t}\bar{\eta}^{1}\|_{L^{\infty}})\|\p_{t}\nabla{u}\|_{L^{q_-}}\|\p_{t}\tilde{u}\|_{L^{\frac{2}{\varepsilon_{-}-\alpha}}}\\
					&\lesssim \int_0^T\|\p_t\eta^{1}\|_{H^{3/2-\alpha}}(\|\p_{t}\eta^{1}\|_{H^{\frac{3}{2}+\frac{\varepsilon_{-}-\alpha}{2}}}+\|\p_{t}\eta^{2}\|_{H^{\frac{3}{2}+\frac{\varepsilon_{-}-\alpha}{2}}})\|u\|_{W^{2,q_{-}}}\|\p_t\tilde{u}\|_1\\
					&\quad+\int_0^T\|\eta^{1}\|_{W^{3-\frac{1}{q_{-}},q_{-}}}(\|\p_{t}\eta^{1}\|_{H^{\frac{3}{2}+\frac{\varepsilon_{-}-\alpha}{2}}}+\|\p_{t}\eta^{2}\|_{H^{\frac{3}{2}+\frac{\varepsilon_{-}-\alpha}{2}}})\|u\|_{W^{2,q_{-}}}\|\p_t\tilde{u}\|_1\\
					&\quad+\int_0^T\|\eta^{1}\|_{W^{3-\frac{1}{q_{-}},q_{-}}}(\|\p_{t}\eta^{1}\|_{H^{\frac{3}{2}+\frac{\varepsilon_{-}-\alpha}{2}}}+\|\p_{t}\eta^{2}\|_{H^{\frac{3}{2}+\frac{\varepsilon_{-}-\alpha}{2}}})\|\p_{t}u\|_{H^{1}}\|\p_t\tilde{u}\|_1\\
					&\lesssim \delta^{1/2}(\|u\|_{L^{2}W^{2,q_{+}}}+\|\p_{t}u\|_{L^{2}H^{1}})\|\p_t\tilde{u}\|_{L^2H^1}.
				\end{aligned}
			\end{equation}
			\noindent For the fourth line in \eqref{est:remain_bulk11}, we can use similar computation to show that it is bounded by:
			\begin{align}
				\begin{aligned}
					&\int_0^T\|\nabla\bar{\eta}^{2}\|_{L^{\infty}}((\|\p_{t}^{2}\bar{\eta}\|_{L^{\frac{2}{\alpha}}})\|\nabla{u}^{2}\|_{L^2}
					)\|\p_t\tilde{u}\|_{L^{2/(1-\alpha)}}+\|\nabla\p_{t}\bar{\eta}^{2}\|_{L^{\infty}}(\|\p_{t}\bar{\eta}\|_{L^{\infty}})\|\nabla{u}^{2}\|_{L^{2}}\|\p_t\tilde{u}\|_{L^{2}}\\
					&\quad+\|\nabla\bar{\eta}^{1}\|_{L^\infty}(\|\p_{t}\bar{\eta}\|_{L^{\infty}})\|\p_{t}\nabla{u}^{2}\|_{L^{2}}\|\p_t\tilde{u}\|_{L^{2}}\\
					&\lesssim \int_0^T\|\p_t^{2}\eta\|_{1/2-\alpha}(\|\eta^{2}\|_{H^{\frac{3}{2}+\frac{\varepsilon_{-}-\alpha}{2}}})\|u^{2}\|_{H^{1}}\|\p_t\tilde{u}\|_1+\int_0^T\|\p_t\eta^{2}\|_{H^{\frac{3}{2}+\frac{\varepsilon_{-}-\alpha}{2}}}(\|\p_{t}\eta\|_{H^{\frac{3}{2}-\alpha}})\|u^{2}\|_{W^{2,q_{-}}}\|\p_{t}\tilde{u}\|_1\\
					&\quad+\int_0^T\|\eta^{1}\|_{W^{3-\frac{1}{q_{-}},q_{-}}}(\|\p_{t}\eta\|_{H^{\frac{3}{2}-\alpha}})\|\p_{t}u^{2}\|_{H^{1}}\|\p_t\tilde{u}\|_1\\
					&\lesssim \delta^{1/2}(\|\p_{t}\eta\|_{L^{2}H^{\frac{3}{2}-\alpha}}+\|\p_{t}\eta\|_{L^{2}H^{\frac{1}{2}-\alpha}})\|\p_t\tilde{u}\|_{L^2H^1},
				\end{aligned}
			\end{align}
			where we have used the Sobolev embedding $H^{\varepsilon_-/2}\hookrightarrow L^{4/(2-\varepsilon_-)}$ and $H^1\hookrightarrow L^{4/(2\varepsilon_++\varepsilon_--2\alpha)}$ for bounded domain $\Om$.
			The fifth line in \eqref{est:remain_bulk11}, after an integration over $[0, T]$ (without loss of generality, we may assume $T<1$), is estimated by
			\begin{equation}\label{est:remain_bulk14}
				\begin{aligned}
					&\int_{0}^{T}(\|\p_{t}u\|_{L^{\frac{4}{\varepsilon_{-}}}}+\|u\|_{L^{\infty}}\|\p_{t}\bar\eta^{1}\|_{W^{1,\infty}})\|u^{1}\|_{W^{1,\frac{4}{2-\varepsilon_{-}}}}\|\p_{t}\tilde{u}\|_{H^{1}}+\int_{0}^{T}\|u\|_{L^{\infty}}\|\p_{t}u^{1}\|_{H^{1}}\|\p_{t}\tilde{u}\|_{L^{2}}\\
					&+\int_{0}^{T}(\|u^{1}\|_{L^{\infty}}\|\bar{\eta}\|_{W^{1,\infty}})\|\p_{t}u^{1}\|_{H^{1}}\|\p_{t}\tilde{u}\|_{H^{1}}\\
					\lesssim & \delta^{\frac{1}{2}} (\|\p_{t}u\|_{L^{2}H^{1}}+\|u\|_{L^{2}W^{2,q_{+}}}+\|\eta\|_{L^{\infty}H^{\frac{3}{2}+\frac{\varepsilon_{-}-\alpha}{2}}})\|\p_{t}\tilde{u}\|_{L^{2}H^{1}}.
				\end{aligned}
			\end{equation}
			After an integration over $[0, T]$, the last line in \eqref{est:remain_bulk11} can be similarly bounded by
			\begin{equation}\label{est:remain_bulk15_1}
				\begin{aligned}
					\delta^{\frac{1}{2}}(\|u\|_{L^{\infty}W^{2,q_{-}}}+\|\p_{t}u\|_{L^2W^{2,q_{-}}}+\|\p_{t}^{2}u\|_{L^{2}H^{1}})\|\p_{t}^{2}\tilde{u}\|_{L^{2}H^{1}}.
				\end{aligned}
			\end{equation}
			\noindent We omit the details here.
			\noindent Combining \eqref{est:remain_bulk11}--\eqref{est:remain_bulk15_1}, we obtain the following estimate
			\begin{equation}\label{est:remain_bulk15}
				\begin{aligned}
					\int_{0}^{T}\int_{\Omega}\p_{t}R^{1}\cdot \p_{t}\tilde{u}J^{1}\lesssim \delta^{1/2}d(u,p,\eta)d(\tilde{u},\tilde{p},\tilde{\eta}).
				\end{aligned}
			\end{equation}
			
			\subparagraph{\underline{Other terms in $R^{1,1}$}}
			We now estimate other terms involved in expression of $R^{1,1}$ such that
			\begin{equation}\label{est:remain_bulk21}
				\begin{aligned}
					&\int_0^T\int_\Om \dive_{\p_t\mathcal{A}^1}(\mathbb{D}_{(\mathcal{A}^1-\mathcal{A}^2)}{u}^2)\cdot\p_t\tilde{u}J^1\lesssim\int_0^T\int_\Om |\nabla\p_t\bar{\eta}^1|\left(|\nabla^2\bar{\eta}||\nabla{u}^2|+|\nabla\bar{\eta}||\nabla^2{u}^2|\right)|\p_t\tilde{u}|\\
					&\lesssim\int_0^T\|\nabla\p_t\bar{\eta}^1\|_{L^\infty}(\|\nabla^2\bar{\eta}\|_{L^{2/(1-\varepsilon_-)}}\|\nabla{u}^2\|_{L^{2/(1-\varepsilon_-)}}\|\p_t\tilde{u}\|_{L^{1/\varepsilon_-}}+\|\nabla\bar{\eta}\|_{L^\infty}\|\nabla^2{u}^2\|_{L^{q_-}}\|\p_t\tilde{u}\|_{L^{2/\varepsilon_-}})\\
					&\lesssim\int_0^T\|\p_t\eta^1\|_{H^{3/2+(\varepsilon_--\alpha)/2}}\|\eta\|_{W^{3-1/q_-,q_-}}\|{u}^2\|_{W^{2,q_-}}\|\p_t\tilde{u}\|_{1}\\
					&\lesssim  \|\p_t\eta^1\|_{L^{\infty}H^{3/2+(\varepsilon_--\alpha)/2}}\|\eta\|_{L_{t}^{2}W^{3-1/q_-,q_-}}\|{u}^2\|_{L_{t}^{\infty}W^{2,q_-}}\|\p_t\tilde{u}\|_{L_{t}^{2}H^{1}}\lesssim\delta\|\p_{t}\eta\|_{L_{t}^2W^{3-1/q_-,q_-}}\|\p_t\tilde{u}\|_{L_{t}^2H^1}.
				\end{aligned}
			\end{equation}
			Then using the similar estimate, we obtain the following estimate
			\begin{equation}
				\begin{aligned}
					&\int_0^T\int_\Om \dive_{\mathcal{A}^1}(\mathbb{D}_{(\mathcal{A}^1-\mathcal{A}^2)}\p_{t}{u}^2)\cdot\p_t\tilde{u}J^1\lesssim\int_0^T\int_\Om \left(|\nabla^2\bar{\eta}||\p_{t}\nabla{u}^2|+|\nabla\bar{\eta}||\p_{t}\nabla^2{u}^2|\right)|\p_t\tilde{u}|\\
					&\lesssim\int_0^T(\|\nabla^2\bar{\eta}\|_{L^{2/(1-\varepsilon_-)}}\|\p_{t}\nabla{u}^2\|_{L^{4/(1-\varepsilon_-)}}\|\p_t\tilde{u}\|_{L^{1/\varepsilon_-}}+\|\nabla\bar{\eta}\|_{L^\infty}\|\p_{t}\nabla^2{u}^2\|_{L^{q_-}}\|\p_t\tilde{u}\|_{L^{2/\varepsilon_-}})\\
					&\lesssim\int_0^T\|\eta\|_{H^{\frac{3}{2}+\frac{\varepsilon_{-}-\alpha}{2}}}\|\p_{t}{u}^2\|_{W^{2,q_-}}\|\p_t\tilde{u}\|_{1}+\int_{0}^{T}\|\eta\|_{W^{3-\frac{1}{q_{-}},q_{-}}}\|\p_{t}u^{2}\|_{H^{1+\frac{\varepsilon_{-}}{2}}}\|\p_{t}\tilde{u}\|_{H^{1}}\\
					&\lesssim  \|\eta\|_{L_{t}^{2}W^{3-1/q_-,q_-}}\|\p_{t}{u}^2\|_{L_{t}^{\infty}H^{1+\frac{\varepsilon_{-}}{2}}}\|\p_t^{2}\tilde{u}\|_{L_{t}^{2}H^{1}}+ \|\eta\|_{L_{t}^{\infty}H^{\frac{3}{2}+\frac{\varepsilon_{-}-\alpha}{2}}}\|\p_{t}{u}^2\|_{L_{t}^{\infty}W^{2,q_{-}}}\|\p_{t}\tilde{u}\|_{L_{t}^{2}H^{1}}\\
					&\lesssim\delta(\|\eta\|_{L^2W^{3-1/q_-,q_-}}+\|\eta\|_{L_{t}^{\infty}H^{1+\frac{\varepsilon_{-}}{2}}})\|\p_t\tilde{u}\|_{L^2H^1}.
				\end{aligned}
			\end{equation}
			Next, we have the following estimate
			\begin{equation}\label{est:remain_bulk22}
				\begin{aligned}
					&\int_0^T\int_\Om (\mu\dive_{\mathcal{A}^1}(\mathbb{D}_{\p_{t}\mathcal{A}^1}{u})\cdot\p_t\tilde{u}J^1\lesssim\int_0^T\int_\Om \left(|\p_{t}\nabla^2\bar{\eta}^1||\nabla{u}|+|\p_{t}\nabla \bar{\eta}^{1}||\nabla^2{u}|\right)|\p_t\tilde{u}|\\
					&\lesssim\int_0^T\big[\|\p_{t}\nabla^2\bar{\eta}^1\|_{L^{2/(1-\varepsilon_-)}}\|\nabla {u}\|_{L^{4/(2-\varepsilon_-)}}\|\p_t\tilde{u}\|_{L^{4/(3\varepsilon_-)}}+\|\p_{t}\bar{\eta}^{1}\|_{W^{1,+\infty}}\|\nabla^2{u}\|_{L^{q_-}}\|\p_t\tilde{u}\|_{L^{2/\varepsilon_-}}\big]\\
					&\lesssim\int_0^T\|\p_t\eta^1\|_{H^{3/2+(\varepsilon_--\alpha)/2}}\|{u}\|_{W^{2,q_-}}\|\p_t\tilde{u}\|_{1}+\int_{0}^{T}\|\p_{t}\eta^{1}\|_{W^{3-\frac{1}{q_{-}},q_-}}\|u\|_{H^{1+\frac{\varepsilon_{-}}{2}}}\|\p_{t}\tilde{u}\|_{1}\\
					&\lesssim\delta^{1/2}(\|{u}\|_{L_{t}^2W^{2,q_-}}+\|u\|_{L_{t}^{\infty}H^{1+\frac{\varepsilon_{-}}{2}}})\|\p_t\tilde{u}\|_{L_{t}^2H^1}.
				\end{aligned}
			\end{equation}
			Similarly, it holds that
			\begin{align}
				\begin{aligned}
					&\int_0^T\int_\Om (\mu\dive_{\p_{t}\mathcal{A}^1}(\mathbb{D}_{\mathcal{A}^1}{u})\cdot\p_t\tilde{u}J^1\lesssim\int_0^T\int_\Om |\p_{t}\nabla\bar{\eta}^{1}|\left(|\nabla^2\bar{\eta}^1||\nabla{u}|+|\nabla^2{u}|\right)|\p_t\tilde{u}|\\
					&\lesssim\int_0^T\|\p_{t}\nabla \bar{\eta}^{1}\|_{L^{\infty}}\big[\|\nabla^2\bar{\eta}^1\|_{L^{2/(1-\varepsilon_-)}}\|\nabla {u}\|_{L^{2/(1-\varepsilon_-)}}\|\p_t\tilde{u}\|_{L^{1/\varepsilon_-}}+\|\nabla^2{u}\|_{L^{q_-}}\|\p_t\tilde{u}\|_{L^{2/\varepsilon_-}}\big]\\
					&\lesssim\int_0^T\|\p_t\eta^1\|_{H^{3/2+(\varepsilon_--\alpha)/2}}\big[(1+\|\eta^1\|_{W^{3-1/q_-,q_-}})\|{u}\|_{W^{2,q_-}}\big]\|\p_t\tilde{u}\|_{1}\\
					&\lesssim\delta^{1/2}(\|{u}\|_{L_{t}^2W^{2,q_-}})\|\p_t\tilde{u}\|_{L_{t}^2H^1}.
				\end{aligned}
			\end{align}
			Finally, we have the following estimates
			\begin{equation}\label{est:remain_bulk23}
				\begin{aligned}
					&\int_0^T\int_\Om (\mu\dive_{\p_t\mathcal{A}^1-\p_{t}\mathcal{A}^{2}}(\mathbb{D}_{\mathcal{A}^2}{u}^{2})-\nabla_{\p_t\mathcal{A}^1-\p_{t}\mathcal{A}^{2}}{p}^{2})\cdot\p_t\tilde{u}J^1\lesssim\int_0^T\int_\Om |\nabla\p_t\bar{\eta}|\left(|\nabla^2\bar{\eta}^2||\nabla{u}^{2}|+|\nabla^2{u}^{2}|+|\nabla{p}^{2}|\right)|\p_t\tilde{u}|\\
					&\lesssim\int_0^T\|\nabla\p_t\bar{\eta}\|_{L^{\frac{2}{\alpha}}}\big[\|\nabla^2\bar{\eta}^2\|_{L^{2/(1-\varepsilon_-)}}\|\nabla{u}^{2}\|_{L^{2/(1-\varepsilon_-)}}\|\p_t\tilde{u}\|_{L^{1/\varepsilon_--\frac{\alpha}{2}}}+(\|\nabla^2{u}^{2}\|_{L^{q_-}}+\|\nabla{p}^{2}\|_{L^{q_-}})\|\p_t\tilde{u}\|_{L^{2/\varepsilon_-}}\big]\\
					&\lesssim\int_0^T\|\p_t\eta\|_{H^{\frac{3}{2}-\alpha}}\big[(1+\|\eta^2\|_{W^{3-1/q_-,q_-}})\|{u}^{2}\|_{W^{2,q_-}}+\|{p}^{2}\|_{W^{1,q_-}}\big]\|\p_t\tilde{u}\|_{1}\\
					&\lesssim \|\p_t\eta\|_{L_{t}^{2}H^{\frac{3}{2}-\alpha}}\big[(1+\|\eta^2\|_{L_{t}^{\infty}W^{3-1/q_-,q_-}})\|{u}^{2}\|_{L_{t}^{\infty}W^{2,q_-}}+\|{p}^{2}\|_{L_{t}^{\infty}W^{1,q_-}}\big]\|\p_t\tilde{u}\|_{L_{t}^{2}H^{1}}\\
					&\lesssim\delta^{1/2}\|\p_t\eta\|_{L_{t}^{2}H^{\frac{3}{2}-\alpha}}\|\p_t\tilde{u}\|_{L^2H^1},
				\end{aligned}
			\end{equation}
			and 
			\begin{equation}
				\begin{aligned}
					&\int_0^T\int_\Om (\mu\dive_{\mathcal{A}^1-\mathcal{A}^{2}}(\mathbb{D}_{\p_{t}\mathcal{A}^2}{u}^{2}))\cdot\p_t\tilde{u}J^1\lesssim\int_0^T\int_\Om |\nabla\bar{\eta}|\left(|\p_{t}\nabla^2\bar{\eta}^2||\nabla{u}^{2}|+|\p_{t}\nabla\bar{\eta}^{2}||\nabla^2{u}^{2}|\right)|\p_t\tilde{u}|\\
					&\lesssim\int_0^T\|\nabla\bar{\eta}\|_{L^{+\infty}}\big[\|\p_{t}\nabla^2\bar{\eta}^2\|_{L^{2/(1-\varepsilon_-)}}\|\nabla{u}^{2}\|_{L^{2/(1-\varepsilon_-)}}\|\p_t\tilde{u}\|_{L^{1/\varepsilon_-}}+\|\p_{t}\nabla\bar{\eta}\|_{L^{\infty}}(\|\nabla^2{u}^{2}\|_{L^{q_-}})\|\p_t\tilde{u}\|_{L^{2/\varepsilon_-}}\big]\\
					&\lesssim\int_0^T\|\eta\|_{H^{\frac{3}{2}+\frac{\varepsilon_{-}-\alpha}{2}}}\big[(\|\p_{t}\eta^2\|_{W^{3-1/q_-,q_-}})\|{u}^{2}\|_{W^{2,q_-}}\big]\|\p_t\tilde{u}\|_{1}\\
					&\lesssim \|\eta\|_{L_{t}^{\infty}H^{\frac{3}{2}+\frac{\varepsilon_{-}-\alpha}{2}}}\big[(\|\p_{t}\eta^2\|_{L_{t}^{2}W^{3-1/q_-,q_-}})\|{u}^{2}\|_{L_{t}^{\infty}W^{2,q_-}}\big]\|\p_t\tilde{u}\|_{L_{t}^{2}H^{1}}\\
					&\lesssim\delta^{1/2}\|\eta\|_{L_{t}^{\infty}H^{\frac{3}{2}+\frac{\varepsilon_{-}-\alpha}{2}}}\|\p_t\tilde{u}\|_{L^2H^1},
				\end{aligned}
			\end{equation}
			and
			\begin{equation}\label{est:remain_bulk231}
				\begin{aligned}
					&\int_0^T\int_\Om (\mu\dive_{\mathcal{A}^1-\mathcal{A}^{2}}(\mathbb{D}_{\mathcal{A}^2}\p_{t}\tilde{u}^{2})+\nabla_{\mathcal{A}^{1}-\mathcal{A}^{2}}\p_{t}\tilde{p}^{2})\cdot\p_t\tilde{u}J^1\\
					&\lesssim\int_0^T\int_\Om |\nabla\bar{\eta}|\left(|\nabla^2\bar{\eta}^2||\p_{t}\nabla\tilde{u}^{2}|+|\nabla\bar{\eta}^{2}||\p_{t}\nabla^2\tilde{u}^{2}|+|\nabla\p_{t}\tilde{p}^{2}|\right)|\p_t\tilde{u}|\\
					&\lesssim\int_0^T\|\nabla\bar{\eta}\|_{L^{\infty}}\big[\|\nabla^2\bar{\eta}^2\|_{L^{2/(1-\varepsilon_-)}}\|\p_{t}^{2}\nabla\tilde{u}^{2}\|_{L^{2/(1-\varepsilon_-)}}\|\p_t\tilde{u}\|_{L^{1/\varepsilon_-}}+(1+\|\nabla\bar{\eta}\|_{L^{\infty}})(\|\p_{t}\nabla^2\tilde{u}^{2}\|_{L^{q_-}}\\
					&\qquad+\|\p_{t}\nabla\tilde{p}^{2}\|_{L^{q_{-}}})\|\p_t\tilde{u}\|_{L^{2/\varepsilon_-}}\big]\\
					&\lesssim\int_0^T\|\eta\|_{H^{\frac{3}{2}+\frac{\varepsilon_{-}-\alpha}{2}}}\big[(1+\|\eta^2\|_{W^{3-1/q_-,q_-}})(\|\p_{t}\tilde{u}^{2}\|_{W^{2,q_-}}+\|\p_{t}\tilde{p}^{2}\|_{W^{1,q_{-}}})\big]\|\p_t^2\tilde{u}\|_{1}\\
					&\lesssim \|\eta\|_{L_{t}^{\infty}H^{\frac{3}{2}+\frac{\varepsilon_{-}-\alpha}{2}}}\big[(1+\|\eta^2\|_{L_{t}^{\infty}W^{3-1/q_-,q_-}})(\|\p_{t}\tilde{u}^{2}\|_{L_{t}^{2}W^{2,q_-}}+\|\p_{t}\tilde{p}^{2}\|_{L_{t}^{2}W^{1,q_{-}}})\big]\|\p_t\tilde{u}\|_{L_{t}^{2}H^{1}}\\
					&\lesssim\delta^{1/2}\|\eta\|_{L_{t}^{\infty}H^{\frac{3}{2}+\frac{\varepsilon_{-}-\alpha}{2}}}\|\p_t\tilde{u}\|_{L_{t}^2H^1}.
				\end{aligned}
			\end{equation}
			Hence, combining the estimate \eqref{est:remain_bulk11} and estimates \eqref{est:remain_bulk21} to  \eqref{est:remain_bulk231}, we obtain the following result
			\begin{equation}\label{est:remain_bulk2}
				\begin{aligned}
					\int_\Om R^{1,1}\cdot\p_t^{2}\tilde{u}J^1
					&\lesssim \delta^{1/2}d(u,p,\eta)d(\tilde{u},\tilde{p},\tilde{\eta}).
				\end{aligned}
			\end{equation}
			
			\subparagraph{\underline{Terms related to $R^{4,1}$ and $R^{5,1}$}}
			We now estimate integrals involving $R^{4,1}$. We first estimate the terms included in $\p_tR^4$ (In the following estimate,  we use $\tilde{u}$ to denote the trace of $\tilde{u}$). By definition, we have
			\begin{equation}\label{est:remain_b21}
				\begin{aligned}
					&|\int_0^T\int_{-\ell}^\ell \p_tR^4\cdot\p_t\tilde{u}|\lesssim\int_0^T\int_{-\ell}^\ell\left[(|\p_t{p}^2|+|\nabla\p_t{u}^2|+|\nabla \p_{t}{\eta}^{2}||\nabla {u}^{2}|)|\p_1\eta|\right]|\p_t\tilde{u}|
					+\int_{0}^{T}\int_{-\ell}^{\ell}(|{p}^2|+|\nabla {u}^2|)|\p_1\p_t\eta||\p_t\tilde{u}|.
				\end{aligned}
			\end{equation}
			The first and second lines on the right-hand side of \eqref{est:remain_b21}, using the H\"older and Sobolev inequalities, are estimated by
			\begin{equation}\label{est:remain_b22}
				\begin{aligned}
					&\int_0^T\bigg[(\|\p_t{p}^2\|_{L^{1/(1-\varepsilon_-)}(\Sigma)}+\|\nabla\p_t{\eta}^2\|_{L^\infty}\|\nabla {u}^2\|_{L^{1/(1-\varepsilon_-)}(\Sigma)}+\|\nabla\p_t{u}^2\|_{L^{1/(1-\varepsilon_-)}(\Sigma)})\bigg]\|\p_{t}\tilde{u}\|_{L^{1/(\varepsilon_--\alpha)}(\Sigma)}
					\times\|\p_1\eta\|_{L^{1/\alpha}}\\&\quad\quad+(\|{p}^2\|_{L^{1/(1-\varepsilon_-)}(\Sigma)}+\|\nabla {u}^2\|_{L^{1/(1-\varepsilon_-)}(\Sigma)})\|\p_1\p_t\eta\|_{L^{1/\alpha}}\|\p_t\tilde{u}\|_{L^{1/(\varepsilon_--\alpha)}(\Sigma)}\\
					&\lesssim\int_0^T\bigg[(\|\p_t{p}^2\|_{W^{1,q_-}}+\|\p_t\eta^2\|_{H^{3/2+(\varepsilon_--\alpha)/2}}\| {u}^2\|_{W^{2,q_-}}+\|\p_t{u}^2\|_{W^{2,q_-}})\bigg]\|\p_t\tilde{u}\|_{1}\|\eta\|_{H^{3/2-\alpha}}\\
					&\quad\quad+(\|{p}^2\|_{W^{1,q_-}}+\| {u}^2\|_{W^{2,q_-}})\|\p_t\eta\|_{H^{3/2-\alpha}}\|\p_t\tilde{u}\|_{1}\\
					&\lesssim\delta^{1/2}(\|\eta\|_{L^{2}W^{3-\frac{1}{q_{-}},q_{-}}}+\|\p_t\eta\|_{L^2H^{3/2-\alpha}})\|\p_t\tilde{u}\|_{L^2H^1}.
				\end{aligned}
			\end{equation}
			
			\noindent Therefore, we obtain
			\begin{equation}\label{est:remain_b2}
				\begin{aligned}
					|\int_0^T\int_{-\ell}^\ell \p_tR^4\cdot\p_t\tilde{u}|\lesssim \delta^{1/2}d(u,p,\eta)d(\tilde{u},\tilde{p},\tilde{\eta}).
				\end{aligned}
			\end{equation}
			Similarly, we estimate other terms involved in $R^{4,1}$, we have
			\begin{align}
				\begin{aligned}
					\int_{0}^{T}\int_{-\ell}^{\ell}S_{\mathcal{A}^{1}}(u,p)\p_{t}\mathcal{N}^{1}\cdot \p_{t}\tilde{u}\lesssim& \int_{0}^{T}(\|u\|_{W^{1,\frac{1}{1-\varepsilon_{-}}}(\Sigma)}+\|p\|_{L^{\frac{1}{1-\varepsilon_{-}}}(\Sigma)})\|\p_{t}\eta^{1}\|_{W^{1,+\infty}(\Sigma)}\|\p_{t}\tilde{u}\|_{L^{\frac{1}{\varepsilon_{-}}}(\Sigma)}\\
					\lesssim& \delta^{\frac{1}{2}}(\|u\|_{L^{2}W^{2,q_{-}}}+\|p\|_{L^{2}W^{1,q_{-}}})\|\p_{t}\tilde{u}\|_{L^{2}H^{1}},\\
					\int_{0}^{T}\int_{-\ell}^{\ell}S_{\p_{t}\mathcal{A}^{1}}(u,p)\mathcal{N}^{1}\cdot \p_{t}\tilde{u}\lesssim& \int_{0}^{T}\|\p_{t}\p_{1}\eta^{1}\|_{L^{\infty}}(\|u\|_{W^{1,\frac{1}{1-\varepsilon_{-}}}(\Sigma)}+\|p\|_{L^{\frac{1}{1-\varepsilon_{-}}}(\Sigma)})\|\eta^{1}\|_{W^{1,+\infty}(\Sigma)}\|\p_{t}\tilde{u}\|_{L^{\frac{1}{\varepsilon_{-}}}(\Sigma)}\\
					\lesssim& \delta^{\frac{1}{2}}(\|u\|_{L^{\infty}W^{2,q_{-}}}+\|p\|_{L^{\infty}W^{1,q_{-}}})\|\p_{t}\tilde{u}\|_{L^{2}H^{1}},
				\end{aligned}
			\end{align}
			\noindent and similarly we have
			\begin{align}
				\begin{aligned}
					\int_{0}^{T} \int_{-\ell}^{\ell} \mathbb{D}_{\p_{t}\mathcal{A}^{1}-\p_{t}\mathcal{A}^{2}}{u}^{2}\mathcal{N}^{2}\cdot \p_{t}\tilde{u}\lesssim&\int_{0}^{T}\|\p_{t}\eta\|_{W^{1,\frac{1}{\alpha}}}\|{u}^{2}\|_{W^{1,\frac{1}{1-\varepsilon_{-}}}(\Sigma)}\|\p_{t}\tilde{u}\|_{L^{\frac{1}{\varepsilon_{-}}}}
					\lesssim\delta^{\frac{1}{2}}\|\p_{t}\eta\|_{L^{2}H^{\frac{3}{2}-\alpha}}\|\p_{t}\tilde{u}\|_{L^{2}H^{1}},\\
					\int_{0}^{T}\int_{-\ell}^{\ell}  \mathbb{D}_{\mathcal{A}^{1}-\mathcal{A}^{2}}\p_{t}\tilde{u}^{2}\mathcal{N}^{2}\cdot \p_{t}\tilde{u}\lesssim&\int_{0}^{T}\|\eta\|_{W^{1,\frac{1}{\alpha}}}\|\p_{t}\tilde{u}^{2}\|_{W^{1,\frac{1}{1-\varepsilon_{-}}}(\Sigma)}\|\p_{t}\tilde{u}\|_{L^{\frac{1}{\varepsilon_{-}-\alpha}}}\\
					\lesssim&\|\p_{t}\tilde{u}^{2}\|_{L^{\infty}W^{2,q_{-}}}\|\eta\|_{L^{\infty}H^{\frac{3}{2}-\alpha}}\|\p_{t}\tilde{u}\|_{L^{2}H^{1}}
					\lesssim\delta^{\frac{1}{2}}\|\eta\|_{L^{\infty}H^{\frac{3}{2}-\alpha}}\|\p_{t}\tilde{u}\|_{L^{2}H^{1}},\\
					\int_{0}^{T}\int_{-\ell}^{\ell}  \mathbb{D}_{\mathcal{A}^{1}-\mathcal{A}^{2}}{u}^{2}\p_{t}\mathcal{N}^{2}\cdot \p_{t}\tilde{u}\lesssim&\int_{0}^{T}\|\p_{t}\eta^{2}\|_{W^{1,+\infty}}\|\eta\|_{W^{1,\infty}}\|{u}^{2}\|_{W^{1,\frac{1}{1-\varepsilon_{-}}}(\Sigma)}\|\p_{t}\tilde{u}\|_{L^{\frac{1}{\varepsilon_{-}}}}\\
					\lesssim&\|\p_{t}\eta^{2}\|_{L^{2}W^{3-\frac{1}{q_{-}},q_-{}}}\|{u}^{2}\|_{L^{\infty}W^{2,q_{-}}}\|\eta\|_{L^{\infty}H^{\frac{3}{2}+\frac{\varepsilon_{-}-\alpha}{2}}}\|\p_{t}\tilde{u}\|_{L^{2}H^{1}}\\
					\lesssim&\delta^{\frac{1}{2}}\|\eta\|_{L^{\infty}H^{\frac{3}{2}+\frac{\varepsilon_{-}-\alpha}{2}}}\|\p_{t}\tilde{u}\|_{L^{2}H^{1}},
				\end{aligned}
			\end{align}
			\noindent and
			\begin{equation}
				\begin{aligned}
					&|\int_0^T\int_{-\ell}^\ell \bigg[ g({\eta})\p_t\mathcal{N}^1-\sigma\p_1\left(\frac{\p_1{\eta}}{(1+|\p_1\zeta_0|)^{3/2}}\right)\p_t\mathcal{N}^1-\p_{1}(\mathcal{R}_{z}(\p_{1}\zeta_{0},\p_{1}\eta))\p_{t}\mathcal{N}^{1}\bigg]\cdot\p_t\tilde{u}|\\
					&\lesssim \int_0^T\int_{-\ell}^\ell(|{\eta}|+|\p_1{\eta}|+|\p_1^2{\eta}|)|\p_t\p_1\eta^1||\p_t\tilde{u}|\lesssim \int_0^T(\|{\eta}\|_{W^{2,1/(1-\varepsilon_-)}})\|\p_t\p_1\eta^1\|_{L^{1/\alpha}}\|\p_t\tilde{u}\|_{L^{1/(\varepsilon_--\alpha)}}\\
					&\lesssim \|\eta\|_{L_{t}^{2}W^{3-\frac{1}{q_{-}},q_{-}}}\|\p_{t}\eta^{1}\|_{L_{t}^{\infty}H^{\frac{3}{2}-\alpha}}\|\p_{t}\tilde{u}\|_{L_{t}^{2}H^{1}}.
				\end{aligned}
			\end{equation}
			Moreover, using the fact that $\|\p_1\eta^i\|_{L^\infty}\lesssim1$ and the definition of $\mathcal{R}_{z}$, we obtain the following estimates
			\begin{equation}\label{est:remain_b32}
				\begin{aligned}
					&|\int_0^T\int_{-\ell}^\ell \p_{1}\bigg[-\sigma\p_1(\mathcal{R}_{z}(\p_{1}\zeta_{0},\p_{1}\eta^{1})-\mathcal{R}_{z}(\p_{1}\zeta_{0},\p_{1}\eta^{2}))\p_t\mathcal{N}^2\bigg]\cdot\p_t\tilde{u}|
					\\
					&\lesssim \int_0^T\int_{-\ell}^\ell(|\p_1^2\eta|)|\p_t\p_1\eta^2||\p_t\tilde{u}|\lesssim \int_0^T(\|\p_1^{2}\eta\|_{W^{1,1/(1-\varepsilon_-)}}\|\p_{t}\eta^{2}\|_{W^{1,\frac{1}{1-\varepsilon_{-}}}})
					\|\p_t\p_1\eta^2\|_{L^{1/\alpha}}\|\p_t\tilde{u}\|_{L^{1/(\varepsilon_--\alpha)}}\\
					&\lesssim\int_0^T (\|\eta\|_{W^{3-1/q_-,q_-}})\|\p_t\eta^1\|_{3/2+}\|\p_t\tilde{u}\|_1\lesssim \delta^{1/2}(\|\eta\|_{L_{t}^2W^{3-1/q_-,q_-}})\|\p_t\tilde{u}\|_{L_{t}^2H^1},
				\end{aligned}
			\end{equation}
			
			\noindent and
			
			\begin{equation}\label{est:remain_b33}
				\begin{aligned}
					&|\int_0^T\int_{-\ell}^\ell \bigg[\mathcal{K}(\eta^{1})-\p_{1}(\mathcal{R}(\p_{1}\zeta_{0},\p_{1}\eta^{1}))\bigg]\p_{t}(\mathcal{N}^{1}-\mathcal{N}^{2})\cdot\p_t\tilde{u}|
					\\
					&\lesssim \int_0^T\int_{-\ell}^\ell(|\p_{1}^{2}\eta^{1}|)|\p_t\p_1\eta||\p_t\tilde{u}|\lesssim \int_0^T(\|\eta^{1}\|_{W^{2,\frac{1}{1-\varepsilon_{-}}}})
					\|\p_t\p_1\eta^2\|_{L^{1/\alpha}}\|\p_t\tilde{u}\|_{L^{1/(\varepsilon_--\alpha)}}\\
					&\lesssim\int_0^T (\|\eta^{1}\|_{W^{3-1/q_-,q_-}})\|\p_t\eta\|_{H^{3/2-\alpha}}\|\p_t\tilde{u}\|_1\lesssim \delta^{1/2}(\|\p_{t}\eta\|_{L^2H^{\frac{3}{2}-\alpha}})\|\p_t\tilde{u}\|_{L^2H^1},
				\end{aligned}
			\end{equation}
			and
			\begin{align}{\label{est:remain_b35}}
				\begin{aligned}
					&|\int_0^T\int_{-\ell}^\ell \bigg[\mathcal{K}(\p_{t}\tilde{\eta}^{1})-\p_{1}(\mathcal{R}_{z}(\p_{1}\zeta_{0},\p_{1}\eta^{1})\p_{1}\p_{t}\tilde{\eta}^{1})\bigg](\mathcal{N}^{1}-\mathcal{N}^{2})\cdot\p_t\tilde{u}|
					\\
					&\lesssim \int_0^T\int_{-\ell}^\ell(|\p_{1}^{2}\p_{t}\tilde{\eta}^{1}|+|\p_{1}\p_{t}\tilde{\eta}^{1}|)|\p_1\eta||\p_t\tilde{u}|\lesssim \int_0^T(\|\tilde{\eta}^{1}\|_{W^{2,\frac{1}{1-\varepsilon_{-}}}}+\|\tilde{\eta}^{1}\|_{W^{1,+\infty}})
					\|\p_1\eta\|_{L^{1/\alpha}}\|\p_t\tilde{u}\|_{L^{1/(\varepsilon_--\alpha)}}\\
					&\lesssim\int_0^T (\|\tilde{\eta}^{1}\|_{W^{3-1/q_-,q_-}})\|\eta\|_{H^{3/2-\alpha}}\|\p_t\tilde{u}\|_1\lesssim \delta^{1/2}(\|\eta\|_{L_{t}^{\infty}H^{\frac{3}{2}-\alpha}})\|\p_t\tilde{u}\|_{L_{t}^2H^1},
				\end{aligned}
			\end{align}
			Therefore, by estimate \eqref{est:remain_b2}, and estimates \eqref{est:remain_b32} to \eqref{est:remain_b35}, we derive the following estimate for $R^{4,1}$
			\begin{align}{\label{eq:remain_b3}}
				\begin{aligned}
					\|R^{4,1}\|_{L_{t}^{2}(\mathcal{H}^1)^{\ast}}\lesssim \delta^{\frac{1}{2}}d(u,\eta,p),
				\end{aligned}
			\end{align}
			and
			\begin{equation}\label{est:remain_b3}
				\begin{aligned}
					\int_0^T\int_{-\ell}^\ell R^{4,1}\cdot\p_t\tilde{u}\lesssim\delta^{1/2}d(u,p,\eta)d(\tilde{u},\tilde{p},\tilde{\eta}).
				\end{aligned}
			\end{equation}
			
			We use similar methods to deal with the terms involving $R^{5,1}$ since the terms in $R^{5,1}$ are all included in $R^{4,1}$. We have
			\begin{align}
				\begin{aligned}
					|\int_0^T\int_{-\ell}^\ell R^{5,1}\cdot\p_t\tilde{u}\lesssim\delta^{1/2}d(u,p,\eta)d(\tilde{u},\tilde{p},\tilde{\eta})
				\end{aligned}
			\end{align}
			
			\subparagraph{\underline{Terms related to $R^{6,1}$}}
			We now estimate the integrals involving $R^{6,1}$. First, using spatial integration by parts, we have
			\begin{equation}\label{est:remain_b4}
				\begin{aligned}
					&|\int_0^T\int_{-\ell}^\ell g(\p_{t}\tilde{\eta})R^{6,1}-\sigma\left(\frac{\p_1\p_{t}(\tilde{\eta})}{(1+|\p_1\zeta_0|^2)^{3/2}}\right)\p_{1}R^{6,1}|\\
					&\lesssim\int_0^T\int_{-\ell}^\ell(|\p_{t}\tilde{\eta}|+|\p_1\p_{t}(\tilde{\eta})|)(|\p_{t}\p_{1}\tilde{u}_1^{2}||\p_1\eta|+|\p_{t}\tilde{u}_{1}^{2}||\p_{1}^{2}\eta|+|\p_{1}u_{1}||\p_{1}\p_{t}\tilde{\eta}^{2}|+|u_{1}||\p_{1}^{2}\p_{t}\tilde{\eta}|^{2}+|\p_{1}u_{1}^{2}||\p_{1}\p_{t}\tilde{\eta}|) \\
					&\lesssim(\|\p_{t}\tilde{\eta}\|_{L^{2}W^{1,\frac{1}{\alpha}}})(\|{u}\|_{L^{\infty}L^{1/(\varepsilon_--\alpha)}(\Sigma)}\|\p_t\tilde{\eta}^2\|_{L^{2}W^{3-\frac{1}{q_{-}},q_{-}}}+\|{u}\|_{L^{2}W^{1,\frac{1}{1-\varepsilon_{-}}}(\Sigma)}\|\p_t\tilde{\eta}^2\|_{L^{\infty}H^{\frac{3}{2}-\alpha}}\\
					&\quad\quad+\|\p_t\tilde{u}^2\|_{L^{\infty}L^\infty(\Sigma)}\|\eta\|_{L_{t}^{2}W^{2,1/(1-\varepsilon_-)}}+\|\p_{t}\tilde{u}^{2}\|_{L_{t}^{2}W^{1,\frac{1}{1-\varepsilon_{-}}}}\|\eta\|_{L_{t}^{\infty}W^{1,+\infty}}+\|\p_{1}{u}^{2}\|_{L_{t}^{2}W^{1,\frac{1}{1-\varepsilon_{-}}}(\Sigma)}\|\p_{t}\tilde{\eta}\|_{L^{\infty}W^{1,\frac{1}{\alpha}}})\\
					&\lesssim\delta^{1/2}(\|\p_{t}\tilde{\eta}\|_{L^2H^{\frac{3}{2}-\alpha}})(\|{u}\|_{L^\infty H^{1}}+\|u\|_{L^{2}W^{2,q_{-}}}+\|\eta\|_{L^2W^{3-1/q_-,q_-}}
					+\|\eta\|_{L_{t}^{\infty}H^{\frac{3}{2}+\frac{\varepsilon_{-}-\alpha}{2}}}+\|\p_t\tilde{\eta}\|_{L^2H^{3/2-\alpha}})\\
					&\lesssim \delta^{\frac{1}{2}}d(\tilde{u},\tilde{\eta},\tilde{p})(d(\tilde{u},\tilde{\eta},\tilde{p})+d({u},{\eta},{p})).
				\end{aligned}
			\end{equation}
			\noindent
			Moreover, the computation of \eqref{est:remain_b4} implies that:
			\[
			\|R^{6,1}\|_{L^{2}L^{\frac{1}{1-\alpha}}}\lesssim \delta^{\frac{1}{2}} (d(\tilde{u},\tilde{\eta},\tilde{p})+d({u},{\eta},{p})). 
			\]
			\noindent  For the following term including $R^{6,1}$, since
			\[\begin{aligned}
				&\|\p_{1}((\mathcal{R}_{z}(\p_{1}\zeta_{0},\p_{1}\eta^{2})-\mathcal{R}_{z}(\p_{1}\zeta_{0},\p_{1}\eta^{1}))\p_{1}\p_{t}\tilde{\eta}^{2})\|_{L_{t}^{2}L^{\frac{1}{\alpha}}}\\&\lesssim (\|\p_{t}\tilde{\eta}^{2}\|_{L_{t}^{2}W^{3-\frac{1}{q_{-}},q_{-}}}\|\eta\|_{L_{t}^{\infty}H^{\frac{3}{2}-\alpha}}+\|\p_{t}\tilde{\eta}^{2}\|_{L_{t}^{\infty}H^{\frac{3}{2}+\frac{\varepsilon_{-}-\alpha}{2}}}\|\eta\|_{L_{t}^{2}W^{3-\frac{1}{q_{-}},q_{-}}})\lesssim\delta^{\frac{1}{2}}d(\tilde{u},\tilde{\eta},\tilde{p}),\no
			\end{aligned}\]
			it holds that
			\begin{align}
				&\int_{0}^{T}\int_{-\ell}^{\ell} \p_{1}((\mathcal{R}_{z}(\p_{1}\zeta_{0},\p_{1}\eta^{2})-\mathcal{R}_{z}(\p_{1}\zeta_{0},\p_{1}\eta^{1}))\p_{1}\p_{t}^{3}\tilde{\eta}^{2})ds R^{6,1} dxdt\\
				&\lesssim \|\p_{1}((\mathcal{R}_{z}(\p_{1}\zeta_{0},\p_{1}\eta^{2})-\mathcal{R}_{z}(\p_{1}\zeta_{0},\p_{1}\eta^{1}))\p_{1}\p_{t}\tilde{\eta}^{2})\mathcal{N}^{1}(t)\|_{L_{t}^{2}L^{\frac{1}{\alpha}}}\|R^{6,1}\|_{L_{t}^{2}L^{\frac{1}{1-\alpha}}}\lesssim \delta^{\frac{1}{2}}(d(\tilde{u},\tilde{\eta},\tilde{p})+d(u,\eta,p))^{2}.\no
			\end{align}
			\noindent Then we obtain the following bound:
			\begin{equation}
				\begin{aligned}
					\text{Terms with}~~R^{6,1}\lesssim \delta^{\frac{1}{2}}d(\tilde{u},\tilde{\eta},\tilde{p})(d(\tilde{u},\tilde{\eta},\tilde{p})+d({u},{\eta},{p})).
				\end{aligned}
			\end{equation}
			
			\subparagraph{\underline{Other Boundary Terms}}
			We now estimate all of the remaining boundary terms. From the computation of terms including $R^{6,1}$, we have the following result
			\[\begin{aligned}
				&\|(g\p_{t}\tilde{\eta})-\sigma(\frac{\p_{1}\p_{t}\tilde{\eta}}{(1+|\p_{1}\zeta_{0}|^{2})^{\frac{3}{2}}})\|_{L^{\frac{1}{\alpha}}}+\|\mathcal{R}_{z}(\p_{1}\zeta_{0},\p_{1}\eta^{1})\p_{1}\p_{t}\tilde{\eta}\|_{L^{\frac{1}{\alpha}}}\\
				&+\|(\mathcal{R}_{z}(\p_{1}\zeta_{0},\p_{1}\eta^{2})-\mathcal{R}_{z}(\p_{1}\zeta_{0},\p_{1}\eta^{1}))\p_{1}\p_{t}\tilde{\eta}^{2}\|_{L^{\frac{1}{\alpha}}}\lesssim \delta^{\frac{1}{2}}(d(u,p,\eta)+d(\tilde u,\tilde p,\tilde \eta)).
			\end{aligned}\]
			
			\noindent Therefore, to bound the following integral
			\begin{align}{\label{eq:complex_1}}
				\begin{aligned}
					&\int_{0}^{T}\int_{-\ell}^{\ell}\p_{1}(u_{1}\p_{1}\p_{t}\tilde{\eta}^{2})\left (g\p_{t}\tilde{\eta}+\sigma\frac{\p_{1}\p_{t}\tilde{\eta}}{(1+\vert \p_{1}\zeta_{0}\vert^2)^{\frac{3}{2}}}-\sigma(\mathcal{R}_{z}(\p_{1}\zeta_{0},\p_{1}\eta^{1})\p_{1}\p_{t}\tilde{\eta})\right)\mathcal{N}^{1}(t)\\
					&\quad+\int_{0}^{T}\int_{-\ell}^{\ell}\p_{1}(u_{1}\p_{1}\p_{t}\tilde{\eta}^{2})\sigma\left(\mathcal{R}_{z}(\p_{1}\zeta_{0},\p_{1}\eta^{2})-\mathcal{R}_{z}(\p_{1}\zeta_{0},\p_{1}\eta^{1}\right)\p_{1}\p_{t}\tilde{\eta}^{2}\mathcal{N}^{1}(t),
				\end{aligned}
			\end{align}
			it suffices to show the following estimate:
			
			\[
			\begin{aligned}
				\|\p_{1}(u_{1}\p_{1}\p_{t}\tilde{\eta}^{2})\|_{L_{t}^{2}L^{\frac{1}{1-\alpha}}}\lesssim&\|\p_{1}u_{1}\p_{1}\p_{t}\tilde{\eta}^{2}\|_{L_{t}^{2}L^{\frac{1}{1-\alpha}}}+\|u_{1}\p_{1}^{2}\p_{t}\tilde{\eta}^{2}\|_{L_{t}^{2}L^{\frac{1}{1-\alpha}}}\\
				\lesssim&\|u\|_{L^{\infty}W^{2,q_{-}}}\|\p_{t}\tilde{\eta}^{2}\|_{L^{2}H^{\frac{3}{2}-\alpha}}+\|u_{1}
				\p_{1}^{2}\p_{t}\tilde{\eta}^{2}\|_{L_{t}^{2}L^{\frac{1}{1-\alpha}}}\\
				\lesssim& \|u\|_{L^{\infty}W^{2,q_{-}}}\|\p_{t}\tilde{\eta}^{2}\|_{L^{2}H^{\frac{3}{2}-\alpha}}+\|u\|_{L_{t}^{\infty}H^{1+\frac{\varepsilon_{-}}{2}}}\|
				\p_{t}\tilde{\eta}^{2}\|_{W^{3-\frac{1}{q_{-}},q_{-}}},
			\end{aligned}
			\]
			
			\noindent which implies that
			
			\begin{align}\label{est:bdd_1}
				\begin{aligned}
					\|\p_{1}u_{1}\p_{1}\p_{t}\tilde{\eta}^{2}\|_{L^{2}L^{\frac{1}{1-\alpha}}}\lesssim \delta^{\frac{1}{2}}(\|u\|_{L^{\infty}W^{2,q_{-}}}+\|\p_{t}u\|_{L^{2}W^{2,q_{-}}})\lesssim\delta^{\frac{1}{2}}d(u,\eta,p).
				\end{aligned}
			\end{align}
			\noindent Therefore , we have
			\[
			\eqref{eq:complex_1}\lesssim \delta^{\frac{1}{2}}d^{2}(u,\eta,p).
			\]
			\noindent Then we estimate the following integral
			\begin{align}{\label{eq:bdd_1}}
				\int_{-\ell}^{\ell}(u_{1}^{1}\p_{1}^{2}\p_{t}\tilde{\eta})\left(g\p_{t}\tilde{\eta}+\sigma\frac{\p_{1}\p_{t}\tilde{\eta}}{(1+\vert \p_{1}\zeta_{0}\vert^2)^{\frac{3}{2}}}-\sigma\mathcal{R}_{z}(\p_{1}\zeta_{0},\p_{1}\eta^{1})\p_{1}\p_{t}\tilde{\eta}\right).
			\end{align}
			
			\noindent For simplicity, we only show the details of estimating the most difficult part
			\[
			\left| \int_{-\ell}^{\ell}(u_{1}^{1}\p_{1}^{2}\p_{t}\tilde{\eta})\sigma\mathcal{R}_{z}(\p_{1}\zeta_{0},\p_{1}\eta^{1})\p_{1}\p_{t}\tilde{\eta} \right|.
			\]
			in \eqref{eq:bdd_1}.
			Using spatial integration by parts (noting that the boundary terms vanish since $u_{1}^{1}=0$), we have
			\[
			\begin{aligned}
				&| \int_{-\ell}^{\ell}(u_{1}^{1}\p_{1}^{2}\p_{t}\tilde{\eta})((\sigma(\mathcal{R}_{z}(\p_{1}\zeta_{0},\p_{1}\eta^{1})\p_{1}\p_{t}\tilde{\eta}))|\\
				\lesssim&\frac{1}{2}|\int_{-\ell}^{\ell}\p_{1}u_{1}^{1}\sigma\mathcal{R}_{z}(\p_{1}\zeta_{0},\p_{1}\eta^{1})(\p_{1}\p_{t}\tilde{\eta})^{2}|+\frac{1}{2}|\int_{-\ell}^{\ell}u_{1}^{1}\sigma\p_{1}\mathcal{R}_{z}(\p_{1}\zeta_{0},\p_{1}\eta^{1})(\p_{1}\p_{t}\tilde{\eta})^{2}|\\
				\lesssim&\|u^{1}\|_{W^{1,\frac{1}{1-\varepsilon_{-}]}}(\Sigma)}\|\eta^{1}\|_{W^{1,+\infty}}\|\p_{t}\tilde{\eta}\|_{H^{\frac{3}{2}-\alpha}}^{2}+\|u^{1}\|_{L^{\infty}(\Sigma)}\|\eta^{1}\|_{W^{2,\frac{1}{1-\varepsilon_{-}}}}\|\p_{t}\tilde{\eta}\|_{H^{\frac{3}{2}-\alpha}}^{2}\\
				\lesssim& \|u^{1}\|_{W^{2,q_{-}}}\|\eta^{1}\|_{H^{\frac{3}{2}+\frac{\varepsilon_{-}-\alpha}{2}}}\|\p_{t}\tilde{\eta}\|_{H^{\frac{3}{2}-\alpha}}^{2}+\|u^{1}\|_{W^{2,q_{-}}}\|\eta^{1}\|_{W^{3-\frac{1}{q_{-}},q_{-}}}\|\p_{t}\tilde{\eta}\|_{H^{\frac{3}{2}-\alpha}}^{2}.
			\end{aligned}
			\]
			\noindent Therefore, we have
			\[
			\int_{0}^{T} | \int_{-\ell}^{\ell}(u_{1}^{1}\p_{1}^{2}\p_{t}\tilde{\eta})((\sigma(\mathcal{R}_{z}(\p_{1}\zeta_{0},\p_{1}\eta^{1})\p_{1}\p_{t}\tilde{\eta}))|\lesssim \delta d^{2}(\tilde{u},\tilde{\eta},\tilde{p}).
			\]
			
			Next, we estimate the following term:
			\begin{align}{\label{eq:bdd_2}}
				\int_{-\ell}^{\ell}u^{1}\p_{1}^{2}\p_{t}\tilde{\eta}(\sigma ((\mathcal{R}_{z}(\p_{1}\zeta_{0},\p_{1}\eta^{2})-\mathcal{R}_{z}(\p_{1}\zeta_{0},\p_{1}\eta^{1}))\p_{1}\p_{t}\tilde{\eta}^{2})).
			\end{align}
			\noindent By spatial integration by parts, we obtain
			\begin{align}{\label{est:bdd_3}}
				&\int_{0}^{T}\int_{-\ell}^{\ell}u^{1}\p_{1}^{2}\p_{t}\tilde{\eta}\sigma (\mathcal{R}_{z}(\p_{1}\zeta_{0},\p_{1}\eta^{2})-\mathcal{R}_{z}(\p_{1}\zeta_{0},\p_{1}\eta^{1}))\p_{1}\p_{t}\tilde{\eta}^{2} \notag\\
				\lesssim& \|u^{1}\|_{L^{\infty}W^{2,q_{-}}}\|\p_{t}\tilde{\eta}\|_{L^{2}H^{\frac{3}{2}-\alpha}}\|\p_{t}\tilde{\eta}^{2}\|_{L_{t}^{2}W^{}}\|\eta\|_{L_{t}^{2}H^{\frac{3}{2}-\alpha}}\notag\\
				&+\|u^{1}\|_{L^{\infty}W^{2,q_{-}}}\|{\eta}\|_{L_{t}^{2}W^{3-\frac{1}{q_{-}},q_{-}}}\|\p_{t}\tilde{\eta}^{2}\|_{L_{t}^{\infty}H^{\frac{3}{2}+\frac{\varepsilon_{-}-\alpha}{2}}}\|\p_{t}\tilde{\eta}\|_{L_{t}^{2}H^{\frac{3}{2}-\alpha}}\notag\\
				\lesssim& \delta d(u,\eta,p)d(\tilde{u},\tilde{\eta},\tilde{p}).
			\end{align}
			
			\subparagraph{\underline{Terms related to pressure}}
			We now estimate the terms involving pressure. Using temporal integration by parts, it holds that
			\[
			\begin{aligned}
				\int_{0}^{t}\int_{\Omega}\p_{t}\tilde{p}\langle R^{2,1}J^{1}\rangle=\tilde{p}\langle R^{2,1}J^{1}\rangle(t)-\int_{0}^{t}\int_{\Omega}\tilde{p}\p_{t}\langle R^{2,1}J^{1}\rangle.
			\end{aligned}
			\]
			\noindent This can be controlled by
			\[
			\begin{aligned}
				|\int_{0}^{T}\int_{\Omega}\p_{t}\tilde{p}\langle R^{2,1}J^{1}\rangle|\lesssim& \int_{0}^{T}(\int_{\Omega}\tilde{p})\int_{\Omega} (|\p_{t}^{2}\nabla\bar{\eta}||\nabla u^{1}|+|\p_{t}\nabla \bar{\eta}||\p_{t}\nabla u^{1}|+|\p_{t}\nabla \bar{\eta}||\p_{t}\nabla\tilde{u}^{2}|\\
				&+|\nabla\bar{\eta}||\p_{t}^{2}\nabla\tilde{u}^{2}|+|\p_{t}^{2}\nabla \bar{\eta}^{1}||\nabla u|+|\p_{t}\nabla \bar{\eta}^{1}||\p_{t}\nabla u|)\\
				&+\sup_{0\leq t\leq T}\int_{\Omega}\tilde{p}\int_{\Omega}|(\p_{t}\nabla \bar{\eta}||\nabla u^{2}|+|\nabla \bar{\eta}||\p_{t}\nabla\tilde{u}^{2}|+|\p_{t}\nabla\bar{\eta}^{1}||\nabla u|)\\
				\lesssim& (\|\tilde{p}\|_{L_{t}^{2}L^{2}}+\|\tilde{p}\|_{L_{t}^{\infty}L^{2}})\bigg( \|\p_{t}^{2}\eta\|_{L_{t}^{2}H^{\frac{1}{2}-\alpha}}\|u^{1}\|_{L_{t}^{\infty}W^{2,q_{+}}}+\|\p_{t}\eta\|_{L_{t}^{2}H^{\frac{3}{2}-\alpha}}\|\p_{t}u^{1}\|_{L_{t}^{\infty}H^{1}}\\
				&+\|\eta\|_{L_{t}^{\infty}H^{\frac{3}{2}+\frac{\varepsilon_{-}-\alpha}{2}}}\|\p_{t}^{2}\tilde{u}^{2}\|_{L_{t}^{2}H^{1}}+\|\p_{t}^{2}\eta^{1}\|_{L_{t}^{\infty}H^{1}}\|u\|_{L_{t}^{\infty}H^{1}}+\|\p_{t}\eta^{1}\|_{L_{t}^{\infty}H^{1}}\|\p_{t}u\|_{L_{t}^{2}H^{1}}\bigg)\\
				&+\|\tilde{p}\|_{L_{t}^{\infty}L^{2}}(\|\p_{t}\eta\|_{L_{t}^{\infty}H^{1}}\|u^{2}\|_{L_{t}^{\infty}H^{1}}+\|\eta\|_{L_{t}^{\infty}H^{1}}\|\p_{t}\tilde{u}^{2}\|_{L_{t}^{\infty}H^{1}}+\|\p_{t}\eta^{1}\|_{L_{t}^{\infty}H^{1}}\|u\|_{L_{t}^{\infty}H^{1}})\\
				\lesssim& \delta^{\frac{1}{2}}(d(\tilde{u},\tilde{\eta},\tilde{p})+d({u},{\eta},{p}))^{2}.
			\end{aligned}
			\]
			
			\subparagraph{\underline{Terms related to $w$}}
			We next establish the estimates for terms involving $w$. From the result in \cite[Section 10]{GT2020}, we have
			\[
			\begin{aligned}
				\|w\|_{L_{t}^{\infty}W_{0}^{1,\frac{4}{3-2\varepsilon_{-}}}}&\lesssim \|R^{2,1}J^{1}\|_{L_{t}^{\infty}L^{\frac{4}{3-2\varepsilon_{+}}}}\\
				&\lesssim \|\nabla \p_{t}\bar{\eta}^{1}\|_{L^{\infty}}\|\nabla\tilde{u}\|_{L^{\infty}L^{\frac{4}{2-\varepsilon_{-}}}}+\|\nabla \p_{t}\bar{\eta}\|_{L_{t}^{\infty}L^{4}(\Omega)}\|\nabla\tilde{u}^{2}\|_{L_{t}^{\infty}L^{\frac{2}{1-\varepsilon_{+}}}}+\|\nabla \bar{\eta}\|_{L_{t}^{\infty}L^{\frac{2}{\alpha}}}\|\nabla \p_{t}\tilde{u}^{2}\|_{L_{t}^{\infty}L^{\frac{4}{2-\varepsilon_{-}}}}\\
				&\lesssim \|\p_{t}\eta^{1}\|_{L^{\infty}H^{\frac{3}{2}+}}\|\tilde{u}\|_{L_{t}^{\infty}H^{1+\frac{\varepsilon_{-}}{2}}}+\|\p_{t}\eta\|_{L^{\infty}H^{1}}\|\tilde{u}^{2}\|_{L^{\infty}W^{2,q_{+}}}+\|\eta\|_{L_{t}^{\infty}H^{\frac{3}{2}-\alpha}}\|\p_{t}\tilde{u}^{2}\|_{L^{\infty}H^{1+\frac{\varepsilon_{-}}{2}}}\\
				&\lesssim \delta^{\frac{1}{2}}(d(u,\eta,p)+d(\tilde{u},\tilde{\eta},\tilde{p})).
			\end{aligned}
			\]
			Using the fact that $1-\varepsilon_{+}\geq \varepsilon_{+}-\varepsilon_{-}>\varepsilon_{+}-\varepsilon_{-}-2\alpha$, the estimate above yields
			\[
			\begin{aligned}
				|\sup_{0\leq t\leq T}\int_{\Omega}\p_{t}\tilde{u}wJ^{1}|&\lesssim \|\p_{t}\tilde{u}\|_{L_{t}^{\infty}H^{0}}\|w\|_{L_{t}^{\infty}H^{0}}\lesssim \|\p_{t}\tilde{u}\|_{L_{t}^{\infty}H^{0}}\|w\|_{L^{\infty}W_{0}^{1,\frac{4}{3-2\varepsilon_{+}}}}\\
				&\lesssim \delta^{\frac{1}{2}}d(\tilde{u},\tilde{\eta},\tilde{p})(d(\tilde{u},\tilde{\eta},\tilde{p})+d({u},{\eta},{p})).
			\end{aligned}
			\]
			\noindent Similarly, we have the following estimate
			\[
			\begin{aligned}
				\|w\|_{L^{2}W_{0}^{1,\frac{2}{1-\varepsilon_{-}}}}\lesssim&\|R^{2,1}J^{1}\|_{L^{2}L^{\frac{2}{1-\varepsilon_{-}}}}\\
				\lesssim&\|\nabla \p_{t}\bar{\eta}^{1}\|_{L^{\infty}L^{\infty}}\|\nabla \tilde{u}\|_{L^{2}L^{\frac{2}{1-\varepsilon_{-}}}}+\|\nabla \p_{t}\bar{\eta}\|_{L^{2}L^{\frac{2}{\alpha}}}\|\nabla \tilde{u}^{2}\|_{L^{\infty}L^{\frac{2}{1-\varepsilon_{+}}}}+\|\nabla \bar{\eta}\|_{L^{2}L^{\frac{2}{\alpha}}}\|\nabla \p_{t}\tilde{u}^{2}\|_{L_{t}^{\infty}L^{\frac{4}{2-\varepsilon_{-}}}}\\
				\lesssim&\|\p_{t}\eta^{1}\|_{L^{\infty}H^{\frac{3}{2}+}}\|\tilde{u}\|_{L^{2}W^{2,q_{-}}}+\|\p_{t}\eta\|_{L^{2}H^{\frac{3}{2}-\alpha}}\|\tilde{u}^{2}\|_{L^{\infty}W^{2,q_{+}}}+\|\eta\|_{L_{t}^{2}H^{\frac{3}{2}-\alpha}}\|\p_{t}\tilde{u}^{2}\|_{L_{t}^{\infty}H^{1+\frac{\varepsilon_{-}}{2}}}\\
				\lesssim&\delta^{\frac{1}{2}}(d(u,\eta,p)+d(\tilde{u},\tilde{\eta},\tilde{p})).
			\end{aligned}
			\]
			\noindent This estimate, together with Cauchy's inequality, implies that
			\[
			\begin{aligned}
				|\int_{0}^{T}\int_{\Omega}R^{1,1}\cdot wJ^{1}|\lesssim& \delta \bigg(\|\p_{t}\eta\|_{L_{t}^{2}H^{\frac{3}{2}-\alpha}}^{2}+\|\eta\|_{L_{t}^{2}W^{3-\frac{1}{q_{-}},q_{-}}}^{2}+\|\eta\|_{L_{t}^{\infty}H^{\frac{3}{2}-\alpha}}^{2}+\|u\|_{L_{t}^{2}W^{2,q_{+}}}^{2}+\|\p_{t}u\|_{L_{t}^{2}H^{1}}^{2}\\
				&+\|\p_{t}\eta\|_{L_{t}^{\infty}H^{1}}^{2}+\|\eta\|_{L_{t}^{2}W^{3-\frac{1}{q_{-}},q_{-}}}^{2}+\|\p_{t}u\|_{L^{2}H^{1}}^{2}+\|\tilde{u}\|_{L^{2}W^{2,q_{-}}}^{2}+\|\tilde{p}\|_{L_{t}^{2}W^{1,q_{-}}}^{2}\bigg)\\
				&\lesssim\delta^{\frac{1}{2}}(d(u,\eta,p)+d(\tilde{u},\tilde{\eta},\tilde{p}))^{2},
			\end{aligned}
			\]
			and similarly
			\[
			\begin{aligned}
				|\int_{0}^{T}\int_{\Omega}\mathbb{D}_{\p_{t}\mathcal{A}^{1}-\p_{t}\mathcal{A}^{2}}\tilde{u}^{2}:\mathbb{D}_{\mathcal{A}^{1}}w|\lesssim \|\p_{t}\eta\|_{L_{t}^{2}H^{\frac{3}{2}-\alpha}}\|w\|_{L_{t}^{2}W_{0}^{1,\frac{2}{1-\varepsilon_{-}}}}\|\tilde{u}^{2}\|_{L_{t}^{\infty}W^{2,q_{+}}}\lesssim \delta^{\frac{1}{2}}(d(u,\eta,p)+d(\tilde{u},\tilde{\eta},\tilde{p}))^{2}.
			\end{aligned}
			\]
			
			Moreover, the estimate for $\p_{t}w$ is given by
			\[
			\begin{aligned}
				\|\p_{t}w\|_{L_{t}^{2}L^{\frac{2}{1-\varepsilon_{-}}}}&\lesssim \|\p_{t}( J^{1}R^{2,1})\|_{L_{t}^{2}L^{q_{-}}}\lesssim (\|\p_{t}\eta^{1}\|_{L_{t}^{\infty}H^{\frac{3}{2}+\frac{\varepsilon_{-}-\alpha}{2}}})\|R^{2,1}\|_{L_{t}^{2}L^{\frac{2}{1-\varepsilon_{-}}}}+\|\p_{t}R^{2,1}\|_{L^{q_{-}}}\\
				&\lesssim \delta d(u,\eta,p)+\|\p_{t}^{2}\bar{\eta}\|_{L_{t}^{2}W^{1,\frac{2}{1+\alpha}}}\|\nabla u^{2}\|_{L_{t}^{\infty}W^{\frac{2}{1-\varepsilon_{+}}}}+\|\p_{t}\bar{\eta}\|_{L_{t}^{\infty}H^{1}}\|\nabla \p_{t}u^{2}\|_{L_{t}^{2}W^{1,\frac{2}{1-\varepsilon_{-}}}}\\
				&\quad+\|\p_{t}\bar{\eta}\|_{L_{t}^{\infty}H^{1}}\|\nabla \p_{t}\tilde{u}^{2}\|_{L_{t}^{2}W^{1,\frac{2}{1-\varepsilon_{-}}}}+\|\bar{\eta}\|_{L_{t}^{\infty}W^{1,\frac{2}{\alpha}}}\|\nabla \tilde{u}^{2}\|_{L_{t}^{2}L^{2}}+\|\p_{t}\bar{\eta}^{1}\|_{L_{t}^{\infty}H^{2+\frac{\varepsilon_{-}-\alpha}{2}}}\|\nabla \p_{t}u\|_{L_{t}^{2}L^{2}}\\
				&\quad+\|\p_{t}^{2}\bar{\eta}^{1}\|_{L_{t}^{2}W^{1\frac{2}{\alpha}}}\|\nabla u\|_{L_{t}^{\infty}L^{2}}\\
				&\lesssim \delta d(u,\eta,p)+\|\p_{t}^{2}{\eta}\|_{L_{t}^{2}H^{\frac{1}{2}-\alpha}}\| u^{2}\|_{L_{t}^{\infty}W^{2,q_{+}}}+\|\p_{t}\bar{\eta}\|_{L_{t}^{\infty}H^{1}}\|\nabla \p_{t}u^{2}\|_{L_{t}^{2}W^{2,q_{-}}}\\
				&\quad+\|\p_{t}\bar{\eta}\|_{L_{t}^{\infty}H^{1}}\|\nabla \p_{t}\tilde{u}^{2}\|_{L_{t}^{2}W^{2,q_{-}}}+\|\bar{\eta}\|_{L_{t}^{\infty}H^{\frac{3}{2}-\alpha}}\|\nabla {u}^{2}\|_{L_{t}^{2}H^{1}}+\|\p_{t}{\eta}^{1}\|_{L_{t}^{\infty}H^{\frac{3}{2}+\frac{\varepsilon_{-}-\alpha}{2}}}\| \p_{t}u\|_{L_{t}^{2}H^{1}}\\
				&\quad+\|\p_{t}^{2}\bar{\eta}^{1}\|_{L_{t}^{2}H^{\frac{3}{2}-\alpha}}\| u\|_{L_{t}^{\infty}H^{1}},
			\end{aligned}
			\]
			which yields that
			
			\[
			\begin{aligned}
				\int_{0}^{T}\int_{\Omega} \p_{t}\tilde{u}\p_{t}(J^{1}w)&=\int_{0}^{T}\int_{\Omega}\p_{t}\tilde{u}\p_{t}J^{1}w+\int_{0}^{T}\int_{\Omega}\p_{t}\tilde{u} J^{1}\p_{t}w\lesssim\|\p_{t}\tilde{u}\|_{L_{t}^{\infty}L^{2}}\|\p_{t}\eta^{1}\|_{L_{t}^{\infty}H^{\frac{3}{2}+\frac{\varepsilon_{-}-\alpha}{2}}}\|w\|_{L_{t}^{\infty}L^{2}}\\
				&\lesssim \delta d(\tilde{u},\tilde{\eta},\tilde{p})(d(\tilde{u},\tilde{\eta},\tilde{p})+d(u,\eta,p)).
			\end{aligned}
			\]
			
			\subparagraph{\underline{Terms related to contact points}}
			We first notice that $\p_tu^1_1=\p_tu^2_1=0$ at the contact points, so that $
			\p_t^2\tilde{\eta}(\pm\ell)=\p_t\tilde{u}\cdot\mathcal{N}(\pm\ell)$.
			We then note that
			\[
			\begin{aligned}
				|\p_{t}R^{7}|&
				\lesssim |\hat{\mathscr{W}}^\prime(u^{1}\cdot \mathcal{N}^{1})||\p_tu^{1}\cdot \mathcal{N}^{1}-\p_{t}u^{2}\cdot \mathcal{N}^{2}|+|\hat{\mathscr{W}}^\prime(u^{1}\cdot \mathcal{N}^{1})-\hat{\mathscr{W}}^{\prime}(u^{2}\cdot \mathcal{N}^{2})||\p_tu^{2}\cdot \mathcal{N}^{2}|\\
				&\lesssim (|u^{1}\cdot \mathcal{N}^{1}|)(|\p_{t}u^{1}\cdot \mathcal{N}^{1}-\p_{t}u^{2}\cdot \mathcal{N}^{2}|)+(|\p_{t}u^{2}\cdot \mathcal{N}^{2}|)(\|u\|_{W^{2,q_{-}}}+\|u^{2}\|_{W^{2,q_{-}}}\|\eta\|_{W^{3-\frac{1}{q_{-}},q_{}-}}),
			\end{aligned}
			\]
			\noindent which implies that:
			\[
			\|\p_{t}R^{7}\|_{L_{t}^{2}} \lesssim \delta^{\frac{1}{2}}(\|\p_t^2u^{1}\cdot \mathcal{N}^{1}-\p_{t}^{2}u^{2}\cdot \mathcal{N}^{2}\|_{L_{t}^{2}}+\|\eta\|_{L^{2}W^{3-\frac{1}{q_{-}},q_{-}}}+\|u\|_{L^{2}W^{2,q_{-}}})\lesssim \delta^{\frac{1}{2}}(d(u,\eta,p)+d(\tilde{u},\tilde{\eta},\tilde{p})).
			\]
			Hence, using trace theory, we have the following estimate
			\begin{equation}\label{est:remain_points}
				\begin{aligned}
					\int_0^T[\p_{t}\tilde{u}\cdot\mathcal{N}^1,  R^{7,1}]_\ell=& \int_{0}^{T}[\p_{t}\tilde{u}^{1}\cdot\mathcal{N}^1-\p_{t}\tilde{u}^{2}\cdot \mathcal{N}^{2},  R^{7,1}]_\ell+\int_{0}^{T}[\p_{t}\tilde{u}^{2}\cdot\mathcal{N}^1-\p_{t}\tilde{u}^{2}\cdot \mathcal{N}^{2},  R^{7,1}]_\ell\\
					\lesssim& \delta^{\frac{1}{2}}d(u,p,\eta)d(\tilde{u},\tilde{p},\tilde{\eta})+\|\p_{t}\tilde{u}^{2}\|_{L^{2}W^{2,q_{-}}}\|\eta\|_{L^{\infty}W^{3-\frac{1}{q_{-}},q_{-}}}\delta^{\frac{1}{2}}d(u,p,\eta)\\
					\lesssim&\delta^{\frac{1}{2}}d(u,p,\eta)d(\tilde{u},\tilde{p},\tilde{\eta}).
				\end{aligned}
			\end{equation}
			Finally, for the last term, we bound it by:
			\[
			\int_{0}^{T}[\p_{t}u^{1}\cdot \mathcal{N}^{1}-\p_{t}u^{2}\cdot \mathcal{N},\p_{t}u^{2}\cdot (\mathcal{N}^{2}-\mathcal{N}^{1})]_{\ell}\lesssim d(u,p,\eta)\|\p_{t}\tilde{u}^{2}\|_{L^{2}W^{2,q_{-}}}\|\eta\|_{L^{\infty}W^{3-\frac{1}{q_{-}},q_{-}}}\lesssim \delta^{\frac{1}{2}}d(u,p,\eta)^{2}.
			\]
			After integrating \eqref{energy_fix} over $[0,T]$, we use the Cauchy's inequality and Propositions \eqref{prop:isomorphism}, Gronwall's Lemma along with \eqref{est:remain_bulk2}, \eqref{est:remain_b2}, \eqref{est:remain_b3}, \eqref{est:remain_b4}, \eqref{est:remain_points} to deduce the bound
			\begin{equation}\label{est:contract_1}
				\begin{aligned}
					d(\tilde{u},\tilde{\eta},\tilde{p})&\le e^{C_1T\delta^{1/2}}C_2\delta^{1/2}(d(u,p,\eta)+d(\tilde{u},\tilde{p},\tilde{\eta}))^{2},
				\end{aligned}
			\end{equation}
			where the universal constants $C_1$ and $C_2$ are determined by the estimates of  \eqref{est:remain_bulk2}, \eqref{est:remain_b2}, \eqref{est:remain_b3}, \eqref{est:remain_b4}, \eqref{est:remain_points}.
			
			The lower-order energy estimates for $(\tilde{u},\tilde{p},\tilde{\eta})$ follow directly from integrating the higher-order terms $\partial_t \tilde{u}$ and $\partial_t \tilde{\eta}$, so we omit the details here.

			\paragraph{\underline{Step 5 -- Enhanced Estimate for $\p_t\tilde{\eta}$}}
			In order to close the estimates for $(\p_t\tilde{u}, \p_t\tilde{\eta})$, we have to estimate $\|\p_t\tilde{\eta}\|_{L^2H^{3/2-\alpha}}$ contained in \eqref{est:contract_1}. This part is similar to \cite[Proof of Theorem 9.3]{GT2020}, but the details are omitted there.  So we give the full proof here. The weak formulation for \eqref{linear_fix2} might be given as
			\begin{equation}\label{enhance_1}
				\begin{aligned}
					&\left<\p_t^2\tilde{u},\psi\right>_\ast+((\p_t\tilde{u},\psi))+(\p_t\tilde{\eta},\psi\cdot\mathcal{N}^1)_{1,\Sigma_{0}}+[\p_t{u}^{1}\cdot\mathcal{N}^1-\p_{t}\tilde{u}^{2}\cdot \mathcal{N}^{2},\psi\cdot\mathcal{N}^1]_{\ell}\\
					&=-\frac\mu2\int_\Om\left( \mathbb{D}_{\p_t\mathcal{A}^1-\p_t\mathcal{A}^2}{u}^2\right):\mathbb{D}_{\mathcal{A}}\psi J^1+\int_{\Om}R^{1,1}\cdot \psi J^1\\
					&\quad-\int_{-\ell}^{\ell}R^{4,1}\cdot \psi-[\psi\cdot\mathcal{N}^1,R^{7,1}]_\ell-\int_{-\ell}^{\ell} R^{5,1}\cdot \psi-\int_{-\ell}^{\ell}\mathcal{R}_{z}(\p_{1}\zeta_{0},\p_{1}\eta)\p_{1}\p_{t}\tilde{\eta} \p_{1}(\psi\cdot \mathcal{N}^{1})
				\end{aligned}
			\end{equation}
			for any $\psi\in W_\sigma(t)$. We now choose $\psi=M^1\nabla\phi$, where $\phi$ solves the equations
			\begin{align*}
				-\Delta\phi=0 \ \text{in} \ \Om,\quad
				\p_\nu\phi=(D_j^s\p_t\tilde{\eta}-\tilde{a}_{0}(t)\zeta_{0})/|\mathcal{N}_0| \ \text{on} \ \Sigma,\quad
				\p_\nu\phi=0 \ \text{on} \ \Sigma_s ,
			\end{align*}
			where the definition of the operator $ D_j^s$ can  be found in \cite[Section 8]{GT2020} and $\tilde{a}_{0}(t)$ is defined to be a function of $t$ such that
			\begin{align}
				\int_{-\ell}^{\ell}(\p_{t}\tilde{\eta}-\tilde{a}_{0}(t)\zeta_{0})dx=0.
			\end{align}
			Using the sixth equation of \eqref{linear_fix2}, we have:
			\begin{align}
				\p_{t}\tilde{\eta}(t)=\int_{0}^{t}\p_{t}\tilde{u}\cdot \mathcal{N}^{1}ds+\int_{0}^{t}R^{6,1}ds+\int_{0}^{t}u_{1}^{1}\p_{1}\p_{t}\tilde{\eta}+\int_{0}^{t}u_{1}\p_{1}\p_{t}\tilde{\eta}^{2},
			\end{align}
			\noindent which implies that
			\begin{align}
				\tilde{a}_{0}(t)=\frac{\int_{-\ell}^{\ell}\big( \int_{0}^{t}\p_{t}\tilde{u}\cdot \mathcal{N}^{1}ds+\int_{0}^{t}R^{6,1}ds+\int_{0}^{t}u_{1}^{1}\p_{1}\p_{t}\tilde{\eta}+\int_{0}^{t}u_{1}\p_{1}\p_{t}\tilde{\eta}^{2}\big)}{\int_{-\ell}^{\ell}\zeta_{0}dx},
			\end{align}
			\noindent and
			\begin{align}
				\tilde{a}_{0}^{\prime}(t)=\frac{\int_{-\ell}^{\ell}\big( \p_{t}\tilde{u}\cdot \mathcal{N}^{1}ds+R^{6,1}+u_{1}^{1}\p_{1}\p_{t}\tilde{\eta}+u_{1}\p_{1}\p_{t}\tilde{\eta}^{2}\big)}{\int_{-\ell}^{\ell}\zeta_{0}dx}.
			\end{align}
			\noindent Using the definition of $R^{6,1}$ and integration by part, we have the following estimate
			\begin{align}
				|\tilde{a}_{0}(t)|\lesssim &\sqrt{T}\|\p_{t}\tilde{u}\|_{L_{t}^{2}H^{1}}+\sqrt{T}\|\p_{t}\tilde{u}^{2}\|_{L_{t}^{2}H^{1}}\|\eta\|_{L_{t}^{\infty}H^{1}}+T\|u^{2}\|_{L_{t}^{\infty}H^{1}}\|\p_{t}\tilde{\eta}\|_{L_{t}^{\infty}H^{1}}+T\|\p_{t}\tilde{\eta}^{2}\|_{L_{t}^{\infty}H^{1}}\|u\|_{L_{t}^{\infty}H^{1}}\notag\\
				\lesssim&\sqrt{T}d(\tilde{u},\tilde{p},\tilde{\eta})+\delta^{\frac{1}{2}}d(\tilde{u},\tilde{p},\tilde{\eta})+\delta^{\frac{1}{2}} d({u},{p},{\eta}),
			\end{align}
			\noindent and similarly
			\begin{align}
				|\tilde{a}^{\prime}_{0}(t)|\lesssim &\|\p_{t}\tilde{u}(t)\|_{H^{1}}+\|\p_{t}\tilde{u}^{2}\|_{L_{t}^{\infty}H^{1}}\|\eta\|_{L_{t}^{\infty}H^{1}}+T\|u\|_{L_{t}^{\infty}H^{1}}\|\p_{t}\eta^{1}\|_{L_{t}^{\infty}H^{1}}+T\|u^{2}\|_{L_{t}^{\infty}H^{1}}\|\p_{t}\tilde{\eta}\|_{L_{t}^{\infty}H^{1}}\notag\\
				\lesssim&\|\p_{t}\tilde{u}(t)\|_{H^{1}}+\delta^{\frac{1}{2}}d(\tilde{u},\tilde{p},\tilde{\eta})+\delta^{\frac{1}{2}}d({u},{p},{\eta}).
			\end{align}
			
			We take $s=1-2\alpha$ so that $\frac12+s+\alpha=\frac32-\alpha$. Note that $M^1=K^1\nabla\Phi^1$ for $\Phi^1$ the straightening map in terms of $\eta^1$, so for the first term in \eqref{enhance_1}, we have
			\begin{equation}\label{enhance_2}
				\left<\p_t^2\tilde{u},\psi\right>_\ast=\frac{d}{dt}\int_\Om \p_t \tilde{u}\cdot\nabla\Phi^1\nabla\phi-\int_\Om \p_t\tilde{u}\cdot\nabla\p_t\Phi^1\nabla\phi-\int_\Om \p_t\tilde{u}\cdot\nabla\Phi^1\nabla\p_t\phi.
			\end{equation}
			Under the assumptions for $S(T, \delta)$, we have the bounds
			\begin{equation}
				\begin{aligned}
					\sup_{0\le t\le T}\int_\Om \p_t \tilde{u}\cdot\nabla\Phi^1\nabla\phi&\lesssim \|\p_t \tilde{u}\|_{L^\infty H^0}(\|\p_t\tilde{\eta}\|_{L^\infty H^1}+|\tilde{a}_{0}(t)|_{L_{t}^{\infty}})\notag\\
					&\lesssim\|\p_t\tilde{u}\|_{L^\infty H^0}(\|\p_t\tilde{\eta}\|_{L^\infty H^1}+\sqrt{T}d(\tilde{u},\tilde{p},\tilde{\eta})+\delta^{\frac{1}{2}} d(\tilde{u},\tilde{p},\tilde{\eta})+\delta^{\frac{1}{2}} d({u},{p},{\eta})),
				\end{aligned}
			\end{equation}
			and
			\begin{equation}
				\begin{aligned}
					&\left|\int_0^T-\int_\Om \p_t\tilde{u}\cdot\nabla\p_t\Phi^1\nabla\phi-\int_\Om \p_t\tilde{u}\cdot\nabla\Phi^1\nabla\p_t\phi\right|\\\lesssim& \sqrt{T}\|\p_t\tilde{u}\|_{L^\infty H^0}(\|\phi\|_{L^2H^1}+\|\p_t\phi\|_{L^2H^1})\\
					\lesssim&\sqrt{T}\|\p_t\tilde{u}\|_{L^\infty H^0}\big(\|\p_t\tilde{\eta}\|_{L^2H^1}+\|\p_t^2\tilde{\eta}\|_{L^2H^{\frac{1}{2}-\alpha}}+\|a(t)\|_{L_{t}^{2}}+\|a(t)^{\prime}\|_{L_{t}^{2}}\big)\notag\\
					\lesssim& \sqrt{T}\|\p_t\tilde{u}\|_{L^\infty H^0}\big(\|\p_t\tilde{\eta}\|_{L^2H^1}+\|\p_t^2\tilde{\eta}\|_{L^2H^{\frac{1}{2}-\alpha}}\big)\\&+\sqrt{T}\|\p_{t}\tilde{u}\|_{L_{t}^{\infty}H^{0}}\big(\|\p_{t}\tilde{u}\|_{L_{t}^{2}H^{1}}+\sqrt{T}d(\tilde{u},\tilde{p},\tilde{\eta})+\delta^{\frac{1}{2}}d(\tilde{u},\tilde{p},\tilde{\eta})+\delta^{\frac{1}{2}} d({u},{p},{\eta})\big).&
				\end{aligned}
			\end{equation}
			For the second term in \eqref{enhance_1}, we estimate
			\begin{equation}
				\begin{aligned}
					\left|\int_0^T\int_\Om \mathbb{D}_{\mathcal{A}^1}\p_t\tilde{u}:\mathbb{D}_{\mathcal{A}^1}\psi\right| \lesssim &\|\p_t\tilde{u}\|_{L^2 H^1}\|\phi\|_{L^2H^2} \\\lesssim& \|\p_t\tilde{u}\|_{L^2 H^1}(\|D_j^s\p_t\tilde{\eta}\|_{L^2\mathcal{H}_{\mathcal{K}}^{1/2}}+|\tilde{a}_{0}(t)|_{L_{t}^{2}})\\
					\lesssim&\|\p_t\tilde{u}\|_{L^2 H^1}(\|D_j^s\p_t\tilde{\eta}\|_{L^2\mathcal{H}_{\mathcal{K}}^{1/2}}+\sqrt{T}d(\tilde{u},\tilde{p},\tilde{\eta})+\delta^{\frac{1}{2}} d(\tilde{u},\tilde{p},\tilde{\eta})+\delta^{\frac{1}{2}} d({u},{p},{\eta})),
				\end{aligned}
			\end{equation}
			where the definition of the space $\mathcal{H}_{\mathcal{K}}^{1/2}$ can also be found in \cite[Section 8]{GT2020}.
			
			The third term in \eqref{enhance_1}, after integration over $[0,T]$ in time, can be rewritten as
			\begin{equation}
				\begin{aligned}
					\int_0^T(\p_t\tilde{\eta},\psi\cdot\mathcal{N}^1)_{1,\Sigma_{0}}&=\int_0^T(\p_t\tilde{\eta}, D_j^s\p_t\tilde{\eta})_{1,\Sigma_{0}}-\int_{0}^{T}(\p_{t}\tilde{\eta},\tilde{a}_{0}(t)D_{j}^{s}\rho_{0})_{1,\Sigma_{0}}\\
					&=\int_0^T(D_j^{s/2}\p_t\tilde{\eta}, D_j^{s/2}\p_t\tilde{\eta})_{1,\Sigma}-\int_{0}^{T}(\p_{t}\tilde{\eta},\tilde{a}_{0}(t)D_{j}^{s}\rho_{0})_{1,\Sigma_{0}}\\
					&\gtrsim\|D_j^{s/2}\p_t\tilde{\eta}\|_{L^2\mathcal{H}_{\mathcal{K}}^1}^2-\|a_{0}(t)\|_{L_{t}^{2}}\|\p_{t}\tilde{\eta}\|_{L_{t}^{2}H^{1}}\\
					&\gtrsim\|D_j^s\p_t\tilde{\eta}\|_{L^2\mathcal{H}_{\mathcal{K}}^{1/2+\alpha}}^2-\|\p_{t}\tilde{\eta}\|_{L_{t}^{2}H^{1}}(\sqrt{T}d(\tilde{u},\tilde{p},\tilde{\eta})+\delta^{\frac{1}{2}}d(\tilde{u},\tilde{p},\tilde{\eta})+\delta^{\frac{1}{2}} d({u},{p},{\eta})),
				\end{aligned}
			\end{equation}
			since $M^1\nabla\phi\cdot\mathcal{N}^1=\nabla\phi\cdot\mathcal{N}_0=D_t^j\p_t\tilde{\eta}-\tilde{a}_{0}(t)D_{j}^{s}\rho_{0}$.
			The fourth term in \eqref{enhance_1}, after integration over $[0,T]$ in time, can be estimated by
			\begin{equation}
				\begin{aligned}
					\int_0^T|[\p_t\tilde{u}^{1}\cdot\mathcal{N}^1-\p_{t}\tilde{u}^{2}\cdot \mathcal{N}^{2},\psi\cdot\mathcal{N}^1]_{\ell}|\lesssim&\int_0^T[\p_t\tilde{u}^{1}\cdot\mathcal{N}^1-\p_{t}\tilde{u}^{2}\cdot \mathcal{N}^{2}]_\ell(\|D_j^s\p_t\tilde{\eta}\|_{\mathcal{H}_{\mathcal{K}}^{1/2+\alpha}}+|\tilde{a}_{0}(t)|)\\
					\lesssim&\|[\p_t\tilde{u}^{1}\cdot\mathcal{N}^1-\p_{t}\tilde{u}^{2}\cdot \mathcal{N}^{2}]_{\ell}\|_{L_{t}^2}\|D_j^s\p_t\tilde{\eta}\|_{L^2\mathcal{H}_{\mathcal{K}}^{1/2+\alpha}}\\
					&+\|[\p_t\tilde{u}^{1}\cdot\mathcal{N}^1-\p_{t}\tilde{u}^{2}\cdot \mathcal{N}^{2}]_{\ell}\|_{L_t^2}(\sqrt{T}d(\tilde{u},\tilde{p},\tilde{\eta})+\delta^{\frac{1}{2}} d(\tilde{u},\tilde{p},\tilde{\eta})+\delta^{\frac{1}{2}} d({u},{p},{\eta})).
				\end{aligned}
			\end{equation}
			Then the symmetric gradient form will result in the estimate
			\begin{equation}
				\begin{aligned}
					&|\int_0^T\int_\Om\left( \mathbb{D}_{\p_t\mathcal{A}^1-\p_t\mathcal{A}^2}{u}^2\right):\mathbb{D}_{\mathcal{A}}\psi J^1|\lesssim \delta^{1/2}\|\phi\|_{L^2H^2}(\|\p_t\eta\|_{L^2H^{3/2-\alpha}}+|\tilde{a}_{0}(t)|)\\
					&\lesssim\delta^{1/2}\|D_j^s\p_t\tilde{\eta}\|_{L^2\mathcal{H}_{\mathcal{K}}^{1/2}}(d(u,\eta,p)+\sqrt{T}d(\tilde{u},\tilde{p},\tilde{\eta})+\delta^{\frac{1}{2}} d(\tilde{u},\tilde{p},\tilde{\eta})+\delta^{\frac{1}{2}} d({u},{p},{\eta})).
				\end{aligned}
			\end{equation}
			
			The terms involving $R^{1, j}$, for $j=1, 4,5$ in \eqref{enhance_1} are estimated, similar to \eqref{est:remain_bulk2} and \eqref{est:remain_b3}.  From the computation in Step 4, we have
			\begin{align}
				\|R^{1,1}-R^{4,1}-R^{5,1}\|_{L^{2}(\mathcal{H}^1)^{*}}\lesssim \delta^{\frac{1}{2}}(d(u,\eta,p)+d(\tilde u,\tilde\eta,\tilde p)).
			\end{align}
			\noindent Therefore:
			\begin{equation}\label{est:r11_r14}
				\begin{aligned}
					&|\int_0^T(\int_{\Om}R^{1,1}\cdot \psi J^1-\int_{-\ell}^{\ell}R^{4,1}\cdot \psi-\int_{-\ell}^{\ell}R^{5,1}\cdot \psi)|\\
					&\lesssim \delta^{\frac{1}{2}}(d(u,\eta,p)+d(\tilde u,\tilde\eta,\tilde p))\|\psi\|_{H^{1}}\lesssim \delta^{\frac{1}{2}}(d(u,\eta,p)+d(\tilde u,\tilde\eta,\tilde p))(\|D_j^s\p_t\tilde{\eta}\|_{L^2\mathcal{H}_{\mathcal{K}}^{1/2}}+|\tilde{a}_{0}(t)|)\\
					&\lesssim \delta^{\frac{1}{2}}(d(u,\eta,p)+d(\tilde u,\tilde\eta,\tilde p))(\|D_j^s\p_t\tilde{\eta}\|_{L^2\mathcal{H}_{\mathcal{K}}^{1/2}}+\sqrt{T}d(\tilde{u},\tilde{p},\tilde{\eta})+\delta^{\frac{1}{2}} d(\tilde{u},\tilde{p},\tilde{\eta})+\delta^{\frac{1}{2}} d({u},{p},{\eta})).
				\end{aligned}
			\end{equation}
			
			Similarly, we still use $\psi\cdot\mathcal{N}^1=M^1\nabla\phi\cdot\mathcal{N}^1=D_j^s\p_t\tilde{\eta}-\tilde{a}_{0}(t)D_{j}^{s}\rho_{0}$ on $\pm\ell$ and the computation for $R^{7,1}$ to bound the following term
			\begin{equation}\label{enhance_3}
				\begin{aligned}
					|\int_0^T[\psi\cdot\mathcal{N}^1, R^{7,1}]_\ell|&\lesssim (\|[D_j^s\p_t\tilde{\eta}]_\ell+|\tilde{a}_{0}(t)|)\|_{L_{t}^{2}}\delta^{\frac{1}{2}}(d(u,\eta,p)+d(\tilde u,\tilde\eta,\tilde p))\\
					&\lesssim  \|D_j^s\p_t\tilde{\eta}\|_{L_{t}^{2}H^{1-s/2}}\delta^{\frac{1}{2}}(d(u,\eta,p)+d(\tilde u,\tilde\eta,\tilde p))\\
					&\quad+\delta^{\frac{1}{2}}(d(u,\eta,p)+d(\tilde u,\tilde\eta,\tilde p))(\sqrt{T}d(\tilde{u},\tilde{p},\tilde{\eta})+\delta^{\frac{1}{2}} d(\tilde{u},\tilde{p},\tilde{\eta})+\delta^{\frac{1}{2}} d({u},{p},{\eta})).
				\end{aligned}
			\end{equation}
			
			Finally, we estimate the last term in \eqref{enhance_1}. We have
			\begin{align}\label{enhance_5}
				\begin{aligned}
					&\int_{0}^{T}(\sigma(\mathcal{R}_{z}(\p_{1}\zeta_{0},\p_{1}\eta^{1})\p_{1}\p_{t}\tilde{\eta}),\p_{1}(\psi\cdot\mathcal{N}^{1}(t)))_{L^{2}}\\
					\lesssim& (\|\p_{1}D_{j}^{s}\p_{t}\tilde{\eta}\|_{L^{2}H^{-\frac{s}{2}}}+\|\tilde{a}_{0}(t)\|_{L_{t}^{2}})\|\int_{0}^{t}(\mathcal{R}_{z}(\p_{1}\zeta_{0},\p_{1}\eta^{1})\p_{1}\p_{t}\tilde{\eta})\|_{L^{2}H^{\frac{s}{2}}}\\
					\lesssim& \|\p_{t}\tilde{\eta}\|^{2}_{L^{2}H^{1+\frac{s}{2}}}\|\p_{t}\eta^{1}\|_{L^{\infty}H^{\frac{3}{2}+}}+\|\p_{t}\tilde{\eta}\|_{L^{2}H^{1+\frac{s}{2}}}(\sqrt{T}d(\tilde{u},\tilde{p},\tilde{\eta})+\delta^{\frac{1}{2}}d(\tilde{u},\tilde{p},\tilde{\eta})+\delta^{\frac{1}{2}} d({u},{p},{\eta}))\\\lesssim&\|\p_{t}\tilde{\eta}\|^{2}_{L^{2}H^{1+\frac{s}{2}}}\|\p_{t}\eta^{1}\|_{L^{\infty}W^{3-\frac{1}{q_{-}},q_{-}}}+\|\p_{t}\tilde{\eta}\|_{L^{2}H^{1+\frac{s}{2}}}(\sqrt{T}d(\tilde{u},\tilde{p},\tilde{\eta})+\delta^{\frac{1}{2}} d(\tilde{u},\tilde{p},\tilde{\eta})+\delta^{\frac{1}{2}} d({u},{p},{\eta}))\\
					\lesssim&\delta^{\frac{1}{2}}\|\p_{t}\tilde{\eta}\|_{L^{2}H^{1+\frac{s}{2}}}^{2}+\sqrt{T}d^{2}(\tilde{u},\tilde{p},\tilde{\eta})+\delta^{\frac{1}{2}}d(\tilde{u},\tilde{p},\tilde{\eta})d({u},{p},{\eta})+\delta^{\frac{1}{2}}d(\tilde{u},\tilde{p},\tilde{\eta})^{2}.
				\end{aligned}
			\end{align}
			
			Setting $T\lesssim \delta$, the combination of \eqref{enhance_1}--\eqref{enhance_5} along with the Cauchy's inequality, after $j\to\infty$, implies that
			\begin{equation}\label{est:contract_3}
				\begin{aligned}
					\|\p_t\tilde{\eta}\|_{L^2H^{3/2-\alpha}}^2
					\lesssim& \delta^{\frac{1}{2}}(d(u,p,\eta)+d(\tilde{u},\tilde{p},\tilde{\eta}))^{2}.
				\end{aligned}
			\end{equation}
			
			Now it remains to use the kinematic boundary condition to show the estimate for $\p_{t}^{2}\tilde{\eta}$. We have:
			
			\begin{align}\label{eq:highest_d}
				\|(\p_t^{2}\tilde{\eta})\|_{L^{2}H^{\frac{1}{2}-\alpha}}=&\|\p_t\tilde{u}\cdot\mathcal{N}^1\|_{L^{2}H^{\frac{1}{2}-\alpha}}+\|R^{6,1}\|_{L^{2}H^{\frac{1}{2}-\alpha}}+\|u^{1}\p_{1}\p_{t}\tilde{\eta}\|_{L^{2}H^{\frac{1}{2}-\alpha}}+\|u\p_{1}\p_{t}\tilde{\eta}^{2}\|_{L^{2}H^{\frac{1}{2}-\alpha}}\\
				=&J_1+J_2+J_3+J_4.\no
			\end{align}
			\noindent For the term $J_1$, by trace theorem, it is bounded by
			\[
			J_{1}\lesssim \|\p_{t}\tilde{u}\|_{L^{2}H^{1}}.
			\]
			\noindent For $J_{2}$, by definition of $R^{6,1}$, we have
			\[
			J_2\lesssim \|\p_{t}\tilde{u}^{2}\|_{L^{2}W^{2,q_{-}}}\|\eta\|_{L^{\infty}H^{\frac{3}{2}-\alpha}}.
			\]
			\noindent For $J_3$, we have
			\[
			J_{3}\lesssim \|u^{1}\p_{1}\p_{t}\tilde{\eta}\|_{L^{2}H^{\frac{1}{2}-\alpha}}
			\lesssim\|u^{1}\|_{L^{\infty}W^{2,q_{-}}}\|\p_{t}\tilde{\eta}\|_{L^{2}H^{\frac{3}{2}-\alpha}}.
			\]
			\noindent Similarly, for the term $J_{4}$, we have:
			\[
			J_{4}\lesssim \|u\|_{L^{\infty}H^{\frac{1}{2}+}(\Sigma)}\|\p_{t}\tilde{\eta}^{2}\|_{L^{2}H^{\frac{3}{2}-\alpha}}\lesssim \|u\|_{L^{\infty}W^{2,q_{+}}}\|\p_{t}\tilde{\eta}^{2}\|_{L^{2}H^{\frac{3}{2}-\alpha}}.
			\]
			\noindent Therefore, combining estimates for $J_{1}$ to $J_{4}$, we obtain
			\[
			\|\p_t^{2}\tilde{\eta}\|_{L^{2}H^{\frac{1}{2}-\alpha}}\lesssim \delta^{\frac{1}{2}}(d(u,p,\eta)+d(\tilde{u},\tilde{p},\tilde{\eta})).
			\]
			
			By integrating $\p_{t}\tilde{\eta}$, we obtain the following estimate for $\tilde{\eta}$
			\begin{align}
				\|\tilde{\eta}\|_{L_{t}^{\infty}H^{\frac{3}{2}-\alpha}}\lesssim \delta^{\frac{1}{2}}(d(u,p,\eta)+d(\tilde{u},\tilde{p},\tilde{\eta}))
			\end{align}

			\paragraph{\underline{Step 6 -- Pressure Estimates and Elliptic Estimates for $(\tilde{u}, \tilde{p}, \tilde{\eta})$}}
			
			We first derive the system for the zero-order terms. Integrating the system \eqref{linear_fix2} from $0$ to $t$ for any $t\in (0,T)$, we obtain
			\begin{equation}\label{linear_fix1}
				\left\{
				\begin{aligned}
					&(\p_t\tilde{u})+\int_{0}^{t}\dive_{\mathcal{A}^1}S_{\mathcal{A}^1}(\p_t\tilde{p},\p_t\tilde{u})=\tilde{R}^{1,1} \  &\text{in}&\ \Om,\\
					&\int_{0}^{t}\dive_{\mathcal{A}^1}\p_t\tilde{u}=\tilde{R}^{2,1} \  &\text{in}&\ \Om,\\
					&\int_{0}^{t}S_{\mathcal{A}^1}(\p_t\tilde{p},\p_t\tilde{u})\mathcal{N}^1=\int_{0}^{t}g(\p_t\tilde{\eta})\mathcal{N}^1-\int_{0}^{t}\sigma\p_1\left(\frac{\p_1\p_t\tilde{\eta}}{(1+|\p_1\zeta_0|)^{3/2}}\right)\mathcal{N}^1\\&\quad\quad\quad\quad\quad\quad\quad-\sigma\int_{0}^{t}\p_{1}(\mathcal{R}_{z}(\p_{1}\zeta_{0},\p_{1}\eta^{1})\p_{1}\p_{t}\tilde{\eta})\mathcal{N}^{1}(t)\\&\quad\quad\quad\quad\quad\quad\quad-\sigma \int_{0}^{t}\p_{1}((\mathcal{R}_{z}(\p_{1}\zeta_{0},\p_{1}\eta^{2})-\mathcal{R}_{z}(\p_{1}\zeta_{0},\p_{1}\eta^{1}))\p_{1}\p_{t}\tilde{\eta}^{2})\mathcal{N}^{1}(t)+\tilde{R}^{4,1} \ &\text{on}&\ \Sigma,\\
					&\int_{0}^{t}(S_{\mathcal{A}^1}(\p_t\tilde{p},\p_t\tilde{u})\nu-\beta \p_t\tilde{u})\cdot\tau=\tilde{R}^{5,1} \  &\text{on}&\ \Sigma_s,\\
					&\p_t\tilde{u}\cdot\nu=0 \  &\text{on}&\ \Sigma_s,\\
					&(\p_t\tilde{\eta})=\int_{0}^{t}\p_t\tilde{u}\cdot\mathcal{N}^1+\tilde R^{6,1} \  &\text{on}&\ \Sigma,\\
					&\kappa(\p_{t}\tilde{\eta})(\pm\ell,t)=\mp\sigma\frac{\p_1\tilde{\eta}}{(1+|\p_1\zeta_0|^2)^{3/2}}(\pm\ell,t)\mp\sigma\int_{0}^{t}\p_{1}(\mathcal{R}_{z}(\p_{1}\zeta_{0},\p_{1}\eta^{1})\p_{1}\p_{t}\tilde{\eta}(\pm \ell,t)\\&\quad\quad\quad\quad\quad\quad\quad\mp\sigma \int_{0}^{t}\p_{1}((\mathcal{R}_{z}(\p_{1}\zeta_{0},\p_{1}\eta^{2})-\mathcal{R}_{z}(\p_{1}\zeta_{0},\p_{1}\eta^{1}))\p_{1}\p_{t}\tilde{\eta}^{2})(\pm \ell,t)-\tilde R^{7,1}(\pm\ell,t)
				\end{aligned}
				\right.
			\end{equation}
			\noindent where:
			\begin{equation}\label{remainder1'}
				\begin{aligned}
					\tilde R^{1,1}&=R^1+\operatorname{div}_{\mathcal{A}^{1}}S_{\mathcal{A}^{1}}(p,u)+\operatorname{div}_{\mathcal{A}^{1}}S_{\mathcal{A}^{1}-\mathcal{A}^{2}}(p^{2},u^{2})+\operatorname{div}_{\mathcal{A}^{1}-\mathcal{A}^{2}}S_{\mathcal{A}^{2}}(p^{2},u^{2}),\\
					\tilde{R}^{2,1}&=-\int_{0}^{t}(\operatorname{div}_{\p_{t}\mathcal{A}^{1}-\p_{t}\mathcal{A}^{2}}{u}^{2})-\int_{0}^{t}\dive_{\p_{t}\mathcal{A}^1}{u}-\int_{0}^{t}\dive_{\mathcal{A}^{1}-\mathcal{A}^{2}}\tilde{u}^{2},\\
					\tilde{R}^{4,1}&=R^4 +\int_{0}^{t}\mu S_{\p_{t}\mathcal{A}^1}({u},{p})\mathcal{N}^1+\int_{0}^{t}\mu S_{\mathcal{A}^{1}}(u,p)\p_{t}\mathcal{N}^{1}+\int_{0}^{t}\mu S_{\p_{t}\mathcal{A}^{1}-\p_{t}\mathcal{A}^{2}}(u^{2},p^{2})\mathcal{N}^{2}\\&\quad +\int_{0}^{t}\mu S_{\mathcal{A}^{1}-\mathcal{A}^{2}}({u}^{2},{p}^{2})\p_{t}\mathcal{N}^{1} +\int_{0}^{t}g{\eta}\p_{t}\mathcal{N}^{1}-\sigma\p_1\left(\frac{\p_1{\eta}}{(1+|\p_1\zeta_0|)^{3/2}}\right)\p_t\mathcal{N}^1\\
					&\quad-\int_{0}^{t}(\mathcal{K}(\eta^{1})-\p_{1}(\mathcal{R}(\p_{1}\zeta_{0},\p_{1}\eta^{1}))\p_{t}(\mathcal{N}^{1}-\mathcal{N}^{2})-(\mathcal{K}(\eta)-\p_{1}\mathcal{R}(\p_{1}\zeta_{0},\p_{1}\eta^{1}
					)+\p_{1}\mathcal{R}(\p_{1}\zeta_{0},\p_{1}\eta^{2}))\p_{t}\mathcal{N}^{2}\\
					&\quad-\sigma \int_{0}^{t}(\mathcal{K}(\tilde{\eta}^{1})-\p_{1}(\mathcal{R}_{z}(\p_{1}\zeta_{0},\p_{1}\eta^{1})\p_{1}\p_{t}\tilde{\eta}^{1}))(\mathcal{N}^{1}-\mathcal{N}^{2}) +\int_{0}^{t}\mu S_{\mathcal{A}^{1}-\mathcal{A}^{2}}(\p_{t}\tilde{u}^{2},\p_{t}\tilde{p}^{2})\mathcal{N}^{2}\\
					\tilde R^{5,1}&=-\int_{0}^{t}S_{\p_{t}\mathcal{A}^{1}}({p},{u})+\mu\int_{0}^{t}S_{\p_{t}\mathcal{A}^{1}-\p_{t}\mathcal{A}^{2}}({u}^{2},p^{2})+\int_{0}^{t}\mu S_{\mathcal{A}^{1}-\mathcal{A}^{2}}(\p_{t}\tilde{u}^{2},\p_{t}\tilde{p}^{2}),\\
					\quad \tilde R^{6,1}=& \int_{0}^{t}\p_{t}\tilde{u}^{2}\cdot(\mathcal{N}^{1}-\mathcal{N}^{2})+\int_{0}^{t}(u_{1}^{1}-u_{1}^{2})\p_{1}\p_{t}\tilde{\eta}+\int_{0}^{t}(u_{1}^{2})(\p_{1}\p_{t}\tilde{\eta}),\\
					\quad \tilde{R}^{7,1}=&R^7.
				\end{aligned}
			\end{equation}
			To apply elliptic theory to \eqref{linear_fix1}, we begin by estimating the elliptic norms of each equation independently.
			
			\subparagraph{\underline{Momentum equation}}
			
			Using integration by part, we can reformulate the first equation of \eqref{linear_fix1} to:
			\begin{align}
				\operatorname{div}_{\mathcal{A}^{1}}S_{\mathcal{A}^{1}}(\tilde{p},\tilde{u})=-\p_{t}\tilde{u}+\int_{0}^{t}\Big(\operatorname{div}_{\p_{t}\mathcal{A}^{1}}S_{\mathcal{A}^{1}}(\tilde{p},\tilde{u})+\operatorname{div}_{\mathcal{A}^{1}}\mathbb{D}_{\p_{t}\mathcal{A}^{1}}\tilde{u} \Big)+\tilde{R}^{1,1}. \label{eq:elliptic_1}
			\end{align}
			
			\noindent We estimate the elliptic nor for each term on the right hand side of equation \eqref{eq:elliptic_1}. We have
			\begin{align}
				\|\p_{t}\tilde{u}\|_{L^{2}L^{q_{-}}}\lesssim \|\p_{t}\tilde{u}\|_{L^{2}H^{1}},
			\end{align}
			\noindent and
			\begin{align}{\label{eq:int_e_1}}
				\left\|\int_{0}^{t}\operatorname{div}_{\p_{t}\mathcal{A}^{1}}S_{\mathcal{A}^{1}}(\tilde{p},\tilde{u}) \right\|_{L^{2}L^{q_{-}}}\lesssim& T\|\p_{t}\bar{\eta}^{1}\|_{L^{\infty}W^{1,\infty}}\|\bar{\eta}^{1}\|_{L^{\infty}W^{2,\frac{2}{1-\varepsilon_{-}}}}(\|\tilde{p}\|_{L^{2}L^{\frac{2}{1-\varepsilon_{-}}}}+\|\tilde{u}\|_{L^{2}W^{1,\frac{2}{1-\varepsilon_{-}}}})\no\\
				\lesssim&T\|\p_{t}{\eta}^{1}\|_{L^{\infty}W^{3-\frac{1}{q_{-}},q_{-}}}\|{\eta}^{1}\|_{L^{\infty}W^{3-\frac{1}{q_{-}},q_{-}}}(\|\tilde{p}\|_{L^{2}W^{1,q_{-}}}+\|\tilde{u}\|_{L^{2}W^{2,q_{-}}})\no\\
				\lesssim& T\delta d(\tilde{u},\tilde{p},\tilde{\eta}),
			\end{align}
			\noindent and similarly
			\begin{align}{\label{eq:int_e_2}}
				&\left \|\int_{0}^{t}(\operatorname{div}_{\mathcal{A}^{1}}\mathbb{D}_{\p_{t}\mathcal{A}^{1}}\tilde{u}) \right\|_{L^{2}L^{q_{-}}}\no\\
				\lesssim&T^{\frac{1}{2}}(1+\|{\eta}^{1}\|_{L^{\infty}W^{3-\frac{1}{q_{-}},q_{-}}})(\|\p_{t}{\eta}^{1}\|_{L^{2}W^{1,\infty}}\|\tilde{u}\|_{L^{2}W^{2,q_{-}}}+\|\p_{t}\eta^{1}\|_{L^{\infty}W^{2,\frac{2}{1-\varepsilon_{-}}}}\|\tilde{u}\|_{L_{t}^{2}W^{1,\frac{2}{1-\varepsilon_{-}}}})\no\\
				\lesssim& T^{\frac{1}{2}}(\|\p_{t}{\eta}^{1}\|_{L^{\infty}W^{3-\frac{1}{q_{-}},q_{-}}}\|\tilde{u}\|_{L^{2}W^{2,q_{-}}}+\|\p_{t}\eta^{1}\|_{L^{\infty}W^{3-\frac{1}{q_{-}},q_{-}}}\|\tilde{u}\|_{L_{t}^{2}W^{2,q_{-}}})\no\\
				\lesssim& T^{\frac{1}{2}}\delta d(\tilde{u},\tilde{p},\tilde{\eta}).
			\end{align}
			\noindent Then it remains to estimate the terms included in $R^{1,1}$. We first discuss the terms in $R^{1}$. By definition of $R^{1}$, we have (For simplicity, we omitted details here; the estimate is similar to the computation done in the proof of Theorem \ref{thm:forcing_+}):
			\begin{align}
				\begin{aligned}
					\|R^{1}\|_{L^{2}L^{q_{-}}}\lesssim& T(1+\|\eta^{1}\|_{L^{\infty}W^{3-\frac{1}{q_{-}},q_{-}}}+\|\eta^{2}\|_{L^{\infty}W^{3-\frac{1}{q_{-}},q_{-}}})(\|u^{1}\|_{L^{\infty}W^{2,q_{-}}})\|u\|_{L^{2}W^{2,q_{-}}}\\&+T(1+\|\eta^{1}\|_{L^{\infty}W^{3-\frac{1}{q_{-}},q_{-}}})\|\p_{t}\eta^{1}\|_{L^{\infty}H^{\frac{3}{2}-\alpha}}\|u\|_{L^{2}W^{2,q_{-}}}\\
					&+T(1+\|\eta^{1}\|_{L^{\infty}W^{3-\frac{1}{q_{-}},q_{-}}})\|\p_{t}\eta^{1}\|_{L^{\infty}H^{\frac{3}{2}-\alpha}}\|u^{2}\|_{L^{\infty}W^{2,q_{-}}}\|\eta\|_{L^{2}W^{3-\frac{1}{q_{-}},q_{-}}}\\
					&+T(1+\|\eta^{2}\|_{L^{\infty}W^{3-\frac{1}{q_{+}},q_+}})\|\p_{t}\eta\|_{L^{2}H^{\frac{3}{2}-\alpha}}\|u^{2}\|_{L^{\infty}W^{2,q_{-}}}\|\eta^{2}\|_{L^{\infty}W^{3-\frac{1}{q_{-}},q_{-}}}\\
					\lesssim&T\delta^{\frac{1}{2}}d(u,p,\eta).
				\end{aligned}
			\end{align}
			\noindent And then for other terms included in $\tilde R^{1,1}$, similar to estimates \eqref{eq:int_e_1} and \eqref{eq:int_e_2}, we have the following estimates
			\begin{align}
				\begin{aligned}
					&\left\|\int_{0}^{t}\operatorname{div}_{\p_{t}\mathcal{A}^{1}}S_{\mathcal{A}^{1}}(p,u) \right\|_{L^{2}L^{q_{-}}}\lesssim  \delta^{\frac{1}{2}}Td(u,p,\eta),
				\end{aligned}
			\end{align}
			\noindent and
			\begin{align}
				\begin{aligned}
					&\left\|\int_{0}^{t}\operatorname{div}_{\mathcal{A}^{1}}S_{\p_{t}\mathcal{A}^{1}}(p,u) \right\|_{L^{2}L^{q_{-}}}\lesssim  \delta^{\frac{1}{2}}T^{\frac{1}{2}}d(u,p,\eta).
				\end{aligned}
			\end{align}
			Then for the remaining terms, using temporal integration by part we have
			\begin{align}
				\begin{aligned}
					&\left\|\int_{0}^{t}\operatorname{div}_{\p_{t}\mathcal{A}^{1}}S_{\mathcal{A}^{1}-\mathcal{A}^{2}}(p^{2},u^{2}) \right\|_{L^{2}L^{q_{-}}}
					\lesssim T^{\frac{1}{2}}\|\p_{t}\eta^{1}\|_{L^{2}W^{3-\frac{1}{q_{-}},q_{-}}}\|\eta\|_{L^{\infty}W^{3-\frac{1}{q_{-}},q_{-}}}(\|u^{2}\|_{L^{2}W^{2,q_{-}}}+\|p^{2}\|_{L^{2}W^{1,q_{-}}})\lesssim T^{\frac{1}{2}}\delta^{\frac{1}{2}}d(u,p,\eta),\\
					&\left\|\int_{0}^{t}\operatorname{div}_{\p_{t}\mathcal{A}^{1}-\p_{t}\mathcal{A}^{2}}S_{\mathcal{A}^{2}}(p^{2},u^{2}) \right\|_{L^{2}L^{q_{-}}}\lesssim T\|\p_{t}\eta\|_{L^{2}H^{\frac{3}{2}-\alpha}}\|\eta^{2}\|_{L^{\infty}W^{3-\frac{1}{q_{+}},q_{+}}}(\|u^{2}\|_{L^{\infty}W^{2,q_{+}}}+\|p^{2}\|_{L^{\infty}W^{1,q_{+}}})\lesssim T\delta^{\frac{1}{2}}d(u,p,\eta),\\
					& \left\|\int_{0}^{t}\operatorname{div}_{\mathcal{A}^{1}}\mathbb{D}_{\p_{t}\mathcal{A}^{1}-\p_{t}\mathcal{A}^{2}}u^{2} \right\|_{L^{2}L^{q_{-}}}\lesssim \left\|\int_{0}^{t}\operatorname{div}_{\p_{t}\mathcal{A}^{1}}\mathbb{D}_{\mathcal{A}^{1}-\mathcal{A}^{2}}u^{2} \right\|_{L^{2}L^{q_{-}}}+ \left\|\int_{0}^{t}\operatorname{div}_{\mathcal{A}^{1}}\mathbb{D}_{\mathcal{A}^{1}-\mathcal{A}^{2}}\p_{t}u^{2} \right\|_{L^{2}L^{q_{-}}}\\
					&\lesssim T^{\frac{1}{2}}\|\p_{t}\eta^{1}\|_{L^{2}W^{3-\frac{1}{q_{-}},q_{-}}}(\|u^{2}\|_{L^{\infty}W^{2,q_{-}}})\|\eta\|_{L_{t}^{2}W^{3-\frac{1}{q_{-}},q_{-}}}+T^{\frac{1}{2}}(\|u^{2}\|_{L^{\infty}W^{2,q_{-}}})\|\eta\|_{L_{t}^{2}W^{3-\frac{1}{q_{-}},q_{-}}}\lesssim \delta^{\frac{1}{2}}T^{\frac{1}{2}}d(u,p,\eta),\\
					& \left \|\int_{0}^{t}\operatorname{div}_{\mathcal{A}^{1}-\mathcal{A}^{2}}\mathbb{D}_{\p_{t}\mathcal{A}^{2}}u^{2} \right\|_{L^{2}L^{q_{-}}}\lesssim T^{\frac{1}{2}}\|\eta\|_{L^{2}W^{3-\frac{1}{q_{-}},q_{-}}}\|\p_{t}\eta^{2}\|_{L^{2}W^{3-\frac{1}{q_{-}},q_{-}}}(\|u^{2}\|_{L^{\infty}W^{2,q_{-}}})\lesssim \delta^{\frac{1}{2}}T^{\frac{1}{2}}d(u,p,\eta),
				\end{aligned}
			\end{align}
			\noindent which finishes the estimate of the momentum equations.
			
			\subparagraph{\underline{Divergence equation}}
			We can rewrite the divergence equation as follows
			\begin{align}{\label{eq:elliptic_2}}
				\dive_{\mathcal{A}^{1}}\tilde{u}=-\int_{0}^{t}\operatorname{div}_{\p_{t}\mathcal{A}^{1}}\tilde{u}+\int_{0}^{t}\dive_{\mathcal{A}^{1}-\mathcal{A}^{2}}\p_{t}\tilde{u}^{2}+\tilde{R}^{2,1}.
			\end{align}
			
			\noindent We then estimate the elliptic norms of each term on the right hand side of \eqref{eq:elliptic_2}. We have:
			\begin{align}
				\|\int_{0}^{t}\operatorname{div}_{\p_{t}\mathcal{A}^{1}}\tilde{u}\|_{L^{2}W^{1,q_{-}}}\lesssim T\|\p_{t}\eta^{1}\|_{L^{\infty}W^{3-\frac{1}{q_{-}},q_{-}}}\|\p_{t}\tilde{u}\|_{L^{2}W^{2,q_{-}}}\lesssim \delta^{\frac{1}{2}}Td(\tilde{u},\tilde{p},\tilde{\eta}),
			\end{align}
			and
			\begin{align}
				\|\int_{0}^{t}\operatorname{div}_{\mathcal{A}^{1}-\mathcal{A}^{2}}\p_{t}\tilde{u}^{2}\|_{L^{2}W^{1,q_{-}}}\lesssim T\|\eta\|_{L^{\infty}W^{3-\frac{1}{q_{-}},q_{-}}}\|\p_{t}\tilde{u}^{2}\|_{L^{2}W^{2,q_{-}}}\lesssim \delta^{\frac{1}{2}}Td(\tilde{u},\tilde{p},\tilde{\eta}).
			\end{align}
			\noindent Then for the terms included in $\tilde{R}^{2,1}$, using integration by part, we have
			\begin{align}
				\begin{aligned}
					\|\int_{0}^{t}\operatorname{div}_{\p_{t}\mathcal{A}^{1}-\p_{t}\mathcal{A}^{2}}{u}^{2}\|_{L^{2}W^{1,q_{-}}}\lesssim&T^{\frac{1}{2}}\|\eta\|_{L^{2}W^{3-\frac{1}{q_{-}},q_{-}}}\|\p_{t}{u}^{2}\|_{L^{2}W^{2,q_{-}}}\lesssim \delta^{\frac{1}{2}}T^{\frac{1}{2}}d({u},{p},{\eta}),\\
					\|\int_{0}^{t}\operatorname{div}_{\p_{t}\mathcal{A}^{1}}{u}\|_{L^{2}W^{1,q_{-}}}\lesssim& T^{\frac{1}{2}}\|\p_{t}\eta^{1}\|_{L^{2}W^{3-\frac{1}{q_{-}},q_{-}}}\|{u}\|_{L^{2}W^{2,q_{-}}}\lesssim \delta^{\frac{1}{2}}T^{\frac{1}{2}}d({u},{p},{\eta}),
				\end{aligned}
			\end{align}
			\noindent which finishes the estimate for divergence equation.
			
			\subparagraph{\underline{Fixed boundary condition}}
			
			The equation on the fixed boundary can be rewritten as:
			\begin{align}{\label{eq:elliptic_3}}
				S_{\mathcal{A}^{1}}(\tilde{u},\tilde{p})\nu \cdot \tau=\beta \tilde{u}\cdot \tau-\int_{0}^{t}\mathbb{D}_{\p_{t}\mathcal{A}^{1}}(\tilde{u})+\tilde{R}^{5,1}.
			\end{align}
			\noindent Then we estimate each term on the right hand side of the equation\eqref{eq:elliptic_3}. We have
			\begin{align}
				\|\tilde{u}\cdot \tau\|_{L^{2}W^{1-\frac{1}{q_{-}},q_{-}}(\Sigma)}\lesssim T^{\frac{1}{2}}\|\tilde{u}\|_{L^{\infty}H^{1}},
			\end{align}
			\noindent and
			\begin{align}
				\|\int_{0}^{t}\mathbb{D}_{\p_{t}\mathcal{A}^{1}}(\tilde{u})\|_{L^{2}W^{1-\frac{1}{q_{-}},q_{-}}}\lesssim T^{\frac{1}{2}}\|\p_{t}\eta^{1}\|_{L^{2}W^{3-\frac{1}{q_{-}},q_{-}}}\|\tilde{u}\|_{L^{2}W^{2,q_{-}}}\lesssim\delta^{\frac{1}{2}}T^{\frac{1}{2}}d(\tilde{u},\tilde{p},\tilde{\eta}),
			\end{align}
			\noindent where we used the following relation:
			\begin{align}
				\|fg\|_{W^{1-\frac{1}{q_{-}},q_{-}}}\lesssim \|f\|_{W^{1,q_{-}}}\|g\|_{W^{1-\frac{1}{q_{-}},q_{-}}}.
			\end{align}
			
			\noindent Then we show the estimate for terms included $\tilde{R}^{5,1}$. It holds that
			\begin{align}
				\begin{aligned}
					\left\|\int_{0}^{t}\mathbb{D}_{\p_{t}\mathcal{A}^{1}}({u}) \right\|_{L^{2}W^{1-\frac{1}{q_{-}},q_{-}}}\lesssim&T^{\frac{1}{2}}\|\p_{t}\eta^{1}\|_{L^{2}W^{3-\frac{1}{q_{-}},q_{-}}}\|{u}\|_{L^{2}W^{2,q_{-}}}\lesssim T^{\frac{1}{2}}\delta^{\frac{1}{2}}d({u},{p},{\eta}),\\
					\left\|\int_{0}^{t}\mathbb{D}_{\p_{t}\mathcal{A}^{1}-\p_{t}\mathcal{A}^{2}}({u}^{2}) \right\|_{L^{2}W^{1-\frac{1}{q_{-}},q_{-}}}\lesssim&T^{\frac{1}{2}}\|\eta\|_{L^{2}W^{3-\frac{1}{q_{-}},q_{-}}}\|\p_{t}{u}^{2}\|_{L^{2}W^{2,q_{-}}}\lesssim T^{\frac{1}{2}}\delta^{\frac{1}{2}}d({u},{p},{\eta}),
				\end{aligned}
			\end{align}
			\noindent which finishes the estimate for the fixed boundary equation.
			
			\subparagraph{\underline{Kinematic boundary condition}}
			
			The kinematic boundary condition can be rewritten as follows
			\begin{align}{\label{eq:elliptic_4}}
				\tilde{u}\cdot \mathcal{N}^{1}=-\p_{t}\tilde{\eta}+\int_{0}^{t}\tilde{u}\cdot \p_{t}\mathcal{N}^{1}+\tilde{R}^{6,1}+\int_{0}^{t}u^{1}\p_{1}\p_{t}\tilde{\eta}+\int_{0}^{t}u\p_{1}\p_{t}\tilde{\eta}^{2}.
			\end{align}
			We now estimate each term on the right hand side of \eqref{eq:elliptic_4}, we have the following computation:
			\begin{align}
				\|\p_{t}\tilde{\eta}\|_{L^{2}W^{2-\frac{1}{q_{-}},q_{-}}}\lesssim \|\p_{t}\tilde{\eta}\|_{L^{2}H^{\frac{3}{2}-\alpha}},
			\end{align}
			\noindent and
			\begin{align}
				\left\|\int_{0}^{t}\tilde{u}\cdot \p_{t}\mathcal{N}^{1} \right\|_{L^{2}W^{2-\frac{1}{q_{-}},q_{-}}}\lesssim T^{\frac{1}{2}}\|\p_{t}\tilde{u}\|_{L^{2}W^{2,q_{-}}}\|\p_{t}\eta^{1}\|_{L^{2}W^{3-\frac{1}{q_{-}},q_{-}}}\lesssim \delta^{\frac{1}{2}}T^{\frac{1}{2}}d(\tilde{u},\tilde{p},\tilde{\eta}),
			\end{align}
			\noindent where we used the trace theorem and the fact that $W^{2-\frac{1}{q_{-}},q_{-}}$ is an algebra. 
			
			Similarly, using temporal integration by part, we have 
			\begin{align}
				\left\|\int_{0}^{t}u^{1}\p_{1}\p_{t}\tilde{\eta} \right\|_{L^{2}W^{2-\frac{1}{q_{-}},q_{-}}}\lesssim& \|u^{1}\p_{1}\tilde{\eta}\|_{L^{2}W^{2-\frac{1}{q_{-}},q_{-}}}+ \left\|\int_{0}^{t}\p_{t}u^{1}\p_{1}\tilde{\eta} \right\|_{L^{2}W^{2-\frac{1}{q_{-}},q_{-}}}+\|\int_{0}^{t}\p_{t}u^{1}\p_{1}\tilde{\eta}\|_{L^{2}W^{2-\frac{1}{q_{-}},q_{-}}}\notag\\
				\lesssim&(\|u^{1}\|_{L^{\infty}W^{2,q_{-}}}+T^{\frac{1}{2}}\|\p_{t}u^{1}\|_{L^{2}W^{2,q_{-}}})\|\tilde{\eta}\|_{L^{2}W^{3-\frac{1}{q_{-}},q_{-}}}\lesssim(1+T^{\frac{1}{2}})\delta^{\frac{1}{2}}d(\tilde{u},\tilde{p},\tilde{\eta})\label{eq:kine1},
			\end{align}
			\noindent and
			\begin{align}
				\begin{aligned}
					\left \|\int_{0}^{t}u\p_{1}\p_{t}\tilde{\eta}^{2} \right\|_{L^{2}W^{2-\frac{1}{q_{-}},q_{-}}}&\lesssim (\|u\|_{L^{2}W^{2,q_{-}}}+\|u\|_{L^{2}W^{2,q_{-}}})(\|\tilde{\eta}^{2}\|_{L^{\infty}W^{3-\frac{1}{q_{-}},q_{-}}}+T^{\frac{1}{2}}\|\p_{t}\tilde{\eta}^{2}\|_{L^{2}W^{3-\frac{1}{q_{-}},q_{-}}})\\
					&\lesssim(1+T^{\frac{1}{2}})\delta^{\frac{1}{2}}d({u},{p},{\eta})\label{eq:kine2}.
				\end{aligned}
			\end{align}
			The estimate for $\tilde{R}^{6,1}$ is similar to \eqref{eq:kine1} and \eqref{eq:kine2}, we omit the details here and only write down the result
			\begin{align}
				\|\tilde{R}^{6,1}\|_{L^{2}W^{2-\frac{1}{q_{-}},q_{-}}}\lesssim \delta^{\frac{1}{2}}(1+T^{\frac{1}{2}})d(u,p,\eta).
			\end{align}
			
			\subparagraph{\underline{Contact point}}
			
			The contact point condition can be rewritten as:
			\begin{align}{\label{eq:elliptic_5}}
				\mp\sigma\frac{\p_1\tilde{\eta}}{(1+|\p_1\zeta_0|^2)^{3/2}}(\pm\ell,t)=&-\kappa(\p_{t}\tilde{\eta})(\pm\ell,t)\mp\sigma\int_{0}^{t}\p_{1}(\mathcal{R}_{z}(\p_{1}\zeta_{0},\p_{1}\eta^{1})\p_{1}\p_{t}\tilde{\eta}\\ &\mp\sigma\int_{0}^{t}\p_{1}((\mathcal{R}_{z}(\p_{1}\zeta_{0},\p_{1}\eta^{2})-\mathcal{R}_{z}(\p_{1}\zeta_{0},\p_{1}\eta^{1}))\p_{1}\p_{t}\tilde{\eta}^{2})-\tilde R^{7,1}.\no
			\end{align}
			We now estimate terms on the right hand side of \eqref{eq:elliptic_5}. Using temporal integration by part, it holds that
			\begin{align}
				\sigma\int_{0}^{t}(\mathcal{R}_{z}(\p_{1}\zeta_{0},\p_{1}\eta^{1})\p_{1}\p_{t}\tilde{\eta}\lesssim T^{\frac{1}{2}}(\|\p_{t}\eta^{1}\|_{L^{2}W^{3-\frac{1}{q_{-}},q_{-}}}+\|\eta^{1}\|_{L^{\infty}W^{3-\frac{1}{q_{-}},q_{-}}})\|\tilde{\eta}\|_{L^{2}W^{3-\frac{1}{q_{-}},q_{-}}}\lesssim T^{\frac{1}{2}}\delta^{\frac{1}{2}}d(\tilde{u},\tilde{p},\tilde{\eta}),
			\end{align}
			\noindent and
			\begin{align}
				\int_{0}^{t}\p_{1}((\mathcal{R}_{z}(\p_{1}\zeta_{0},\p_{1}\eta^{2})-\mathcal{R}_{z}(\p_{1}\zeta_{0},\p_{1}\eta^{1}))\p_{1}\p_{t}\tilde{\eta}^{2})\lesssim &T^{\frac{1}{2}}(\|\eta\|_{L^{2}W^{3-\frac{1}{q_{-}},q_{-}}})\|\p_{t}\tilde{\eta}^{2}\|_{L^{2}W^{3-\frac{1}{q_{-}},q_{-}}}
				\lesssim T^{\frac{1}{2}}\delta^{\frac{1}{2}}d({u},{p},{\eta}).
			\end{align}
			\noindent For the integrated term in $\tilde{R}^{7,1}$, we use an estimate similar  to the one above. We omit the details. Then it remains to show the estimate for $\p_{t}R^{7}$. We have the following:
			
			\begin{align}
				\p_{t}(\hat{\mathcal{W}}(\p_{t}\eta^{1})-\hat{\mathcal{W}}(\p_{t}\eta^{2}))\lesssim \delta^{\frac{1}{2}}[\p_{t}\eta]_{\ell},
			\end{align}
			\noindent which finishes the estimate for the contact point condition.
			
			\subparagraph{\underline{Capillary equation}}
			
			Finally, we show the estimate for the capillary equation. We rewrite the equation as
			\begin{align}{\label{eq:elliptic_6}}
				\begin{aligned}
					S_{\mathcal{A}^1}(\tilde{p},\tilde{u})\mathcal{N}^1=&\int_{0}^{t}S_{\p_{t}\mathcal{A}^{1}}(\tilde{u},\tilde{p})\mathcal{N}^{1}+\int_{0}^{t}S_{\mathcal{A}^{1}}(\tilde{p},\tilde{u})\p_{t}\mathcal{N}^{1}-g(\tilde{\eta})\mathcal{N}^1-\sigma\p_1\left(\frac{\p_1\tilde{\eta}}{(1+|\p_1\zeta_0|)^{3/2}}\right)\mathcal{N}^1\\&-\int_{0}^{t}g(\tilde{\eta})\p_{t}\mathcal{N}^1-\int_{0}^{t}\sigma\p_1\left(\frac{\p_1\tilde{\eta}}{(1+|\p_1\zeta_0|)^{3/2}}\right)\p_{t}\mathcal{N}^1-\sigma\int_{0}^{t}\p_{1}(\mathcal{R}_{z}(\p_{1}\zeta_{0},\p_{1}\eta^{1})\p_{1}\p_{t}\tilde{\eta})\mathcal{N}^{1}(t)\\&-\sigma \int_{0}^{t}\p_{1}((\mathcal{R}_{z}(\p_{1}\zeta_{0},\p_{1}\eta^{2})-\mathcal{R}_{z}(\p_{1}\zeta_{0},\p_{1}\eta^{1}))\p_{1}\p_{t}\tilde{\eta}^{2})\mathcal{N}^{1}(t)+\tilde{R}^{4,1}.
				\end{aligned}
			\end{align}
			For the right hand side of \eqref{eq:elliptic_6}, all of the terms including the integrated terms can be estimated similarly to those of the other equations except for the mean curvature-type terms. We have 
			\begin{align}
				\begin{aligned}
					\left\|\int_{0}^{t}S_{\p_{t}\mathcal{A}^{1}}(\tilde{p},\tilde{u})\mathcal{N}^{1}\right\|_{L_{t}^{2}W^{1-\frac{1}{q_{-}},q_{-}}}\lesssim T^{\frac{1}{2}}\|\p_{t}\eta^{1}\|_{L_{t}^{2}W^{3-\frac{1}{q_{-}},q_{-}}}(\|\tilde{p}\|_{L_{t}^{2}W^{1,q_{-}}}+\|\tilde{u}\|_{L_{t}^{2}W^{2,q_{-}}})\lesssim T^{\frac{1}{2}}\delta^{\frac{1}{2}}d(\tilde{u},\tilde{\eta},\tilde{p}),\\
					\left\|\int_{0}^{t}S_{\mathcal{A}^{1}}(\tilde{p},\tilde{u})\p_{t}\mathcal{N}^{1} \right\|_{L_{t}^{2}W^{1-\frac{1}{q_{-}},q_{-}}}\lesssim T^{\frac{1}{2}}\|\p_{t}\eta^{1}\|_{L_{t}^{2}W^{3-\frac{1}{q_{-}},q_{-}}}(\|\tilde{p}\|_{L_{t}^{2}W^{1,q_{-}}}+\|\tilde{u}\|_{L_{t}^{2}W^{2,q_{-}}})\lesssim T^{\frac{1}{2}}\delta^{\frac{1}{2}}d(\tilde{u},\tilde{\eta},\tilde{p}),
				\end{aligned}
			\end{align}
			and
			\begin{align}
				\left\|\int_{0}^{t}(g\tilde{\eta}-\p_{1}(\frac{\p_{1}\tilde{\eta}}{(1+|\p_{1}\zeta_{0}|^{2})^{\frac{3}{2}}}))\p_{t}\mathcal{N}^{1} \right\|_{L^{2}W^{1-\frac{1}{q_{-}},q_{-}}}\lesssim T\|\p_{t}\eta^{1}\|_{L^{\infty}W^{2,q_{-}}}\|\tilde{\eta}\|_{L^{2}W^{3-\frac{1}{q_{-}},q_{-}}}\lesssim T\delta^{\frac{1}{2}}d(\tilde{u},\tilde{p},\tilde{\eta}).
			\end{align}
			\noindent  Using temporal integration by part, we have
			\begin{align}
				&\left \|\sigma\int_{0}^{t}\p_{1}(\mathcal{R}_{z}(\p_{1}\zeta_{0},\p_{1}\eta^{1})\p_{1}\p_{t}\tilde{\eta})\mathcal{N}^{1}(t) \right\|_{L^{2}W^{1-\frac{1}{q_{-}},q_{-}}}\\
				&\lesssim \|\p_{1}(\mathcal{R}_{z}(\p_{1}\zeta_{0},\p_{1}\eta^{1})\p_{1}\tilde{\eta})\mathcal{N}^{1}(t)\|_{L^{2}W^{1-\frac{1}{q_{-}},q_{-}}}+ \left\|\int_{0}^{t}\p_{1}(\mathcal{R}_{zz}(\p_{1}\zeta_{0},\p_{1}\eta^{1})\p_{1}\p_{t}\eta^{1}\p_{1}\tilde{\eta})\mathcal{N}^{1}(t) \right\|_{L^{2}W^{1-\frac{1}{q_{-}},q_{-}}}\notag\\
				&\quad+ \left\|\int_{0}^{t}\p_{1}(\mathcal{R}_{z}(\p_{1}\zeta_{0},\p_{1}\eta^{1})\p_{1}\tilde{\eta})\p_{t}\mathcal{N}^{1}(t) \right\|_{L^{2}W^{1-\frac{1}{q_{-}},q_{-}}}\notag\\
				&\lesssim T^{\frac{1}{2}}\left(\|\eta^{1}\|_{L^{\infty}W^{3-\frac{1}{q_{-}},q_{-}}}+\|\p_{t}\eta^{1}\|_{L^{2}W^{3-\frac{1}{q_{-}},q_{-}}} \right)\|\tilde{\eta}\|_{L^{2}W^{3-\frac{1}{q_{-}},q_{-}}}\notag\lesssim T^{\frac{1}{2}}\delta^{\frac{1}{2}}d(\tilde{u},\tilde{p},\tilde{\eta}),\no
			\end{align}
			\noindent and similarly
			\begin{align}\label{eq:elliptic_7}
				&\left \|\int_{0}^{t}\p_{1}((\mathcal{R}_{z}(\p_{1}\zeta_{0},\p_{1}\eta^{2})-\mathcal{R}_{z}(\p_{1}\zeta_{0},\p_{1}\eta^{1}))\p_{1}\p_{t}\tilde{\eta}^{2})\mathcal{N}^{1}(t) \right\|_{L^{2}W^{1-\frac{1}{q_{-}},q_{-}}}\notag\\
				&\lesssim T^{\frac{1}{2}}\|\p_{t}\tilde{\eta}^{2}\|_{L^{2}W^{3-\frac{1}{q_{-}},q_{-}}} \|\eta\|_{L^{2}W^{3-\frac{1}{q_{-}},q_{-}}} \lesssim T^{\frac{1}{2}}\delta^{\frac{1}{2}}d(u,\eta,p).
			\end{align}
			
			For terms in $\tilde{R}^{4,1}$, they can be estimated similarly to the terms above, but we omit the details here. The idea is that each time a temporal derivative hits a difference term, we use integration by parts to shift the time derivative to the non-difference term.
			
			Using elliptic theory for system \eqref{linear_fix1} and estimates \eqref{eq:elliptic_1} to \eqref{eq:elliptic_7}, we deduce that
			\begin{equation}\label{est:elliptic_8}
				\begin{aligned}
					&\|\tilde{u}\|_{L^2W^{2,q_-}}^2+\|\tilde{p}\|_{L^2W^{1,q_-}}^2+\|\tilde{\eta}\|_{L^2W^{3-1/q_-,q_-}}^2\lesssim (\delta+T+T^{\frac{1}{2}})(d(u,\eta,p)+d(\tilde{u},\tilde{\eta},\tilde{p}))^2
					+\|\p_t\tilde{\eta}\|_{L^2H^{3/2-\alpha}}^2.
				\end{aligned}
			\end{equation}
			
			\paragraph{\underline{Step 7 -- Fixed Point}}
			Estimates \eqref{est:contract_1}, \eqref{est:contract_3}, and \eqref{est:elliptic_8} yield the bound:
			\begin{equation}\label{est:fixed_point1}
				\begin{aligned}
					d^{2}(\tilde{u},\tilde{p},\tilde{\eta})\leq {C}\exp(\delta^{\frac{1}{2}})(\delta^{\frac{1}{2}}+T)(d(u,p,\eta)+d(\tilde{u},\tilde{p},\tilde{\eta}))^{2},
				\end{aligned}
			\end{equation}
			where $C$ is a universal constant that may change from line to line, and we have used the fact that $Ce^{\delta^{\frac{1}{2}}}(\delta+T)<\frac12$. We then restrict $\delta$ and $T$ to be sufficiently small so that all terms with a tilde superscript on the right-hand side of \eqref{est:fixed_point1} can be absorbed into the left-hand side of inequality \eqref{est:fixed_point1}.  Moreover, we may further restrict $\delta$ to deduce the following inequality:
			\[
			d^{2}(\tilde{u},\tilde{p},\tilde{\eta})\leq \frac{1}{2}d^{2}(u,p,\eta).
			\]
			Consequently, the map $A$ defined in \eqref{def:map_a} is a contraction. Thus, Banach's fixed point theorem implies that the nonlinear system \eqref{eq:quasi_linear} with $(\p_t^ju, \p_t^jp, \p_t^j\eta)$, has a solution $(u,p,\eta)\in S(T,\delta)$ for a small $T$ determined above.
			
			The uniqueness for \eqref{eq:quasi_linear} follows directly from the uniqueness of the solution to the corresponding linear system.
		\end{proof}

		%%%%%%%%%%%%%%%%%%%%%%%%%%%%%%%%%%%%%%%%%%%%%%
		\section{Uniform Bounds and Global Well-Posedness of the Nonlinear System}
		%%%%%%%%%%%%%%%%%%%%%%%%%%%%%%%%%%%%%%%%%%%%%%

		In this section, we study the global well-posedness of the nonlinear system \eqref{eq:geometric} and complete the proof of the main theorem. In light of the local well-posedness results in Theorem \ref{thm: fixed point}, it suffices to derive the global-in-time a priori estimate. 
		
		We denote $\mathcal{A}$, $\mathcal{N}$, $J$ and $K$ as defined in terms of $\eta$. Note that $(\p_t^2u, \p_t^2\eta)$ and $(\p_t^ju, \p_t^jp, \p_t^j\eta)$, $j=0,1$ satisfy the equations
		\begin{equation}\label{eq:geometric1}
			\left\{
			\begin{aligned}
				&\p_t^{j+1}u+\dive_{\mathcal{A}}S_{\mathcal{A}}(\p_t^jp,\p_t^ju)=F^{1,j}\quad&\text{in}&\ \Om,\\
				&\dive_{\mathcal{A}}\p_t^ju=F^{2,j}\quad&\text{in}&\ \Om,\\
				&S_{\mathcal{A}}(\p_t^jp,\p_t^ju)\mathcal{N}=\left[g\p_t^j\eta-\sigma\p_1\left(\frac{\p_1\p_t^j\eta}{(1+|\p_1\zeta_0|^2)^{3/2}}+F^{3,j}\right)\right]\mathcal{N}+F^{4,j}\quad&\text{on}&\ \Sigma,\\
				&\p_t^{j+1}\eta-\p_t^ju\cdot\mathcal{N}=F^{6,j}\quad&\text{on}&\ \Sigma,\\
				&(S_{\mathcal{A}}(\p_t^jp,\p_t^ju)\cdot\nu-\beta\p_t^ju)\cdot\tau=F^{5,j}\quad&\text{on}&\ \Sigma_s,\\
				&\p_t^ju\cdot\nu=0\quad&\text{on}&\ \Sigma_s,\\
				&\kappa\p_t^{j+1}\eta=\mp\sigma\left(\frac{\p_1\p_t^j\eta}{(1+|\p_1\zeta_0|^2)^{3/2}}+F^{3,j}\right)-\kappa F^{7,j}\quad&\text{on}&\ \pm\ell,
			\end{aligned}
			\right.
		\end{equation}
		where the forcing terms are given in the Appendix \ref{sec:dive_forcing}.
		We follow the argument of \cite[Section 10.2]{GT2020} to derive the uniform boundedness of solutions to \eqref{eq:geometric1}.
		
		\begin{proof}[Proof of Theorem \ref{thm:main}.]
			We give only a sketch of the proof.
			
			\paragraph{\underline{Step 1 -- Energy-Dissipation Estimates for \eqref{eq:geometric1}}}
			Using standard arguments, we obtain
			the energy-dissipation equality for $(\p_{t}^{j}u, \p_{t}^{j}\eta,\p_{t}^{j}p)$ with $j=0,1$ in \eqref{eq:geometric1}, and $(\p_{t}^{2}u,\p_{t}^{2}\eta)$
			\begin{equation}\label{eq:e_d1}
				\begin{aligned}
					&\frac{d}{dt} \left(\frac12\int_\Om|u|^2J+\int_{-\ell}^\ell g|\eta|^2+\sigma\frac{|\p_1\eta|^2}{(1+|\p_1\zeta_0|^2)^{3/2}}\right)+\frac{\mu}2\int_\Om|\mathbb{D}_{\mathcal{A}}u|^2J+\beta\int_{\Sigma_s}|u\cdot\tau|^2J+[\p_t\eta]_\ell^2\\
					&=-\sigma (F^{3,0}, \p_1(\p_t\eta))_{L^2(-\ell,\ell)}-[\p_t\eta, F^{7,0}]_\ell,
				\end{aligned}
			\end{equation}
			\begin{equation}\label{eq:e_d2}
				\begin{aligned}
					&\frac{d}{dt}\left(\frac12\int_\Om|\p_tu|^2J+\int_{-\ell}^\ell g|\p_t\eta|^2+\sigma\frac{|\p_1\p_t\eta|^2}{(1+|\p_1\zeta_0|^2)^{3/2}}\right)+\frac{\mu}2\int_\Om|\mathbb{D}_{\mathcal{A}}\p_tu|^2J+\beta\int_{\Sigma_s}|\p_tu\cdot\tau|^2 J+[\p_t^2\eta]_\ell^2\\
					&=\mathcal{F}^{1}(\p_tu, \p_tp, \p_t\eta)+(\p_{t}p,F^{2,1})_{\mathcal{H}^{0}(\Omega)},
				\end{aligned}
			\end{equation}
			\begin{equation}\label{eq:e_d3}
				\begin{aligned}
					&\frac{d}{dt}\left(\frac12\int_\Om|\p_t^{2}u|^2J+\int_{-\ell}^\ell g|\p_t^{2}\eta|^2+\sigma\frac{|\p_1\p_t^{2}\eta|^2}{(1+|\p_1\zeta_0|^2)^{3/2}}-\int_{\Omega}\p_{t}^{2}u \om J\right)+\frac{\mu}2\int_\Om|\mathbb{D}_{\mathcal{A}}\p_t^{2}u|^2J+\beta\int_{\Sigma_s}|\p_t^{2}u\cdot\tau|^2 J+[\p_t^2\eta]_\ell^2\\
					&=\mathcal{F}^{2}(\p_t^{2}u, \p_t^{2}p, \p_t^{2}\eta)+\int_{\Omega}F^{1,2}\om J+\int_{\Omega}\langle JF^{2,2}\rangle \p_{t}^{2}p+\frac{\mu}{2}\int_{\Omega}\mathbb{D}_{\mathcal{A}}\p_{t}^{2}u:\mathbb{D}_{\mathcal{A}}\om J,
				\end{aligned}
			\end{equation}
			where
			\[
			\begin{aligned}
				\mathcal{F}^{1}(\p_tu, \p_tp, \p_t\eta)
				&=\int_\Om (F^{1,1}\cdot\p_tu) J-\int_{\Sigma_s}F^{5,1}(\p_tu\cdot\tau)J+[\p_t^2\eta, F^{6,1}-F^{7,1}]_\ell\\
				&\quad-\int_{-\ell}^\ell\sigma F^{3,2}\p_1(\p_tu\cdot\mathcal{N})+F^{4,1}\cdot\p_tu-\int_{-\ell}^\ell g\p_t\eta F^{6,1}+\sigma\frac{\p_1\p_t\eta}{(1+|\p_1\zeta_0|^2)^{3/2}}\p_1F^{6,1},
			\end{aligned}
			\]
			\[
			\begin{aligned}
				\mathcal{F}^{2}(\p_{t}^{2}u,\p_{t}^{2}p,\p_{t}^{2}\eta)&=\int_\Om (F^{1,2}\cdot\p_t^{2}u) J-\int_{\Sigma_s}F^{5,2}(\p_t^{2}u\cdot\tau)J+[\p_t^3\eta, F^{6,2}-F^{7,2}]_\ell\\
				&\quad-\int_{-\ell}^\ell\sigma F^{3,2}\p_1(\p_{t}^{2}u\cdot\mathcal{N})+F^{4,2}\cdot\p_t^{2}u-\int_{-\ell}^\ell g\p_t^{2}\eta F^{6,2}+\sigma\frac{\p_1\p_t^{2}\eta}{(1+|\p_1\zeta_0|^2)^{3/2}}\p_1F^{6,2}.
			\end{aligned}
			\]
			Here the construction of $\om$ is in \cite[Proposition 10.1]{GT2020}.
			
			Using estimate for the forcing terms in \cite{GT2020} and Section 5, the nonlinear interaction bounds for \eqref{eq:e_d1}, \eqref{eq:e_d2} and \eqref{eq:e_d3} yield the energy-dissipation estimate for $(u, \eta)$:
			\begin{equation}\label{est:e_d1}
				\begin{aligned}
					&\|u(t)\|_{H^0}^2+\|\eta(t)\|_{H^1}^2+\int_s^t(\|u\|_{H^1}^2+\|u\|_{L^2(\Sigma_s)}^2+[\p_t\eta]_\ell^2)\lesssim \|u(s)\|_{H^0}^2+\|\eta(s)\|_{H^1}^2+\int_s^t\sqrt{\mathcal{E}}\mathcal{D},
				\end{aligned}
			\end{equation}
			and the energy-dissipation estimate for $(\p_tu, \p_t\eta)$:
			\begin{equation}\label{est:e_d2}
				\begin{aligned}
					&\|\p_tu(t)\|_{H^0}^2+\|\p_t\eta(t)\|_{H^1}^2+\int_s^t(\|\p_tu\|_{H^1}^2+\|\p_tu\|_{L^2(\Sigma_s)}^2+[\p_t^2\eta]_\ell^2)\lesssim \|\p_tu(s)\|_{H^0}^2+\|\p_t\eta(s)\|_{H^1}^2+\int_s^t\sqrt{\mathcal{E}}\mathcal{D},
				\end{aligned}
			\end{equation}
			as well as the energy-dissipation estimate for $(\p_t^2u, \p_t^2\eta)$:
			\begin{equation}\label{est:e_d3}
				\begin{aligned}
					&\|\p_t^2u(t)\|_{H^0}^2+\|\p_t^2\eta(t)\|_{H^1}^2-(\mathcal{E}(t))^{3/2}+\int_s^t(\|\p_t^2u\|_{H^1}^2+\|\p_t^2u\|_{L^2(\Sigma_s)}^2+[\p_t^3\eta]_\ell^2)\\
					&\lesssim \|\p_t^2u(s)\|_{H^0}^2+\|\p_t^2\eta(s)\|_{H^1}^2+(\mathcal{E}(s))^{3/2}+\int_s^t\sqrt{\mathcal{E}}\mathcal{D}
				\end{aligned}
			\end{equation}
			for all $0\le s\le t\le T$.  We set
			$
			\mathcal{E}_\shortparallel:=\sum_{j=0}^2\|\p_t^ju\|_{L^2}^2+\|\p_t^j\eta\|_{H^1}^2$,
			and
			$
			\mathcal{D}_\shortparallel:=\sum_{j=0}^3\|\p_t^ju\|_{H^1}^2+\|\p_t^ju\|_{L^2(\Sigma_s)}^2+[\p_t^{j+1}\eta]_\ell^2.$
			Then, summing inequalities \eqref{est:e_d1}--\eqref{est:e_d3} yields the estimate
			\begin{equation}\label{est:energy_dissipation1}
				\mathcal{E}_\shortparallel(t)-(\mathcal{E}(t))^{3/2}+\int_s^t\left(\mathcal{D}_\shortparallel\right)\lesssim\mathcal{E}_\shortparallel(s)+(\mathcal{E}(s))^{3/2}+\int_s^t\sqrt{\mathcal{E}}\mathcal{D}
			\end{equation}
			for all $0\le s\le t\le T$.
			
			\paragraph{\underline{Step 2 -- Enhanced Dissipation on the Surface}}
			
			First, from the kinematic boundary condition, it holds that
			\[
			\int_{-\ell}^{\ell}\p_{t}\eta dx=\int_{-\ell}^{\ell}u\cdot \mathcal{N}dx=\int_{\Omega}J\dive_{\mathcal{A}}u =0.
			\]
			Similarly, we have
			\[
			\int_{-\ell}^{\ell}\p_{t}^{j}\eta dx=\p_{t}^{j-1}\int_{-\ell}^{\ell}\p_{t}\eta=0
			\]
			for $j=0,1,2$. Using these results, we obtain
			\[
			\int_{-\ell}^{\ell}\eta(t,x)dx=\int_{0}^{t}\int_{-\ell}^{\ell}\p_{t}\eta(s,x)dsdx=0.
			\]
			
			Using the zero mean value properties for $\p_{t}^{j}\eta$, we apply
			the functional calculus in \cite[Section 8]{GT2020} for the gravity-capillary operator
			\[
			\mathcal{K}: \p_t^j\eta\mapsto g(\p_t^j\eta)-\sigma\p_1\left(\frac{\p_1(\p_t^j\eta)}{(1+|\p_1\zeta_0|^2)^{3/2}}\right)
			\]
			to construct test functions $\psi_{i}=M\nabla \phi_{i}$, where $\phi_{i}$ solves the equation
			\[
			-\Delta\phi_{i}=0~\operatorname{in}~\Omega,~~~\p_{\nu}\phi_{i}=(D_{j}^{s}\p_{t}^{i}{\eta}\zeta_{0})/|\mathcal{N}_{0}|~\operatorname{on}~\Sigma,~~~\p_{\nu}\phi_{i}=0~\operatorname{on}~\Sigma_{s},
			\]
			for $i=\{0,1,2\}$. Plugging this test function $\phi_{i}$ into the $i$-th order weak formulation and using computations similar to those in \cite{GT2020}, step 6 in the proof of Theorem \ref{thm:linear_low} and step 7 in the proof of Theorem  \ref{thm: fixed point}, we obtain that
			\begin{equation}\label{est:enhance_dissipation1}
				\begin{aligned}
					\int_s^t\|\p_t^j\eta\|_{H^{3/2-\alpha}}^2
					&\lesssim \|\p_t^ju(s)\|_{H^0}^2+\|\p_t^j\eta(s)\|_{H^1}^2+\|\p_t^ju(t)\|_{H^0}^2+\|\p_t^j\eta(t)\|_{H^1}^2\\
					&\quad+\int_s^t\left[(\sum_{k=0}^2\|\p_t^ku\|_{H^1}^2+\|\p_t^ku\|_{L^2(\Sigma_s)}^2+[\p_t^{k+1}\eta]_\ell^2)+\mathcal{E}\mathcal{D}\right],
				\end{aligned}
			\end{equation}
			for all $0\le s\le t\le T$ and each $j\in \{0, 1, 2,3\}$.
			The inequalities \eqref{est:energy_dissipation1} and \eqref{est:enhance_dissipation1} give the following result
			\begin{equation}\label{est:energy_dissipation2}
				\begin{aligned}
					&\mathcal{E}_\shortparallel(t)-(\mathcal{E}(t))^{3/2}+\int_s^t\left(\mathcal{D}_\shortparallel+\sum_{j=0}^2\|\p_t\eta\|_{H^{3/2-\alpha}}^2\right)\\
					&\lesssim\mathcal{E}_\shortparallel(s)+\sum_{j=0}^2(\|\p_t^ju(s)\|_{H^0}^2+\|\p_t^j\eta(s)\|_{H^1}^2)+\sum_{j=0}^{1}\|\p_{t}^{j}p(s)\|^{2}_{L^{2}}+(\mathcal{E}(s))^{3/2}+\int_s^t\sqrt{\mathcal{E}}\mathcal{D}.
				\end{aligned}
			\end{equation}
			
			\paragraph{\underline{Step 3 -- Enhanced Dissipation Estimates}}
			{The elliptic theory for \eqref{eq:geometric1}, with $j=0, 1,2$, yields
				\[
				\begin{aligned}
					\int_s^t(\|u\|_{W^{2,q_+}}^2+\|p-\bar{p}\|_{W^{1,q_+}}^2+\|\eta\|_{W^{3-1/q_+,q_+}}^2)
					\lesssim \int_s^t\|\p_tu\|_{H^1}^2+\|\p_t\eta\|_{H^{3/2-\alpha}}^2+\sqrt{\mathcal{E}}\mathcal{D},
				\end{aligned}
				\]
				\[
				\begin{aligned}
					\int_s^t(\|\p_tu\|_{W^{2,q_-}}^2+\|\p_tp-\overline{\p_{t}{p}}\|_{W^{1,q_-}}^2+\|\p_t\eta\|_{W^{3-1/q_-,q_-}}^2)
					&\lesssim \int_s^t\|\p_t^2u\|_{H^1}^2+\|\p_t^2\eta\|_{H^{3/2-\alpha}}^2+(\sqrt{\mathcal{E}})\mathcal{D},
				\end{aligned}
				\]
				with $\overline{\p_{t}^{j}p}=\frac{\int_{\Omega}\p_{t}^{j}p}{|\Omega|}$for all $0\le s\le t\le T$.
				
				We now consider the dissipation at the contact points.  From the transport equation
				$
				\p_t^{j+1}\eta-\p_t^ju\cdot\mathcal{N}=F^{6,j}$,
				and the vanishing of $F^{6,j}$ at $\pm\ell$, we immediately obtain
				$\sum_{j=0}^2[\p_t^ju\cdot\mathcal{N}]_\ell^2=\sum_{j=0}^2[\p_t^{j+1}\eta]_\ell^2$.
				
				The enhanced surface dissipation result then yields
				\[
				\|\p_t^3\eta\|_{H^{1/2-\alpha}}^2\lesssim \|\p_t^2u\|_{H^1}^2+\mathcal{E}\mathcal{D}.
				\]
				
				Consequently, by the result in \eqref{est:energy_dissipation2}, we derive the following estimate
				\begin{equation}\label{est:energy_dissipation3}
					\begin{aligned}
						&\mathcal{E}_\shortparallel(t)-(\mathcal{E}(t))^{3/2}+\int_s^t\left(\mathcal{D}_\shortparallel+\sum_{j=1}^2\|\p_t\eta\|_{H^{3/2-\alpha}}^2\right)\\
						&\quad+\int_s^t\sum_{j=0}^2[\p_t^{j+1}\eta]_\ell^2+\int_s^t\sum_{j=0}^{1}(\|\p_t^{j}u\|_{W^{2,q_-}}^2+\|\p_t^{j}p-\overline{\p_{t}^{j}p}\|_{W^{1,q_-}}^2+\|\p_t^{j}\eta\|_{W^{3-1/q_-,q_-}}^2)\\
						&\lesssim\mathcal{E}_\shortparallel(s)+\sum_{j=0}^2(\|\p_t^ju(s)\|_{H^0}^2+\|\p_t^j\eta(s)\|_{H^1}^2)+\sum_{j=0}^{1}\|\p_{t}^{j}p(s)\|^{2}_{L^{2}}+(\mathcal{E}(s))^{3/2}+\int_s^t\sqrt{\mathcal{E}}\mathcal{D}.
					\end{aligned}
				\end{equation}
			}
			\paragraph{\underline{Step 4 --The Estimate for $\overline{\p_{t}^{j}p}$}}
			
			Using the same computation as in the proof of Theorem \ref{thm:pressure}, we obtain
			\[
			\|\overline{p}\|^{2}_{L_{t}^{2}}\lesssim \|u\|_{L_{t}^{2}H^{1}}+\|\p_{t}u\|^{2}_{L_{t}^{2}L^{2}}+\|[\p_{t}^{2}\eta]_{\ell}\|^{2}_{L_{t}^{2}}+\int_{0}^{t}\sqrt{\mathcal{E}}\mathcal{D}+\mathcal{E}(0),
			\]
			\[ \|\overline{\p_{t}p}\|^{2}_{L_{t}^{2}}\lesssim \|\p_{t}u\|_{L_{t}^{2}H^{1}}+\|\p_{t}^{2}u\|^{2}_{L_{t}^{2}L^{2}}+\|[\p_{t}^{2}\eta]_{\ell}\|^{2}_{L_{t}^{2}}+\int_{0}^{t}\sqrt{\mathcal{E}}\mathcal{D}+\mathcal{E}(0).
			\]
			Hence, combining  Steps 3 and 4, and applying interpolation theory, we obtain that for any $t>0$,
			\[
			\begin{aligned}  &\mathcal{E}_\shortparallel(t)-(\mathcal{E}(t))^{3/2}+\int_s^t\left(\mathcal{D}_\shortparallel+\sum_{j=0}^2\|\p_t^{j}\eta\|_{H^{3/2-\alpha}}^2\right)\\
				&\quad+\int_s^t\sum_{j=0}^2[\p_t^{j+1}\eta]_\ell^2+\int_s^t\sum_{j=0}^{1}(\|\p_t^{j}u\|_{W^{2,q_-}}^2+\|\p_t^{j}p\|_{W^{1,q_-}}^2+\|\p_t^{j}\eta\|_{W^{3-1/q_-,q_-}}^2)\\
				&\lesssim\mathcal{E}_\shortparallel(s)+\sum_{j=0}^2(\|\p_t^ju(s)\|_{H^0}^2+\|\p_t^j\eta(s)\|_{H^1}^2)+\sum_{j=0}^{1}\|\p_{t}^{j}p(s)\|^{2}_{L^{2}}+(\mathcal{E}(s))^{3/2}+\int_s^t\sqrt{\mathcal{E}}\mathcal{D}.
			\end{aligned}\]
			
			\paragraph{\underline{Step 5 -- Enhanced Energy Estimates}}
			Interpolation allows us to enhance the energy estimates:
			\[
			\begin{aligned}
				&\|\p_tu(t)\|_{H^{1+\varepsilon_-/2}}^2\lesssim\|\p_tu(s)\|_{H^{1+\varepsilon_-/2}}^2+\int_s^t\mathcal{D},\\
				&\|\p_t\eta(t)\|_{H^{3/2+(\varepsilon_--\alpha)/2}}^2\lesssim\|\p_t\eta(s)\|_{H^{3/2+(\varepsilon_--\alpha)/2 }}^2+\int_s^t\mathcal{D}.
			\end{aligned}
			\]
			The elliptic theory for \eqref{eq:geometric1} with $j=0,1$ produces the bound:
			\[
			\|u\|_{W^{2,q_+}}^2+\|p\|_{W^{1,q_+}}^2+\|\eta\|_{W^{3-1/q_+,q_+}}^2
			\lesssim \|\p_tu\|_{L^2}^2+\|\p_t\eta\|_{H^{3/2-\alpha}}^2+\mathcal{E}.
			\]
			Theorem \ref{thm:pressure} reveals the estimate
			\[
			\|\p_tp\|_{L^2}\lesssim \|\p_tu\|_{H^1}+\|\p_t^2u\|_{L^2}+\|\p_t^{2}\eta\|_{H^1}.
			\]
			Thus, with the estimate in \eqref{est:energy_dissipation3}, we have
			\begin{equation}\label{est:energy_dissipation4}
				\begin{aligned}
					&\mathcal{E}_\shortparallel(t)+\|u(t)\|_{W^{2,q_+}}^2+\|p(t)\|_{W^{1,q_+}}^2+\|\eta(t)\|_{W^{3-1/q_+,q_+}}^2\\
					&-(\mathcal{E}(t))^{3/2}+\|\p_tp(t)\|_{L^2}^2+\int_s^t\left(\mathcal{D}_\shortparallel+\sum_{j=0}^2\|\p_t^{j}\eta\|_{H^{3/2-\alpha}}^2\right)+\int_s^t\sum_{j=0}^2[\p_t^{j+1}\eta]_\ell^2\\
					&\quad+\int_s^t(\|u\|_{W^{2,q_+}}^2+\|p\|_{W^{1,q_+}}^2+\|\eta\|_{W^{3-1/q_+,q_+}}^2)+\int_s^t(\|\p_tu\|_{W^{2,q_-}}^2+\|\p_tp\|_{W^{1,q_-}}^2+\|\p_t\eta\|_{W^{3-1/q_-,q_-}}^2)\\
					&\lesssim\mathcal{E}(s)+(\mathcal{E}(s))^{3/2}+\int_s^t\sqrt{\mathcal{E}}\mathcal{D}.
				\end{aligned}
			\end{equation}
			
			\paragraph{\underline{Step 6 -- Synthesis}}
			For any $T>0$, we take $s=0$ and restrict $\delta$ to be smaller if necessary, so that if
			\[
			\sup_{0\le t\le T}\mathcal{E}(t)\le\delta \ll 1,
			\]
			then we can absorb the superlinear terms of $\mathcal{E}$ in \eqref{est:energy_dissipation4} to obtain the estimate
			\[
			\sup_{0\le t\le t^\prime}\mathcal{E}(t)+\int_0^{t^\prime}\mathcal{D}(s)\,\mathrm{d}s\lesssim \mathcal{E}(0),
			\]
			for each $t^\prime\le T$, so that
			\begin{equation}\label{est:energy5}
				\mathcal{E}(t)+\int_0^t\mathcal{D}(s)\,\mathrm{d}s\lesssim \mathcal{E}(0)
			\end{equation}
			for all $t\le T$.
			
			Since $\mathcal{E}\lesssim \mathcal{D}$, $\mathcal{E}$ is integrable over $(0,T)$. Gronwall's lemma guarantees that there exists a universal constant $\lambda>0$ such that
			$
			e^{\lambda t}\mathcal{E}(t)\lesssim\mathcal{E}(0)$,
			for all $t<T$. Then the inequality \eqref{est:energy5} also implies
			\[
			\int_0^T\mathcal{D}\lesssim\mathcal{E}(0).
			\] 
		\end{proof}
		
		We note that this theorem is conditional. To ensure it is non-vacuous, we must prove  that the set of initial data satisfying the limiting conditions is nonempty. Thanks to Theorem \ref{thm:initial}, the inclusion of the term $\p_t^2\eta(0)\in W^{2-1/q_+,q_+}(\Sigma)$ in addition to the initial energy$\mathcal{E}(0)$ guarantees that such initial data exist and satisfy the conditions in Appendix \ref{sec:initial}. Consequently, it is sufficient to complete the global well-posedness in Theorem \ref{thm:main}.

		%%%%%%%%%%%%%%%%%%%%%%%%%%%%%%%%%%%%%%%%%%%%%%
		\begin{appendix}
			%%%%%%%%%%%%%%%%%%%%%%%%%%%%%%%%%%%%%%%%%%%%%%

			\makeatletter
			\renewcommand \theequation {%
				A.%
				%\ifnum \c@section>\z@ \@arabic\c@section.%
				%\fi
				% \ifnum\c@subsection>\z@\@arabic\c@subsection.%
				%\fi\ifnum \c@subsubsection>\z@\@arabic\c@subsubsection.
				% \fi
				\@arabic\c@equation} \@addtoreset{equation}{section}
			% \@addtoreset{equation}{subsection}
			\makeatother

			%%%%%%%%%%%%%%%%%%%%%%%%%%%%%%%%%%%%%%%%%%%%%%
			\section{Forcing Terms in the Linear System}\label{sec:dive_forcing}
			%%%%%%%%%%%%%%%%%%%%%%%%%%%%%%%%%%%%%%%%%%%%%%

			Define
			\begin{align}{\label{eq:def1}}
				X:=\{(u,p)|(u,p)\in W^{2,q_{-}}\times W^{1,q_{-}}\},
			\end{align}
			and
			\begin{align}{\label{eq:def2}}
				Y:=\{(F^{1},F^{2},F^{4},F^{4_{+}},F^{5})|(F^{1},F^{2},F^{4},F^{4_{+}},F^{5})\in L^{q_{-}}\times W^{1,q_{-}}\times W^{1-\frac{1}{q_{-}},q_{-}}\times W^{2-\frac{1}{q_{-}},q_{-}}\times W^{1-\frac{
						1
					}{q_{-}},q_{-}}\}.
			\end{align}
			We now present the nonlinear interaction terms for \eqref{eq:quasi_linear}. From \eqref{eq:geometric}, we explicitly set
			\[
			\begin{aligned}
				F^{1}&=\operatorname{div}_{\partial_{t}\mathcal{A}}S_{\mathcal{A}}(p,u)
				+\operatorname{div}_{\mathcal{A}}\mathbb{D}_{\partial_{t}\mathcal{A}}u+\partial_{t}u\cdot \nabla_{\mathcal{A}}u+u\cdot \nabla_{\partial_{t}\mathcal{A}}u+u\cdot \nabla_{\mathcal{A}}\partial_{t}u+\partial_{t}(\partial_{t}\bar{\eta}WK\partial_{2}u),\\F^{2}&=\operatorname{div}_{\partial_{t}\mathcal{A}}u,\\
				F^{4}&=\mu\mathbb{D}_{\partial_{t}\mathcal{A}}u\mathcal{N}+\mu S_{\mathcal{A}}(p,u)\partial_{t}\mathcal{N}+\left[g(\eta)-\sigma\p_1\left(\frac{\p_1\eta}{(1+|\p_1\zeta_0|^2)^{3/2}}\right)-\sigma\partial_{1}\mathcal{R}(\partial_{1}\zeta_{0},\partial_{1}\eta)\right]\p_t\mathcal{N},\\
				F^{5}&=\mu\mathbb{D}_{\p_t\mathcal{A}}u\nu\cdot\tau ,\\
				F^{7}&=\kappa\hat{\mathscr{W}}^\prime(u\cdot \mathcal{N})\p_tu\cdot \mathcal{N}(\pm\ell).
			\end{aligned}
			\]
			
			\makeatletter
			\renewcommand \theequation {%
				B.%
				%\ifnum \c@section>\z@ \@arabic\c@section.%
				%\fi
				% \ifnum\c@subsection>\z@\@arabic\c@subsection.%
				%\fi\ifnum \c@subsubsection>\z@\@arabic\c@subsubsection.
				% \fi
				\@arabic\c@equation} \@addtoreset{equation}{section}
			% \@addtoreset{equation}{subsection}
			\makeatother

			%%%%%%%%%%%%%%%%%%%%%%%%%%%%%%%%%%%%%%%%%%%%%%
			\section{Initial Data for the Nonlinear System}\label{sec:initial}
			%%%%%%%%%%%%%%%%%%%%%%%%%%%%%%%%%%%%%%%%%%%%%%

			Our aim is to use the Galerkin method to establish the existence and uniqueness of solutions to \eqref{eq:geometric}. Therefore, it is crucial to ensure that the initial data of the sequence of approximate solutions converge to the initial data of PDE \eqref{eq:geometric}.
			Now we state our theorem for the construction of initial conditions for nonlinear system \eqref{eq:geometric}.
			\begin{theorem}\label{thm:initial}
				Suppose that the initial data $\p_t^2\eta(0)\in \mathring{H}^1(\Sigma)\cap W^{2-1/q_+, q_+}(\Sigma)$ and $ D_t^2u(0)\in H^0(\Om)$ are given and satisfy $\dive_{\mathcal{A}_0}D_t^2u(0)=0$. Then we can  construct $(u_0, p_0, \eta_0, \p_tu(0), \p_tp(0), \p_t\eta(0))$
				solving the system \eqref{eq:geometric} at $t=0$ such that
				\begin{align}
					\mathcal{E}(0) + \|\p_t^2\eta(0)\|_{W^{2-1/q_+, q_+}}^2\le \delta
				\end{align}
				for a universal constant $\delta>0$ sufficiently small.
			\end{theorem}
			
			\begin{remark}
				It is important to note that the initial data in Theorem \ref{thm:initial} implies the finiteness of $\mathcal{E}(0)$, where $\mathcal{E}$ is defined in \eqref{def:energy}. So, Theorem \ref{thm:initial} guarantees that our initial data for nonlinear system \eqref{eq:geometric} are well defined.
			\end{remark}

			%%%%%%%%%%%%%%%%%%%%%%%%%%%%%%%%%%%%%%%%%%%%%%
			\subsection{Initial Data for the Nonlinear System}\label{sec:initial_linear}
			%%%%%%%%%%%%%%%%%%%%%%%%%%%%%%%%%%%%%%%%%%%%%%
			
			In this section, we construct the initial data for the nonlinear system \eqref{eq:geometric1}. Evaluating the system \eqref{eq:geometric1} at $t=0$, we define the initial forcing terms
			\begin{equation}\label{initial_dt2xi}
				\begin{aligned}
					&F^{1,j}(0)\in L^{q_-}(\Om),\ F^{2,j}(0)\in W^{1,q_{-}}(\Om)\ , F^{3,j}\in W^{2-\frac{1}{q_{-}},q_{-}}(\Sigma), \\
					&F^{4,j}(0)\in W^{1-1/q_-,q_-}(\Sigma), F^{6,j}\in W^{2-\frac{1}{q_{-}},q_{-}}(\Sigma), F^{5,j}(0)\in W^{1-1/q_-,q_-}(\Sigma_s),
				\end{aligned}
			\end{equation}
			are in terms of $(u_0, p_0,\eta_0, \p_tu(0), \p_tp(0), \p_t\eta(0),\p_{t}^{2}u(0),\p_{t}^{2}\eta(0))$, and $\mathcal{A}_0$, $\mathcal{N}(0)$, $J(0)=1/K(0)$ are in terms of $\eta_0$, and $R(0)$ are in terms of $(\eta_0, \p_t\eta(0))$.  We replace $(u,p,\eta)$ with $(v,q,\xi)$ on the left hand side of the system. 
			
			We now use elliptic theory to construct $(D_tu(0), \p_tp(0), \p_t\xi(0))$. First, we define the initial forcing terms $F^{1,1}(0)$, $F^{3,1}(0)$, $F^{4,1}(0)$ and $F^{5,1}(0)$ as
			\begin{equation}\label{def:F_11}
				\begin{aligned}
					&F^{1,1}:=\p_tF^{1,0}+\mathfrak{G}^1(u,p), \quad F^{3,1}(0)=\p_tF^{3,0}(0),\\
					&F^{4,1}:=\p_tF^{4,0}+\mathfrak{G}^4(u,p,\xi),\quad F^{5,1}:=\p_tF^{5,0}+\mathfrak{G}^5(u),
				\end{aligned}
			\end{equation}
			where
			\begin{equation}\label{def:map_g}
				\begin{aligned}
					\mathfrak{G}^1(u,p)&=-\p_tRu -R^2u+\nabla_{\p_t\mathcal{A}}p+\dive_{\mathcal{A}}\left(\mathbb{D}_{\mathcal{A}}(Ru)+\mathbb{D}_{\p_t\mathcal{A}}u)+ \dive_{\p_t\mathcal{A}}\mathbb{D}_{\mathcal{A}}u\right),\\
					\mathfrak{G}^4(u,p,\xi)&=\mu\mathbb{D}_{\mathcal{A}}(Ru)\mathcal{N}-(pI-\mu\mathbb{D}_{\mathcal{A}}u)\p_t\mathcal{N}+\mu\mathbb{D}_{\p_t\mathcal{A}}u\mathcal{N}+\mathcal{K}(\xi)\p_t\mathcal{N}-\sigma\p_1F^3\p_t\mathcal{N},\\
					\mathfrak{G}^5(u)&=(\mu\mathbb{D}_{\mathcal{A}}(Ru)\nu+\mu\mathbb{D}_{\p_t\mathcal{A}}u\nu+\beta Ru)\cdot\tau.
				\end{aligned}
			\end{equation}
			\noindent and
			\begin{equation}{\label{def:forcing_0}}
				F^{1,0}=\p_t\bar{\eta}WK\p_2u-u\cdot\nabla_{\mathcal{A}}u,\quad F^{3,0}=\sigma\mathcal{R}(\p_1\zeta_0,\p_1\eta),\quad F^{4,0}=0,\quad F^{5,0}=0,\quad F^{7,0}=\kappa\hat{\mathscr{W}}(\p_t\eta).
			\end{equation}
			Provided $u$, $p$ and $\xi$ are sufficiently regular, we define
			\begin{equation}\label{def:forcej2}
				F^{1,2}:=\mathfrak{G}^1(D_tu,\p_tp)+D_tF^{1,1},\
				F^{4,2}:=\mathfrak{G}^4(D_tu,\p_tp,\p_t\xi)+\p_tF^{4,1},\
				F^{5,2}:=\mathfrak{G}^5(D_tu)+\p_tF^{5,1}.
			\end{equation}
			
			Then we have the following lemma.
			\begin{lemma}\label{lem:initial_force1}
				The following estimates hold:
				\begin{equation}\label{est:initial_f11}
					\begin{aligned}
						\|F^{1,1}(0)\|_{L^{q_+}(\Om)}&\lesssim \|\p_tF^{1,0}(0)\|_{L^{q_+}}+\|\p_t\eta(0)\|_{W^{3-1/q_+,q_+}}\|F^{1,0}(0)\|_{L^{q_+}}\\
						&\quad +\big(\|\p_t^2\eta(0)\|_{W^{2-1/q_+,q_+}}+\|\p_t\eta(0)\|_{W^{3-1/q_+,q_+}}^2+\|\p_t\eta(0)\|_{W^{3-1/q_+,q_+}}\big)\|u_0\|_{W^{2,q_+}}\\
						&\quad+\|\p_t\eta(0)\|_{W^{3-1/q_+,q_+}}\big(\|p_0\|_{W^{1,q_+}}+\|u_0\|_{W^{2,q_+}}\big),
					\end{aligned}
				\end{equation}
				\begin{equation}\label{est:initial_f41}
					\begin{aligned}
						\|F^{4,1}(0)\|_{W^{1-1/q_+, q_+}(\Sigma)}&\lesssim\|\p_t\eta(0)\|_{W^{3-1/q_+,q_+}}\big(\|p_0\|_{W^{1,q_+}}+\|u_0\|_{W^{2,q_+}}+\|\xi_0\|_{W^{3-1/q_+,q_+}}\\
						&\quad+\|\p_t\xi(0)\|_{W^{3-1/q_+,q_+}}+\|\p_1F^{3,0}(0)\|_{W^{1-1/q_+, q_+}(\Sigma)}\big),
					\end{aligned}
				\end{equation}
				and
				\begin{equation}\label{est:initial_f51}
					\begin{aligned}
						\|F^{5,1}(0)\|_{W^{1-1/q_+, q_+}(\Sigma_s)}\lesssim \|\p_t\eta(0)\|_{W^{3-1/q_+,q_+}}\|u_0\|_{W^{2, q_+}}.
					\end{aligned}
				\end{equation}
				These \eqref{est:initial_f11}-- \eqref{est:initial_f51} imply
				\[
				\begin{aligned}
					&F^{1,1}(0)\in L^{q_+}(\Om),\quad \p_1F^{3,1}(0)=\p_1\p_tF^{3,0}(0)\in W^{1-1/q_+, q_+}(\Sigma),\\
					&F^{4,1}(0)\in W^{1-1/q_+, q_+}(\Sigma),\quad F^{5,1}(0)\in W^{1-1/q_+, q_+}(\Sigma_s).
				\end{aligned}
				\]
				
			\end{lemma}
			\begin{proof}
				By the assumption that $F^{1,0}(0)$, $\p_tF^{1,0}(0)\in L^{q_+}$, and $D_tF^{1,0}(0)=\p_tF^{1,0}(0)+R^1(0)F^{1,0}(0)$, we directly deduce that
				\begin{equation}\label{est:initial_dtf1}
					\begin{aligned}
						\|D_tF^{1,0}(0)\|_{L^{q_+}}&\lesssim \|\p_tF^{1,0}(0)\|_{L^{q_+}}+\|R(0)\|_{L^\infty}\|F^{1,0}(0)\|_{L^{q_+}}\\
						&\lesssim\|\p_tF^{1,0}(0)\|_{L^{q_+}}+\|\p_t\eta(0)\|_{W^{3-1/q_+,q_+}}\|F^{1,0}(0)\|_{L^{q_+}}.
					\end{aligned}
				\end{equation}
				We now estimate $\mathfrak{G}^1(u_0, p_0)$ term by term. By $\|\mathcal{A}_0\|_{L^\infty(\Om)}\lesssim1$, we have
				\begin{equation}\label{est:initial_g11}
					\begin{aligned}
						&\|(R(0)+\p_tJ(0)K(0))\Delta_{\mathcal{A}_0}u_0\|_{L^{q_+}}\\
						&\lesssim\|R(0)+\p_tJ(0)K(0)\|_{L^\infty(\Om)}(\|\nabla^2u_0\|_{L^{q_+}}+\|\nabla\mathcal{A}_0\|_{L^2(\Om)}\|\nabla u_0\|_{L^{2/(1-\varepsilon_+)}})\\
						&\lesssim \|\p_t\eta(0)\|_{W^{3-1/q_+,q_+}}(1+\|\eta_0\|_{W^{3-1/q_+,q_+}})\|u_0\|_{W^{2,q_+}}.
					\end{aligned}
				\end{equation}
				Similarly, we have
				\begin{equation}\label{est:initial_g12}
					\|(\p_tJ(0)K(0)+R(0)+R^T(0))\nabla_{\mathcal{A}_0}p_0\|_{L^{q_+}}\lesssim \|\p_t\eta(0)\|_{W^{3-1/q_+,q_+}}\|p_0\|_{W^{1,q_+}},
				\end{equation}
				and
				\begin{equation}\label{est:initial_g13}
					\begin{aligned}
						&\|\dive_{\mathcal{A}_0}\left(\mathbb{D}_{\mathcal{A}_0}(R(0)u_0)+\mathbb{D}_{\p_t\mathcal{A}(0)}u_0-R(0)\mathbb{D}_{\mathcal{A}_0}u_0\right)\|_{L^{q_+}}\\
						&\lesssim \|\nabla\mathcal{A}_0\|_{L^2(\Om)}\|\nabla(R(0) u_0)\|_{L^{2/(1-\varepsilon_+)}}+\|\nabla^2(R(0) u_0)\|_{L^{q_+}}+\|\p_t\mathcal{A}(0)\|_{L^\infty(\Om)}\|\nabla^2u_0\|_{L^{q_+}}\\
						&\quad+\|\nabla\p_t\mathcal{A}(0)\|_{L^2(\Om)}\|\nabla u_0\|_{L^{2/(1-\varepsilon_+)}}+\|\nabla R(0)\|_{L^2(\Om)}\|\nabla u_0\|_{L^{2/(1-\varepsilon_+)}}+\|R(0)\|_{L^\infty(\Om)}\|\nabla^2 u_0\|_{L^{q_+}}\\
						&\quad+\|R(0)\|_{L^\infty(\Om)}\|\nabla\mathcal{A}_0\|_{L^2(\Om)}\|\nabla u_0\|_{L^{2/(1-\varepsilon_+)}}\\
						&\lesssim \|\nabla\mathcal{A}_0\|_{L^2(\Om)}(\|\nabla R(0)\|_{L^{2/(1-\varepsilon_+)}}\| u_0\|_{L^\infty}+ \|R(0)\|_{L^\infty(\Om)}\|\nabla u_0\|_{L^{2/(1-\varepsilon_+)}})\\
						&\quad+\|\nabla^2 R(0)\|_{L^{q_+}(\Om)}\| u_0\|_{L^\infty}+ \|R(0)\|_{L^\infty(\Om)}\|\nabla u_0\|_{L^{q_+}}+\|\nabla R(0)\|_{L^2(\Om)}\|\nabla u_0\|_{L^{2/(1-\varepsilon_+)}}\\
						&\quad+\|\p_t\mathcal{A}(0)\|_{L^\infty(\Om)}\|\nabla^2u_0\|_{L^{q_+}}+\|\nabla\p_t\mathcal{A}(0)\|_{L^2(\Om)}\|\nabla u_0\|_{L^{2/(1-\varepsilon_+)}}+\|\nabla R(0)\|_{L^2(\Om)}\|\nabla u_0\|_{L^{2/(1-\varepsilon_+)}}\\
						&\quad+\|R(0)\|_{L^\infty(\Om)}\|\nabla^2 u_0\|_{L^{q_+}}+\|R(0)\|_{L^\infty(\Om)}\|\nabla\mathcal{A}_0\|_{L^2(\Om)}\|\nabla u_0\|_{L^{2/(1-\varepsilon_+)}}\\
						&\lesssim\|\p_t\eta(0)\|_{W^{3-1/q_+,q_+}}(1+\|\eta_0\|_{W^{3-1/q_+,q_+}})\|u_0\|_{W^{2,q_+}}.
					\end{aligned}
				\end{equation}
				Then we estimate
				\begin{equation}\label{est:initial_g14}
					\begin{aligned}
						\|\p_tR(0)u_0\|_{L^{q_+}}&\lesssim(\|\nabla\p_t^2\bar{\eta}(0)\|_{L^{q_+}}+\|\nabla\p_t\bar{\eta}(0)\|_{L^{q_+}})\| u_0\|_{L^\infty}\\
						&\lesssim (\|\p_t^2\eta(0)\|_{W^{2-1/q_+,q_+}}+\|\p_t\eta(0)\|_{W^{3-1/q_+,q_+}}^2)\|u_0\|_{W^{2,q_+}}.
					\end{aligned}
				\end{equation}
				The combination of \eqref{est:initial_g11}--\eqref{est:initial_g14} gives the bound of  $\mathfrak{G}^1(v_0, q_0)$, which together with \eqref{est:initial_dtf1} imply \eqref{est:initial_f11}.
				
				We now consider the bounds for $\mathfrak{G}^4(u_0,p_0,\xi_0)$. We first employ Sobolev embedding theory and usual trace theory to deduce that
				\begin{equation}\label{est:initial_g41}
					\begin{aligned}
						\|\mathbb{D}_{\mathcal{A}_0}(R(0)u_0)\mathcal{N}(0)\|_{W^{1-1/q_+, q_+}(\Sigma)}&\lesssim \|\mathbb{D}_{\mathcal{A}_0}(R(0)u_0\|_{W^{1, q_+}(\Om)}\\
						&\lesssim \|\nabla R(0)u_0)\|_{L^{q_+}}+\| R(0)\nabla u_0)\|_{L^{q_+}}+\|\nabla^2 R(0)u_0)\|_{L^{q_+}}\\
						&\quad+\|\nabla R(0)\nabla u_0)\|_{L^{q_+}}+\|R(0)\nabla^2 u_0)\|_{L^{q_+}}\\
						&\lesssim \|\p_t\eta(0)\|_{W^{3-1/q_+,q_+}}\|u_0\|_{W^{2,q_+}}.
					\end{aligned}
				\end{equation}
				The similar argument allows us to deduce
				\begin{equation}\label{est:initial_g42}
					\begin{aligned}
						&\|(p_0I-\mu\mathbb{D}_{\mathcal{A}_0}u_0)\p_t\mathcal{N}(0)\|_{W^{1-1/q_+, q_+}(\Sigma)}\\
						&\lesssim \|p_0I-\mu\mathbb{D}_{\mathcal{A}_0}u_0\|_{W^{1, q_+}(\Om)}\|\p_t\mathcal{N}(0)\|_{W^{1, q_+}(\Sigma)}\\
						&\lesssim (\|p_0\|_{W^{1,q_+}}+\||\mathcal{A}_0|\nabla u_0\|_{L^{q_+}}+\||\nabla\mathcal{A}_0|\nabla u_0\|_{L^{q_+}}+\||\mathcal{A}_0|\nabla^2u_0\|_{L^{q_+}})\|\p_t\eta(0)\|_{W^{3-1/q_+,q_+}}\\
						&\lesssim \|\p_t\eta(0)\|_{W^{3-1/q_+,q_+}}(\|p_0\|_{W^{1,q_+}}+\|u_0\|_{W^{2,q_+}}),
					\end{aligned}
				\end{equation}
				and
				\begin{equation}\label{est:initial_g43}
					\begin{aligned}
						&\|\mathbb{D}_{\p_t\mathcal{A}(0)}u_0\mathcal{N}(0)\|_{W^{1-1/q_+, q_+}(\Sigma)}\lesssim \|\mathbb{D}_{\p_t\mathcal{A}(0)}u_0\|_{W^{1, q_+}(\Om)}\\
						&\lesssim \||\p_t\mathcal{A}(0)|\nabla u_0\|_{L^{q_+}}+\||\nabla\p_t\mathcal{A}(0)|\nabla u_0\|_{L^{q_+}}+\||\p_t\mathcal{A}_0|\nabla^2u_0\|_{L^{q_+}})\lesssim \|\p_t\eta(0)\|_{W^{3-1/q_+,q_+}}\|u_0\|_{W^{2,q_+}}.
					\end{aligned}
				\end{equation}
				For the capillary term, we have
				\begin{equation}\label{est:initial_g44}
					\begin{aligned}
						\|\mathcal{K}(\xi_0)\p_t\mathcal{N}(0)\|_{W^{1-1/q_+, q_+}(\Sigma)}&\lesssim \|\mathcal{K}(\xi_0)\|_{W^{1-1/q_+, q_+}(\Sigma)}\|\p_t\mathcal{N}(0)\|_{W^{1, q_+}(\Sigma)}\\
						&\lesssim  \|\p_t\eta(0)\|_{W^{3-1/q_+,q_+}}(\|\xi_0\|_{W^{3-1/q_+,q_+}}+\|\p_t\xi(0)\|_{W^{3-1/q_+,q_+}}).
					\end{aligned}
				\end{equation}
				The assumption for $\p_1F^3(0)$ shows that
				\begin{equation}\label{est:initial_g45}
					\begin{aligned}
						\|\p_1F^{3,0}(0)\p_t\mathcal{N}\|_{W^{1-1/q_+, q_+}(\Sigma)}&\lesssim \|\p_1F^{3,0}(0)\|_{W^{1-1/q_+, q_+}(\Sigma)}\|\p_t\mathcal{N}(0)\|_{W^{1, q_+}(\Sigma)}\\
						&\lesssim  \|\p_t\eta(0)\|_{W^{3-1/q_+,q_+}}\|\p_1F^{3,0}(0)\|_{W^{1-1/q_+, q_+}(\Sigma)}.
					\end{aligned}
				\end{equation}
				The estimates \eqref{est:initial_g41}--\eqref{est:initial_g45} along with the assumption $\p_tF^4(0)\in W^{1-1/q_+, q_+}(\Sigma)$ imply \eqref{est:initial_f41}.
				
				Finally, we estimate $\mathfrak{G}^5(u_0)$. We first bound that
				\begin{equation}\label{est:initial_g51}
					\begin{aligned}
						&\|\mathbb{D}_{\mathcal{A}_0}(R(0)u_0)\|_{W^{1-1/q_+, q_+}(\Sigma_s)}+\|\mathbb{D}_{\p_t\mathcal{A}(0)}u_0\|_{W^{1-1/q_+, q_+}(\Sigma_s)}\\
						&\lesssim \|\mathbb{D}_{\mathcal{A}_0}(R(0)u_0)\|_{W^{1, q_+}(\Om)}+\|\mathbb{D}_{\p_t\mathcal{A}(0)}u_0\|_{W^{1, q_+}(\Om)}\lesssim \|\p_t\eta(0)\|_{W^{3-1/q_+,q_+}}\|u_0\|_{W^{2,q_+}}.
					\end{aligned}
				\end{equation}
				We then bound the remainder terms:
				\begin{equation}\label{est:initial_g52}
					\begin{aligned}
						&\|\p_tJ(0)K(0)\mathbb{D}_{\mathcal{A}(0)}u_0\|_{W^{1-1/q_+, q_+}(\Sigma_s)}+\|R(0)u_0\|_{W^{1-1/q_+, q_+}(\Sigma_s)}+\| \p_tJ(0)K(0)u_0\|_{W^{1-1/q_+, q_+}(\Sigma_s)}\\
						&\lesssim\|\p_tJ(0)K(0)\|_{W^{1, q_+}(\Sigma_s)}\|\mathbb{D}_{\mathcal{A}(0)}u_0\|_{W^{1-1/q_+, q_+}(\Sigma_s)}+\||R(0)|+|\p_tJ(0)K(0)|\|_{W^{1, q_+}(\Sigma_s)}\|u_0\|_{W^{2, q_+}}\\
						&\lesssim \|\p_t\eta(0)\|_{W^{3-1/q_+,q_+}}\|u_0\|_{W^{2, q_+}}.
					\end{aligned}
				\end{equation}
				Inequalities \eqref{est:initial_g51} and \eqref{est:initial_g52} yield the bound \eqref{est:initial_f51}.
			\end{proof}

			The result for the nonlinear system is given in Theorem \ref{thm:initial}.
			The proof of Theorem \ref{thm:initial} relies on the Banach fixed-point theorem. We first assume that $(u_0, p_0, \eta_0)$, $(D_tu(0), \p_{t}p(0), \p_{t}\eta(0))$, and $(D_t^2u(0), \p_t^2\eta(0))$ are given so that
			$\mathcal{E} (u,p,\eta)(0) + \|\p_t^{2}\eta(0)\|_{W^{2-1/q_+, q_+}}^2\le \delta$. We then show that the forcing terms are well-defined in Lemma \ref{est:initial_force3}. Then we denote the unknowns $(v_0, q_0, \xi_0)$ in \eqref{eq:geometric1} with $j=0$ at time $t=0$ instead of $(u_0, p_0, \xi_0)$, and the unknowns $(\p_tv(0), \p_tq(0), \p_t\xi(0))$ in \eqref{eq:geometric1} with $j=1$ at time $t=0$ instead of $(\p_tu(0), \p_tp(0), \p_t\xi(0))$, and the unknowns $(\p_t^{2}v(0), \p_t^{2}q(0), \p_t^{2}\xi(0))$ in \eqref{eq:geometric1} with $j=2$ at time $t=0$ instead of $(\p_t^{2}u(0), \p_t^{2}p(0), \p_t^{2}\xi(0))$, and obtain the bound $\mathcal{E}(v,q,\xi)(0) + \|\p_t^2\xi(0)\|_{W^{2-1/q_+, q_+}}^2\le \delta$ for nonlinear initial data, subject to the assumption that $D_t^2v(0) = D_t^2u(0)$ and $\p_t^2\xi(0) = \p_t^2 \eta(0)$. Finally, we use a contraction mapping argument to prove the existence of fixed points.
			
			\begin{lemma}\label{est:initial_force3}
				It holds that
				\begin{equation}\label{est:initial_force1}
					\begin{aligned}
						&\|F^{1,0}(0)\|_{L^{q_+}}+\|\p_1F^{1,0}(0)\|_{W^{1-1/q_+,q_+}}
						+[F^{7,0}(0)]_\ell\\
						&\lesssim\|\p_t\eta(0)\|_{W^{3-1/q_+,q_+}}(\|u_0\|_{W^{2,q_+}}+\|\eta_0\|_{W^{3-1/q_+,q_+}}^2)+\|\eta_0\|_{W^{3-1/q_+,q_+}}^2+\|u_0\|_{W^{2,q_+}}^2
					\end{aligned}
				\end{equation}
				and
				\begin{equation}\label{est:initial_force2}
					\begin{aligned}
						&\|\p_tF^{1,0}(0)\|_{L^{q_+}}+\|\p_1\p_tF^{3,0}(0)\|_{W^{1-1/q_+,q_+}}
						+[\p_tF^{7,0}(0)]_\ell\\
						&\lesssim\|\p_t^2\eta(0)\|_{H^1}\|u_0\|_{W^{2,q_+}}+\|u_0\|_{W^{2,q_+}}(\|D_tu(0)\|_{W^{2,q_+}}+\|u_0\|_{W^{2,q_+}}\|\p_t\eta(0)\|_{W^{3-1/q_+,q_+}})\\
						&\quad+\|\p_t\eta(0)\|_{W^{3-1/q_+,q_+}}(\|\eta_0\|_{W^{3-1/q_+,q_+}}+\|\p_t^2\eta(0)\|_{W^{3-1/q_+,q_+}}).
					\end{aligned}
				\end{equation}
			\end{lemma}
			\begin{proof}
				From the definition of $F^1(0)$, we directly use H\"older's inequality to show that
				\begin{equation}\label{est:initial_f111}
					\begin{aligned}
						\|F^{1,0}(0)\|_{L^{q_+}}&\lesssim \|\p_t\bar{\eta}(0)\p_2u_0\|_{L^{q_+}}+\|u_0\cdot\nabla_{\mathcal{A}_0}u_0\|_{L^{q_+}}\lesssim \|\p_t\eta(0)\|_{H^{3/2+(\varepsilon_-\alpha)/2}}\|u_0\|_{W^{2,q_+}}+\|u_0\|_{W^{2,q_+}}^2,
					\end{aligned}
				\end{equation}
				where in the last inequality, we have used the fact $\|\mathcal{A}_0\|\lesssim 1$, and the Sobolev embedding theory and the usual trace theory.
				By definition of $\mathcal{R}$, we have $\p_1F^{3,0}(0)=\p_y\mathcal{R}(\p_1\zeta_0,\p_1\eta_0)\p_1\zeta_0+\p_z\mathcal{R}(\p_1\zeta_0,\p_1\eta_0)\p_1\eta_0$. By the boundedness of $\p_y\mathcal{R}$, $\p_z\mathcal{R}$ and $\p_z^2\mathcal{R}$, we have
				\begin{equation}\label{est:initial_f311}
					\begin{aligned}
						\|\p_1F^{3,0}(0)\|_{W^{1-1/q_+,q_+}}\lesssim \|\eta_0\|_{W^{3-1/q_+,q_+}}^2.
					\end{aligned}
				\end{equation}
				By property $|\hat{\mathscr{W}}(z)|\lesssim|z|^2$, we have
				\begin{equation}\label{est:initial_f711}
					[F^{7,0}(0)]_\ell\lesssim [\p_t\eta(0)]_\ell\|\eta_0\|_{W^{3-1/q_+,q_+}}^2\lesssim\|\p_t\eta(0)\|_{W^{3-1/q_+,q_+}}\|\eta_0\|_{W^{3-1/q_+,q_+}}^2.
				\end{equation}
				The combination of \eqref{est:initial_f111}, \eqref{est:initial_f311} and \eqref{est:initial_f711} implies \eqref{est:initial_force1}.
				
				We estimate $\p_tF^{1,0}(0)$ term by term. First, by the H\"older's inequality, and the Sobolev embedding theory and the usual trace theory, we have that
				\begin{equation}\label{est:initial_dtf111}
					\begin{aligned}
						\|\p_t^2\bar{\eta}(0)K(0)\p_2u_0\|_{L^{q_+}}\lesssim \|\p_t^2\bar{\eta}(0)\|_{L^\infty}\|\p_2u_0\|_{L^{q_+}}\lesssim \|\p_t^2\eta(0)\|_{H^1}\|u_0\|_{W^{2,q_+}},
					\end{aligned}
				\end{equation}
				where we have used $\|K(0)\|_{L^\infty}\lesssim 1$. Similarly,
				\begin{equation}\label{est:initial_dtf112}
					\begin{aligned}
						\|\p_t\bar{\eta}(0)\p_tK(0)W\p_2u_0+\p_t\bar{\eta}(0)K(0)W\p_2D_tu(0)\|_{L^{q_+}}\lesssim \|\p_t\eta(0)\|_{W^{3-1/q_+,q_+}}(\|u_0\|_{W^{2,q_+}}+\|D_tu(0)\|_{W^{2,q_+}}),
					\end{aligned}
				\end{equation}
				and
				\begin{equation}\label{est:initial_dtf113}
					\|\p_t\bar{\eta}(0)K(0)W\p_2(R(0)u_0)\|_{L^{q_+}}\lesssim \|\p_t\eta(0)\|_{W^{3-1/q_+,q_+}}\|R(0)u_0\|_{W^{2,q_+}}\lesssim \|\p_t\eta(0)\|_{W^{3-1/q_+,q_+}}^2\|u_0\|_{W^{2,q_+}}.
				\end{equation}
				Then the lower order terms are bounded by
				\begin{equation}\label{est:initial_dtf114}
					\begin{aligned}
						\|D_tu(0)\nabla_{\mathcal{A}_0}u_0\|_{L^{q_+}}+\|u_0\nabla_{\p_t\mathcal{A}(0)}u_0\|_{L^{q_+}}\lesssim (\|D_tu(0)\|_{W^{2,q_+}}+\|u_0\|_{W^{2,q_+}}\|\p_t\eta(0)\|_{W^{3-1/q_+,q_+}})\|u_0\|_{W^{2,q_+}}.
					\end{aligned}
				\end{equation}
				and
				\begin{equation}\label{est:initial_dtf115}
					\begin{aligned}
						&\|u_0\nabla_{\mathcal{A}_0}D_tu(0)\|_{L^{q_+}}+\|(R(0)u_0)\nabla_{\mathcal{A}_0}u_0\|_{L^{q_+}}+\|u_0\nabla_{\mathcal{A}_0}(R(0)u_0)\|_{L^{q_+}}\\
						&\lesssim \|u_0\|_{W^{2,q_+}}(\|D_tu(0)\|_{W^{2,q_+}}+\|u_0\|_{W^{2,q_+}}\|\p_t\eta(0)\|_{W^{3-1/q_+,q_+}}).
					\end{aligned}
				\end{equation}
				
				We expand
				\[
				\begin{aligned}
					\p_1\p_tF^{3,0}(0)&=\p_{yz}^2\mathcal{R}(\p_1\zeta_0, \p_1\eta_0)\p_1\p_t\eta(0)\p_1^2\zeta_0+\p_{zz}^2\mathcal{R}(\p_1\zeta_0, \p_1\eta_0)\p_1\p_t\eta(0)\p_1^2\eta_0+\p_z\mathcal{R}(\p_1\zeta_0, \p_1\eta_0)\p_1^2\p_t\eta(0),
				\end{aligned}
				\]
				and then use the fact that $W^{1,q_+}(\Sigma)$ is a Banach algebra to estimate
				\begin{equation}\label{est:initial_dtf311}
					\begin{aligned}
						\|\p_1\p_tF^{3,0}(0)\|_{W^{1-1/q_+,q_+}(\Sigma)}&\lesssim \|\p_{yz}^2\mathcal{R}(\p_1\zeta_0, \p_1\eta_0)\|_{W^{1,q_+}}\|\p_1\p_t\eta(0)\|_{W^{1-1/q_+,q_+}}\\
						&\quad+\|\p_{zz}^2\mathcal{R}(\p_1\zeta_0, \p_1\eta_0)\|_{W^{1,q_+}}\|\p_1\p_t\eta(0)\p_1^2\eta_0\|_{W^{1-1/q_+,q_+}}\\
						&\quad+\|\p_z\mathcal{R}(\p_1\zeta_0, \p_1\eta_0)\|_{W^{1,q_+}}\|\p_1^2\p_t\eta_0\|_{W^{1-1/q_+,q_+}}\\
						&\lesssim \|\p_t\eta(0)\|_{W^{3-1/q_+,q_+}}\|\eta+\|_{W^{3-1/q_+,q_+}},
					\end{aligned}
				\end{equation}
				where we have used the facts $|\p_{yz}^2\mathcal{R}(y,z)|\lesssim|z|$ and $|\p_{zz}^2\mathcal{R}(y,z)|\lesssim1$.
				
				The definition of $\hat{\mathscr{W}}$ gives the bound $|\hat{\mathscr{W}}^\prime(z)|\lesssim|z|$ for $|z|\lesssim1$. Then
				\begin{equation}\label{est:initial_dtf711}
					[\p_tF^{7,0}(0)]_\ell\lesssim \max_{\pm\ell}|\p_t\eta(0)||\p_t^2\eta(0)|\lesssim \|\p_t\eta(0)\|_{W^{3-1/q_+,q_+}}\|\p_t^2\eta(0)\|_{W^{3-1/q_+,q_+}}.
				\end{equation}
				
				So all the above estimates from \eqref{est:initial_dtf111}--\eqref{est:initial_dtf711} show the estimate \eqref{est:initial_force2}.
			\end{proof}
			
			To establish the contraction, we denote velocities by $v^i$, $u^i$, pressures by $q^i$,$ p^i$, and surface functions by $\eta^i$, $\xi^i$ for $i=1, 2$. We suppose that for $j=1, 2$, $(u_0^j, p_0^j, \eta_0^j, D_tu^j(0), \p_tp^j(0), \p_t\eta^j(0))$
			as well as $(D_t^2v^1(0), \p_t^2\eta^1(0))=(D_t^2v^2(0), \p_t^2\eta^2(0))$ are two initial data for the nonlinear system \eqref{eq:geometric} with $\xi$ replaced by $\eta$. We then consider the following systems with $j=0, 1$
			\begin{equation}\label{eq:pde_dt1}
				\left\{
				\begin{aligned}
					&(D_t^{j+1}v^i)(0)+\nabla_{\mathcal{A}^i_0}(\p_t^jq^i)(0)-\mu\Delta_{\mathcal{A}^i_0}(D_t^jv^i(0)+2^jR^i(0)D_t^jv^i(0)=\mathfrak{f}^{1,i}_{j}(0)\quad&\text{in}&\quad\Om,\\
					&\dive_{\mathcal{A}^i_0}(D_t^jv^i)(0)=0\quad&\text{in}&\quad\Om,\\
					&S_{\mathcal{A}^i_0}(\p_t^jq^i(0),D_t^jv^i(0))\mathcal{N}^i(0)=\bigg[g(\p_t^j\xi^i(0))-\sigma\p_1\left(\frac{\p_1\p_t^j\xi^i(0)}{(1+|\p_1\zeta_0|^2)^{3/2}}\right)-\p_1 \mathfrak{f}^{3,i}_j(0)\bigg]\mathcal{N}(0)+\mathfrak{f}^{4,i}_j(0)\quad&\text{on}&\quad\Sigma,\\
					&D_t^jv^i(0)\cdot\mathcal{N}^i(0)=\p_t^{j+1}\xi^i(0)\quad&\text{on}&\quad \Sigma,\\
					&(S_{\mathcal{A}^i_0}(\p_t^jq^i(0),D_t^jv^i(0))\nu-\beta D_t^jv^i(0))\cdot\tau=\mathfrak{f}^{5,i}_j(0)\quad&\text{on}&\quad\Sigma_s,\\
					&D_t^jv^i(0)\cdot\nu=0\quad&\text{on}&\quad\Sigma_s,\\
					&\mp\sigma\frac{\p_1\p_t^j\xi^i(0)}{(1+|\p_1\zeta_0|^2)^{3/2}}(\pm\ell)=\kappa(D_t^jv^i(0)\cdot\mathcal{N}^i(0))(\pm\ell)\pm \mathfrak{f}^{3,i}_j(\pm\ell,0)-\mathfrak{f}^{7,i}_j(\pm\ell,0),
				\end{aligned}
				\right.
			\end{equation}
			where $\mathfrak{f}^{k,i}_0=F^{j,0}(u_0^i,\eta_0^i,\p_t\eta^i(0))$ for $i=1, 2$, $k=1, 3, 4, 5$, $\mathfrak{f}^{7,i}_0=F^{7,0}(\p_t\eta^i(0))$ and for $i=1, 2$,
			\[
			\mathfrak{f}^{k,i}_1=\p_tF^{k,0}(u_0^i,\eta_0^i,D_tu^i(0),\p_t\eta^i(0))+\mathfrak{G}^{k}(v^i_0, q^i_0, \xi^i_0),
			\]
			$k=1, 4, 5$, $\mathfrak{f}^{3,i}_1=\p_tF^{3,0}(\eta_0^i,\p_t\eta^i(0))$ and $\mathfrak{f}^{7,i}_1=\p_tF^{7,0}(\p_t\eta^i(0))$.  We define $\mathcal{A}^i$, $\mathcal{N}^i$, etc, in terms of $\eta^i$, $i=1, 2$.
			Set
			\[
			\begin{aligned}
				\widetilde{\mathfrak{E}}_0(u_0^i,p_0^i,\eta_0^i, D_tu^i(0),\p_tp^i(0),\p_t\xi^i(0))=\|D_tu^i(0)\|_{W^{2, q_+}}^2+\|\p_tp^i(0)\|_{W^{1, q_+}}^2+\|\p_t\xi^i(0)\|_{W^{3-1/q_+, q_+}}^2\\
				+\|u_0^i\|_{W^{2, q_+}}^2+\|p_0^i\|_{W^{1, q_+}}^2+\|\xi_0^i\|_{W^{3-1/q_+, q_+}}^2.
			\end{aligned}
			\]

			Lemma  \ref{est:initial_force3} directly shows that
			\begin{lemma}
				The solutions $(v^i_0, q^i_0, \xi^i_0)$ and $(D_tv^i(0), \p_tq^i(0), \p_t\xi^i(0))$ solving \eqref{eq:pde_dt1} are bounded by
				\[
				\begin{aligned}
					&\|v_0^i\|_{W^{2, q_+}}^2+\|q_0^i\|_{W^{1, q_+}}^2+\|\xi_0^i\|_{W^{3-1/q_+, q_+}}^2+\|D_tv(0)\|_{W^{2, q_+}}^2+\|\p_tq(0)\|_{W^{1, q_+}}^2+\|\p_t\xi(0)\|_{W^{3-1/q_+, q_+}}^2\\
					&\lesssim \|D_t^2u(0)\|_{L^2}^2+\|\p_t^2\eta(0)\|_{W^{3-1/q_+, q_+}}^2+\widetilde{\mathfrak{E}}_0(u_0^i,p_0^i,\eta_0^i, D_tu^i(0),\p_tp^i(0),\p_t\eta^i(0)).
				\end{aligned}
				\]
			\end{lemma}
			Therefore, by assuming $\|D_t^2u(0)\|_{L^2}^2+\|\p_t^2\eta(0)\|_{W^{3-1/q_+, q_+}}^2\lesssim \delta_0$, we have
			\begin{equation}\label{eq:small_initial}
				\widetilde{\mathfrak{E}}_0(v_0^i,q_0^i,\xi_0^i, D_tv^i(0),\p_tq^i(0),\p_t\xi^i(0))\lesssim \delta
			\end{equation}
			for a universal constant $\delta$ sufficiently small, $i=1, 2$.
			
			For simplicity,  we might abuse the notation to denote
			\begin{equation}\label{eq:diff_1}
				u_0=u_0^1-u_0^2,\ p_0=p_0^1-p_0^2,\ \xi_0=\xi_0^1-\xi_0^2,\ v_0=v_0^1-v_0^2,\ q_0=q_0^1-q_0^2,\ \eta_0=\eta_0^1-\eta_0^2,
			\end{equation}
			and
			\begin{equation}\label{eq:diff_2}
				\begin{aligned}
					D_tu(0)=D_tu^1(0)-D_tu^2(0),\ \p_tp(0)=\p_tp^1(0)-\p_tp^2(0),\ \p_t\xi(0)=\p_t\xi^1(0)-\p_t\xi^2(0),\\ D_tv(0)=D_tv^1(0)-D_tv^2(0),\ \p_tq=\p_tq(0)^1-\p_tq(0)^2,\ \p_t\eta(0)=\p_t\eta(0)^1-\p_t\eta(0)^2.
				\end{aligned}
			\end{equation}
			\begin{lemma}\label{lem:contract_1}
				The triple $(D_tv(0), \p_tq(0), \p_t\xi(0))$ in \eqref{eq:diff_2} obeys the estimate
				\begin{equation}\label{est:initial_contract_4}
					\begin{aligned}
						\|D_tv(0)\|_{H^1}^2+\|\p_tq(0)\|_{H^0}^2+\|\p_t\xi(0)\|_{H^{3/2}}^2
						\le C\delta\big(\|D_tu(0)\|_{H^1}^2+\|u_0\|_{W^{2,q_+}}^2+\|\eta_0\|_{W^{3-1/q_+,q_+}}^2\\
						\quad+\|\p_t\eta(0)\|_{H^{3/2}}^2\|v_0\|_{W^{2,q_+}}^2+\|q_0\|_{W^{1,q_+}}^2+\|\xi_0\|_{W^{3-1/q_+,q_+}}^2\big)
					\end{aligned}
				\end{equation}
				where $C$ is a universal constant that may change from line to line.
			\end{lemma}
			\begin{proof}
				By $(D_t^2v^1(0), \p_t^2\xi^1(0))=(D_t^2v^2(0), \p_t^2\xi^2(0))$, the difference of \eqref{eq:diff_2} satisfies the equations
				\begin{equation}\label{eq:diff_initial1}
					\left\{
					\begin{aligned}
						&\nabla_{\mathcal{A}_0^1}\p_t^jq-\Delta_{\mathcal{A}_0^1}D_t^jv+2^jR(0)^1D_t^jv=\mathfrak{g}^1_j&\text{in}&\ \Om,\\
						&\dive_{\mathcal{A}_0^1}D_t^jv=\mathfrak{g}^2_j&\text{in}&\ \Om,\\
						&S_{\mathcal{A}_0^1}(\p_t^jq,D_t^jv)\mathcal{N}^1(0)=\bigg[g\p_t^j\xi(0)-\sigma\p_1\bigg(\frac{\p_1\p_t^j\xi(0)}{(1+|\p_1\zeta_0|^2)^{3/2}}\bigg)-\p_1 \mathfrak{g}^3_j\bigg]\mathcal{N}^1(0)+\mathfrak{g}^4_j&\text{on}&\ \Sigma,\\
						&D_t^ju\cdot\mathcal{N}^1(0)=\mathfrak{g}^6_j\quad&\text{on}&\ \Sigma,\\
						&(S_{\mathcal{A}_0^1}(\p_t^jq,D_t^jv)\nu-\beta D_t^jv)\cdot\tau=\mathfrak{g}^5_j&\text{on}&\ \Sigma_s,\\
						&D_t^jv\cdot\nu=0&\text{on}&\ \Sigma_s,\\
						&\mp\sigma\frac{\p_1\p_t^j\xi(0)}{(1+|\p_1\zeta_0|^2)^{3/2}}(\pm\ell)=\kappa D_t^jv\cdot\mathcal{N}^1(0)(\pm\ell)\pm \mathfrak{g}^3_j(\pm\ell,0)-\mathfrak{g}^7_j(\pm\ell,0)
					\end{aligned}
					\right.
				\end{equation}
				with $j=1$,  where the forcing terms $\mathfrak{g}^k_1$, $k=1, \ldots, 7$, are presented in Appendix \ref{sec:initial_forcing}.
				
				We multiply the first equation of \eqref{eq:diff_initial1} by $J^1(0)D_tv(0)$, then integrate over $\Om$ and integrate by parts to obtain
				\[
				\begin{aligned}
					&\frac\mu2\int_\Om|\mathbb{D}_{\mathcal{A}^1_0}D_tv(0)|^2J^1(0)+\beta\int_{\Sigma_s}|D_tv(0)|^2J^1(0)=\int_\Om\mathfrak{g}^1\cdot D_tv(0)J^1(0)+\int_\Om \p_tq(0)\mathfrak{g}^2J^1(0)-(\p_t\xi(0), \mathfrak{g}^6)_{1,\Sigma}\\
					&\qquad\qquad\qquad\qquad-\int_\Sigma\mathfrak{g}^3\p_1(\mathfrak{g}^6)-\int_\Sigma\mathfrak{g}^4\cdot(D_tv(0))-\int_{\Sigma_s}\mathfrak{g}^5(D_tv(0))\cdot\tau J^1(0),
				\end{aligned}
				\]
				where we have used the fact $\mathfrak{g}^6(\pm\ell)=0$ on the contact points. By the assumption \eqref{eq:small_initial} and integration by parts, we obtain the bounds
				\begin{equation}\label{est:icontract_1}
					\begin{aligned}
						&\left|\int_\Om\mathfrak{g}^1\cdot(D_tv(0))J^1(0)-\int_\Sigma\mathfrak{g}^4\cdot(D_tv(0))-\int_{\Sigma_s}\mathfrak{g}^5(D_tv(0))\cdot\tau J^1(0)\right|\\
						&\lesssim \delta^{1/2}(\|\p_t\eta(0)\|_{H^{3/2}}+\|\eta_0\|_{W^{3-1/q_+,q_+}}+\|D_tu(0)\|_{H^1}+\|u_0\|_{W^{2,q_+}}\\
						&\quad+\|v_0\|_{W^{2,q_+}}+\|q_0\|_{W^{1,q_+}}+\|\xi_0\|_{W^{3-1/q_+,q_+}})\|D_tv(0)\|_{H^1}.
					\end{aligned}
				\end{equation}
				Similarly, we have
				\begin{equation}\label{est:diff_dt_q}
					\begin{aligned}
						&\left|\int_\Om \p_tq(0)\mathfrak{g}^2J^1(0)-(\p_t\xi(0), \mathfrak{g}^6)_{1,\Sigma}-\int_\Sigma\mathfrak{g}^3\p_1(\mathfrak{g}^6)\right|\\
						&\lesssim \delta^{1/2}(\|\p_tq(0)\|_{H^0}+\|\p_t\xi(0)\|_{H^{3/2}}+\|\eta_0\|_{W^{3-1/q_+,q_+}}+\|\p_t\eta(0)\|_{H^{3/2}})\|\eta_0\|_{W^{3-1/q_+,q_+}}.
					\end{aligned}
				\end{equation}
				By Cauchy's inequality, we have the bounds
				\begin{equation}\label{est:diff_dt_u}
					\begin{aligned}
						\|D_tv(0)\|_{H^1}^2&\le C\delta(\|D_tu(0)\|_{H^1}^2+\|u_0\|_{W^{2,q_+}}^2+\|\eta_0\|_{W^{3-1/q_+,q_+}}^2+\|\p_t\eta(0)\|_{H^{3/2}}^2)\\
						&\quad+C\delta(\|v_0\|_{W^{2,q_+}}^2+\|q_0\|_{W^{1,q_+}}^2+\|\xi_0\|_{W^{3-1/q_+,q_+}}^2+\|\p_tq(0)\|_{H^0}^2+\|\p_t\xi(0)\|_{H^{3/2}}^2).
					\end{aligned}
				\end{equation}
				
				It is observed from \eqref{est:diff_dt_q} and \eqref{est:diff_dt_u}, we have to estimate the difference $\p_tq(0)$ and $\p_t\xi(0)$. We now estimate the difference $\p_tq(0)$. We first define $w=M^1(0)\nabla\psi$, where $\psi\in H^2(\Om)$ solves
				\[
				-\Delta\psi=\p_tq(0) \quad \text{in}\ \Om,\quad
				\psi=0 \quad \text{on}\ \Sigma,\quad
				\p_\nu\psi=0\quad \text{on}\ \Sigma_s,
				\]
				which, by the elliptic theory on the $\Om$ with convex corners, enjoys the estimate
				$\|\psi\|_{H^2}\lesssim \|\p_tq(0)\|_{H^0}$.
				Then we have
				\begin{equation}\label{eq:div_2}
					\dive_{\mathcal{A}^1_0}w=\dive_{\mathcal{A}^1_0}M^1(0)\nabla\psi=K^1(0)[\p_tq(0)],\quad \text{satisfying} \quad \|w\|_{H^1}\lesssim \|\p_tq(0)\|_{H^0}.
				\end{equation}
				
				We now multiply the first equation of \eqref{eq:diff_initial1} by $J^1(0)w$, and then integrate over $\Om$ and integrate by parts, together with \eqref{eq:div_2}, to deduce that
				\begin{equation}\label{eq:contract_dtq}
					\begin{aligned}
						-\|\p_tq(0)\|_{H^0}^2+\frac\mu2\int_\Om(\mathbb{D}_{\mathcal{A}_0^1}(D_tv(0)):\mathbb{D}_{\mathcal{A}_0^1}w)J^1(0)=\int_\Om \mathfrak{g}^1\cdot J^1(0)w-\int_{\Sigma_s}\mathfrak{g}^5(w\cdot\tau)J^1(0)\\
						-\int_{-\ell}^\ell \left[g\p_t\xi(0)-\sigma\p_1\left(\frac{\p_1\p_t\xi(0)}{(1+|\p_1\zeta_0|^2)^{3/2}}+\mathfrak{g}^3\right)\right]w\cdot\mathcal{N}^1(0)+\mathfrak{g}^4\cdot w.
					\end{aligned}
				\end{equation}
				Then by \eqref{eq:div_2}, we estimate
				\begin{equation}\label{est:icontract_2}
					\begin{aligned}
						|\int_\Om(\mathbb{D}_{\mathcal{A}_0^1}(D_tv(0)):\mathbb{D}_{\mathcal{A}_0^1}w)J^1(0)|\lesssim \|D_tv(0)\|_{H^1}\|w\|_{H^1}\lesssim\|D_tv(0)\|_{H^1}\|\p_tq(0)\|_{H^0}.
					\end{aligned}
				\end{equation}
				The same argument as \eqref{est:icontract_1} implies that
				\begin{equation}
					\begin{aligned}
						&|\int_\Om \mathfrak{g}^1\cdot J^1(0)w-\int_{\Sigma_s}\mathfrak{g}^5(w\cdot\tau)J^1(0)-\int_{-\ell}^\ell \mathfrak{g}^4\cdot w|\\
						&\lesssim \delta^{1/2}(\|\p_t\eta(0)\|_{H^{3/2-\alpha}}+\|\eta_0\|_{W^{3-1/q_+,q_+}}+\|D_tu(0)\|_{H^1}+\|u_0\|_{W^{2,q_+}}\\
						&\quad+\|v_0\|_{W^{2,q_+}}+\|q_0\|_{W^{1,q_+}}+\|\xi_0\|_{W^{3-1/q_+,q_+}})\|w\|_{H^1}\\
						&\lesssim\delta^{1/2}(\|\p_t\eta(0)\|_{H^{3/2}}+\|\eta_0\|_{W^{3-1/q_+,q_+}}+\|D_tu(0)\|_{H^1}+\|u_0\|_{W^{2,q_+}}\\
						&\quad+\|v_0\|_{W^{2,q_+}}+\|q_0\|_{W^{1,q_+}}+\|\xi_0\|_{W^{3-1/q_+,q_+}})\|\p_tq(0)\|_{H^0}.
					\end{aligned}
				\end{equation}
				We directly estimate
				\begin{equation}\label{est:icontract_3}
					\begin{aligned}
						|\int_{-\ell}^\ell \left[g\p_t\xi(0)-\sigma\p_1\left(\frac{\p_1\p_t\xi(0)}{(1+|\p_1\zeta_0|^2)^{3/2}}+\mathfrak{g}^3\right)\right]w\cdot\mathcal{N}^1(0)|\lesssim (\|\p_t\xi(0)\|_{H^{3/2}}+\|\mathfrak{g}^3\|_{H^{1/2}})\|w\|_{H^1}\\
						\lesssim [\|\p_t\xi(0)\|_{H^{3/2}}+\delta^{1/2}(\|\p_t\eta(0)\|_{H^{3/2}}+\|\eta_0\|_{W^{3-1/q_+,q_+}})]\|\p_tq(0)\|_{H^0}.
					\end{aligned}
				\end{equation}
				So \eqref{eq:contract_dtq}, together with \eqref{est:icontract_2}--\eqref{est:icontract_3} implies that
				\begin{equation}\label{est:diff_dtq}
					\begin{aligned}
						\|\p_tq(0)\|_{H^0}&\lesssim\|D_tv(0)\|_{H^1}+\|\p_t\xi(0)\|_{H^{3/2}}+\delta^{1/2}(\|\p_t\eta(0)\|_{H^{3/2}}+\|\eta_0\|_{W^{3-1/q_+,q_+}}\\
						&\quad+\|D_tu(0)\|_{H^1}+\|u_0\|_{W^{2,q_+}}+\|v_0\|_{W^{2,q_+}}+\|q_0\|_{W^{1,q_+}}+\|\xi_0\|_{W^{3-1/q_+,q_+}}).
					\end{aligned}
				\end{equation}
				
				We now estimate the difference $\p_t\xi(0)$. The weak formulation for \eqref{eq:diff_initial1} might be given as
				\begin{equation}\label{eq:weak_initial_dtxi}
					\begin{aligned}
						&(\p_t\xi(0), \varphi\cdot\mathcal{N}^1(0))_{1,\Sigma}+\frac\mu2\int_\Om(\mathbb{D}_{\mathcal{A}_0^1}(D_tu(0)):\mathbb{D}_{\mathcal{A}_0^1}w)J^1(0)\\
						&=\int_\Om \mathfrak{g}^1\cdot J^1(0)\varphi-\int_{-\ell}^\ell \mathfrak{g}^3\p_1(\varphi\cdot\mathcal{N}^1(0))+\mathfrak{g}^4\cdot \varphi-\int_{\Sigma_s}\mathfrak{g}^5(\varphi\cdot\tau)J^1(0)
					\end{aligned}
				\end{equation}
				for each $\varphi\in \mathcal{W}_\sigma(\Om)$. Then the dissipation theory for the surface function developed in \cite{GT18} can  be directly applied to \eqref{eq:weak_initial_dtxi} to obtain the estimate
				\begin{equation}\label{est:diff_dtxi}
					\begin{aligned}
						\|\p_t\xi(0)\|_{H^{3/2}}&\lesssim \|D_tv(0)\|_{H^1}+\delta^{1/2}(\|\p_t\eta(0)\|_{H^{3/2-\alpha}}+\|\eta_0\|_{W^{3-1/q_+,q_+}}+\|D_tu(0)\|_{H^1}\\
						&\quad+\|u_0\|_{W^{2,q_+}}+\|v_0\|_{W^{2,q_+}}+\|q_0\|_{W^{1,q_+}}+\|\xi_0\|_{W^{3-1/q_+,q_+}}).
					\end{aligned}
				\end{equation}
				
				In order to obtain the estimate \eqref{est:initial_contract_4}, we plug \eqref{est:diff_dtq} and \eqref{est:diff_dtxi} into \eqref{est:diff_dt_u} to obtain
				\begin{equation}\label{est:diff_dt_u1}
					\begin{aligned}
						\|D_tv(0)\|_{H^1}^2&\le C\delta(\|D_tu(0)\|_{H^1}^2+\|u_0\|_{W^{2,q_+}}^2+\|\eta_0\|_{W^{3-1/q_+,q_+}}^2\\
						&\quad+\|\p_t\eta(0)\|_{H^{3/2}}^2)+C\delta(\|v_0\|_{W^{2,q_+}}^2+\|q_0\|_{W^{1,q_+}}^2+\|\xi_0\|_{W^{3-1/q_+,q_+}}^2).
					\end{aligned}
				\end{equation}
				for $\delta$ sufficiently small. Then we plunge \eqref{est:diff_dt_u1} into \eqref{est:diff_dtxi} to bound $(\p_t\xi^1(0)-\p_t\xi^2(0))$ by the right-hand side of inequality in \eqref{est:initial_contract_4}. Finally, we plunge \eqref{est:diff_dt_u1} and the resulting estimate for $(\p_t\xi^1(0)-\p_t\xi^2(0))$ into \eqref{est:diff_dtq} to bound the $\p_tq^1(0)-\p_tq^2(0)$ to obtain the estimate  \eqref{est:initial_contract_4}.
			\end{proof}
			
			In order to get the contraction result, we have to estimate the difference of \eqref{eq:diff_1}.
			\begin{lemma}\label{lem:contract_2}
				The triple $(v_0, q_0, \xi_0)$ in \eqref{eq:diff_1} obeys the estimate
				\[
				\begin{aligned}
					&\|v_0\|_{W^{2,q_+}}^2+\|q_0\|_{W^{1,q_+}}^2+\|\xi_0\|_{W^{3-1/q_+,q_+}}^2\lesssim \|D_tv(0)\|_{L^2}^2+\delta(\|\eta_0\|_{W^{3-1/q_+,q_+}}^2+\|\p_t\eta(0)\|_{H^{3/2}}^2+\|u_0\|_{W^{2,q_+}}^2).
				\end{aligned}
				\]
			\end{lemma}
			\begin{proof}
				The differences defined in \eqref{eq:diff_1} satisfy equations \eqref{eq:diff_initial1} for j=0, where the forcing terms $\mathfrak{g}_{0}^{j}$ are given below. To apply the elliptic theory in \cite[Theorem 4.9]{GT2020}, we need to guarantee that the assumptions in the elliptic estimates are meaningful. We employ the H\"older's inequality, Sobolev embedding theory and usual trace theory to deduce the following estimates.
				\begin{equation}\label{est:diff_u1}
					\begin{aligned}
						\|(R(0)^1-R(0)^2)v_0^2\|_{L^{q_+}}\lesssim \delta^{1/2}\|\p_t\eta(0)\|_{H^{3/2+(\varepsilon_--\alpha)/2}},
					\end{aligned}
				\end{equation}
				\begin{equation}
					\begin{aligned}
						\|\nabla_{\mathcal{A}_0^1-\mathcal{A}_0^2}q_0^2\|_{L^{q_+}}+\|\dive_{\mathcal{A}_0^1-\mathcal{A}_0^2}\mathbb{D}_{\mathcal{A}_0^1}v_0^2\|_{L^{q_+}}\lesssim \delta^{1/2}\|\eta_0\|_{W^{3-1/q_+,q_+}},
					\end{aligned}
				\end{equation}
				\begin{equation}
					\|\dive_{\mathcal{A}_0^2}\mathbb{D}_{\mathcal{A}_0^1-\mathcal{A}_0^2}u_0^2\|_{L^{q_+}}\lesssim \delta^{1/2}\|\eta_0\|_{W^{3-1/q_+,q_+}},
				\end{equation}
				\begin{equation}
					\|\dive_{\mathcal{A}_0^1-\mathcal{A}_0^2}v_0^2\|_{W^{1,q_+}}\lesssim \delta^{1/2}\|\eta_0\|_{W^{3-1/q_+,q_+}},
				\end{equation}
				\begin{equation}
					\begin{aligned}
						\|\mathfrak{f}^1\|_{L^{q_+}}\lesssim \delta^{1/2} (\|\eta_0\|_{W^{3-1/q_+,q_+}}+\|\p_t\eta(0)\|_{H^{3/2}}
						+\|u_0\|_{W^{2,q_+}}),
					\end{aligned}
				\end{equation}
				\begin{equation}\label{est:diff_u2}
					\|v_0^2\cdot[\mathcal{N}^1(0)-\mathcal{N}^2(0)]\|_{W^{2-1/q_+, q_+}(\Sigma)}\lesssim\delta^{1/2}\|\eta_0\|_{W^{3-1/q_+,q_+}}.
				\end{equation}
				Then by the elliptic theory and the above estimates \eqref{est:diff_u1}--\eqref{est:diff_u2}, with $\delta$ sufficiently small,
				\[
				\begin{aligned}
					&\|v_0\|_{W^{2,q_+}}^2+\|q_0\|_{W^{1,q_+}}^2+\|\xi_0\|_{W^{3-1/q_+,q_+}}^2\lesssim \|D_tv(0)\|_{L^2}^2+\delta(\|\eta_0\|_{W^{3-1/q_+,q_+}}^2+\|\p_t\eta(0)\|_{H^{3/2}}^2+\|u_0\|_{W^{2,q_+}}^2).
				\end{aligned}
				\]
				
			\end{proof}
			
			The Lemmas \ref{lem:contract_1} and \ref{lem:contract_2} give the initial data for $\varepsilon$- nonlinear system \eqref{eq:geometric}.
			
			\begin{proof}[Proof of Theorem \ref{thm:initial}.]
				The results in Lemma \ref{lem:contract_1} and Lemma \ref{lem:contract_2} imply the estimates
				\[
				\begin{aligned}
					&\|v_0^1-v_0^2\|_{W^{2,q_+}}^2+\|q_0^1-q_0^2\|_{W^{1,q_+}}^2+\|\xi_0^1-\xi_0^2\|_{W^{3-1/q_+,q_+}}^2+\|D_tv^1(0)-D_tv^2(0)\|_{H^1}^2\\
					&\quad+\|\p_tq^1(0)-\p_tq^2(0)\|_{H^0}^2+\|\p_t\xi^1(0)-\p_t\xi^2(0)\|_{H^{3/2}}^2\\
					&\le \frac12(\|D_tu^1(0)-D_tu^2(0)\|_{H^1}^2+\|u_0^1-u_0^2\|_{W^{2,q_+}}^2+\|\eta_0^1-\eta_0^2\|_{W^{3-1/q_+,q_+}}^2+\|\p_t\eta^1(0)-\p_t\eta^2(0)\|_{H^{3/2-\alpha}}^2)
				\end{aligned}
				\]
				for $\delta$ sufficiently small.
				
				Then by the Banach's fixed point theory, given $D_t^2u(0)$ and $\p_t^2\eta(0)$ satisfying the conditions in assumptions in Theorem \ref{thm:initial}, there exist one unique $(u_0, p_0, \eta_0, D_tu(0), \p_tp(0), \p_t\eta(0))$ satisfying the full initial conditions.
			\end{proof}
			
			Denote 
			\[
			\begin{aligned}
				\mathscr{F}_j(\psi)=\int_{\Om}F^{1,j}(0)\cdot \psi J(0)-\int_{-\ell}^{\ell} F^{3,j}(0)\p_1(\psi\cdot\mathcal{N}(0))+F^{4,j}(0)\cdot\psi -\int_{\Sigma_s}F^{5,j}(0)(\psi\cdot\tau)J(0)\\-[F^{7,j}(0),\psi\cdot\mathcal{N}(0)]_\ell
			\end{aligned}
			\]
			for $j=0, 1$.
			The weak solution to \eqref{eq:geometric} at $t=0$ for j=0 satisfies
			\begin{equation}\label{eq:couple5}
				\begin{aligned}
					(D_tu(0),\psi)_{\mathcal{H}^0}+((u_0,\psi))+[u_0\cdot\mathcal{N},\psi\cdot\mathcal{N}]_\ell-(p_0, \dive_{\mathcal{A}_0}\psi)_{\mathcal{H}^0}
					+(Ru_0,\psi)_{\mathcal{H}^0}=\mathscr{F}_1(\psi)-(\xi(0),\psi\cdot\mathcal{N})_{1,\Sigma}
				\end{aligned}
			\end{equation}
			for each $\psi\in\mathcal{W}(0)$.
			
			So, the pressureless weak solutions to \eqref{eq:geometric} at $t=0$ with $j=0$ are supposed to satisfy the equation
			\begin{equation}\label{eq:initial_p}
				\begin{aligned}
					(D_tu(0),\psi)_{\mathcal{H}^0}+((u_0,\psi))+[u_0\cdot\mathcal{N},\psi\cdot\mathcal{N}]_\ell
					+(Ru_0,\psi)_{\mathcal{H}^0}=\mathscr{F}_1(\psi)-(\xi(0),\psi\cdot\mathcal{N})_{1,\Sigma}
				\end{aligned}
			\end{equation}
			for each $\psi\in\mathcal{W}_\sigma(0)$.
			The pressureless weak formulation at $t=0$ with $j=1$  is expressed as
			\begin{equation}\label{eq:couple2}
				\begin{aligned}
					(D_t^2u(0),\psi)_{\mathcal{H}^0}
					+((D_tu(0),\psi))+[D_tu(0)\cdot\mathcal{N}(0),\psi\cdot\mathcal{N}(0)]_\ell-(\p_tp(0), \dive_{\mathcal{A}_0}\psi)_{\mathcal{H}^0}\\
					+2(RD_tu(0),\psi)_{\mathcal{H}^0}=\mathscr{F}_2(\psi)-(\p_t\xi(0),\psi\cdot\mathcal{N}(0))_{1,\Sigma}
				\end{aligned}
			\end{equation}
			for each $\psi\in\mathcal{W}(0)$.
			So, the pressureless weak solutions to \eqref{eq:geometric} at $t=0$ with $j=1$ satisfy
			\begin{equation}\label{eq:initial_p2}
				\begin{aligned}
					(D_t^2u(0),\psi)_{\mathcal{H}^0}
					+((D_tu(0),\psi))+[D_tu(0)\cdot\mathcal{N}(0),\psi\cdot\mathcal{N}(0)]_\ell\\
					+2(RD_tu(0),\psi)_{\mathcal{H}^0}=\mathscr{F}_2(\psi)-(u_0\cdot\mathcal{N}(0),\psi\cdot\mathcal{N}(0))_{1,\Sigma}
				\end{aligned}
			\end{equation}
			for each $\psi\in\mathcal{W}_\sigma(0)$.

			%%%%%%%%%%%%%%%%%%%%%%%%%%%%%%%%%%%%%%%%%%%%%%
			\subsection{Initial Forcing Terms for Contraction}\label{sec:initial_forcing}
			%%%%%%%%%%%%%%%%%%%%%%%%%%%%%%%%%%%%%%%%%%%%%%

			We give the forcing terms in the system of \eqref{eq:diff_initial1}. For $k=0$,
			\[
			\begin{aligned}
				\mathfrak{g}^1_0&=-(R(0)^1-R(0)^2)v_0^2-\nabla_{\mathcal{A}_0^1-\mathcal{A}_0^2}q_0^2+\dive_{\mathcal{A}_0^1-\mathcal{A}_0^2}\mathbb{D}_{\mathcal{A}_0^1}v_0^2+\dive_{\mathcal{A}_0^2}\mathbb{D}_{\mathcal{A}_0^1-\mathcal{A}_0^2}v_0^2+\p_tF^1(\eta^1_0, u^1_0)- \p_tF^1(\eta^2_0, u^2_0),\\
				\mathfrak{g}^2_0&=-\dive_{\mathcal{A}_0^1-\mathcal{A}_0^2}u^2(0),\\
				\mathfrak{g}^3_0&=\mathcal{R}(\p_1\zeta_0, \p_1\eta^1(0))-\mathcal{R}(\p_1\zeta_0, \p_1\eta^2(0)),\\
				\mathfrak{g}^4_0&=-\mu\mathbb{D}_{\mathcal{A}_0^1-\mathcal{A}_0^2}v_0^2\mathcal{N}^1(0)-\mu\mathbb{D}_{\mathcal{A}_0^2}v_0^2(\mathcal{N}^1(0)-\mathcal{N}^2(0))\\
				&\quad+\bigg[g\xi_0^2-\sigma\p_1\bigg(\frac{\p_1\xi_0^2}{(1+|\p_1\zeta_0|^2)^{3/2}}\bigg)-\p_1 \mathcal{R}(\p_1\zeta_0,\p_1\eta^2(0))\bigg][\mathcal{N}^1(0)-\mathcal{N}^2(0)],\\
				\mathfrak{g}^5_0&=\mu\mathbb{D}_{\mathcal{A}_0^1-\mathcal{A}_0^2}u^2(0)\nu\cdot\tau,\\
				\mathfrak{g}^6_0&=-u(0)\cdot[\mathcal{N}^1(0)-\mathcal{N}^1(0)],\\
				\mathfrak{g}^7_0&=v_0^2\cdot(\mathcal{N}^1(0)-\mathcal{N}^2(0))-\kappa[\hat{\mathscr{W}}(\p_t\eta^1)-\hat{\mathscr{W}}(\p_t\eta^2)].
			\end{aligned}
			\]
			For $k=1$,
			\[
			\begin{aligned}
				\mathfrak{g}^1_1&=-2(R^1(0)-R^2(0))D_tu^2(0)+\mu\dive_{\mathcal{A}^1(0)}\mathbb{D}_{\mathcal{A}^1(0)-\mathcal{A}^2(0)}D_tu^2(0)+\mu\dive_{\mathcal{A}^1(0)-\mathcal{A}^2(0)}\mathbb{D}_{\mathcal{A}^2_0}D_tu^2(0)\\
				&\quad-\mu\dive_{\mathcal{A}^1(0)}\mathbb{D}_{\mathcal{A}^1_0}R^1(0)u_0-\mu\dive_{\mathcal{A}^1(0)}\mathbb{D}_{\mathcal{A}^1_0}[R^1(0)-R^2(0)]u^2_0-\mu\dive_{\mathcal{A}^1_0}\mathbb{D}_{\mathcal{A}^1_0-\mathcal{A}^2_0}R^2(0)u^2_0\\
				&\quad-\mu\dive_{\mathcal{A}^1_0-\mathcal{A}^2_0}\mathbb{D}_{\mathcal{A}^2_0}R^2(0)u^2_0+\mu\dive_{\p_t\mathcal{A}^1(0)}(\mathbb{D}_{\mathcal{A}^1_0}u_0)+\mu\dive_{\p_t\mathcal{A}^1(0)}(\mathbb{D}_{\mathcal{A}^1_0-\mathcal{A}^2_0}u^2_0)\\
				&\quad+\mu\dive_{(\p_t\mathcal{A}^1(0)-\p_t\mathcal{A}^2(0))}(\mathbb{D}_{\mathcal{A}^2_0}u^2_0)+\mu\dive_{(\mathcal{A}^1_0-\mathcal{A}^2_0)}(\mathbb{D}_{\p_t\mathcal{A}^2(0)}u^2_0)+\mu\dive_{\mathcal{A}^1_0}(\mathbb{D}_{\p_t\mathcal{A}^1(0)-\p_t\mathcal{A}^2(0)}Du^2(0))\\
				&\quad+\mu\dive_{\mathcal{A}^1_0}(\mathbb{D}_{\p_t\mathcal{A}^2(0)}u_0)-\nabla_{\p_t\mathcal{A}^1(0)}p_0-\nabla_{(\p_t\mathcal{A}^1(0)-\p_t\mathcal{A}^2(0))}p^2_0-\nabla_{(\mathcal{A}^1-\mathcal{A}^2)}\p_tp^2(0)-R^1(0))^2u_0\\
				&\quad-[R^1(0)-R^2(0)][R^1(0)+R^2(0)]u^2_0+(\p_t^2\bar{\eta}^1(0)-\p_t^2\bar{\eta}^2(0))K^1(0)W\p_2u^1_0\\
				&\quad+\p_t^2\bar{\eta}^2(0)(K^1(0)-K^2(0))W\p_2v^1_0+\p_t^2\bar{\eta}^2(0)K^2(0)W\p_2v_0+(\p_t\bar{\eta}^1(0)-\p_t\bar{\eta}^2(0))\p_tK^1(0)W\p_2v^1_0\\
				&\quad+\p_t\bar{\eta}^2(0)(\p_tK^1(0)-\p_tK^2(0))W\p_2v^1_0+\p_t\bar{\eta}^2(0)\p_tK^2(0)W\p_2v_0+\p_t\bar{\eta}^2(0)K^2(0)W\p_2D_tv(0)\\
				&\quad+\p_t\bar{\eta}^2(0)(K^1(0)-K^2(0))W\p_2D_tv^1(0)+(\p_t\bar{\eta}^1(0)-\p_t\bar{\eta}^2(0))K^1(0)W\p_2D_tv^1(0)\\
				&\quad+(\p_t\bar{\eta}^1(0)-\p_t\bar{\eta}^2(0))K^1(0)W\p_2(R^1(0)v^1_0)+\p_t\bar{\eta}^2(0)(K^1(0)-K^2(0))W\p_2(R^1(0)v^1_0)\\
				&\quad+\p_t\bar{\eta}^2(0)K^2(0)W\p_2(R^1(0)v_0)+\p_t\bar{\eta}^2(0)K^2(0)W\p_2[(R^1(0)-R^2(0))v^2_0]\\
				&\quad-D_tv(0)\cdot\nabla_{\mathcal{A}^1_0}v^1_0- D_tv^2(0)\cdot\nabla_{\mathcal{A}^1_0-\mathcal{A}^2_0}v^1_0-D_tv^2(0)\cdot\nabla_{\mathcal{A}^2_0}v_0-R^1(0)v_0\cdot\nabla_{\mathcal{A}^1_0}v^1_0- R^2(0)v^2_0\cdot\nabla_{\mathcal{A}^1_0-\mathcal{A}^2_0}v^1_0\\
				&\quad-R^2(0)v^2_0\cdot\nabla_{\mathcal{A}^2_0}v_0-[R^1(0)-R^2(0)]v^2_0\cdot\nabla_{\mathcal{A}^1_0}v^1_0-v_0\cdot\nabla_{\p_t\mathcal{A}^1(0)}v^1_0- v^2_0\cdot\nabla_{\p_t\mathcal{A}^1(0)-\p_t\mathcal{A}^2(0)}v^1_0\\
				&\quad-v^2_0\cdot\nabla_{\p_t\mathcal{A}^2(0)}v_0-v_0\cdot\nabla_{\mathcal{A}^1(0)}D_tv^1(0)- v^2_0\cdot\nabla_{\mathcal{A}^1(0)-\mathcal{A}^2(0)}D_tv^1(0)-u^2_0\cdot\nabla_{\mathcal{A}^2(0)}D_tv(0)\\
				&\quad-v_0\cdot\nabla_{\mathcal{A}^1_0}(R^1(0)v^1_0)- v^2_0\cdot\nabla_{\mathcal{A}^1_0-\mathcal{A}^2_0}(R^1(0)v^1_0)-u^2_0\cdot\nabla_{\mathcal{A}^2_0}(R^1(0)v_0)-v^2_0\cdot\nabla_{\mathcal{A}^1_0}[R^1(0)-R^2(0)]v^2_0.
			\end{aligned}
			\]
			\[
			\begin{aligned}
				\mathfrak{g}^2_1&=-\dive_{\mathcal{A}_0^1-\mathcal{A}_0^2}D_tu^2(0),\\
				\mathfrak{g}^3_1&=\p_z\mathcal{R}(\p_1\zeta_0,\p_1\eta^1(0))\p_1\p_t\eta^1(0)-\p_z\mathcal{R}(\p_1\zeta_0,\p_1\eta^2(0))\p_1\p_t\eta^2(0),
			\end{aligned}
			\]
			\[
			\begin{aligned}
				\mathfrak{g}^4_1&=-S_{\mathcal{A}^1(0)}(\p_tp^2(0), D_tu^2(0))[\mathcal{N}^1(0)-\mathcal{N}^2(0)]+\mu\mathbb{D}_{\mathcal{A}_0^1}D_tu^2(0)(\mathcal{N}^1(0)-\mathcal{N}^2(0))\\
				&\quad+\mu\mathbb{D}_{\mathcal{A}_0^1-\mathcal{A}_0^2}D_tu^2(0)\mathcal{N}^2(0)+\mu\mathbb{D}_{\mathcal{A}_0^1}R^1(0)u_0\mathcal{N}^1(0)+\mu\mathbb{D}_{\mathcal{A}_0^1}[R^1(0)-R^2(0)]u_0^2\mathcal{N}^1(0)\\
				&\quad+\mu\mathbb{D}_{\mathcal{A}_0^1-\mathcal{A}_0^2}R^2(0)u^2_0\mathcal{N}^2(0)+\mu\mathbb{D}_{\mathcal{A}_0^1}R^2(0)u^2_0(\mathcal{N}^1(0)-\mathcal{N}^2(0))+\mu\mathbb{D}_{\p_t\mathcal{A}^1(0)}u_0\mathcal{N}^1(0)\\
				&\quad+\mu\mathbb{D}_{\p_t\mathcal{A}^1(0)-\p_t\mathcal{A}^2(0)}u^2_0\mathcal{N}^2(0)+\mu\mathbb{D}_{\p_t\mathcal{A}^1(0)}u^2_0(\mathcal{N}^1(0)-\mathcal{N}^2(0))-S_{\mathcal{A}^1_0}(p_0, u_0)\p_t\mathcal{N}^1(0)\\
				&\quad-S_{\mathcal{A}^1_0}(p^2_0, u^2_0)[\p_t\mathcal{N}^1(0)-\p_t\mathcal{N}^2(0)]+\mu\mathbb{D}_{\mathcal{A}_0^1-\mathcal{A}_0^2}u^2_0\p_t\mathcal{N}^2(0)\\
				&\quad+\bigg[g\p_t\xi^2(0)-\sigma\p_1\bigg(\frac{\p_1\p_t\xi^2(0)}{(1+|\p_1\zeta_0|^2)^{3/2}}\bigg)-\p_1 \p_z\mathcal{R}\p_1\p_t\eta^2(0)\bigg][\mathcal{N}^1(0)-\mathcal{N}^2(0)]\\
				&\quad+\bigg[g\xi^2(0)-\sigma\p_1\bigg(\frac{\p_1\xi^2(0)}{(1+|\p_1\zeta_0|^2)^{3/2}}\bigg)-\p_1 \mathcal{R}\bigg][\p_t\mathcal{N}^1(0)-\p_t\mathcal{N}^2(0)]\\
				&\quad+\bigg[g\xi_0-\sigma\p_1\bigg(\frac{\p_1\xi_0}{(1+|\p_1\zeta_0|^2)^{3/2}}\bigg)-\p_1 \mathcal{R}(\p_1\zeta_0,\p_1\eta^1_0+\p_1\mathcal{R}(\p_1\zeta_0,\p_1\eta^2_0)\bigg]\p_t\mathcal{N}^1(0),
			\end{aligned}
			\]
			\[
			\begin{aligned}
				\mathfrak{g}^5_1&=\mu\mathbb{D}_{\mathcal{A}_0^1-\mathcal{A}_0^2}D_tu^2(0)\nu\cdot\tau+\mu\mathbb{D}_{\mathcal{A}_0^1}R^1(0)u_0\nu\cdot\tau+\mu\mathbb{D}_{\mathcal{A}_0^1}[R^1(0)-R^2(0)]u^2_0\nu\cdot\tau\\
				&\quad+\mu\mathbb{D}_{\mathcal{A}_0^1-\mathcal{A}_0^2}R^2(0)u_0^2\nu\cdot\tau+\beta R^1(0)u_0\cdot\tau+\beta[R^1(0)-R^2(0)]u^2_0\cdot\tau,\\
				\mathfrak{g}^6_1&=-D_tu^2(0)\cdot[\mathcal{N}^1(0)-\mathcal{N}^1(0)],\\
				\mathfrak{g}^7_1&=\kappa D_tu^2(0)\cdot[\mathcal{N}^1(0)-\mathcal{N}^2(0)](\pm\ell)+\kappa\p_t\hat{\mathscr{W}}(\p_t\eta^1)(\pm\ell)-\kappa\p_t\hat{\mathscr{W}}(\p_t\eta^2)(\pm\ell).
			\end{aligned}
			\]
			
			\makeatletter
			\renewcommand \theequation {%
				C.%
				%\ifnum \c@section>\z@ \@arabic\c@section.%
				%\fi
				% \ifnum\c@subsection>\z@\@arabic\c@subsection.%
				%\fi\ifnum \c@subsubsection>\z@\@arabic\c@subsubsection.
				% \fi
				\@arabic\c@equation} \@addtoreset{equation}{section}
			% \@addtoreset{equation}{subsection}
			\makeatother
			
			\section{Construction of Smooth Sequences}\label{sec:smooth}
			
			In this section, we construct the approximating sequence in the linear estimate. 
			Recall that in equation \eqref{eq:quasi_linear_{s}}, we only smooth the functions on the free surface.
			
			We construct the extension map
			$
			E: H^s(-\ell,\ell)\to H^{s}(\mathbb{R})$,
			explicitly as follows
			\[
			(Eu):=
			\begin{cases}
				u(x)~~~~~&|x|\leq\ell,\\
				\eta(x-\ell)(6u(2\ell-x)-8u(3\ell-2x)+6u(4\ell-3x))~~~~~~&x>\ell,\\
				\eta(x+\ell)(6u(-2\ell-x)-8u(-3\ell-2x)+6u(-4\ell-3x))~~~~~&x<-\ell,
			\end{cases}
			\]
			where the function $\eta\in C_{c}^{\infty}(-\frac{\ell}{8},\frac{\ell}{8})$ satisfying
			$
			\eta(x)=1\ \text{for}\ |x|\leq \frac{\ell}{24}$,
			$\eta(x)=0~\operatorname{for}~|x|\geq \frac{\ell}{12}$,
			and
			$\eta(x)\in [0,1]$ for any $x$.
			\noindent The definition of $E$ above provides a bounded extension operator that is independent of time $t$ for all $0\le s\le 3$. 
			
			Let $K(x)$ be a smooth function defined by
			\[
			K(x):=
			\begin{cases}
				C_{K}\exp(\frac{1}{x^{2}-1})\quad\quad\quad\quad&|x|<1,\\
				0\quad\quad\quad\quad\quad\quad\quad\quad\quad&|x|\geq1.
			\end{cases}
			\]
			\noindent where $C_{K}$ is a constant such that $
			\int_{\mathbb{R}}K(x)dx=1$.
			
			Using the extension operator and mollifier introduced above, we define the following smooth approximating sequence for any $T>0$ and each function $f(t,x)\in L_{t}^{2}(0,T;L^{2}(\Sigma))$
			\begin{align}{\label{def:smooth_0}}
				\begin{aligned}
					f^{\varepsilon}(t,x):=\frac{C_{K}}{\varepsilon}\int_{-\infty}^{\infty}K\big(\frac{x-y}{\varepsilon}\big)Ef(t,y)dy.
				\end{aligned}
			\end{align}
			
			By the standard properties of mollification and the boundedness of the extension operator, if $f\in L^{p}(0,T; W^{i,q}(\Sigma))$ for any $p,q\in [1,\infty]$ and $i\in (1,\infty)$, we have the following convergence results:
			\begin{align}{\label{eq:convergence_e}}
				\begin{aligned}
					\lim_{\varepsilon\rightarrow 0}f^{\varepsilon}(t,x)=f(t,x)~~\operatorname{in}~~L^{p}(0,T; W^{i,q}(\Sigma)).
				\end{aligned}
			\end{align}
			\noindent Moreover, the following regularity properties hold for the smooth modification function $f^{\varepsilon}$
			\begin{align}{\label{eq:smooth_e}}
				\begin{aligned}
					f^{\varepsilon} \in& L_{t}^{\infty}(0,T; C^{\infty}(\Sigma)),
				\end{aligned}
			\end{align}
			
			\makeatletter
			\renewcommand \theequation {%
				D.%
				%\ifnum \c@section>\z@ \@arabic\c@section.%
				%\fi
				% \ifnum\c@subsection>\z@\@arabic\c@subsection.%
				%\fi\ifnum \c@subsubsection>\z@\@arabic\c@subsubsection.
				% \fi
				\@arabic\c@equation} \@addtoreset{equation}{section}
			% \@addtoreset{equation}{subsection}
			\makeatother
			
			\section{Initial Data for the Linear System}{\label{sec:initial_l}}
			
			\subsection{Initial Data for System with Smooth Surface Function \texorpdfstring{$\eta^{n}$}{}}
			For any prescribed $n$, we establish the initial data for $(v,\xi,q)$ which is a solution to the following system
			
			\begin{equation}{\label{eq:quasi_linear_{sn}}}
				\begin{cases}
					\partial_{t}^{2}v+\operatorname{div}_{\mathcal{A}^{n}}S_{\mathcal{A}^{n}}(v,q)=F^{1}(u,p,\eta)~~~&\operatorname{in}~~\Omega,\\
					\operatorname{div}_{\mathcal{A}^{n}}D_{t}v=0~~~&\operatorname{in}~~\Omega,\\
					S_{\mathcal{A}^{n}}(\p_{t}q,\p_{t}v)\mathcal{N}^{n}=g\xi_{d}\mathcal{N}^{n}-\sigma\partial_{1}(\frac{1}{(1+\vert \partial_{1}\zeta_{0}\vert^{2})^{3/2}}\partial_{1}\xi_{d})\mathcal{N}^{n}
					+\sigma\partial_{1}(\mathcal{R}_{z}(\partial_{1}\zeta_{0},\partial_{1}\eta)(\partial_{1}\p_{t}\xi))\mathcal{N}^{n}\\\quad\quad\quad\quad\quad\quad\quad+F^{4}(u,p,\eta)~~~&\operatorname{on}~~\Sigma,\\
					(S_{\mathcal{A}^{n}}(\p_{t}q,\p_{t}v)\nu-\beta \p_{t}v)\cdot \tau=F^{5}(u,\eta,p)~~~&\operatorname{on}~~\Sigma_{s},\\
					\p_{t}v\cdot \nu=0~~~&\operatorname{on}~~\Sigma_{s},\\
					\partial_{t}^{2}\xi=\p_{t}v\cdot \mathcal{N}^{n}+u_{1}\cdot\partial_{t}\p_{1}\xi~~~&\operatorname{on}~~\Sigma\\
					\sigma(\mp \frac{\partial_{1}\p_{t}\xi}{(1+\vert \partial_{1}\zeta_{0}\vert^{2})^{\frac{3}{2}}}\pm \mathcal{R}_{z}(\partial_{1}\zeta_{0},\partial_{1}\eta)\partial_{t}\partial_{1}\xi)(\pm \ell))(\pm\ell)=\kappa (v_{d}\cdot \mathcal{N}^{n})(\pm \ell)-{F}^{7}.
				\end{cases}
			\end{equation}
			
			For any prescribed $n$, let $\p_{t}^{2}\xi(0)=\p_{t}^{2}\eta(0)$ and $D_{t}^{2}v(0)=D_{t}^{2}u(0)$ which are the initial functions constructed in Appendix \ref{sec:initial}. We construct the initial conditions by solving the following system. Let $(v(0),\xi(0),q(0))$, and $(D_{t}v(0),\p_{t}\xi(0),\p_{t}q(0))$ be the solution to the following systems
			\begin{align}{\label{eq:smooth_n0}}
				\begin{cases}
					D_{t}v_{0}+R^{n}(0)v_{0}+\operatorname{div}_{\mathcal{A}^{n}(0)}S_{\mathcal{A}^{n}(0)}(v_{0},q_{0})=F^{1,0}(u_{0},p_{0},\eta_{0})~~~&\operatorname{in}~~\Omega,\\
					\operatorname{div}_{\mathcal{A}^{n}(0)}v_{0}=0~~~&\operatorname{in}~~\Omega,\\
					S_{\mathcal{A}^{n}(0)}(q_{0},v_{0})\mathcal{N}^{n}(0)=g\xi_{0}\mathcal{N}^{n}(0)-\sigma\partial_{1}(\frac{1}{(1+\vert \partial_{1}\zeta_{0}\vert^{2})^{3/2}}\partial_{1}\xi_{0})\mathcal{N}^{n}(0)
					+\sigma\partial_{1}(\mathcal{R}(\partial_{1}\zeta_{0},\partial_{1}\eta_{0}))\mathcal{N}^{n}(0)~~~&\operatorname{on}~~\Sigma,\\
					(S_{\mathcal{A}^{n}(0)}(q_{0},v_{0})\nu-\beta v_{0})\cdot \tau=0~~~&\operatorname{on}~~\Sigma_{s},\\
					v_{0}\cdot \nu=0~~~&\operatorname{on}~~\Sigma_{s},\\
					\partial_{t}\xi_{0}=v_{0}\cdot \mathcal{N}^{n}(0)~~~&\operatorname{on}~~\Sigma,\\
					\sigma(\mp \frac{\partial_{1}\xi_{0}}{(1+\vert \partial_{1}\zeta_{0}\vert^{2})^{\frac{3}{2}}}\pm \mathcal{R}_{z}(\partial_{1}\zeta_{0},\partial_{1}\eta_{0})\partial_{1}\xi_{0})(\pm \ell))(\pm\ell)=\kappa (v_{0}\cdot \mathcal{N}^{n}(0))(\pm \ell)-{F}^{7,0},
				\end{cases}
			\end{align}
			\noindent and
			\begin{align}{\label{eq:smooth_n1}}
				\begin{cases}
					D_{t}^{2}v_{0}+\operatorname{div}_{\mathcal{A}^{n}(0)}S_{\mathcal{A}^{n}(0)}(D_{t}v_{0}+R^{n}(0)v_{0},\p_{t}q_{0})\\
					=\p_{t}F^{1,0}(u_{0},p_{0},\eta_{0})+\mathfrak{G}^{1}(u_{0},p_{0})-R^{n}(0)D_{t}v_{0}-\p_{t}(R^{n}(0)v_{0})~~~&\operatorname{in}~~\Omega,\\
					\operatorname{div}_{\mathcal{A}^{n}(0)}D_{t}v_{0}=0~~~&\operatorname{in}~~\Omega,\\
					S_{\mathcal{A}^{n}(0)}(\p_{t}q_{0},D_{t}v_{0}+R^{n}(0)v_{0})\mathcal{N}^{n}(0)=g\p_{t}\xi_{0}\mathcal{N}^{n}-\sigma\partial_{1}(\frac{1}{(1+\vert \partial_{1}\zeta_{0}\vert^{2})^{3/2}}\partial_{1}\p_{t}\xi_{0})\mathcal{N}^{n}(0)
					\\
					~~~~~~~~~~~~~~~~~~~~~~~~~~~~~~~~~~~~~~~~~~~~~~~~+\sigma\partial_{1}(\mathcal{R}(\partial_{1}\zeta_{0},\partial_{1}\eta_{0})(\partial_{1}\p_{t}\xi_{0}))\mathcal{N}^{n}(0)+\mathfrak{G}^{4}(u_{0},p_{0},\eta_{0})~~~&\operatorname{on}~~\Sigma,\\
					(S_{\mathcal{A}^{n}(0)}(\p_{t}q_{0},D_{t}v_{0}+R^{n}(0)v_{0})\nu-\beta \p_{t}v_{0})\cdot \tau=\mathfrak{G}^{5}(u_{0},p_{0},\eta_{0})~~~&\operatorname{on}~~\Sigma_{s},\\
					D_{t}v_{0}\cdot \nu=0~~~&\operatorname{on}~~\Sigma_{s},\\
					\partial_{t}^{2}\xi_{0}=D_{t}v_{0}\cdot \mathcal{N}^{n}(0)+R^{n}(0)v_{0}\cdot \mathcal{N}^{n}(0)-u_{0}\p_{t}\p_{1}\xi_{0}-\frac{1}{2\ell}\int_{-\ell}^{\ell}(R^{n}(0)v_{0}\cdot \mathcal{N}^{n}(0)-u_{0}\p_{t}\p_{1}\xi_{0})~~~&\operatorname{on}~~\Sigma,\\
					\sigma(\mp \frac{\partial_{1}\p_{t}\xi_{0}}{(1+\vert \partial_{1}\zeta_{0}\vert^{2})^{\frac{3}{2}}}\pm \mathcal{R}_{z}(\partial_{1}\zeta_{0},\partial_{1}\eta_{0})\partial_{1}\p_{t}\xi_{0})ds)(\pm\ell)=\kappa (D_{t}v_{0}\cdot \mathcal{N}^{n}(0))(\pm \ell)-\p_{t}{F}^{7,0}.
				\end{cases}
			\end{align}
			
			Remark: Because the initial data of the smooth sequence do not satisfy the original compatibility conditions, we add a new integral to the kinematic boundary condition so that the modified system remains compatible. 
			This integral converges to $0$ when $n,k\rightarrow \infty$
			
			Following the arguments in Appendix \ref{sec:initial}, given $\eta_{0},u_{0},p_{0},\eta_{0}^{n},\p_{t}\eta_{0}^{n},\p_{t}^{2}\eta_{0}^{n}$. the system above has a solution. We state the following theorem with respect to this result.
			\begin{theorem}{\label{thm:initial_n}}
				$(u_{0},\eta_{0},p_{0})$ are as constructed in Appendix \ref{sec:initial}. $\eta_{0}^{n},\p_{t}\eta_{0}^{n},\p_{t}^{2}\eta_{0}^{n}$ are the initial data of $\eta^{n},\p_{t}\eta^{n},\p_{t}^{2}\eta^{n}$ as constructed in Appendix \ref{sec:smooth}. Then for any given $(D_{t}^{i}u_{0},\p_{t}^{i}\eta_{0},\p_{t}^{i}p_{0})$ with $i=0,1,2$, and $\eta_{0}^{n},\p_{t}\eta_{0}^{n},\p_{t}^{2}\eta_{0}^{n}$. Equation \eqref{eq:smooth_n0} and equation \eqref{eq:smooth_n1} admits a solution $(v^{n}_{0},\xi_{0}^{n},q_{0}^{n},D_{t}v_{0}^{n},\p_{t}\xi_{0}^{n},\p_{t}q_{0}^{n})$. Moreover, when $n\rightarrow \infty$, it holds that
				\[
				D_{t}^{i}v_{0}^{n}\rightarrow D_{t}^{i}u_{0}~~~\operatorname{in}~~~W^{2,q_{+}},\
				\p_{t}^{i}\xi_{0}^{n}\rightarrow \p_{t}^{i}\eta_{0}~~~\operatorname{in}~~~W^{3-\frac{1}{q_{+}},q_{+}},\
				\p_{t}^{i}q_{0}^{n}\rightarrow \p_{t}^{i}q_{0}~~~\operatorname{in}~~~W^{1,q_{+}}
				\]
				for $i=0,1$.
			\end{theorem}
			\begin{proof}
				To prove existence, it suffices to verify the compatibility conditions. Applying the same construction as in Appendix \ref{sec:initial} then yields existence.
				
				Since $\int_{-\ell}^{\ell}\p_{t}^{i}\xi^{n}(0)=0$, integrating both sides of the kinematic boundary condition in \eqref{eq:smooth_n0}, we obtain
				\[
				\int_{-\ell}^{\ell}v^{n}_{0}\cdot \mathcal{N}^{n}(0)=\int_{-\ell}^{\ell}\p_{t}\xi^{n}(0)=0
				\]
				\noindent Moreover, $v_{0}^{n}\cdot \nu=0$ on $\Sigma_{s}$ implies that
				$\int_{\Sigma_{s}}v_{0}^{n}\cdot \nu =0$.
				Therefore,
				\[
				\int_{\Omega}\operatorname{div}_{\mathcal{A}^{n}(0)}v^{n}_{0}=0.
				\]
				This guarantees the compatibility for the zero-order equation.
				
				We now show the compatibility condition for the first-order system \eqref{eq:smooth_n1}. From the kinematic boundary condition in \eqref{eq:smooth_n1}, we have
				\[
				\begin{aligned}
					\int_{-\ell}^{\ell} D_{t}v^{n}_{0}\cdot \mathcal{N}^{n}(0)&=\int_{-\ell}^{\ell}\p_{t}^{2}\xi^{n}_0-\int_{-\ell}^{\ell}(R^{n}(0)v^{n}_{0}\cdot \mathcal{N}^{n}(0)+u_{0}\p_{t}\p_{1}\xi^{n}_{0})+\int_{-\ell}^{\ell}\frac{1}{2\ell}\int_{-\ell}^{\ell}(R^{n}(0)v^{n}_{0}\cdot \mathcal{N}^{n}(0)+u^{n}_{0}\p_{t}\p_{1}\xi^{n}_{0})\\
					&=\int_{-\ell}^{\ell}\p_{t}^{2}\xi^{n}_{0}=0.
				\end{aligned}
				\]
				\noindent Since $D_{t}v^{n}_{0}\cdot \nu =0$ on $\Sigma_{s}$, we have
				\[
				\int_{\Omega}\operatorname{div}_{\mathcal{A}^{n}(0)}D_{t}v^{n}_{0}=\int_{-\ell}^{\ell} D_{t}v_{0}^{n}\cdot\mathcal{N}^{n}(0)+\int_{\Sigma_{s}}D_{t}v_{0}^{n}\cdot \nu =0,
				\]
				which guarantees the compatibility for the first-order equation.
				
				We now prove the convergence results. By the elliptic theory developed in \cite{GT2020}, we have
				\[
				\begin{aligned}
					&\|v^{n}_{0}\|_{W^{2,q_{+}}}+\|\xi^{n}_{0}\|_{W^{3-\frac{1}{q_{+}},q_{+}}}+\|q^{n}_{0}\|_{W^{1,q_{+}}}+\|\p_{t}v^{n}_{0}\|_{W^{2,q_{+}}}+\|\p_{t}\xi^{n}_{0}\|_{W^{3-\frac{1}{q_{+}},q_{+}}}+\|\p_{t}q^{n}_{0}\|_{W^{1,q_{+}}}\\
					&\lesssim \|u_{0}\|_{W^{3-\frac{1}{q_{+}},q_{+}}}+\|\eta_{0}\|_{W^{3-\frac{1}{q_{+}},q_{+}}}+\|q_{0}\|_{W^{1,q_{+}}}+\|\p_{t}u_{0}\|_{W^{3-\frac{1}{q_{+}},q_{+}}}+\|\p_{t}\eta_{0}\|_{W^{3-\frac{1}{q_{+}},q_{+}}}+\|\p_{t}q_{0}\|_{W^{1,q_{+}}}\\
					&\quad+\|\p_{t}^{2}\eta_{0}\|_{W^{2-\frac{1}{q_{+}},q_{+}}}+\|\p_{t}^{2}u_{0}\|_{L^{2}}.
				\end{aligned}
				\]
				
				Subtracting \eqref{eq:smooth_n0}, \eqref{eq:smooth_n1} by \eqref{eq:geometric} and \eqref{eq:quasi_linear}, respectively, we obtain the following system for $(v^{n}(0)-u(0),\xi^{n}(0)-\eta(0),q^{n}(0)-p(0),D_{t}v^{n}-D_{t}u(0),\p_{t}\xi^{n}(0)-\p_{t}\eta(0),\p_{t}q^{n}(0)-\p_tp(0))$
				\begin{align}{\label{eq:converge_n0}}
					\begin{cases}
						D_{t}(v_{0}^{n}-u_{0})+\operatorname{div}_{\mathcal{A}(0)}S_{\mathcal{A}(0)}(v_{0}^{n}-u_{0},q_{0}^{n}-p_{0})+R(0)(v_{0}^{n}-v_{0})\\ \qquad\qquad\qquad=\operatorname{div}_{\mathcal{A}(0)}S_{\mathcal{A}(0)-\mathcal{A}^{n}(0)}(v_{0}^{n},q_{0}^{n})+\operatorname{div}_{\mathcal{A}(0)-\mathcal{A}^{n}(0)}S_{\mathcal{A}^{n}(0)}(v_{0}^{n},q_{0}^{n})~~~&\operatorname{in}~~\Omega,\\
						\operatorname{div}_{\mathcal{A}(0)}(v_{0}^{n}-u_{0})=\operatorname{div}_{\mathcal{A}(0)-\mathcal{A}^{n}(0)}v_{0}^{n}~~~&\operatorname{in}~~\Omega,\\
						S_{\mathcal{A}(0)}(q_{0}^{n}-p_{0},v_{0}^{n}-u_{0})\mathcal{N}(0)=g(\xi^{n}(0)-\eta_{0})\mathcal{N}(0)-\sigma\partial_{1}(\frac{1}{(1+\vert \partial_{1}\zeta_{0}\vert^{2})^{3/2}}\partial_{1}(\xi^{n}(0)-\eta_{0}))(\mathcal{N}(0))
						\\
						\quad\quad\quad\quad\quad\quad\quad\quad\quad+\sigma\partial_{1}(\mathcal{R}(\partial_{1}\zeta_{0},\partial_{1}\eta_{0}))(\mathcal{N}^{n}(0)-\mathcal{N}(0))+S_{\mathcal{A}(0)-\mathcal{A}^{n}(0)}(q_{0}^{n},v_{0}^{n})\mathcal{N}(0)\\
						\quad\quad\quad\quad\quad\quad\quad\quad\quad+S_{\mathcal{A}^{n}(0)}(q_{0}^{n},v_{0}^{n})(\mathcal{N}^{n}(0)-\mathcal{N}(0))-\mathcal{K}(\xi_{0}^{n})(\mathcal{N}^{n}(0)-\mathcal{N}(0))~~~&\operatorname{on}~~\Sigma,\\
						(S_{\mathcal{A}(0)}(q_{0}^{n}-p_{0},v_{0}^{n}-u_{0})\nu-\beta (v_{0}^{n}-u_{0}))\cdot \tau=S_{\mathcal{A}(0)-\mathcal{A}^{n}(0)}(q_{0}^{n},v_{0}^{n})~~~&\operatorname{on}~~\Sigma_{s},\\
						(v_{0}^{n}-u_{0})\cdot \nu=0~~~&\operatorname{on}~~\Sigma_{s},\\
						\partial_{t}\xi_{0}^{n}-\p_{t}\eta_{0}=(v_{0}^{n}-u_{0})\cdot \mathcal{N}(0)+v_{0}^{n}(\mathcal{N}^{n}(0)-\mathcal{N}(0))~~~&\operatorname{on}~~\Sigma,\\
						\sigma(\mp \frac{\partial_{1}(\xi_{0}^{n}-\eta_{0})}{(1+\vert \partial_{1}\zeta_{0}\vert^{2})^{\frac{3}{2}}}\pm \mathcal{R}_{z}(\partial_{1}\zeta_{0},\partial_{1}\eta_{0})\partial_{1}(\xi_{0}^{n}-\eta_{0}))ds)(\pm\ell)\\=\kappa ((v_{0}^{n}-u_{0})\cdot \mathcal{N}(0))(\pm \ell)+\kappa v_{0}^{n}\cdot (\mathcal{N}^{n}(0)-\mathcal{N}(0)),
					\end{cases}
				\end{align}
				\noindent and
				\begin{align}{\label{eq:converge_n1}}
					\begin{cases}
						\operatorname{div}_{\mathcal{A}(0)}S_{\mathcal{A}(0)}(D_{t}(v_{0}^{n}-u_{0})+R(0)(v_{0}^{n}-u_{0}),\p_{t}(q_{0}^{n}-p_{0}))=-R^{n}(0)(D_{t}v_{0}^{n}-D_{t}u_{0})\\
						\quad\quad\quad\quad\quad-(R(0)-R^{n}(0))(D_{t}v_{0}^{n})-\p_{t}(R^{n}(0)(v_{0}^{n}-u_{0}))-\p_{t}((R(0)-R^{n}(0))v_{0}^{n})+R(0)(v_{0}^{n}),\p_{t}(q_{0}^{n}))\\
						\quad\quad\quad\quad\quad+\operatorname{div}_{\mathcal{A}(0)}S_{\mathcal{A}(0)-\mathcal{A}^{n}(0)}(D_{t}(v_{0}^{n})+R(0)(v_{0}^{n}),\p_{t}(q_{0}^{n}))+\operatorname{div}_{\mathcal{A}(0)-\mathcal{A}^{n}(0)}S_{\mathcal{A}^{n}(0)}(D_{t}(v_{0}^{n})~~~&\operatorname{in}~~\Omega,\\
						\operatorname{div}_{\mathcal{A}(0)}(D_{t}v_{0}^{n}-D_{t}u_{0})=\operatorname{div}_{\mathcal{A}(0)-\mathcal{A}^{n}(0)}(D_{t}v_{0}^{n} )~~~&\operatorname{in}~~\Omega,\\
						S_{\mathcal{A}(0)}(\p_{t}q_{0}^{n}-\p_{t}p_{0},(D_{t}v_{0}^{n}-D_{t}u_{0})+R(0)(v_{0}^{n}-u_{0}))\mathcal{N}(0)=g(\p_{t}\xi_{0}^{n}-\p_{t}\eta_{0})\mathcal{N}(0)\\
						\quad\quad\quad\quad\quad\quad\quad\quad-\sigma\partial_{1}(\frac{1}{(1+\vert \partial_{1}\zeta_{0}\vert^{2})^{3/2}}\partial_{1}(\p_{t}\xi_{0}^{n}-\p_{t}\eta_{0}))\mathcal{N}(0)+\sigma\partial_{1}(\mathcal{R}_{z}(\partial_{1}\zeta_{0},\partial_{1}\eta_{0})(\partial_{1}(\p_{t}\xi_{0}^{n}-\p_{t}\eta_{0})))\mathcal{N}(0)
						\\
						\quad\quad\quad\quad\quad\quad\quad\quad+S_{\mathcal{A}(0)}(\p_{t}q_{0}^{n},D_{t}v_{0}^{n}+R(v_{0}^{n}))(\mathcal{N}^{n}(0)-\mathcal{N}(0))\\
						\quad\quad\quad\quad\quad\quad\quad\quad+S_{\mathcal{A}^{n}(0)-\mathcal{A}(0)}(\p_{t}q_{0}^{n},D_{t}v_{0}^{n}+R(0)(v_{0}^{n}))(\mathcal{N}^{n}(0))+S_{\mathcal{A}^{n}}(0,(R^{n}(0)-R(0))(v_{0}^{n}))(\mathcal{N}^{n}(0))\\
						\quad\quad\quad\quad\quad\quad\quad\quad+(\mathcal{K}(\p_{t}\xi_{0}^{n})+\sigma\p_{1}(\mathcal{R}_{z}(\p_{1}\zeta_{0},\p_{1}\eta)\p_{1}\p_{t}\xi_{0}^{n}))(\mathcal{N}^{n}(0)-\mathcal{N}(0))~~~&\operatorname{on}~~\Sigma,\\
						(S_{\mathcal{A}(0)}(\p_{t}q_{0}^{n}-\p_{t}p_{0},(D_{t}v_{0}^{n}-D_{t}u_{0})+R(0)(v_{0}^{n}-u_{0}))\nu-\beta (D_{t}v_{0}-D_{t}u_{0}))\cdot \tau\\=S_{\mathcal{A}(0)-\mathcal{A}^{n}(0)}(\p_{t}q_{0}^{n},D_{t}v_{0}^{n}+Rv_{0}^{n})\cdot \tau+S_{\mathcal{A}^{n}(0)}(0,(R^{n}-R)v_{0}^{n})\cdot \tau~~~&\operatorname{on}~~\Sigma_{s},\\
						(D_{t}v_{0}^{n}-D_{t}u_{0})\cdot \nu=0~~~&\operatorname{on}~~\Sigma_{s},\\
						(D_{t}v_{0}^{n}-D_{t}u_{0})\cdot \mathcal{N}(0)+R(0)(v_{0}^{n}-u_{0})\cdot \mathcal{N}(0)\\-u_{0}\p_{1}(\p_{t}\xi_{0}^{n}-\p_{t}\eta_{0})-{\frac{1}{2\ell}\int_{-\ell}^{\ell}(R(0)(v_{0}^{n}-v_{0})\cdot \mathcal{N}(0)-u_{0}\p_{1}(\p_{t}\xi_{0}^{n}-\p_{t}\eta_{0}))}\\
						=D_{t}v_{0}^{n}\cdot (\mathcal{N}(0)-\mathcal{N}^{n}(0))+(R^{n}(0))v_{0}^{n}\cdot \mathcal{N}(0)+(R(0)-R^{n}(0))v_{0}^{n}\cdot (\mathcal{N}(0)-\mathcal{N}^{n}(0))\\-\frac{1}{2\ell}\int_{-\ell}^{\ell}(R(0)-R^{n}(0))v_{0}^{n}\cdot \mathcal{N}(0)+{R}^n(0)v_{0}^{n}\cdot (\mathcal{N}(0)-\mathcal{N}^{n}(0))~~&\operatorname{on}~~\Sigma,\\
						\sigma(\mp \frac{\partial_{1}(\p_{t}\xi_{0}^{n}-\p_{t}\eta_{0})}{(1+\vert \partial_{1}\zeta_{0}\vert^{2})^{\frac{3}{2}}}\pm \mathcal{R}_{z}(\partial_{1}\zeta_{0},\partial_{1}\eta_{0})\partial_{1}(\p_{t}\xi_{0}^{n}-\p_{t}\eta_{0}))(\pm \ell))(\pm\ell)=\kappa ((D_{t}v_{0}^{n}-D_{t}u_{0})\cdot \mathcal{N}(0))(\pm \ell).
					\end{cases}
				\end{align}
			\end{proof}
			\noindent By the similar elliptic estimate used in Appendix \ref{sec:initial}, we have the following estimate
			\[
			\begin{aligned}
				\|v_{0}^{n}-u_{0}\|_{W^{2,q_{+}}}+\|\xi_{0}^{n}-\eta_{0}\|_{W^{3-\frac{1}{q_{+}},q_{+}}}+\|q_{0}^{n}-p_{0}\|_{W^{1,q_{+}}}&+\|D_{t}v_{0}^{n}-D_{t}u_{0}\|_{W^{2,q_{+}}}+\|\p_{t}\xi_{0}^{n}-\p_{t}\eta_{0}\|_{W^{3-\frac{1}{q_{+}},q_{+}}}\\
				+\|\p_{t}q_{0}^{n}-\p_{t}p_{0}\|_{W^{1,q_{+}}}&\lesssim \|\eta_{0}^{n}-\eta_{0}\|_{W^{3-\frac{1}{q_{+}},q_{+}}}+\|\p_{t}\eta_{0}^{n}-\p_{t}\eta_{0}\|_{W^{3-\frac{1}{q_{+}},q_{+}}}.
			\end{aligned}
			\]
			This directly implies the convergence results as $n\rightarrow \infty$.
			
			\subsection{Initial Data for Smooth Given Data Problem with \texorpdfstring{$\eta^{k},\eta^{n},v_{l}^{k},\xi_{l}^{k},u^{k}$}{}}
			
			In this subsection, we consider the initial data for the following system
			
			\begin{equation}
				\begin{cases}
					\partial_{t}v_{d}+\operatorname{div}_{\mathcal{A}^{n}}S_{\mathcal{A}^{n}}(v_{d},q_{d})+\dive_{\mathcal{A}^{n}}\nabla_{\mathcal{A}}(R^{n}v_{l})+\p_{t}(R^{n}v_{l})=F^{1}(u,p,\eta)~~~&\operatorname{in}~~\Omega,\\
					\operatorname{div}_{\mathcal{A}^{n}}v_{d}=0~~~&\operatorname{in}~~\Omega,\\
					S_{\mathcal{A}^{n}}(q_{d},v_{d})\mathcal{N}^{n}+\nabla_{\mathcal{A}^{n}}(R^{n}v_{l})\mathcal{N}^{n}=g\xi_{d}\mathcal{N}^{n}-\sigma\partial_{1}(\frac{\p_1\xi_{d0}}{(1+\vert \partial_{1}\zeta_{0}\vert^{2})^{3/2}})\mathcal{N}^{n}+\p_{1}I_{1}^{n}\mathcal{N}^{n}\\
					\quad\quad\quad\quad\quad\quad\quad+\sigma\partial_{1}(\int_{0}^{t}\mathcal{R}_{z}(\partial_{1}\zeta_{0},\partial_{1}\eta^{k})(\partial_{1}\p_{t}\xi_{l}^{k})\p_{1}\p_{t}\eta^{k})\mathcal{N}^{n}+\sigma\partial_{1}(\int_{0}^{t}\mathcal{R}_{z}(\partial_{1}\zeta_{0},\partial_{1}\eta^{k})\partial_{1}\partial_{t}\xi_{d})\mathcal{N}^{n}\\
					\quad\quad\quad\quad\quad\quad\quad+F^{4}(u,p,\eta)~~~&\operatorname{on}~~\Sigma,\\
					(S_{\mathcal{A}^{n}}(q_{d},v_{d})\nu+\nabla_{\mathcal{A}^{n}}(R^{n}v_{l})\nu-\beta v_{d})\cdot \tau=F^{5}(u,\eta,p)~~~&\operatorname{on}~~\Sigma_{s},\\
					v_{d}\cdot \nu=0~~~&\operatorname{on}~~\Sigma_{s},\\
					\partial_{t}\xi_{d}=v_{d}\cdot \mathcal{N}^{n}+(R^{n}v_{l}^{k})\cdot \mathcal{N}^{n}+\int_{0}^{t}(\partial_{t}u_{1}^{k}\cdot\partial_{t}\p_{1}\xi_{l}^{k}+u_{1}^{k}\partial_{1}\partial_{t}\xi_{d})+I_{2}^{n}~~~&\operatorname{on}~~\Sigma,\\
					\sigma(\mp \frac{\partial_{1}\xi_{d}}{(1+\vert \partial_{1}\zeta_{0}\vert^{2})^{\frac{3}{2}}}\pm \int_{0}^{t}\mathcal{R}_{z}(\partial_{1}\zeta_{0},\partial_{1}\eta^{k})\partial_{t}\partial_{1}\xi_{d})(\pm \ell)\pm\int_{0}^{t}\mathcal{R}_{z}(\p_{1}\zeta_{0},\p_{1}\eta^{k})(\p_{t}\p_{1}\xi_{l}^{k}\p_{1}\p_{t}\eta^{k})\pm I_{1}^{n})\\
					\quad\quad\quad\quad\quad\quad\quad=\kappa (v_{d}\cdot \mathcal{N}^{n})(\pm \ell)-{F}^{7},
				\end{cases}
			\end{equation}
			where $v_{l}^{k},\eta^{k},\xi_{l}^{k},u^{k},\eta^{n}$ are as defined in section \ref{sec:smooth}. For simplicity, we give the following definition
			\[
			f^{k}(x):=\frac{C_{K}}{\frac{1}{k}}\int_{-\ell}^{\ell}K(\frac{x-y}{\frac{1}{k}})f(y)dy.
			\]
			
			We construct the initial data for $(v_{d},\xi_{d},q_{d})$ via solving the following system
			\begin{equation}{\label{initial_nk}}
				\begin{cases}
					D_{t}v_{d0}+R^{n}(0)v_{d0}+\operatorname{div}_{\mathcal{A}^{n}(0)}S_{\mathcal{A}^{n}(0)}(v_{d0},q_{d0})+\dive_{\mathcal{A}^{n}(0)}\nabla_{\mathcal{A}^{n}(0)}(R^{n}(0)v_{l0})+\p_{t}(R^{n}(0)v_{l0})\\
					\quad\quad\quad\quad\quad\quad\quad~ =F^{1}(u_{0},p_{0},\eta_{0})~~~&\operatorname{in}~~\Omega,\\
					\operatorname{div}_{\mathcal{A}^{n}(0)}v_{d0}=0~~~&\operatorname{in}~~\Omega,\\
					S_{\mathcal{A}^{n}(0)}(q_{d0},v_{d0})\mathcal{N}^{n}(0)+\nabla_{\mathcal{A}^{n}(0)}(R^{n}(0)v_{l0})\mathcal{N}^{n}(0)=g\xi_{d0}\mathcal{N}^{n}(0)-\sigma\partial_{1}(\frac{\p_1\xi_{d0}}{(1+\vert \partial_{1}\zeta_{0}\vert^{2})^{3/2}})\mathcal{N}^{n}(0)\\
					\quad\quad\quad\quad\quad\quad\quad~~~~~~~~~~~~~~~~~~~~~~~~~~~~~~~~~~~~~~~~~~~+\p_{1}I_{1}^{n,k}\mathcal{N}^{n}(0)+F^{4}(u_{0},p_{0},\eta_{0})~~~&\operatorname{on}~~\Sigma,\\
					(S_{\mathcal{A}^{n}(0)}(q_{d0},v_{d0})\nu+\nabla_{\mathcal{A}^{n}(0)}(R^{n}(0)v_{l0})\nu-\beta v_{d0})\cdot \tau=F^{5}(u_{0},\eta_{0},p_{0})~~~&\operatorname{on}~~\Sigma_{s},\\
					v_{d0}\cdot \nu=0~~~&\operatorname{on}~~\Sigma_{s},\\
					\partial_{t}\xi_{d0}=v_{d0}\cdot \mathcal{N}^{n}(0)+(R^{n}(0)v_{l0}^{k})\cdot \mathcal{N}^{n}(0)+I_{2}^{n,k}-\frac{1}{2\ell}\int_{-\ell}^{\ell}(I^{n,k}_{2}+(R^{n}v_{l0}^{k})\cdot \mathcal{N}^{n}(0))~~~&\operatorname{on}~~\Sigma,\\
					\sigma(\mp \frac{\partial_{1}\xi_{d0}}{(1+\vert \partial_{1}\zeta_{0}\vert^{2})^{\frac{3}{2}}}\pm I_{1}^{n,k})
					=\kappa (v_{d0}\cdot \mathcal{N}^{n}(0))(\pm \ell)-{F}^{7}(u_{0},q_{0},\eta_{0}),
				\end{cases}
			\end{equation}
			
			\noindent where
			\[
			\begin{aligned}
				u^{k}(0)=(u_{0})^{k},~~~D_{t}v_{d0}=D_{t}^{2}u(0),~~~~~~\p_{t}\xi_{d0}=\p_{t}^{2}\eta(0),\\
				v_{l0}^{k}=(v^{n}(0))^{k},~~~~v_{l0}=(v^{n}(0)),~~~\p_{t}v_{l0}=(\p_{t}v^{n}(0)),~~~\eta^{k}(0)=(\eta_{0})^{k}.
			\end{aligned}
			\]
			
			By an elliptic argument identical to that in Appendix \ref{sec:initial}, there exists a unique solution $(v_{d0}^{n,k},\xi^{n,k}_{d0},q^{n,k}_{d0})$ of system \eqref{initial_nk}. Moreover, when $k\rightarrow +\infty$, we have
			\[
			\begin{aligned}
				v_{d0}^{n,k}\rightarrow D_{t}v_{0}^{n}~~~\operatorname{in}~~~W^{2,q_{+}},\
				\xi_{d0}^{n,k}\rightarrow \p_{t}\xi_{0}^{n}~~~\operatorname{in}~~~W^{3-\frac{1}{q_{+}},q_{+}},\
				q_{d0}^{n,k}\rightarrow \p_{t}q_{0}^{n}~~~\operatorname{in}~~~W^{1,q_{+}}.
			\end{aligned}
			\]
			
		\end{appendix}

		% \bibliographystyle{siam}
		% \bibliography{Reference}

	\end{document}